\newtheorem{theorem}{Theorem}[section]
\newtheorem{theorem*}{Theorem}
\newtheorem{lemma}[theorem]{Lemma}
\newtheorem{definition}[theorem]{Definition}
\newtheorem{notation*}[theorem*]{Notations}
\newtheorem{example*}[theorem*]{Example}
\newtheorem{remark}[theorem]{Remark}
\newtheorem{remark*}[theorem*]{Remark}
\newcommand*{\dif}{\mathop{}\!\mathrm{d}}
\title{Normalized Solutions to Schr\"{o}dinger Equations with Critical Exponent and Mixed Nonlocal Nonlinearities}
\author{
 Yanheng Ding \\
  Academy of Mathematics and System Science, Chinese Academy of Sciences, Bejing 100190, P.R. China\\
  University of Chinese Academy of Sciences, Bejing 100049, P.R. China\\
  \texttt{dingyh@math.ac.cn} \\
   \And
 Hua-Yang Wang\footnote{Corresponding author.} \\
  Academy of Mathematics and System Science, Chinese Academy of Sciences, Bejing 100190, P.R. China\\
  University of Chinese Academy of Sciences, Bejing 100049, P.R. China \\
  \texttt{wanghuayang@amss.ac.cn} \\
}
\begin{document}
\maketitle
\begin{abstract}
 We study the existence and nonexistence of normalized solutions $(u_a, \lambda_a)\in H^{1}(\mathbb{R}^N)\times \mathbb{R}$ to the nonlinear Schr\"{o}dinger equation with mixed nonlocal nonlinearities:
	\begin{align*}
		\begin{cases}
			-\Delta u=\lambda u+ (I_{\alpha} * \vert u \vert^p)\vert u \vert^{p-2}u+\mu (I_{\alpha} * \vert u \vert^q)\vert u \vert^{q-2}u\quad \text{in }\mathbb{R}^N,\\
			\int_{\mathbb{R}^{N}} \vert u \vert^2 \dif x=a^2>0,
		\end{cases}
	\end{align*}
	where $N\geq 3$,  $\alpha\in (0,N)$, $\mu\in\mathbb{R}$, $\frac{N+\alpha}{N}< q< p \leq \frac{N+\alpha}{N-2}$,
	and $I_{\alpha}$ is the Riesz potential.
	This study can be viewed as a counterpart of the Brezis-Nirenberg problem in the context of normalized solutions to the nonlocal Schr\"{o}diger equation with a fixed $L^2$-norm $\|u\|_2=a>0$. 
	The leading term is $L^2$-supercritical, that is, $p\in (\frac{N+\alpha+2}{N},\frac{N+\alpha}{N-2}]$, where the Hardy-Littlewood-Sobolev critical exponent $p=\frac{N+\alpha}{N-2}$ appears.
	We first prove that there exist two normalized solutions if $q\in (\frac{N+\alpha}{N},\frac{N+\alpha+2}{N})$ with $\mu >0$ small, that is, one is at the negative energy level while the other one is at the positive energy level.
	For $q=\frac{N+\alpha+2}{N}$, we show that there is a normalized ground state for $0<\mu < \tilde{\mu} $ and there exist no ground states for $\mu >\tilde{\mu}$, where $\tilde{\mu}$ is a sharp positive constant.
	If $q\in (\frac{N+\alpha+2}{N},\frac{N+\alpha}{N-2})$, we deduce that there exists a normalized ground state for any $\mu>0$.
	We also obtain some existence and nonexistence results for the case $\mu<0$ and $q\in (\frac{N+\alpha}{N},\frac{N+\alpha+2}{N}]$.
	Besides, we analyze the asymptotic behavior of normalized ground states as $\mu\rightarrow 0^{+}$.
\end{abstract}

\textbf{Keywords } Normalized solutions, Ground states, Hardy-Littlewood-Sobolev critical exponent, Nonlocal nonlinearity, Mixed nonlinearity.

\textbf{Mathematics Subject Classification(2020) } 35B33, 35J20, 35J60

	\section{Introduction}
	In this paper, we are interested in the existence of standing waves with a given $L^2$-norm for the Schr\"{o}dinger equation with mixed nonlocal nonlinearities:
	\begin{equation}\label{eq:NLS}
		i\partial_{t}\psi+\Delta \psi + (I_{\alpha} * \vert \psi \vert^p)\vert \psi \vert^{p-2}\psi+\mu (I_{\alpha} * \vert \psi \vert^q)\vert \psi \vert^{q-2}\psi=0\quad \text{in }\mathbb{R}\times\mathbb{R}^{N},
	\end{equation}
	where $N\geq 3$, $\psi:\mathbb{R}\times \mathbb{R}^N\rightarrow \mathbb{C}$, $\mu\in\mathbb{R}$, $\frac{N+\alpha}{N}=:2_{\alpha}< q< p \leq 2^{*}_{\alpha}:=\frac{N+\alpha}{N-2}$ 
	 and $I_{\alpha}$ is the Riesz potential of order $\alpha\in (0,N)$ defined by
	\begin{equation*}
		I_{\alpha}=\frac{\mathcal{A}(N,\alpha)}{\vert x \vert ^ {N-\alpha}}\quad \text{with}\quad \mathcal{A}(N,\alpha)=\frac{\Gamma(\frac{N-\alpha}{2})}{\pi^{\frac{N}{2}}2^{\alpha}\Gamma(\frac{\alpha}{2})}\text{ for each } x\in\mathbb{R}^N\setminus\{0\}.
	\end{equation*}
	We first recall that the standing waves of \eqref{eq:NLS}, namely to solutions of the form $\psi(t,x)=e^{-i\lambda t}u(x)$, where $\lambda\in\mathbb{R}$ and $u:\mathbb{R}^N\rightarrow \mathbb{R}$.
	Then the function $u(x)$ satisfies the following equation:
	\begin{equation}\label{eq:NLS_S}
		-\Delta u=\lambda u+ (I_{\alpha} * \vert u \vert^p)\vert u \vert^{p-2}u+\mu (I_{\alpha} * \vert u \vert^q)\vert u \vert^{q-2}u\quad \text{in }\mathbb{R}^N,\ N\geq 3.
	\end{equation}
	
	Physically speaking, Schr\"{o}dinger equation appears naturally in many fields.
	For $N=3$, $\mu=0$, $\alpha=2$ and $p=2$, \eqref{eq:NLS_S} is called the Choquard-Pekar equation.
	It seems to originate from Fr\"{o}hlich and Pekar’s model of the polaron, where free electrons in an
	ionic lattice interact with photons associated with deformations of the lattice
	or with the polarization that it creates on the medium \cite{frohlich1937theory,Pekar+1954}.
	The Choquard-Pekar equation is also introduced by Choquard in the modeling of a one-component
	plasma \cite{MR471785}, which is a certain approximation to the Hartree-Fock theory.
	When dealing with the Schr\"{o}dinger equation of quantum physics together with nonrelativistic Newtonian gravity, the Choquard-Pekar equation is known as the Schr\"{o}dinger–Newton equation \cite{MR1386305}.
	In a few words, the nonlocal Schnr\"{o}diner equation plays an important role in the study of Physics.
	
	We shall focus more on the mathematical aspects of the Schr\"{o}dinger equation \eqref{eq:NLS_S}.
	On the one hand, when looking for solutions to \eqref{eq:NLS_S}, a possible way is to study the \emph{fixed frequency problem}.
	Precisely, we could fix $\lambda\in\mathbb{R}$ and search for solutions as critical points of the following functional defined on $H^{1}(\mathbb{R}^N)$:
	\begin{align*}
		\mathcal{I}_{\lambda,\mu}[u]:=\frac{1}{2}\int_{\mathbb{R}^N}\vert \nabla u \vert^2\dif x-\frac{\lambda}{2}\int_{\mathbb{R}^N}\vert  u \vert^2\dif x-\frac{\mu}{2q}\int_{\mathbb{R}^N}(I_{\alpha} * \vert u \vert^q)\vert u \vert^{q}\dif x-\frac{1}{2p}\int_{\mathbb{R}^N}(I_{\alpha} * \vert u \vert^p)\vert u \vert^{p} \dif x.
	\end{align*}
	For $N=3$, $\mu=0$, $\alpha=2$ and $p=2$, the existence of solutions to the Choquard-Pekar equation is proved via variational methods by Lieb \cite{MR471785}, Lions \cite{MR591299} and Menzala \cite{MR592556}.
	For the more general homogeneous case $N\geq 3$, $\mu=0$ and $p\in (2_{\alpha},2_{\alpha}^*)$, the existence of ground states can be founded in Ma and Zhao \cite{MR2592284}, Genev and Venkov \cite{MR2877355} as well as Moroz and Van Schaftingen \cite{MOROZ2013153}.
	Later, Moroz and Van Schaftingen \cite{MR3356947} prove the existence of a nontrivial solution with nonhomogeneous nonlinearities, including $\mu\ne 0$ and $2_{\alpha}<q<p<2_{\alpha}^{*}$.
	Gao and Yang investigate the existence and multiplicity of solutions to \eqref{eq:NLS_S} for $p=2_{\alpha}^{*}$ on a smooth bounded domain $\Omega \subset \mathbb{R}^N,$ $N\geq 3$ in \cite{MR3582271}.
	The fixed frequency problem has been widely investigated for decades. 
	It seems impossible to summarize it here since the related litterateurs are abundant.
	We advise readers to read a survey paper \cite{MR3625092} and its references.
	
	On the other hand, we can study the \emph{normalized solution problem}.
	Specifically, we can search for solutions to \eqref{eq:NLS_S} possessing a given $L^2$-norm, that is, finding $(\lambda_a,u_a)\in\mathbb{R}\times H^{1}(\mathbb{R}^N)$ solving $\eqref{eq:NLS_S}$ together with the normalized condition
	\begin{equation*}
		\|u_a\|_2^2=\int_{\mathbb{R}^N}\vert u_a(x) \vert^2  \dif x =a^2,\quad a>0.
	\end{equation*}
	Here $u_a$ is called the \emph{normalized solution}, $\lambda_a$ arises as an unknown \emph{Lagrange multiplier} which depends on the solution $u_a$.
	It is standard to show that the following energy functional is of class $\mathscr{C}^1$:
	\begin{align}
		E[u]=&\frac{1}{2}\int_{\mathbb{R}^N}\vert \nabla u \vert^2\dif x-\frac{1}{2p}\int_{\mathbb{R}^N}(I_{\alpha} * \vert u \vert^p)\vert u \vert^{p}\dif x -\frac{\mu}{2q}\int_{\mathbb{R}^N}(I_{\alpha} * \vert u \vert^q)\vert u \vert^{q}\dif x.
	\end{align}
	Indeed, normalized solutions to \eqref{eq:NLS_S} can be obtained as critical points of the energy functional $E$ under the constraint
	\begin{equation}\label{eq:S(a)}
		S(a)=\{u\in H^{1}(\mathbb{R}^N):\|u\|_2=a>0\}.
	\end{equation}
	For future's convenience, we recall the definition of a \emph{ground state} to \eqref{eq:NLS_S} on $S(a)$:
	\begin{definition}
		 We say that $u_a$ is a \textbf{ground state} of \eqref{eq:NLS_S} on $S(a)$ if $(\lambda_a,u_a)\in \mathbb{R}\times S(a)$ is a solution to \eqref{eq:NLS_S} and $u_a$ has the minimal energy among all the solutions which belong to $S(a)$, that is
		$$
		\left.\dif E \right|_{S(a)}[u_a]=0 \quad \text { and } \quad E(u_a)=\inf_{u\in S(a)} \{E[u]:\left.\dif E \right|_{S(a)}[u_a]=0\}.
		$$
	\end{definition}
	Before recalling some related works, we consider a general nonlinear Schr\"{o}dinger equation:
	\begin{align}\label{eq:NLS_g}
		-\Delta u =\lambda u + g(u),\quad \text{in }\mathbb{R}^N,\ N\geq 1,
	\end{align}
	where $g(u)$ could be a local or nonlocal nonlinearity and define 
	\begin{align}\label{functional:NLS_g}
		J[u]:=\frac{1}{2}\int_{\mathbb{R}^N}\vert \nabla u \vert^2 \dif x - \int_{\mathbb{R}^N} G(u(x))\dif x,\quad  \text{and}\quad m(a):=\inf_{u\in S(a)} J[u],
	\end{align}
	where $G(u)$ is the primitive function of $g(u)$.
	We find that $m(a)$ changes if we choose different conditions of $g(u)$, and in particular, it could be $-\infty$.
	The case that $m(a)$ is finite mainly starts at the work of Stuart \cite{MR587907} via a bifurcation approach with a nonhomogeneous local nonlinearity.
	Later, in the famous works about the Compactness by Concentration Principle of Lions \cite{MR778970,MR778974}, the case that $m(a)$ is finite has been studied systemically, where $g(u)$ may be a local or nonlocal nonlinearity.
	Under some assumptions of $g(u)$, Lions gets the existence of normalized solutions to \eqref{eq:NLS_g} via finding the global minimum of the functional $J[u]$ with the constraint \eqref{eq:S(a)}.
	However, before the work of Jeanjean \cite{MR1430506}, there is no suitable way to deal with the case $m(a)=-\infty$.
	It seems that this direction of research has received attention due to the papers \cite{MR3009665, MR3092340}.
	Recently, many people are committed to the normalized solution problem, see \cite{MR3539467,MR3639521,MR4297197,MR4232669,MR4188316,MR4021261,MR4150876,MR3369266} and their references.
	We also refer to \cite{MR3892400,Guo,MR4304693,MR4445673,MR4452342} for the equation with a linear potential and to \cite{MR3918087,MR4191345,MR3689156} for contributions when the equation considered on a bounded domain $\Omega$ of $\mathbb{R}^N$.
	
	In the above mentioned papers, whether $g(u)$ is local or nonlocal in \eqref{eq:NLS_g}, the involved nonlinearities are subcritical.
	We shall introduce some results about normalized solutions to Schr\"{o}dinger equations with critical exponent and mixed nonlinearities.
	
	\begin{itemize}
		\item Soave first studies the normalized solutions to the following Schr\"{o}dinger equation with Sobolev critical exponent and mixed local nonlinearities in \cite{SOAVE20206941,MR4096725}:
		\begin{align}\label{eq:NLS_l}
			-\Delta u=\lambda u+\mu\vert u \vert^{q-2}u+\vert u \vert^{p-2}u,\quad \text{in }\mathbb{R}^N,\ N\geq 1,
		\end{align}
		where $\mu \in \mathbb{R}$, $2<q\leq 2^{\sharp}:=2+\frac{4}{N}\leq p \leq 2^{*}$, $q\ne p$.
		When the leading term $\vert u \vert^{p-2}u$ is $L^2$-supercritical, it determines that the functional restricted to $S(a)$ is unbounded form below.
		He introduces a Pohozaev manifold and considers the energy functional with this natural constraint.
		The perturbation term $\mu\vert u \vert^{q-2}u$ influences the structure of Pohozaev manifold, where the range of exponent $q$ and the number $\mu$ play essential roles.
		Roughly speaking, Soave proves that if the perturbation term is small (i.e., $\vert \mu\vert>0$ is small), there exists at least one normalized radial ground state to \eqref{eq:NLS_l}.
		In particular, if $p\in (2^{\sharp},2^{*}]$, $q\in (2, 2^{*})$ and $\mu>0$ small, he showed that there are two radial positive solutions.
		Later, for $p=2^{*}$, $q\in (2^{\sharp}, 2^{*})$ and $\mu>0$, Jeanjean and Le \cite{2021Jeanjean} prove that the perturbation term can be large enough (i.e, we only need $\mu>0$) with $N\geq 4$.
		Wei and Wu \cite{MR4433054} solve this problem for $N=3$.
		Many authors pay attention to related problems, for example, see \cite{MR4350192, MR4426879, MR4290382, MR4135640} and their references.
		For authors' knowledge, it seems that the existence of nonradial normalized solutions and the multiplicity of normalized solutions to \eqref{eq:NLS_l} are open problems. 
		
		\item Recently, Yao, Chen, R\v{a}dulescu and Sun consider normalized solutions to the following Schr\"{o}dinger equation with one local and one nonlocal nonlinearities in 
		\cite{MR4443663}:
		\begin{align}\label{eq:NLS_SPS}
			-\Delta u+\lambda u= \gamma(I_{\alpha}*\vert u \vert^p)\vert u \vert^{p-2}u+\mu \vert u \vert^{q-2}u,\quad \text{in }\mathbb{R}^N,\ N\geq 3,
		\end{align}
		where $\gamma,\mu >0$, $p=2_{\alpha}$ and $2<q\leq2^{*}$.
		Bhattarai has studied a similar question for fractional Schr\"{o}dinger equation in \cite{MR3659360}.
		For $N=3$ and $p=2$, \eqref{eq:NLS_SPS} is called the Schr\"{o}dinger-Possion-Slater equation. 
		Jeanjean and Le consider the normalized solutions to this system in \cite{JEANJEAN2021277}, where $q\in (\frac{10}{3},6]$ involves the Sobolev critical case, see also \cite{MR3092340}.
		Generally speaking, the two nonlinearities are competing for different $\gamma$, $\mu$ and influence the structure of energy functional in the above works, where the $L^2$-critical exponents and Sobolev critical exponent play an important role.
		\item A natural goal is to investigate the normalized solutions to the Schr\"{o}dinger equation \eqref{eq:NLS_S} with mixed nonlocal  nonlinearities. 
		Cao, Jia and Luo \cite{MR4193068} study the normalized solution to the following equation:
		\begin{align*}
			-\Delta u=\lambda u+\mu\left(|x|^{-\alpha} *|u|^2\right) u+\left(|x|^{-\beta} *|u|^2\right) u,\quad \text{in }\mathbb{R}^N,\ N\geq 1,
		\end{align*}
		where $\alpha$ and $\beta$ have an influence on the existence of solutions, see also a similar result by Bhimani, Gou and Hajaiej \cite{Gou}.
		However, there exists few works about our equation \eqref{eq:NLS_S}, which means, for fixed $N\geq 3$ and $\alpha \in (0,N)$, studying the existence and multiplicity of solutions with a $L^2$-supercritical (may be HLS critical) leading term and different perturbations described by $p,q$ and $\mu$.
		Motivated by above works, we believe following problems are meaningful:
		\begin{enumerate}
			\item[\textbf{(Q1)}] \textbf{Can we find some sufficient conditions described by $\mu$ and $a$ such that there exists a normalized ground state to \eqref{eq:NLS_S}?}
			\item[\textbf{(Q2)}] \textbf{Can we find some sufficient conditions independent of $\mu$ and $a$ such that there exists a normalized ground state to \eqref{eq:NLS_S}? If not, what is the sufficient condition dependent on $\mu$ and $a$ such that there exists no normalized ground states to \eqref{eq:NLS_S}?}
			\item[\textbf{(Q3)}] \textbf{Can we show some multiplicity results about normalized solutions to  \eqref{eq:NLS_S}?}
			\item[\textbf{(Q4)}] \textbf{For $\mu>0$, if the equivalent condition of the existence of at least one normalized ground state to \eqref{eq:NLS_S} is  $\mu \in (0,\bar{\mu})$ with $\bar{\mu}\in (0,+\infty]$, can we describe the asymptotic behavior of the normalized ground state as $\mu \rightarrow 0^{+}$ and $\mu \rightarrow \bar{\mu}$?}
		\end{enumerate}
	\end{itemize}
	
	To describe our main results, we define
	$$
	A[u]:=\int_{\mathbb{R}^N}\vert \nabla u\vert^2\dif x,\quad B_s[u]:=\int_{\mathbb{R}^N}(I_{\alpha} * \vert u \vert^s)\vert u \vert^{s}\dif x,
	$$
	then we have
	\begin{equation*}
		E[u]=\frac{1}{2}A[u]-\frac{1}{2p}B_p[u]-\frac{\mu}{2q}B_q[u].
	\end{equation*}
	Setting $(t \star u)(x)=t^{\zeta}u(t^{\theta}x)$, $t>0$,  it holds that
	\begin{align*}
		\|t \star u \|_r^r= t^{\zeta r-\theta N}\|u\|_r^r,\quad A[t \star u]=t^{2\theta +2\zeta -\theta N}A[u],\quad
		B_r[t \star u]=t^{(2\zeta-\theta N)r+2r\eta_{r}\theta}B_r[u],
	\end{align*}
	where $\eta_{r}=\frac{Nr-N-\alpha}{2r}$. 
	The following inequality appears frequently in this paper:
	\begin{align*}
		0<2r\eta_{r}=Nr-N-\alpha
		\begin{cases}
			<2,&\text{if }r\in [2_{\alpha},2_{\alpha}^{\sharp}),\\
			=2,&\text{if }r=2_{\alpha}^{\sharp},\\
			>2,&\text{if }r\in (2_{\alpha}^{\sharp},2_{\alpha}^{*}].
		\end{cases}
	\end{align*}
	If $\|t \star u\|_2=\|u\|_2$, then it holds that $2\zeta-\theta N=0$. 
	In particular, if we choose $\theta=\frac{1}{2}$ with $2\zeta -\theta N=0$ and put $s\circ u(x)=s^{\frac{N}{4}}u(s^{\frac{1}{2}}x)$, then it follows that
	\begin{align*}
		\|s\circ u\|_2=\|u\|_2,\quad A[s\circ u]=sA[u],\quad B_r[s\circ u]=s^{r\eta_{r}}B_r[u].
	\end{align*}
	For $u\in H^1(\mathbb{R}^N)\setminus \{0\}$ fixed, we define a function $f_u:[0,+\infty)\rightarrow \mathbb{R}$ by
	\begin{equation}\label{eq:scaling_function}
		f_u(s):=2E[s\circ u]=sA[u]-\frac{\mu}{q}s^{q\eta_q}B_q[u]-\frac{1}{p}s^{p\eta_{p}}B_p[u].
	\end{equation}
	It is clear that
	\begin{equation}\label{eq:scaling_function_1st_derivative}
		f^{\prime}_u(s)=A[u]-\eta_{p}s^{p\eta_{p}-1}B_p[u]-\mu\eta_{q} s^{q\eta_{q}-1}B_q[u],
	\end{equation}
	and 
	\begin{equation}\label{eq:scaling_function_2nd_derivative}
		f^{\prime\prime}_u(s)=-\eta_{p}(p\eta_{p}-1)s^{p\eta_{p}-2}B_p[u]+\mu\eta_{q}(1-q\eta_{q}) s^{q\eta_{q}-2}B_q[u].
	\end{equation}
	By a similar proof in \cite[Proposition 3.1]{MOROZ2013153} (see also \cite[Theorem 2.1]{MR4106818}), it follows that if $u\in H^1(\mathbb{R}^N)$ is a solution to \eqref{eq:NLS_S}, then the following Pohozaev type identity holds:
	\begin{align}\label{eq:Pohozaev}
		\frac{N-2}{2}A[u]-\frac{N}{2}\lambda \|u\|_2^2-\frac{N+\alpha}{2p}B_p[u]-\frac{N+\alpha}{2q}\mu B_q[u]=0.
	\end{align} 
	Testing equation \eqref{eq:NLS_S} by $u$, we obtain that
	\begin{align}\label{eq:testing_1}
		A[u]-\lambda \|u\|_2^2-B_p[u]-\mu B_q[u]=0.
	\end{align}
	Combing \eqref{eq:Pohozaev} and $\eqref{eq:testing_1}$, it follows that $P[u]=0$, where
	\begin{align*}
		P[u]=A[u]-\eta_{p}B_p[u]-\mu \eta_{q}B_q[u].
	\end{align*}
	It is clear that $P[s\circ u]=f_u^{\prime}(s)$.
	Then we can define
	\begin{align*}
		\mathcal{P}(a):=&\{u\in S(a): f'_u(1)=0\}=\{u\in S(a): P[u]=0\},\\
		\mathcal{P}^{+}(a):=&\{u\in S(a): f'_u(1)=0,\ f^{\prime\prime}_u(1)>0\},\\
		\mathcal{P}^{0}(a):=&\{u\in S(a): f'_u(1)=0,\ f^{\prime\prime}_u(1)=0\},\\
		\mathcal{P}^{-}(a):=&\{u\in S(a): f'_u(1)=0,\ f^{\prime\prime}_u(1)<0\},
	\end{align*}
	and hence we can divide $\mathcal{P}(a)$ into the disjoint union  $\mathcal{P}(a)=\mathcal{P}^{+}(a)\cup\mathcal{P}^{0}(a)\cup\mathcal{P}^{-}(a)$.
	We define 
	\begin{align*}
		\gamma(a,\mu):=\inf_{u \in \mathcal{P}(a)}E[u] \quad\text{and}\quad \gamma^{\pm}(a,\mu):=\inf_{u \in \mathcal{P}^{\pm}(a)}E[u],
	\end{align*}
	where $\mathcal{P}^{\pm}(a)$ denotes either $\mathcal{P}^{+}(a)$ or $\mathcal{P}^{-}(a)$, and we use similar notations without explanation in the following part.
	Besides, we usually ignore the parameter $\mu$ of $\gamma(a,\mu)$.
	
	We shall give partial answers to \textbf{(Q1)-(Q4)} via variational methods.
	The first difficulty is to find the existence of a Palais-Smale sequence at suitable level.
	Motivated by above papers, since $E$ restricted to $S(a)$ is unbounded from below, we shall consider the functional $E$ with a natural constraint $\mathcal{P}(a)$.
	The different perturbations will have a huge influence on the structure of the manifold $\mathcal{P}(a)$, and we shall handle them with more detailed analysis separately.
	For example, under suitable assumptions, we will find that $\mathcal{P}^{+}(a)=\emptyset$ for $\mu >0$ and $q\in [2_{\alpha}^{\sharp},p)$ and $\mathcal{P}^{+}(a)\ne \emptyset$ for $\mu>0$ and $q\in (2_{\alpha},2_{\alpha}^{\sharp})$.
	However, since the functional $E$ restricted to $\mathcal{P}(a)$ is always coercive and bounded from below (see Lemma \ref{lm:coercive}), we can deal with such a case.
	Using a critical theorem in \cite{1993Duality} and some methods developed in \cite{MR3976588,MR4150876}, we can construct a bounded Palais-Smale sequence $\{u_n\}$ on $\mathcal{P}(a)$.
	Then we shall deal with the second obstacle, that is, to recover the compactness.
	If $p\in (2_{\alpha}^{\sharp},2_{\alpha}^{*})$, we can do this with the help of an extra condition $P[u_n]=0$.
	But the HLS critical case $p=2_{\alpha}^{*}$ is much more difficult than the HLS subcritical case.
	To deal with the the HLS critical case $p=2_{\alpha}^{*}$, motivated by \cite{MR4096725}, we prove an alternative lemma, see Lemma \ref{lm:PS_strong_convergence}.
	Then we use various methods to estimate the critical value for different perturbations.
	Especially, the dimension $N$ and the parameter $\alpha$ will have an impact on the estimate, which is a new difficulty compared with the local case, for example, see Lemma \ref{lm:energy_critical_sub}.
	It is valuable to point out that we find two methods to deal with the HLS critical leading term: the first one is purely variational and the another one is using the HLS subcritical approximation.
	We believe that the second method may be useful for other critical problems.
	
	Suppose that the following inequality holds:
	\begin{align}\label{assumptions:basic}
		&(\vert \mu \vert a^{2q(1-\eta_{q})})^{p\eta_{p}-1}(a^{2p(1-\eta_{p})})^{1-q\eta_{q}}\notag\\
		&<\Xi (N,\alpha,p,q):=
		\begin{cases}
			\left(\frac{1-q\eta_{q}}{\eta_{p}\mathcal{C}_{G}(p)(p\eta_{p}-q\eta_{q})}\right)^{1-q\eta_{q}}\left(\frac{p\eta_{p}-1}{\eta_{q}\mathcal{C}_{G}(q)(p\eta_{p}-q\eta_{q})}\right)^{p\eta_{p}-1}, &\text{if}\ \mu>0\ \text{and}\ 2_{\alpha}<q\leq 2_{\alpha}^{\sharp}<p<2_{\alpha}^{*},\\
			\left(\frac{1}{\eta_{q}\mathcal{C}_{G}(q)}\right)^{1-q\eta_{q}}
			\left(\frac{1-\eta_{p}}{(\eta_{p}-\eta_{q})\mathcal{C}_{G}(p)}\right)^{p\eta_{p}-1},&\text{if}\ \mu\leq 0\ \text{and}\ 2_{\alpha}<q\leq 2_{\alpha}^{\sharp}<p<2_{\alpha}^{*},\\
			\left(\frac{1-q\eta_{q}}{\mathcal{S}_H^{-p}(p-q\eta_{q})}\right)^{1-q\eta_{q}}\left(\frac{p-1}{\eta_{q}\mathcal{C}_{G}(q)(p-q\eta_{q})}\right)^{p-1}, &\text{if}\ \mu>0\ \text{and}\ 2_{\alpha}< q\leq 2_{\alpha}^{\sharp}<p=2_{\alpha}^{*},\\
			+\infty, &\text{if}\ \mu>0\ \text{and}\ 2_{\alpha}^{\sharp}<q<p\leq 2_{\alpha}^{*},
		\end{cases}
	\end{align}
	where the constants $\mathcal{C}_G(p)$, $\mathcal{S}_H$ and $\eta_{p}$ can be founded in \eqref{ieq:best_constant} and \eqref{ieq:GN_H} at section 2.
	It is time to introduce our main results in this paper.
	The first theorem gives a partial answer to \textbf{(Q1)} and \textbf{(Q3)} for the focusing $L^2$-subcritical perturbation. 
	\begin{theorem}\label{thm:mass_subcritical}
		Let $N\geq 3$, $\alpha \in (0,N)$, $2_{\alpha}<q<2_{\alpha}^{\sharp}<p\leq2_{\alpha}^{*}$ and $\mu,a>0$.
		Suppose that \eqref{assumptions:basic} holds.
		Then 
		\begin{enumerate}
			\item[(i)] there exists a pair $(u^{+}_a,\lambda_a^{+})\in H^{1}(\mathbb{R}^N)\times\mathbb{R}^{-}$  such that $u_a^+$ is a ground state to \eqref{eq:NLS_S} on $S(a)$ at level $\gamma^{+}(a)<0$; 
			\item[(ii)] there exists another pair $(u^{-}_a,\lambda_a^{-})\in H^{1}(\mathbb{R}^N)\times\mathbb{R}^{-}$  such that $u_a^-$ is a normalized solution to \eqref{eq:NLS_S} at level $\gamma^{-}(a)>0$ if 
			either $p\in (2_{\alpha}^{\sharp},2_{\alpha}^{*})$ holds or $p=2_{\alpha}^{*}$ with one of the following holds:
			\begin{itemize}
				\item $N\geq 6$, $0<\alpha<N-2$ and $\max\{\frac{2N-2+\alpha}{2N-4},\frac{N+\alpha-2}{N-2}\}<q<2_{\alpha}^{\sharp}$;
				\item $N=5$, $0<\alpha<3$ and $\max\{\frac{4\alpha}{3},\frac{7+2\alpha}{6}\}<q<2_{\alpha}^{\sharp}$;
				\item $3\leq N\leq 5$, $\max\{N-2, 2N-6\}\leq\alpha<N$ and $2\leq q<2_{\alpha}^{\sharp}$.
			\end{itemize}
		\end{enumerate}
	\end{theorem}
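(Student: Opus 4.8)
The strategy is the standard fibering/Pohozaev-manifold scheme adapted to the nonlocal critical setting. Since $2_\alpha<q<2_\alpha^\sharp<p\le 2_\alpha^*$, the exponents satisfy $q\eta_q<1<p\eta_p$ (with $p\eta_p\le N/(N-2)=2_\alpha^*\eta_{2_\alpha^*}$ only when $p=2_\alpha^*$ — in fact $p\eta_p=(Np-N-\alpha)/2$, so $p\eta_p>1$ throughout and equals $2$ when... no: here $p>2_\alpha^\sharp$ forces $p\eta_p>1$). For $u\in S(a)$ fixed, I would analyze the fiber map $f_u(s)=sA[u]-\tfrac{\mu}{q}s^{q\eta_q}B_q[u]-\tfrac1p s^{p\eta_p}B_p[u]$ from \eqref{eq:scaling_function}. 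Because the perturbation is $L^2$-subcritical ($q\eta_q<1$) with $\mu>0$ and the leading term is $L^2$-(super)critical ($p\eta_p>1$), $f_u(0^+)=0$, $f_u$ is first decreasing (the $s^{q\eta_q}$ term dominates near $0$ since $q\eta_q<1$), then can increase, then $\to-\infty$. Under hypothesis \eqref{assumptions:basic}, which is exactly the smallness condition making the relevant algebraic inequality solvable, $f_u$ has exactly two critical points $0<s_u^+<s_u^-$, a local minimum and a local maximum, with $f_u(s_u^+)<0<f_u(s_u^-)$. This gives the splitting $\mathcal P(a)=\mathcal P^+(a)\cup\mathcal P^0(a)\cup\mathcal P^-(a)$ with $\mathcal P^0(a)=\emptyset$, so both $\mathcal P^\pm(a)$ are smooth manifolds of codimension $2$ in $H^1$, and the maps $u\mapsto s_u^\pm\circ u$ are continuous; this is where I would invoke Lemma \ref{lm:coercive} to get that $E$ is coercive and bounded below on $\mathcal P(a)$, so $\gamma^+(a)<0$ and $\gamma^-(a)>0$ are well defined, with $\gamma^+(a)=\inf_{S(a)\cap A^{-1}(\text{small})}E$ attained as a local minimum.

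For part (i): I would show $\gamma^+(a)<0$ by evaluating $E$ along a suitable scaled function, then extract a Palais–Smale sequence for $E|_{S(a)}$ at level $\gamma^+(a)$ lying in $\mathcal P^+(a)$ — using the minimax structure over the local-minimum region together with the critical-point theorem of \cite{1993Duality} and the deformation arguments of \cite{MR3976588,MR4150876} to ensure the PS sequence additionally satisfies $P[u_n]\to 0$. Boundedness of $\{u_n\}$ in $H^1$ follows from coercivity of $E$ on (a neighborhood of) $\mathcal P(a)$. Then, up to translations, $u_n\rightharpoonup u^+_a\ne 0$; the Lagrange multiplier $\lambda_a^+$ is recovered from the weak equation and the Pohozaev identity \eqref{eq:Pohozaev}, and one checks $\lambda_a^+<0$ by combining \eqref{eq:testing_1}, $P[u]=0$ and the sign of $E$ (negativity of the energy forces $\lambda_a^+<0$; this uses $q\eta_q<1$). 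Strong convergence: in the subcritical case $p<2_\alpha^*$, the condition $P[u_n]=0$ rules out vanishing of the $B_p,B_q$ terms and a Brezis–Lieb / splitting argument gives $u_n\to u_a^+$ in $H^1$ directly. In the critical case $p=2_\alpha^*$, I would instead invoke Lemma \ref{lm:PS_strong_convergence}, the alternative lemma: either strong convergence holds, or the energy jumps by at least the universal amount associated with the HLS-critical Aubin–Talenti bubble (governed by $\mathcal S_H$). To exclude the bad alternative at level $\gamma^+(a)<0$ one only needs $\gamma^+(a)<\gamma^+(a)+(\text{bubble energy})$, which is automatic since the bubble energy is positive; hence (i) holds for all $p\le 2_\alpha^*$ with no extra dimensional restriction.

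For part (ii): here $u_a^-$ sits at the mountain-pass level $\gamma^-(a)>0$ over $S(a)$, obtained by a minimax class $\inf_\gamma\max_{t}E(\gamma(t))$ whose value equals $\inf_{\mathcal P^-(a)}E$; again I produce a bounded PS sequence with $P[u_n]\to0$ via \cite{1993Duality,MR3976588,MR4150876}. The subcritical case $p\in(2_\alpha^\sharp,2_\alpha^*)$ is then closed as before. The genuinely hard part — and the reason for the list of dimension/$\alpha$/$q$ conditions — is the critical case $p=2_\alpha^*$: one must show $\gamma^-(a)$ lies strictly below the threshold $\gamma^+(a)+c_{N,\alpha}$ (equivalently below the first "bubbling" level determined by $\mathcal S_H$) so that Lemma \ref{lm:PS_strong_convergence} forces strong convergence rather than bubbling. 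This is a delicate energy estimate: I would test the mountain-pass geometry with a truncated Talenti bubble $U_\varepsilon$ concentrating at a point, corrected to have $L^2$-norm exactly $a$, and carefully expand $A[U_\varepsilon]$, $B_{2_\alpha^*}[U_\varepsilon]$ and the lower-order term $B_q[U_\varepsilon]$ in $\varepsilon$ (cf. the nonlocal Brezis–Nirenberg estimates, Lemma \ref{lm:energy_critical_sub}). The subcritical perturbation $\mu B_q$ must contribute a gain strictly larger than the (positive but small) error terms coming from the truncation and from enforcing the mass constraint; whether this works depends on comparing the $\varepsilon$-exponent of the $B_q$-gain, which is $\sim\varepsilon^{(N-2)q-(N+\alpha)}$ up to the scaling normalization... more precisely the competition is between an $\varepsilon^{\,\alpha}$-type error (or $\varepsilon^{N-2}\log$ in borderline dimensions) and the perturbation gain of order $\varepsilon^{\,(\text{something in }N,\alpha,q)}$, and the three bullet-point regimes are exactly the parameter ranges in which the perturbation wins. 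This estimate is where essentially all the work of part (ii) goes; once the strict inequality is in hand, the compactness, the identification of $(u_a^-,\lambda_a^-)$ with $\lambda_a^-<0$ via \eqref{eq:Pohozaev}–\eqref{eq:testing_1}, and the conclusion that $u_a^-$ is a normalized solution at level $\gamma^-(a)>0$ follow by the same routine as in (i).
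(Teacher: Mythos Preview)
Your overall architecture matches the paper's: fibering on $S(a)$, $\mathcal P^0(a)=\emptyset$ under \eqref{assumptions:basic}, construction of bounded radial Palais--Smale sequences on $\mathcal P^\pm(a)$ via Ghoussoub's min-max, and then compactness (directly when $p<2_\alpha^*$, via the alternative Lemma~\ref{lm:PS_strong_convergence} when $p=2_\alpha^*$). Two points need fixing.

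For part~(i) with $p=2_\alpha^*$, your exclusion of the bad alternative is too quick. Lemma~\ref{lm:PS_strong_convergence}(i) yields $E[u_a]\le \gamma^+(a)-\tfrac{\alpha+2}{2(N+\alpha)}\mathcal S_H^{(N+\alpha)/(\alpha+2)}$, but the weak limit $u_a$ need only satisfy $\|u_a\|_2=b\le a$, so you cannot immediately compare $E[u_a]$ with $\gamma^+(a)$. The paper closes this using the monotonicity/subadditivity of $a\mapsto\gamma^+(a)$ (Lemma~\ref{lm:nonincreasing}), which gives $\gamma^+(b)\ge\gamma^+(a)$; since $P[u_a]=0$ one has $E[u_a]\ge\gamma^+(b)\ge\gamma^+(a)$, and the contradiction follows. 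This is routine, but it is not ``automatic''.

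For part~(ii) with $p=2_\alpha^*$ there is a genuine gap in your proposed test function. You correctly identify that the target inequality is $\gamma^-(a)<\gamma^+(a)+\tfrac{\alpha+2}{2(N+\alpha)}\mathcal S_H^{(N+\alpha)/(\alpha+2)}$, with $\gamma^+(a)<0$. But testing with a mass-normalised truncated Talenti bubble $V_\varepsilon$ alone only yields $\gamma^-(a)\le \max_s E[s\circ V_\varepsilon]<\tfrac{\alpha+2}{2(N+\alpha)}\mathcal S_H^{(N+\alpha)/(\alpha+2)}$: the gain from the $\mu B_q$-term is an $o_\varepsilon(1)$ correction and cannot absorb the fixed negative quantity $\gamma^+(a)$. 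The paper (Lemma~\ref{lm:energy_critical_sub}) instead follows the Wei--Wu/Jeanjean--Le idea and uses the \emph{superposition} $w_{\varepsilon,t}=u_a^++tU_\varepsilon$, rescaled back onto $S(a)$; because $u_a^+$ is already a solution, the cross terms can be rewritten via \eqref{eq:differential_criticl_point_plus}, and one obtains $E[\overline w_{\varepsilon,t}]\le E[u_a^+]+E[tU_\varepsilon]+(\text{error terms in }\varepsilon)$. This is what produces the offset $\gamma^+(a)$ on the right-hand side. The listed restrictions on $(N,\alpha,q)$ are precisely those making the positive error terms (of order $\varepsilon^{\min\{(\alpha+2)/2,(N-2)/2\}}$, $\varepsilon^{2}$, $\varepsilon^{2}|\log\varepsilon|$, or $\varepsilon$ depending on dimension) beaten by the negative $B_q$-gain of order $\varepsilon^{2q(1-\eta_q)}=\varepsilon^{N+\alpha-(N-2)q}$ (note the sign: your exponent $(N-2)q-(N+\alpha)$ is inverted), with additional structure exploited when $q\ge2$ or $p\ge3$ in low dimensions. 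Without the superposition ansatz the estimate simply does not reach the required threshold.
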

	
	\begin{remark} 
		Although we summarize the results for $p\in (2_{\alpha}^{\sharp},2_{\alpha}^{*})$ and $p=2_{\alpha}^{*}$ in one theorem, the proof of two cases are not the same. 
		Precisely, if $p=2_{\alpha}^{*}$, it is hard to recover the compactness, especially for the Palais-Smale sequence at $\gamma^{-}(a)$.
		The main difficulty is to estimate the range of critical value.
		In fact, similar question arises in the study of \eqref{eq:NLS_l}, and the method developed in \cite{MR4433054} is efficient, see also \cite{2021Jeanjean}.
		However, since our nonlinearities are nonlocal and sometimes $q<2$, we shall find some new estimates for our problem, see Lemma \ref{lm:energy_critical_sub}.
	\end{remark}
	Under the assumptions in Theorem \ref{thm:mass_subcritical}, we can describe the ground state $u^{+}(a)$ as a local minima for $E$ restricted to $S(a)$. 
	There exists an $a_1:=a_1(p)>0$ such that for $\mu>0$ and $2_{\alpha}<q<2_{\alpha}^{\sharp}<p\leq2_{\alpha}^{*}$, it holds that 
	\begin{align}\label{assumptiobs_basic_equality}
		(\mu a_1^{2q(1-\eta_{q})})^{p\eta_{p}-1}(a_1^{2p(1-\eta_{p})})^{1-q\eta_{q}}=\Xi(N,\alpha,p,q).
	\end{align}
	More precisely, for $\mu>0$ fixed, we have 
	\begin{theorem}\label{thm:minizing}
		Let $N\geq 3$, $\alpha \in (0,N)$, $2_{\alpha}<q<2_{\alpha}^{\sharp}<p\leq2_{\alpha}^{*}$, $\mu>0$ fixed and $a\in (0,a_1)$, where $a_1$ is defined in \eqref{assumptiobs_basic_equality}.
		Then we have $\mathcal{P}^{+}(a)\subset V(a)$ and 
		\begin{align*}
			\gamma^{+}(a)=\inf\limits_{u \in \mathcal{P}^{+}(a)}E[u]=\inf\limits_{u \in \overline{V(a)}}E[u],
		\end{align*}
		where $V(a):=\{u\in S(a):A[u]<k_1\}$ for some 
		\begin{align}
			k_1:=\left( \frac{\mu\eta_{q}\mathcal{C}_{G}(q)(p\eta_{p}-q\eta_{q})}{p\eta_{p}-1}\right)^{\frac{1}{1-q\eta_{q}}}a_1^{\frac{2q(1-\eta_{q})}{1-q\eta_{q}}}>0.
		\end{align}
		In addition, and minimizing sequence for $E$ on $V(a)$ is, up to a subsequence and translation, strongly convergent in $H^1(\mathbb{R}^N)$. 
	\end{theorem}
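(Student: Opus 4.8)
The plan is to establish, in this order, the inclusion $\mathcal{P}^{+}(a)\subset V(a)$, the chain of equalities $\gamma^{+}(a)=\inf_{\mathcal{P}^{+}(a)}E=\inf_{\overline{V(a)}}E=\inf_{V(a)}E$, and the compactness of minimizing sequences for $E$ on $V(a)$. For the first, let $u\in\mathcal{P}^{+}(a)$. From $P[u]=f_u'(1)=0$ and $f_u''(1)>0$ in \eqref{eq:scaling_function_1st_derivative}--\eqref{eq:scaling_function_2nd_derivative} one reads $A[u]=\eta_pB_p[u]+\mu\eta_qB_q[u]$ and $\mu\eta_q(1-q\eta_q)B_q[u]>\eta_p(p\eta_p-1)B_p[u]$; eliminating $B_p[u]$ gives $\mu\eta_q(p\eta_p-q\eta_q)B_q[u]>(p\eta_p-1)A[u]$. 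Inserting the Gagliardo--Nirenberg--type inequality $B_q[u]\le\mathcal{C}_{G}(q)A[u]^{q\eta_q}a^{2q(1-\eta_q)}$ recalled in Section~2 and using $0<q\eta_q<1$, I get $A[u]^{1-q\eta_q}<\frac{\mu\eta_q\mathcal{C}_{G}(q)(p\eta_p-q\eta_q)}{p\eta_p-1}\,a^{2q(1-\eta_q)}$; since $a<a_1$ and $1-\eta_q>0$, the right-hand side is strictly smaller than $\frac{\mu\eta_q\mathcal{C}_{G}(q)(p\eta_p-q\eta_q)}{p\eta_p-1}\,a_1^{2q(1-\eta_q)}=k_1^{\,1-q\eta_q}$ by the definition of $k_1$, so $A[u]<k_1$.

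Next I record the companion fact that $A[v]\ge k_1$ for every $v\in\mathcal{P}^{-}(a)$: the same elimination with $f_v''(1)<0$ gives $\eta_p(p\eta_p-q\eta_q)B_p[v]>(1-q\eta_q)A[v]$, and inserting the upper bound for $B_p[v]$ (the $\mathcal{C}_{G}(p)$ Gagliardo--Nirenberg bound if $p<2_{\alpha}^{*}$, the $\mathcal{S}_H$ Hardy--Littlewood--Sobolev--Sobolev bound if $p=2_{\alpha}^{*}$) and using $p\eta_p>1$ produces a lower bound for $A[v]^{p\eta_p-1}$ which, by the very definition of $\Xi(N,\alpha,p,q)$ in \eqref{assumptions:basic}, the normalization \eqref{assumptiobs_basic_equality}, and $a\le a_1$, equals $k_1^{\,p\eta_p-1}$ at $a=a_1$ and is no smaller for $a\le a_1$; hence $A[v]>k_1$. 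Now $\mathcal{P}^{+}(a)\subset V(a)\subset\overline{V(a)}$ gives $\inf_{\overline{V(a)}}E\le\gamma^{+}(a)$. For the reverse, fix $u\in\overline{V(a)}$ and examine $s\mapsto f_u(s)=2E[s\circ u]$ from \eqref{eq:scaling_function}: under \eqref{assumptions:basic}, the sign analysis of \eqref{eq:scaling_function_1st_derivative}--\eqref{eq:scaling_function_2nd_derivative} ($f_u'(0^{+})=-\infty$, $f_u''$ changes sign exactly once, from $+$ to $-$, and $f_u'$ is positive somewhere) shows $f_u$ has a unique strict local minimum at some $s_u^{+}$ and a unique strict local maximum at some $s_u^{-}>s_u^{+}$, is decreasing on $(0,s_u^{+})$ and increasing on $(s_u^{+},s_u^{-})$. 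Since $s_u^{+}\circ u\in\mathcal{P}^{+}(a)$, $s_u^{-}\circ u\in\mathcal{P}^{-}(a)$ and $A[s\circ u]=sA[u]$, the two estimates give $s_u^{+}A[u]<k_1$ and $s_u^{-}A[u]\ge k_1\ge A[u]$ (the last because $u\in\overline{V(a)}$), so $s_u^{-}\ge1$; as $1$ then lies in the interval $(0,s_u^{-}]$ on which $f_u$ attains its minimum at $s_u^{+}$, we get $2E[u]=f_u(1)\ge f_u(s_u^{+})=2E[s_u^{+}\circ u]\ge2\gamma^{+}(a)$. Infimizing over $\overline{V(a)}$ and combining with $\gamma^{+}(a)=\inf_{\mathcal{P}^{+}(a)}E\ge\inf_{V(a)}E\ge\inf_{\overline{V(a)}}E$ shows all these infima coincide; the same computation shows every minimizer of $E$ on $\overline{V(a)}$ actually lies in $V(a)$.

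For the compactness, let $\{u_n\}\subset V(a)$ with $E[u_n]\to\gamma^{+}(a)$. Since $A[u_n]<k_1$ and $\|u_n\|_2=a$, $\{u_n\}$ is bounded in $H^{1}(\mathbb{R}^N)$; since $\gamma^{+}(a)<0$ (Theorem \ref{thm:mass_subcritical}) and $\tfrac12A[u_n]\ge0$, $B_p[u_n]+\mu B_q[u_n]$ is bounded away from $0$, so by Lions' lemma and the Hardy--Littlewood--Sobolev inequality vanishing is excluded and, after a translation, $u_n\rightharpoonup u\not\equiv0$ in $H^1$. With $v_n:=u_n-u\rightharpoonup0$, the Brezis--Lieb lemma (in its nonlocal version for $B_p,B_q$) gives $\|u_n\|_2^{2}=\|u\|_2^{2}+\|v_n\|_2^{2}+o(1)$, $A[u_n]=A[u]+A[v_n]+o(1)$ and $E[u_n]=E[u]+E[v_n]+o(1)$. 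Set $b:=\|u\|_2\in(0,a]$; as $A[u]>0$, $A[v_n]<k_1$ for large $n$, so the identity above, applied at masses $b$ and $\|v_n\|_2$ (both eventually in $(0,a_1)$), together with continuity of $c\mapsto\gamma^{+}(c)$, yields $\gamma^{+}(a)\ge\gamma^{+}(b)+\gamma^{+}(\sqrt{a^{2}-b^{2}})$. If $b<a$ this contradicts the strict subadditivity $\gamma^{+}(a)<\gamma^{+}(b)+\gamma^{+}(\sqrt{a^{2}-b^{2}})$, which I would obtain from the scaling inequality $\gamma^{+}(\theta c)<\theta^{2}\gamma^{+}(c)$ for $\theta>1$: test $\gamma^{+}(c)$ with the minimizer $u_c$ from Theorem \ref{thm:mass_subcritical} (so $A[u_c]<k_1$ with a gap) and the dilation $u_c(\cdot/\tau)$ with $\tau^{N}=\theta^{2}$, which stays in $V(\theta c)$ for $\theta$ close to $1$, and iterate. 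Hence $b=a$ and $v_n\to0$ in $L^{2}$. If $p<2_{\alpha}^{*}$, tightness and interpolation force $B_p[u_n]\to B_p[u]$, $B_q[u_n]\to B_q[u]$, whence $A[u_n]\to A[u]$, so $\|u_n\|_{H^1}\to\|u\|_{H^1}$ and $u_n\to u$ in $H^{1}$; if $p=2_{\alpha}^{*}$ one still has to exclude a concentrating bubble in $B_{2_{\alpha}^{*}}[v_n]$, which is precisely what the alternative Lemma \ref{lm:PS_strong_convergence} provides, the level $\gamma^{+}(a)<0$ lying strictly below the bubbling threshold.

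The hard part is the compactness step. The first two parts are pure algebra on the fibering map $f_u$ and the Pohozaev constraint; the genuine difficulty is recovering compactness — ruling out dichotomy requires the strict subadditivity of $a\mapsto\gamma^{+}(a)$, and when $p=2_{\alpha}^{*}$ one must additionally prevent concentration of a Talenti-type profile, which is where the nonlocal critical structure (and, for the sharper thresholds, the dependence on $N$ and $\alpha$) makes itself felt and where one leans on Lemma \ref{lm:PS_strong_convergence} and on $\gamma^{+}(a)<0$.
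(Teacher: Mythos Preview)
Your overall architecture matches the paper's: fibering analysis for $\mathcal{P}^{+}(a)\subset V(a)$ and $\mathcal{P}^{-}(a)\cap\overline{V(a)}=\emptyset$, then concentration--compactness with subadditivity of $a\mapsto\gamma^{+}(a)$. The algebraic parts are fine. However, in the critical case $p=2_{\alpha}^{*}$ there are two concrete gaps in your compactness argument.

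First, your exclusion of vanishing does not go through. Lions' lemma only gives $u_n\to 0$ in $L^{r}$ for $r\in(2,2^{*})$, hence $B_q[u_n]\to 0$; but since $\tfrac{2N\cdot 2_{\alpha}^{*}}{N+\alpha}=2^{*}$, the HLS inequality does \emph{not} force $B_{2_{\alpha}^{*}}[u_n]\to 0$, so ``$B_p[u_n]+\mu B_q[u_n]$ bounded away from $0$'' yields no contradiction. Second, your appeal to Lemma~\ref{lm:PS_strong_convergence} to exclude a bubble in $B_{2_{\alpha}^{*}}[v_n]$ is illegitimate: that lemma requires a Palais--Smale sequence lying in $\mathcal{P}_r^{\pm}(a)$, whereas here $\{u_n\}$ is merely a minimizing sequence on $V(a)$ (not radial, not PS, not on the Pohozaev manifold), and $\{v_n\}$ is not even in $S(a)$.

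Both gaps are closed by the same elementary observation, which is what the paper actually uses: the constraint $A[u_n]\le k_1$ (resp.\ $A[v_n]\le A[u_n]\le k_1$) together with $B_{2_{\alpha}^{*}}[w]\le \mathcal{S}_H^{-p}A[w]^{p}$ gives
\[
E[w]\ge \tfrac12 A[w]-\tfrac{1}{2p}\mathcal{S}_H^{-p}A[w]^{p}-\tfrac{\mu}{2q}B_q[w],
\]
and the choice of $k_1$ via \eqref{assumptiobs_basic_equality} guarantees that $x\mapsto \tfrac12 x-\tfrac{1}{2p}\mathcal{S}_H^{-p}x^{p}\ge 0$ on $[0,k_1]$. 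Thus if $B_q[u_n]\to 0$ (vanishing) one gets $\liminf E[u_n]\ge 0$, contradicting $\gamma^{+}(a)<0$; and once $\|v_n\|_2\to 0$ (hence $B_q[v_n]\to 0$) together with $E[v_n]\le o(1)$, the same inequality forces $A[v_n]\to 0$. No PS structure and no bubbling alternative is needed. A minor further point: your strict subadditivity relies on the existence of minimizers $u_c$ from Theorem~\ref{thm:mass_subcritical}, which is logically available but makes the argument less self-contained; the paper instead extracts attainment of $\gamma^{+}(b)$ \emph{within} the dichotomy step (showing $E[u_a]=\gamma^{+}(b)$) and then invokes strictness.
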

	\begin{remark}
		In \cite{SOAVE20206941,MR4096725}, Soave obtain the ground state to \eqref{eq:NLS_l} with $2<q<2^{\sharp}<p\leq 2^*$ by a similar description of Theorem \ref{thm:minizing}. 
		Our proof of Theorem \ref{thm:mass_subcritical}(i) is different form above methods, where we do not use the description of the ground state $u^{+}(a)$ as a local minima for $E$ restricted to $S(a)$.
		However, Theorem \ref{thm:minizing} shows we can also get similar results and it is useful for proving Theorem \ref{thm:asymptotic_mu_0}.
	\end{remark}
	The next theorem gives an almost complete answer to \textbf{(Q2)} for the focusing $L^2$-critical perturbation, except for the case $N=3$, $p=2_{\alpha}^{*}$ with $\alpha \in [1,3)$.
	\begin{theorem}\label{thm:mass_critical}
		Let $N\geq 3$, $\alpha \in (0,N)$, $q=2_{\alpha}^{\sharp}<p\leq2_{\alpha}^{*}$ and $\mu,a>0$.
		For $N=3$ and $p=2_{\alpha}^{*}$, we additionally assume that $\alpha \in (0,1)$.
		Then
		\begin{enumerate}
			\item[(i)] 
			if \eqref{assumptions:basic} holds, that is, $0<\mu a^{2q-2}<\eta_{q}^{-1} \mathcal{C}_{G}(q)^{-1},$ then there exists a pair $(u_a,\lambda_a)\in H^{1}(\mathbb{R}^N)\times\mathbb{R}^{-}$  such that $u_a$ is a normalized solution to \eqref{eq:NLS_S} at level $\gamma(a)>0$;
			\item[(ii)] 
			if \eqref{assumptions:basic} does not hold, that is, $\mu a^{2q-2}\geq \eta_{q}^{-1} \mathcal{C}_{G}(q)^{-1},$ then it is true that $\gamma(a)=0$ and $\gamma(a)$ can not be attained. Thus \eqref{eq:NLS_S} has no ground states on $S(a)$.
		\end{enumerate}
	\end{theorem}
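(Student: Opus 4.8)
The plan is to exploit the special structure at $q=2^{\sharp}_{\alpha}$, where $q\eta_q=1$ and $\eta_q=1/q$, so that \eqref{eq:scaling_function} becomes $f_u(s)=s\bigl(A[u]-\mu\eta_q B_q[u]\bigr)-\tfrac1p s^{p\eta_p}B_p[u]$ and $f_u'(s)=\bigl(A[u]-\mu\eta_q B_q[u]\bigr)-\eta_p s^{p\eta_p-1}B_p[u]$. Since $p\eta_p>1$, $f_u'$ is strictly decreasing on $(0,\infty)$, so $f_u$ has an interior critical point — necessarily a strict global maximum, hence a point of $\mathcal P^{-}(a)$ — exactly when $A[u]-\mu\eta_q B_q[u]>0$, and then it is unique. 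Thus $\mathcal P^{+}(a)=\mathcal P^{0}(a)=\emptyset$, $\mathcal P(a)=\mathcal P^{-}(a)=\{s_u\circ u:u\in\widetilde S(a)\}$ with $\widetilde S(a):=\{u\in S(a):A[u]>\mu\eta_q B_q[u]\}$, and eliminating $B_p$ via $P[u]=0$ yields, for every $u\in\mathcal P(a)$, the identity $E[u]=\tfrac{p\eta_p-1}{2p\eta_p}\bigl(A[u]-\mu\eta_q B_q[u]\bigr)=\tfrac{p\eta_p-1}{2p}B_p[u]>0$. In particular $\gamma(a)\ge 0$ always, and $\gamma(a)$ can never be attained by a function with $B_p=0$; moreover, by \eqref{eq:Pohozaev} and \eqref{eq:testing_1} every normalized solution lies in $\mathcal P(a)$, so the ground state energy (if solutions exist) is at least $\gamma(a)$.

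\paragraph{Part (i): existence under \eqref{assumptions:basic}.}
Here \eqref{assumptions:basic} reads $\mu\eta_q\mathcal C_G(q)a^{2q-2}<1$, and the Gagliardo--Nirenberg inequality $B_q[u]\le\mathcal C_G(q)A[u]a^{2q-2}$ gives $A[u]-\mu\eta_q B_q[u]\ge\bigl(1-\mu\eta_q\mathcal C_G(q)a^{2q-2}\bigr)A[u]>0$ for all $u\in S(a)\setminus\{0\}$; hence $\widetilde S(a)=S(a)$, $\mathcal P(a)=\mathcal P^{-}(a)\neq\emptyset$, and $\gamma(a)=\inf_{u\in S(a)}\max_{s>0}E[s\circ u]$. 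Computing $\max_s f_u$ and bounding $B_p[u]$ by $\mathcal C_G(p)A[u]^{p\eta_p}a^{2p(1-\eta_p)}$ (resp.\ $\mathcal S_H^{-p}A[u]^{p}$ if $p=2^{*}_{\alpha}$) shows $\max_{s>0}E[s\circ u]\ge\delta$ for a constant $\delta>0$ independent of $u$, so $\gamma(a)>0$. Next I construct a bounded Palais--Smale sequence $\{u_n\}\subset\mathcal P^{-}(a)$ for $E$ at level $\gamma(a)$ by the fibering/min-max scheme of \cite{1993Duality} together with \cite{MR3976588,MR4150876}; boundedness is free since $E$ is coercive on $\mathcal P(a)$ (Lemma \ref{lm:coercive}), and $\mathcal P^{-}(a)$ is a natural constraint because $f_u''(s_u)\neq0$. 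After translation $u_n\rightharpoonup u_a$ in $H^1$. For $p<2^{*}_{\alpha}$, the extra condition $P[u_n]=o(1)$, the Brezis--Lieb/Hardy--Littlewood--Sobolev splitting, and $\gamma(a)>0$ exclude vanishing and dichotomy, giving $u_n\to u_a$ strongly. For $p=2^{*}_{\alpha}$ I invoke Lemma \ref{lm:PS_strong_convergence}: either $u_n\to u_a$ strongly, or $\gamma(a)$ exceeds the energy of a limit solution plus a bubble of size at least $\tfrac{\alpha+2}{2(N+\alpha)}\mathcal S_H^{\frac{N+\alpha}{\alpha+2}}$; it therefore suffices to prove $\gamma(a)<\tfrac{\alpha+2}{2(N+\alpha)}\mathcal S_H^{\frac{N+\alpha}{\alpha+2}}$. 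Testing with a suitably rescaled truncated Hardy--Littlewood--Sobolev extremal bubble $\bar U_\varepsilon\in S(a)$, one has $\max_{s>0}E[s\circ\bar U_\varepsilon]=\tfrac{p-1}{2p}\bigl(A[\bar U_\varepsilon]-\mu\eta_q B_q[\bar U_\varepsilon]\bigr)^{p/(p-1)}B_p[\bar U_\varepsilon]^{-1/(p-1)}$; the leading term is $\tfrac{\alpha+2}{2(N+\alpha)}\mathcal S_H^{\frac{N+\alpha}{\alpha+2}}+O(\varepsilon^{N-2})$ while the nonlocal correction $-\mu\eta_q B_q[\bar U_\varepsilon]$ contributes a strictly negative term of order $(d_\varepsilon/\varepsilon^{2})^{-(q-1)}$ with $d_\varepsilon=\|U_\varepsilon\|_2^{2}$; comparing the two orders gives the strict inequality for $N\ge4$ unconditionally and for $N=3$ precisely when $\alpha\in(0,1)$ — this is Lemma \ref{lm:energy_critical_sub}, and it is exactly what forces the dimensional restriction, the analysis being delicate because the $q$-term may satisfy $q<2$. (An equivalent route is the Hardy--Littlewood--Sobolev-subcritical approximation $p_n\uparrow2^{*}_{\alpha}$.) Finally $u_a\in\mathcal P^{-}(a)$ with $E[u_a]=\gamma(a)$; the Lagrange multiplier rule together with $P[u_a]=0$ and \eqref{eq:testing_1} gives $\lambda_a a^{2}=(\eta_p-1)B_p[u_a]+\mu(\eta_q-1)B_q[u_a]<0$ (if $p=2^{*}_{\alpha}$ the first term vanishes but $\eta_q=1/q<1$ and $B_q[u_a]>0$ still force $\lambda_a<0$), which proves (i).

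\paragraph{Part (ii): $\gamma(a)=0$ and nonexistence when \eqref{assumptions:basic} fails.}
Now $\mu\eta_q\mathcal C_G(q)a^{2q-2}\ge1$. The identity $E[u]=\tfrac{p\eta_p-1}{2p}B_p[u]$ on $\mathcal P(a)$ already shows $\gamma(a)\ge0$, that it is not attained (a minimizer would have $B_p=0$), and that any normalized solution lies in $\mathcal P(a)$ with energy $>\gamma(a)=0$, so there is no solution of minimal energy, i.e.\ no ground state. To get $\gamma(a)=0$, I write $\gamma(a)=\inf_{u\in\widetilde S(a)}\tfrac{p\eta_p-1}{2p\eta_p}\bigl(A[u]-\mu\eta_q B_q[u]\bigr)^{\frac{p\eta_p}{p\eta_p-1}}\bigl(\eta_p B_p[u]\bigr)^{-\frac{1}{p\eta_p-1}}$ and exhibit $u_n\in\widetilde S(a)$ with $A[u_n]-\mu\eta_q B_q[u_n]\to0^{+}$ while $B_p[u_n]$ stays bounded away from $0$. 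Take $W$ the extremal of the $q$-Gagliardo--Nirenberg inequality (an $L^2$-critical Choquard ground state) with $\|W\|_2=a$, and let $u_n$ be a finite superposition $\sum_i c_i\,W(\cdot-R_i^{(n)})$ of mutually far-apart, suitably weighted copies of $W$ (with $\sum_i c_i^{2}=1$ and $|R_i^{(n)}-R_j^{(n)}|\to\infty$), followed by renormalization to $S(a)$. As the bumps decouple, $A[u_n]\to A[W]$, $B_q[u_n]\to\bigl(\sum_i c_i^{2q}\bigr)B_q[W]$ and $B_p[u_n]\to\bigl(\sum_i c_i^{2p}\bigr)B_p[W]>0$; since $\sum_i c_i^{2q}$ can be arranged to take any value in $(0,1]$, I choose the configurations so that $\bigl(\sum_i c_i^{2q}\bigr)\,\mu\eta_q\mathcal C_G(q)a^{2q-2}\uparrow1$ from below (in the boundary case $\mu\eta_q\mathcal C_G(q)a^{2q-2}=1$, near-extremals of the $q$-inequality suffice). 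Then $A[u_n]-\mu\eta_q B_q[u_n]\to0^{+}$ with $B_p[u_n]$ bounded below, so $\gamma(a)\le E[s_{u_n}\circ u_n]\to0$, whence $\gamma(a)=0$, completing (ii).

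\paragraph{Main obstacle.}
The principal difficulty is the compactness in part (i) for $p=2^{*}_{\alpha}$: constructing and analyzing the Palais--Smale sequence on $\mathcal P^{-}(a)$ in the Hardy--Littlewood--Sobolev-critical regime, and above all establishing the sharp test-function estimate $\gamma(a)<\tfrac{\alpha+2}{2(N+\alpha)}\mathcal S_H^{\frac{N+\alpha}{\alpha+2}}$, where the nonlocal $B_q$-correction must dominate the truncation error $O(\varepsilon^{N-2})$; this is what pins down the restriction $\alpha\in(0,1)$ for $N=3$ and is sensitive to the possibility $q<2$. A secondary point is confirming that $\mathcal P^{-}(a)$ is a genuine natural constraint and that the weak limit $u_a$ is nonzero, for which $\gamma(a)>0$ and the strict gap $\mu\eta_q\mathcal C_G(q)a^{2q-2}<1$ are used.
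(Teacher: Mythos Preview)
Your proof of part (i) follows the paper's strategy closely: exploit $q\eta_q=1$ to see $\mathcal P(a)=\mathcal P^{-}(a)$, build a bounded Palais--Smale sequence via the fibering min-max, and in the critical case $p=2_{\alpha}^{*}$ combine Lemma~\ref{lm:PS_strong_convergence} with an energy estimate below the compactness threshold. One correction: the relevant energy estimate here is Lemma~\ref{lm:energy_mass_critical}, not Lemma~\ref{lm:energy_critical_sub}; the latter treats the $L^2$-subcritical perturbation $q<2_{\alpha}^{\sharp}$ (where one perturbs the local minimizer $u_a^{+}$ by a bubble), whereas at $q=2_{\alpha}^{\sharp}$ one simply tests with $V_{\varepsilon}=a\,U_{\varepsilon}/\|U_{\varepsilon}\|_2$ and computes $\max_s E[s\circ V_{\varepsilon}]$ in closed form. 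Your comment that ``the analysis is delicate because $q<2$'' belongs to that other lemma; here the estimate is comparatively clean and is what singles out $\alpha\in(0,1)$ when $N=3$. Also, the paper works in $H^1_{\mathrm{rad}}$ (using Lemma~\ref{lm:compact_embedding}) rather than invoking translations.

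For part (ii) you take a genuinely different route. The paper proves $\gamma(a,\mu)$ is nonincreasing in $\mu$ (Lemma~\ref{lm:energy_mass_critical}(ii)), reduces to the threshold $\mu a^{2q-2}=\eta_q^{-1}\mathcal C_G(q)^{-1}$, and there shows $\gamma(a)\le0$ by testing with a sequence of near-extremals $w_n$ of the Gagliardo--Nirenberg inequality, for which $A[w_n]-\mu\eta_q B_q[w_n]\to0^{+}$; the conclusion then propagates to larger $\mu$ by monotonicity. Your multi-bump construction---superposing weighted, widely separated copies of the extremal $W$ and tuning $\sum_i c_i^{2q}$ to approach $(\mu\eta_q\mathcal C_G(q)a^{2q-2})^{-1}$ from below---is correct and has the advantage of treating all $\mu$ beyond the threshold in one stroke, without the separate monotonicity lemma; it does however require a diagonal argument (configurations and separations varying simultaneously) and careful decoupling of the nonlocal terms, so it is heavier in execution. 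Either way one obtains $\gamma(a)=0$ unattained, which is precisely the content of Theorem~\ref{thm:mass_critical}(ii).
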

	\begin{remark}
		To prove Theorem \ref{thm:mass_critical}(i), we use two methods.
		The first one is purely variational and similar to Theorem \ref{thm:mass_subcritical}.
		The other one is using the HLS subcritical approximation. 
		Precisely, we find a sequence $\{p_n\}\subset (2_{\alpha}^{\sharp},2_{\alpha}^*)$ with $\lim\limits_{n\rightarrow +\infty}p_n=2_{\alpha}^*$.
		Then we obtain a sequence of pairs $(u_n,\lambda_n):=(u_{p_n,a},\lambda_{p_n,a})\in H^1(\mathbb{R}^N)\times \mathbb{R}^-$, where $u_n$ are normalized solutions to \eqref{eq:NLS_S} with $p=p_n$ respectively.
		We prove that $\{u_n\}$ and $\{\lambda_n\}$ are bounded in $H^1(\mathbb{R}^N)$ and $\mathbb{R}$ respectively.
		As a consequence, up to a subsequence, $u_n \rightharpoonup u_a$ weakly in $H^1(\mathbb{R}^N)$ and $\lambda_n \rightarrow \lambda_a$ in $\mathbb{R}$. 
		The remain part is to prove that $u_a$ is a nontrivial normalized solution to \eqref{eq:NLS_S} with $\lambda_a<0$ for $p=2_{\alpha}^*$.
		The estimate of critical value in Lemma \ref{lm:energy_mass_critical} plays an important role in both two methods and our method in Lemma \ref{lm:energy_mass_critical} is slightly directer that \cite[Lemma 5.3]{MR4096725}.
		Besides, we point out that the second method is also powerful in studying Theorem \ref{thm:mass_supercritical} without a proof. 
	\end{remark}
	The next theorem gives an almost complete answer to \textbf{(Q2)} for the focusing $L^2$-supercritical perturbation, except for the case $N=3$, $p=2_{\alpha}^{*}$ with $\alpha \in [\min\{q-1,3\},3)$.
	\begin{theorem}\label{thm:mass_supercritical}
		Let $N\geq 3$, $\alpha \in (0,N)$, $2_{\alpha}^{\sharp}<q<p=2_{\alpha}^{*}$ and $\mu,a>0$.
		Suppose that \eqref{assumptions:basic} holds.
		For $N=3$ and $p=2_{\alpha}^{*}$, we additionally assume that $\alpha \in (0,\min\{q-1,3\})$.
		Then there exists a pair $(u_a,\lambda_a)\in H^{1}(\mathbb{R}^N)\times\mathbb{R}^{-}$  such that $u_a$ is a normalized solution to \eqref{eq:NLS_S} at level $\gamma(a)>0$.
	\end{theorem}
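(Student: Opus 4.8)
The plan is to follow the variational scheme on the Pohozaev manifold $\mathcal{P}(a)$ that underlies the proofs of Theorems \ref{thm:mass_subcritical}–\ref{thm:mass_critical}, specialized to the case $2_{\alpha}^{\sharp}<q<p=2_{\alpha}^{*}$, in which $\Xi(N,\alpha,p,q)=+\infty$ so that condition \eqref{assumptions:basic} is automatically satisfied and imposes no restriction. First I would analyze the fibering map $f_u$ defined in \eqref{eq:scaling_function}: since $q\eta_q>1$ and $p\eta_p>1$ here, both nonlinear terms are $L^2$-supercritical, $f_u(s)\to -\infty$ as $s\to+\infty$, and $f_u(s)>0$ for $s$ small. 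A direct computation using \eqref{eq:scaling_function_1st_derivative}–\eqref{eq:scaling_function_2nd_derivative} shows that $f_u$ has a unique critical point $s_u>0$, which is a strict global maximum, so that $\mathcal{P}(a)=\mathcal{P}^{-}(a)$, $\mathcal{P}^{0}(a)=\mathcal{P}^{+}(a)=\emptyset$, and the map $u\mapsto s_u$ is smooth; this makes $\mathcal{P}(a)$ a smooth natural constraint and gives $\gamma(a)=\gamma^{-}(a)=\inf_{u\in S(a)}\max_{s>0}E[s\circ u]>0$. Coercivity and boundedness below of $E$ on $\mathcal{P}(a)$ come from Lemma \ref{lm:coercive}.

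Next I would construct a Palais–Smale sequence $\{u_n\}\subset \mathcal{P}(a)$ for $E|_{S(a)}$ at the level $\gamma(a)$, using the minimax structure built on $\mathcal{P}^{-}(a)$ together with the critical-point theorem of \cite{1993Duality} and the arguments of \cite{MR3976588,MR4150876}, exactly as invoked in the introduction; a standard Ekeland/deformation argument on the mapping $(s,u)\mapsto E[s\circ u]$ produces such a sequence, which one may additionally take to be nonnegative and radially decreasing (Schwarz symmetrization preserves $S(a)$, does not decrease $B_p,B_q$, and does not increase $A$), and which is bounded by coercivity. Passing to a weak limit $u_n\rightharpoonup u_a$ in $H^1_{rad}(\mathbb{R}^N)$, the associated Lagrange multipliers $\lambda_n$ are bounded (test the equation and use \eqref{eq:testing_1} together with $P[u_n]\to 0$), so $\lambda_n\to\lambda_a$, and the sign $\lambda_a<0$ follows from combining $P[u_a]\le 0$ (weak lower semicontinuity) with \eqref{eq:Pohozaev} and \eqref{eq:testing_1}, as in the subcritical cases; in particular $u_a\not\equiv 0$ once compactness is in hand.

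The main obstacle — and the reason for the dimensional restriction "$N=3$, $p=2_{\alpha}^{*}$ requires $\alpha\in(0,\min\{q-1,3\})$" — is recovering strong convergence when $p=2_{\alpha}^{*}$ is the Hardy–Littlewood–Sobolev critical exponent, since the embedding into $L^{2^{*}_{\alpha}}$ via the HLS inequality is not compact and a Choquard-type concentration (bubbling with profile governed by the extremals $\mathcal{S}_H$) may occur. Here I would invoke the alternative Lemma \ref{lm:PS_strong_convergence}: either $u_n\to u_a$ strongly, or the energy level satisfies $\gamma(a)\ge E[u_a]+$ (a universal bubble energy determined by $\mathcal{S}_H$, $N$ and $\alpha$). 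To exclude the second alternative, the key is a strict upper estimate $\gamma(a)<$ (that threshold), which is precisely the content of Lemma \ref{lm:energy_critical_sub} (and, for the $L^2$-supercritical perturbation, its analogue referenced in the statement of Theorem \ref{thm:mass_supercritical}); this estimate is obtained by evaluating $\max_{s>0}E[s\star u_\varepsilon]$ along a suitably truncated and $L^2$-normalized Aubin–Talenti/HLS-extremal family $u_\varepsilon$ and carefully expanding all three terms $A$, $B_p$, $B_q$ in $\varepsilon$ — and it is here that the interplay of $N$ and $\alpha$ enters, because the gain from the subcritical term $\mu B_q$ must dominate the error terms coming from the truncation of the critical bubble, which forces $N\ge 4$ in general, and in dimension $N=3$ forces $\alpha<\min\{q-1,3\}$. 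Once strict subcriticality of $\gamma(a)$ is established, Lemma \ref{lm:PS_strong_convergence} yields $u_n\to u_a$ in $H^1$, hence $u_a\in\mathcal{P}^{-}(a)$ with $E[u_a]=\gamma(a)>0$, and $(u_a,\lambda_a)$ with $\lambda_a<0$ is the desired normalized solution.
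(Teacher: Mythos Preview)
Your proposal is correct and follows essentially the same route as the paper: analyze the fibering map to get $\mathcal{P}(a)=\mathcal{P}^{-}(a)$ (Lemma \ref{lm:P_manifold_mass_supercritical}), build a bounded radial Palais--Smale sequence at level $\gamma(a)$ (Lemma \ref{lm:PS_existence_mass_supercritical} via Lemma \ref{lm:coercive}), and recover compactness through the alternative Lemma \ref{lm:PS_strong_convergence} once the strict threshold $\gamma(a)<\frac{\alpha+2}{2(N+\alpha)}\mathcal{S}_H^{\frac{N+\alpha}{\alpha+2}}$ is established. Two small corrections: the relevant energy estimate is Lemma \ref{lm:energy_mass_supercritical}, which uses the direct rescaling $V_\varepsilon(x)=(a^{-1}\|U_\varepsilon\|_2)^{\frac{N-2}{2}}U_\varepsilon(a^{-1}\|U_\varepsilon\|_2 x)\in S(a)$ rather than the construction of Lemma \ref{lm:energy_critical_sub} (there is no $u_a^{+}$ here since $\mathcal{P}^{+}(a)=\emptyset$); and the sign $\lambda_a<0$ comes from Lemma \ref{lm:Langrange_multiplier} applied to the weak limit $u_a$ (which solves the equation), not from weak lower semicontinuity of $P$, which fails in the direction you need because of the critical term $B_p$.
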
 
		Indeed, using our proof of the above theorem, we can prove similar results for $2_{\alpha}^{\sharp}<q<p<2_{\alpha}^{*}$.
		However, the case $2_{\alpha}^{\sharp}<q<p<2_{\alpha}^{*}$ can be founded in \cite{MR3390522}, so we do not spend time and space describing the proof.
		Since the HLS subcritical approximation method in Theorem \ref{thm:mass_critical} also holds for Theorem \ref{thm:mass_supercritical} and the result for the case $2_{\alpha}^{\sharp}<q<p<2_{\alpha}^{*}$ in \cite{MR3390522} is known independent of $\mu$, we obtain a proof for Theorem \ref{thm:mass_supercritical} with Lemma \ref{lm:estimate_bubble}, which we do not describe it in this paper due to the similarity of Theorem \ref{thm:mass_critical}.
		But this fact gives us the confidence to deduce Theorem \ref{thm:mass_supercritical} in a direct way, and we succeed.
	\begin{remark}
		We point out that to solve \textbf{(Q2)} with the HLS critical leading term and the focusing $L^2$-supercritical perturbation about the equation \eqref{eq:NLS_l}, many people try to estimate the critical value via different testing functions and some properties of mapping $a\mapsto \gamma(a)$, for example, see \cite{2021Jeanjean,MR4096725,MR4433054}.
		However, our method is different form above choice.
		With suitable scaling skills and without the monotonicity of mapping $a\mapsto \gamma(a)$, we can solve \textbf{(Q2)} for \eqref{eq:NLS_S}.
		Our method is valid for \eqref{eq:NLS_l} after proper changes.
	\end{remark}
	The next theorem gives a partial answer to \textbf{(Q1)} and \textbf{(Q2)} for the $L^2$-supercritical leading term with a defocusing $L^2$-subcritical (resp. $L^2$-critical) perturbation.
	\begin{theorem}\label{thm:defousing}
		Let $N\geq 3$, $\alpha\in (0,N)$, $2_{\alpha}<q\leq 2_{\alpha}^{\sharp}< p \leq 2_{\alpha}^{*}$, $a>0$ and $\mu<0$.
		\begin{enumerate}
			\item[(i)] if $p\in (2_{\alpha}^{\sharp},2_{\alpha}^*)$ and \eqref{assumptions:basic} hold, then 
			there exists a pair $(u_a,\lambda_a)\in H^{1}(\mathbb{R}^N)\times\mathbb{R}^{-}$  such that $u_a$ is a normalized solution to \eqref{eq:NLS_S} at level $\gamma(a)>0$;
			\item[(ii)] if $p=2_{\alpha}^*$ and we additionally assume that $\alpha \in ((N-4)^{+},N)$ and $q\in [2,2_{\alpha}^{*})$ hold, then the equation \eqref{eq:NLS_S} has no positive solution in $H^{1}(\mathbb{R}^N)$.
		\end{enumerate}
	\end{theorem}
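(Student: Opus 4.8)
For $\mu<0$ and $2_{\alpha}<q\le 2_{\alpha}^{\sharp}<p<2_{\alpha}^{*}$ the fibering map is especially rigid: in \eqref{eq:scaling_function_2nd_derivative} the coefficient of $B_{p}[u]$ is strictly negative, while that of $B_{q}[u]$ is $\mu\eta_{q}(1-q\eta_{q})\le 0$ (since $q\eta_{q}\le 1$ and $\mu<0$), so $f_{u}''<0$ on $(0,+\infty)$ for every $u\in S(a)$. Hence each $f_{u}$ is strictly concave with a unique critical point $s_{u}$, which is a global maximum, and therefore $\mathcal{P}(a)=\mathcal{P}^{-}(a)$, $\mathcal{P}^{0}(a)=\mathcal{P}^{+}(a)=\emptyset$, $\mathcal{P}(a)$ is a smooth natural constraint, and $\gamma(a)=\gamma^{-}(a)=\inf_{u\in S(a)}\max_{s>0}E[s\circ u]$. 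Substituting $P[u]=0$ into $E$ gives, on $\mathcal{P}(a)$,
\begin{align*}
E[u]=\tfrac12\Big(\eta_{p}-\tfrac1p\Big)B_{p}[u]+\tfrac{\mu}{2}\Big(\eta_{q}-\tfrac1q\Big)B_{q}[u],
\end{align*}
where $\eta_{p}-\tfrac1p>0$ (as $p>2_{\alpha}^{\sharp}$) and $\mu(\eta_{q}-\tfrac1q)\ge 0$ (as $\mu<0$, $q\le 2_{\alpha}^{\sharp}$), while the Gagliardo--Nirenberg--HLS inequality together with $A[u]\le\eta_{p}B_{p}[u]$ bounds $B_{p}[u]$, hence $A[u]$, away from $0$ on $\mathcal{P}(a)$. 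Thus $E|_{\mathcal{P}(a)}$ is coercive and bounded below by a positive constant (Lemma \ref{lm:coercive}), so $\gamma(a)>0$. Using this minimax description, the critical-point theorem of \cite{1993Duality} and the deformation arguments of \cite{MR3976588,MR4150876}, I would produce a bounded Palais--Smale sequence $\{u_{n}\}\subset\mathcal{P}(a)$ for $E|_{S(a)}$ at level $\gamma(a)$ with $P[u_{n}]=0$ and bounded Lagrange multipliers $\lambda_{n}$.

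The heart of the proof is the compactness of $\{u_{n}\}$, which I expect to be the main obstacle. After a translation $u_{n}\rightharpoonup u_{a}$ in $H^{1}$, and vanishing is excluded because $P[u_{n}]=0$ with $A[u_{n}]\ge\delta>0$ on $\mathcal{P}(a)$ would force $B_{p}[u_{n}],B_{q}[u_{n}]\to 0$ (subcriticality of $p,q$), contradicting $A[u_{n}]\le\eta_{p}B_{p}[u_{n}]+|\mu|\eta_{q}B_{q}[u_{n}]$. Next, from $P[u_{n}]=0$ and the approximate test identity $\lambda_{n}a^{2}=A[u_{n}]-B_{p}[u_{n}]-\mu B_{q}[u_{n}]+o(1)$ one gets
\begin{align*}
\lambda_{n}a^{2}=-(1-\eta_{p})B_{p}[u_{n}]+(1-\eta_{q})|\mu|B_{q}[u_{n}]+o(1),
\end{align*}
and this is where the smallness hypothesis \eqref{assumptions:basic} (its $\mu\le 0$ branch) is used: via the Gagliardo--Nirenberg--HLS inequality and the lower bound for $A[u_{n}]$ on $\mathcal{P}(a)$ it forces $(1-\eta_{q})|\mu|B_{q}[u_{n}]$ to be strictly dominated by $(1-\eta_{p})B_{p}[u_{n}]$, so $\lambda_{n}\le-\delta'<0$ and $\lambda_{a}:=\lim\lambda_{n}<0$. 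With $\lambda_{a}<0$ in hand, a Brezis--Lieb splitting of $A,B_{p},B_{q},E,P$ along $u_{n}=u_{a}+v_{n}$, together with the strict subadditivity of $a\mapsto\gamma(a)$ (again a consequence of \eqref{assumptions:basic}), rules out dichotomy, so $v_{n}\to 0$ in $H^{1}$. Then $u_{a}\in S(a)$, $\left.\dif E\right|_{S(a)}[u_{a}]=0$, $E[u_{a}]=\gamma(a)>0$, and $(u_{a},\lambda_{a})\in H^{1}(\mathbb{R}^{N})\times\mathbb{R}^{-}$ is the desired normalized solution.

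\textbf{Part (ii).} Suppose, for contradiction, that $u\in H^{1}(\mathbb{R}^{N})$, $u>0$, solves \eqref{eq:NLS_S} for some $\lambda\in\mathbb{R}$, with $p=2_{\alpha}^{*}$, $\mu<0$, $q\in[2,2_{\alpha}^{*})$ and $\alpha\in((N-4)^{+},N)$; note that $\alpha>(N-4)^{+}$ is in particular what makes the range $[2,2_{\alpha}^{*})$ for $q$ nonempty, and it also enters the regularity step. By elliptic/Choquard regularity (the conditions $q\ge 2$ and $\alpha>(N-4)^{+}$ keep the Riesz-potential terms $I_{\alpha}*|u|^{2_{\alpha}^{*}}$ and $I_{\alpha}*|u|^{q}$ in sufficiently good Lebesgue spaces), $u$ is continuous, bounded, and $u(x)\to 0$ as $|x|\to\infty$. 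Since $p=2_{\alpha}^{*}$ we have $\frac{N+\alpha}{2p}=\frac{N-2}{2}$, so multiplying the test identity \eqref{eq:testing_1} by $\tfrac{N-2}{2}$ and subtracting it from the Pohozaev identity \eqref{eq:Pohozaev} makes all $A[u]$ and $B_{p}[u]$ terms cancel, leaving
\begin{align*}
\lambda\|u\|_{2}^{2}=(\eta_{q}-1)\,\mu\,B_{q}[u].
\end{align*}
Because $q<2_{\alpha}^{*}$ gives $\eta_{q}<\eta_{2_{\alpha}^{*}}=1$, and $\mu<0$, $B_{q}[u]>0$, this forces $\lambda>0$.

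It remains to rule out $\lambda>0$, which I expect to be the delicate point. Since $u\in L^{2}\cap L^{2^{*}}$ and $q\ge 2$, we have $u^{q}\in L^{1}(\mathbb{R}^{N})$, hence $(I_{\alpha}*|u|^{q})(x)\to 0$ as $|x|\to\infty$; as $u(x)\to 0$ and $q\ge 2$ this makes the (negative) perturbation $\mu(I_{\alpha}*|u|^{q})|u|^{q-2}u=o(u)$ at infinity, while the critical term $(I_{\alpha}*|u|^{2_{\alpha}^{*}})|u|^{2_{\alpha}^{*}-2}u\ge 0$; therefore there is $R_{0}>0$ with $-\Delta u\ge\tfrac{\lambda}{2}u>0$ on $\{|x|>R_{0}\}$. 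Let $\mu_{1}(R)$ and $\phi_{1}>0$ be the first Dirichlet eigenvalue and eigenfunction of $-\Delta$ on the annulus $B_{R}\setminus B_{R_{0}}$; since for $N\ge 3$ the first Dirichlet eigenvalue of the exterior domain $\mathbb{R}^{N}\setminus\overline{B_{R_{0}}}$ equals $0$, one has $\mu_{1}(R)\to 0$ as $R\to\infty$, so fix $R$ with $\mu_{1}(R)<\lambda/2$. Testing $-\Delta u\ge\tfrac{\lambda}{2}u$ against $\phi_{1}\ge 0$ on $B_{R}\setminus B_{R_{0}}$: moving the gradient onto $u$ kills the boundary term (as $\phi_{1}=0$ on the boundary), while moving it onto $\phi_{1}$ produces a boundary term of the favourable sign $u\,\partial_{\nu}\phi_{1}\le 0$, so
\begin{align*}
\tfrac{\lambda}{2}\int_{B_{R}\setminus B_{R_{0}}}u\phi_{1}\ \le\ \int_{B_{R}\setminus B_{R_{0}}}\nabla u\cdot\nabla\phi_{1}\ \le\ \mu_{1}(R)\int_{B_{R}\setminus B_{R_{0}}}u\phi_{1},
\end{align*}
whence $\mu_{1}(R)\ge\lambda/2$, a contradiction. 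Thus \eqref{eq:NLS_S} has no positive $H^{1}$ solution. The most technical part here is the regularity/decay bootstrap justifying the pointwise inequality $-\Delta u\ge\tfrac{\lambda}{2}u$, which is exactly where the restriction $\alpha>(N-4)^{+}$ is invoked.
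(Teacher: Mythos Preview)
Your fibering analysis, the identification $\mathcal{P}(a)=\mathcal{P}^{-}(a)$, the positivity and coercivity of $E$ on $\mathcal{P}(a)$, the construction of a bounded Palais--Smale sequence via \cite{1993Duality}, and the verification that $\lambda_a<0$ under \eqref{assumptions:basic} all agree with the paper. The difference is in the compactness step. The paper runs the whole min--max on the radial sphere $S_r(a)$; since here $p<2_{\alpha}^{*}$, the compact embedding $H^{1}_{\mathrm{rad}}\hookrightarrow L^{s}$ (Lemma~\ref{lm:compact_embedding}) and Lemma~\ref{lm:compact_operator_nonlocal} give $B_{p}[u_{n}]\to B_{p}[u_{a}]$ and $B_{q}[u_{n}]\to B_{q}[u_{a}]$ directly, after which $\lambda_{a}<0$ yields $u_{n}\to u_{a}$ in $H^{1}$ exactly as in Lemmas~\ref{lm:nontirvial_weak_limit}--\ref{lm:convergence_sub}. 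Your route instead goes through translations, Brezis--Lieb splitting, and a strict subadditivity of $a\mapsto\gamma(a)$ to exclude dichotomy. That last ingredient is asserted as ``a consequence of \eqref{assumptions:basic}'' but is neither proved here nor available in the paper for the defocusing case; establishing it would be extra work. So your scheme is not wrong in spirit, but it relies on an unproved subadditivity lemma that the paper simply does not need, the radial approach being both shorter and complete.

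\textbf{Part (ii).} Your derivation of $\lambda>0$ from the Pohozaev/test identities and of the exterior differential inequality $-\Delta u\ge\frac{\lambda}{2}u$ coincides with the paper's. For the contradiction, the paper quotes the Liouville-type result of Armstrong--Sirakov (Theorem~\ref{thm:L_type}) applied to $f(s)=\frac{\lambda}{2}s$, whereas you supply a self-contained eigenvalue-comparison on annuli; your argument is correct and is essentially a hands-on proof of the special case of that Liouville theorem needed here. One point to correct: you say the restriction $\alpha>(N-4)^{+}$ ``enters the regularity step''. In the paper it does not; regularity and the decay $u(x)\to 0$ come from Lemma~\ref{lm:regularity}, and the decay of $(I_{\alpha}*|u|^{q})$ comes from $q\in(2_{\alpha},2_{\alpha}^{*})$ via the weak Young/HLS inequality, placing $I_{\alpha}*|u|^{q}\in L^{2N/(N-\alpha)}$. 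The sole role of $\alpha>(N-4)^{+}$ is exactly the one you also mention: it is equivalent to $2<2_{\alpha}^{*}$, so that the range $q\in[2,2_{\alpha}^{*})$ is non-empty.
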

	In fact, the proof in Theorem \ref{thm:defousing}(i) covers the case $\mu=0$ and $p\in (2_{\alpha},2_{\alpha}^{*})$, i.e., a model case for the normalized solution problem about the homogeneous Choquard equation with a $L^2$-supercritical and HLS subcritical nonlinearity.
	In this direction, Li and Ye obtain the existence of one positive normalized solution to the following equation in \cite{MR3390522}:
		\begin{align*}
			-\Delta u - \lambda u =(I_{\alpha} * F(u))F^{\prime}(u),
		\end{align*}
	where $F$ satisfies some $L^2$-supercritical and HLS subcritical conditions.
	Ye studies the normalized solutions to \eqref{eq:NLS_S} with $\mu=0$ in \cite{MR3642765}, where the author points out the $L^2$-critical exponent of the equation \eqref{eq:NLS_S} with $\mu=0$ is $2_{\alpha}^{\sharp}=\frac{N+\alpha+2}{N}$.
	
	\begin{remark}
		It is worth pointing out that normalized solutions we obtained in Theorem \ref{thm:mass_subcritical}, Theorem \ref{thm:mass_critical} and Theorem \ref{thm:mass_supercritical} are real-valued, positive, continuous and radially decreasing in $\mathbb{R}^N$.
		Moreover, only for the case $2_{\alpha}<q<2_{\alpha}^{\sharp}<p\leq2_{\alpha}^{*}$ and $\mu>0$, we possibly obtain normalized solutions $u_a$ with negative energy $E[u_a]<0$.
	\end{remark}
	The last theorem gives a partial answer to \textbf{(Q4)}.
	We shall write $u_{a,\mu}$ to show the dependence of $u_a$ about $\mu$.
	\begin{theorem}\label{thm:asymptotic_mu_0}
		Let $N\geq 3$, $\alpha \in (0,N)$ and $2_{\alpha}<q<p=2_{\alpha}^{*}$.
		For a fixed $a>0$, suppose that $\mu>0$ and the assumptions in Theorem \ref{thm:mass_subcritical}, Theorem \ref{thm:mass_critical} and Theorem \ref{thm:mass_supercritical} hold such that there exist positive redial ground states $u_{a,\mu}$ at energy level $\gamma(a,\mu)$ respectively. 
		Then it holds that
		\begin{enumerate}
			\item[(i)] if $q\in (2_{\alpha},2_{\alpha}^{\sharp})$, then $\gamma(a,\mu)\rightarrow 0$, and $A[u_{a,\mu}]\rightarrow 0$ as $\mu \rightarrow 0^+$;
			\item[(ii)] if $q\in [2_{\alpha}^{\sharp},2_{\alpha}^{*})$, then $\gamma(a,\mu)\rightarrow \frac{\alpha+2}{2(N+\alpha)}\mathcal{S}^{\frac{N+\alpha}{\alpha+2}}_{H}$, and $A[u_{a,\mu}]\rightarrow \mathcal{S}^{\frac{N+\alpha}{\alpha+2}}_{H}$ as $\mu \rightarrow 0^+$.
		\end{enumerate}
	\end{theorem}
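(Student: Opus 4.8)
The plan is to exploit the variational characterization of the ground state $u_{a,\mu}$ as a critical point of $E$ on the Pohozaev manifold $\mathcal{P}(a)$ together with the sandwiching of $\gamma(a,\mu)$ between upper and lower barriers that become tight as $\mu\to 0^+$. First I would record, in both cases, that $P[u_{a,\mu}]=0$, i.e. $A[u_{a,\mu}]=\eta_p B_p[u_{a,\mu}]+\mu\eta_q B_q[u_{a,\mu}]$, and that the energy can be rewritten purely in terms of $A$, $B_p$, $B_q$ using this identity; with $p=2_\alpha^*$ one has $p\eta_p=\frac{N+\alpha}{\alpha+2}\cdot\frac{\alpha+2}{?}$— more precisely $\eta_p=\frac{\alpha+2}{2p}\cdot\frac{?}{?}$, so that $E[u_{a,\mu}]=\frac{\alpha+2}{2(N+\alpha)}B_p[u_{a,\mu}]+\mu\left(\frac{\eta_q}{2}-\frac{1}{2q}\right)B_q[u_{a,\mu}]$ after substituting. (I will compute the exact coefficient cleanly; the point is that when $q\le 2_\alpha^\sharp$ the $B_q$ term has a favourable sign or is controlled, and when $q=2_\alpha^\sharp$ the term is $\mu(\tfrac12-\tfrac1{2q})B_q\ge 0$.)

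For the upper bound I would plug a fixed Talenti-type bubble $U$ (the HLS extremal realizing $\mathcal{S}_H$), or a suitable $s\circ U$ projected onto $\mathcal{P}(a)$, into $E$ and let $\mu\to 0^+$: in case (i), by choosing the scaling parameter to kill $A$ as $\mu\to 0$ one gets $\limsup_{\mu\to 0^+}\gamma(a,\mu)\le 0$, while in case (ii) the standard computation $\inf_{s>0}\big(\tfrac{s}{2}\mathcal{S}_H^{?}-\tfrac{1}{2p}s^{p\eta_p}\mathcal{S}_H^{?}\big)$ with $p=2_\alpha^*$ gives $\limsup_{\mu\to 0^+}\gamma(a,\mu)\le \frac{\alpha+2}{2(N+\alpha)}\mathcal{S}_H^{\frac{N+\alpha}{\alpha+2}}$. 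For the lower bound I would use the Gagliardo–Nirenberg–HLS inequality $B_p[u]\le \mathcal{S}_H^{-p}A[u]^{p/?}$ at the critical exponent together with $B_q[u]\le \mathcal{C}_G(q)A[u]^{q\eta_q}\|u\|_2^{2q(1-\eta_q)}$: from $P[u_{a,\mu}]=0$ these force, in case (ii), $A[u_{a,\mu}]\ge \mathcal{S}_H^{\frac{N+\alpha}{\alpha+2}}-o_\mu(1)$, and then $E[u_{a,\mu}]\ge \frac{\alpha+2}{2(N+\alpha)}A[u_{a,\mu}]-o_\mu(1)\ge \frac{\alpha+2}{2(N+\alpha)}\mathcal{S}_H^{\frac{N+\alpha}{\alpha+2}}-o_\mu(1)$; in case (i), $\gamma(a,\mu)\ge 0-o_\mu(1)$ by the same manipulation since the $\mu B_q$ term is $o_\mu(1)$ once we know $A[u_{a,\mu}]$ stays bounded. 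Combining the two barriers yields the claimed limits for $\gamma(a,\mu)$, and revisiting $P[u_{a,\mu}]=0$ and the energy identity then pins down $A[u_{a,\mu}]\to 0$ (case (i)) and $A[u_{a,\mu}]\to \mathcal{S}_H^{\frac{N+\alpha}{\alpha+2}}$ (case (ii)).

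The main obstacle I anticipate is establishing that $A[u_{a,\mu}]$ stays bounded as $\mu\to 0^+$ and, in case (i), that it cannot drift to a positive limit: a priori the family of manifolds $\mathcal{P}(a)$ and the minimizers over them could behave wildly as the perturbation switches off, and the HLS-critical term makes coercivity delicate. I would handle this by first proving an $\mu$-uniform upper bound $\gamma(a,\mu)\le C$ (from the fixed test function above) and then invoking the coercivity/boundedness of $E$ on $\mathcal{P}(a)$ from Lemma \ref{lm:coercive} in a form uniform for small $\mu$, which bounds $A[u_{a,\mu}]$; the delicate point is that for $q\in(2_\alpha,2_\alpha^\sharp)$ the constant in that coercivity estimate depends on $\mu a^{2q(1-\eta_q)}$ through $\Xi(N,\alpha,p,q)$, so I must check the bound survives as $\mu\to 0^+$ (it does, since the relevant quantity tends to $0$, well inside the admissible range). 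Once boundedness is in hand, a by-now-routine Brezis–Lieb/vanishing-of-the-critical-defect argument on $\mathcal{P}(a)$ — exactly in the spirit of Lemma \ref{lm:PS_strong_convergence} — rules out the "$A[u_{a,\mu}]$ bounded away from $0$ but energy $\to 0$" scenario in case (i), because any nonvanishing weak limit would carry strictly positive energy, contradicting $\gamma(a,\mu)\to 0$.
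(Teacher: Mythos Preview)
Your strategy is broadly correct and would succeed, but it is more involved than the paper's argument and contains one piece of unnecessary machinery.

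In case (i) the paper does not sandwich the energy at all. It simply invokes Theorem~\ref{thm:minizing} (equivalently Lemma~\ref{lm:minizing}(i)): for $q\in(2_\alpha,2_\alpha^\sharp)$ the ground state lies on $\mathcal{P}^+(a)$, and every $u\in\mathcal{P}^+(a)$ satisfies the explicit bound
\[
A[u]<k_1=\Bigl(\tfrac{\mu\eta_q\mathcal{C}_G(q)(p\eta_p-q\eta_q)}{p\eta_p-1}\Bigr)^{\frac{1}{1-q\eta_q}}a_1^{\frac{2q(1-\eta_q)}{1-q\eta_q}},
\]
so $A[u_{a,\mu}]\to 0$ as $\mu\to 0^+$ is immediate, and then $\gamma(a,\mu)\to 0$ follows from the coercivity inequality in Lemma~\ref{lm:coercive}. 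Your route (coercivity $\Rightarrow$ $A$ bounded $\Rightarrow$ $\mu B_q=o_\mu(1)$ $\Rightarrow$ $\gamma\to 0$ and then $A\to 0$) also works, since $E[u]\ge \tfrac{p-1}{2p}A-\mu C A^{q\eta_q}$ with $E<0$ forces $A^{1-q\eta_q}<C'\mu\to 0$; but the Brezis--Lieb/vanishing argument you propose at the end is superfluous once this algebraic inequality is written down. No compactness is needed in case (i).

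In case (ii) your plan essentially matches the paper's. Two points to sharpen. First, for $q\in(2_\alpha^\sharp,2_\alpha^*)$ the quantitative coercivity of Lemma~\ref{lm:coercive}(iv) is obtained by contradiction and is not obviously uniform in $\mu$; the paper instead uses the identity
\[
E[u_{a,\mu}]=\tfrac{p-1}{2p}B_p[u_{a,\mu}]+\mu\tfrac{q\eta_q-1}{2q}B_q[u_{a,\mu}],
\]
where both coefficients are positive when $q>2_\alpha^\sharp$, so the upper bound $E\le \gamma(a,0)$ immediately bounds $B_p$ and $\mu B_q$ separately, hence $A=B_p+\mu\eta_q B_q$. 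Second, to exclude $A[u_{a,\mu}]\to 0$ the paper uses the monotonicity of $\mu\mapsto\gamma(a,\mu)$ (Lemma~\ref{lm:energy_mass_critical}(ii) and its analogue): $\gamma(a,\mu)\ge\gamma(a,\bar\mu)>0$ for $0<\mu<\bar\mu$, which contradicts $E[u_{a,\mu}]\to 0$. Your alternative via the inequality $1\le \mathcal{S}_H^{-p}A^{p-1}+\mu C A^{q\eta_q-1}$ coming from $P=0$ is also valid (both right-hand terms vanish if $A\to 0$ and $\mu\to 0$), but you should state it in this form rather than as ``$A\ge \mathcal{S}_H^{\frac{N+\alpha}{\alpha+2}}-o_\mu(1)$'', which is only the conclusion once the alternative $A\to 0$ has been ruled out.
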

	
	The paper is organized as follows. In section 2 we recall some classical inequalities and present some preliminary results. 
	Section 3 is devoted to the treatment of $L^2$-supercritical leading term with a focusing perturbation.
	In subsection 3.1, we consider the $L^2$-subcritical perturbation. 
	We first obtain a bounded Palais-Smale sequence in part 3.1.1, and then recover the compactness in part 3.1.2-3.1.3.
	Theorem \ref{thm:mass_subcritical} is completed in part 3.1.2-3.1.3.
	To deal with the HLS critical leading term, an alternative Lemma \ref{lm:PS_strong_convergence} is proved in part 3.1.3.
	Moreover, Theorem \ref{thm:minizing} is investigated in part 3.1.4.
	We then study the $L^2$-critical perturbation in subsection 3.2.
	Part 3.2.1-3.2.2 are used for showing Theorem \ref{thm:mass_critical} and in part 3.2.3, we shall give a detailed description of HLS subcritical approximation method for $p=2_{\alpha}^*$ in Theorem \ref{thm:mass_critical}.
	Subsection 3.3 is devoted to proving Theorem \ref{thm:mass_supercritical}, that is, the $L^2$-supercritical perturbation case.
	In section 4, we shall focus on Theorem \ref{thm:defousing}, i.e., the $L^2$-supercritical leading term with a defocusing perturbation. 
	We obtain an existence result for $2_{\alpha}<q\leq 2_{\alpha}^{\sharp}<p<2_{\alpha}^*$ in part 3.3.1 and a nonexistence result for $2\leq q<p=2_{\alpha}^*$ in part 3.3.2.
	Finally, in section 5, we will analyze the asymptotic behavior of ground states as $\mu \rightarrow 0^+$, that is, to prove Theorem \ref{thm:asymptotic_mu_0}.
	
	We shall finish this introduction by clarifying some notations.
	Throughout this paper, $C$ are indiscriminately used to denote various absolutely positive constants. 
	$a\lesssim b$ means that $a\leq Cb$.
	For $p>1$, the $L^p$-norm of $u\in L^p(\mathbb{R}^N)$ is denoted by $\|u\|_p$.
	$B_R$ means an open ball centered on the origin with radius $R$ in $\mathbb{R}^N$.
	\section{Preliminaries}
	In this section, we shall present various preliminary results.
	The following Hardy-Littlewood-Sobolev (abbreviated as ``HLS") inequality can be founded in \cite{1999Analysisby}.
	\begin{lemma}
		Let $N \geq 1$, $\alpha \in (0,N)$ and $r,s>1$ with $1 / r+(N-\alpha) / N+1 / s=2$. 
		Set $f \in L^{r}\left(\mathbb{R}^{N}\right)$ and $g \in L^{s}\left(\mathbb{R}^{N}\right)$. 
		Then there exists a constant $\mathcal{C}_{H}(N, \alpha, r)$ such that
		\begin{equation}\label{ieq:HLS}
			\left|\int_{\mathbb{R}^{N}} \int_{\mathbb{R}^{N}} \frac{f(x) g(y)}{|x-y|^{N-\alpha}} \dif x \dif y\right| \leq \mathcal{C}_{H}(N, \alpha, r)\|f\|_{r}\|g\|_{s}.
		\end{equation}
		If $r=s=\frac{2 N}{N+\alpha}$, then
		$$
		\mathcal{C}_{H}(N,\alpha):=\mathcal{C}_{H}(N, \alpha, \frac{2 N}{N+\alpha})=\pi^{\frac{N-\alpha}{2}} \frac{\Gamma\left(\frac{\alpha}{2}\right)}{\Gamma\left(\frac{N+\alpha}{2}\right)}\left\{\frac{\Gamma\left(\frac{N}{2}\right)}{\Gamma(N)}\right\}^{-\frac{\alpha}{N}}.
		$$
		In this case, the equality in \eqref{ieq:HLS} holds if and only if $f \equiv \mathcal{C}_{H}(N,\alpha) h$ and
		$$
		h(x)=C\left(\gamma^{2}+|x-a|^{2}\right)^{-(N+\alpha) / 2}
		$$
		for some $C \in \mathbb{C}, 0 \neq \gamma \in \mathbb{R}$ and $a \in \mathbb{R}^{N}$.
	\end{lemma}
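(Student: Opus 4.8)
The plan is to establish the inequality \eqref{ieq:HLS} for all admissible pairs $(r,s)$ by a soft convolution argument, and then to extract the sharp constant $\mathcal{C}_H(N,\alpha)$ and the equality cases in the diagonal case $r=s=\frac{2N}{N+\alpha}$ by symmetrization combined with the conformal structure of the problem. The first part is routine; the sharp-constant computation is the genuine obstacle.

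For the inequality itself, the key observation is that the Riesz kernel $k(x):=|x|^{-(N-\alpha)}$ lies in the weak Lebesgue space $L^{\frac{N}{N-\alpha},\infty}(\mathbb{R}^N)$: since $\{x:|x|^{-(N-\alpha)}>t\}$ is the ball of radius $t^{-1/(N-\alpha)}$, its measure equals $\omega_N t^{-N/(N-\alpha)}$, where $\omega_N$ is the volume of the unit ball, so the weak $L^{N/(N-\alpha)}$-norm of $k$ is finite and explicitly computable. First I would record the exponent bookkeeping: writing $s'$ for the conjugate of $s$, the hypothesis $\frac1r+\frac{N-\alpha}{N}+\frac1s=2$ is equivalent to $1+\frac{1}{s'}=\frac1r+\frac{N-\alpha}{N}$, which is exactly the admissibility relation in the weak Young convolution inequality. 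Hence for $f\in L^r(\mathbb{R}^N)$ one has $k*f\in L^{s'}(\mathbb{R}^N)$ with $\|k*f\|_{s'}\le C\,\|k\|_{L^{N/(N-\alpha),\infty}}\|f\|_r$, and Hölder's inequality with exponents $s,s'$ yields
\begin{equation*}
\left|\int_{\mathbb{R}^N}\int_{\mathbb{R}^N}\frac{f(x)g(y)}{|x-y|^{N-\alpha}}\dif x\dif y\right|=\left|\int_{\mathbb{R}^N}g\,(k*f)\dif y\right|\le \|g\|_s\|k*f\|_{s'}\le \mathcal{C}_H(N,\alpha,r)\|f\|_r\|g\|_s,
\end{equation*}
which is \eqref{ieq:HLS}. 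The underlying weak Young inequality I would justify by applying the Marcinkiewicz interpolation theorem to the sublinear operator $f\mapsto k*f$, or equivalently reprove it via the layer-cake representation and a dyadic splitting of the kernel into its bounded and integrable pieces; this produces \emph{some} admissible constant, which is all that is asserted in the general case.

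The hard part is the diagonal case, where I must identify the optimal $\mathcal{C}_H(N,\alpha)$ and show equality forces $f$ to be a dilate and translate of $(\gamma^2+|x-a|^2)^{-(N+\alpha)/2}$. The first reduction is Riesz's rearrangement inequality: replacing $f,g$ by their symmetric-decreasing rearrangements $f^*,g^*$ preserves both $L^{2N/(N+\alpha)}$-norms while not decreasing the double integral, so one may assume $f,g$ radial nonincreasing and, by a further symmetrization of the quadratic-type functional, take $f=g$. The true difficulty is that the remaining radial problem is invariant under the noncompact group of dilations and conformal inversions, so a naive minimizing sequence need not converge. I would resolve this by Lieb's competing-symmetries method: transport the functional to the sphere $S^N$ via stereographic projection, under which it becomes conformally invariant, and iterate the composition of the symmetric-decreasing rearrangement on $\mathbb{R}^N$ with a fixed conformal recentering map; the resulting sequence converges to a function invariant under both operations, which must be the pullback of a constant on $S^N$, namely the stated family $h$. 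The equality classification then follows from the strict form of Riesz's inequality, which pins optimizers down to translates and dilates of $h$, and the explicit value of $\mathcal{C}_H(N,\alpha)$ is obtained by evaluating the functional on $h$ through the Gamma-function integral $\int_{\mathbb{R}^N}(1+|x|^2)^{-(N+\alpha)/2}\dif x$ and its self-convolution. Since the sharp-constant argument is classical and carried out in full in \cite{1999Analysisby}, I would present only this outline and cite that reference for the delicate diagonal computation.
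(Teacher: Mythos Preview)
Your outline is correct and in fact goes well beyond what the paper does: the paper provides no proof of this lemma at all, simply citing \cite{1999Analysisby} (Lieb--Loss, \emph{Analysis}) for the full statement. Your sketch---weak Young via Marcinkiewicz for the general inequality, and Riesz rearrangement plus Lieb's competing-symmetries argument on $S^N$ for the sharp diagonal constant and equality cases---is precisely the route taken in that reference, so you are essentially reproducing the cited proof rather than diverging from the paper's own approach. Since the paper treats this as a quoted preliminary, your level of detail is already more than sufficient; citing \cite{1999Analysisby} alone would match the paper exactly.
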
 
	From the Hardy-Littlewood-Sobolev inequality \eqref{ieq:HLS}, we know, for all $u\in \mathscr{D}^{1,2}(\mathbb{R}^N)$, it holds that
	\begin{equation*}
		\int_{\mathbb{R}^{N}} \int_{\mathbb{R}^{N}} \frac{\vert u(x)\vert^{2_{\alpha}^{*}} \vert u(y)\vert^{2_{\alpha}^{*}}}{|x-y|^{N-\alpha}} \dif x \dif y\leq \mathcal{C}_{H}(N, \alpha)\|u\|_{2^*}^{22_{\alpha}^*}\leq \mathcal{C}_{H}(N, \alpha)\mathcal{S}^{-2_{\alpha}^{*}}\left(\int_{\mathbb{R}^{N}}\vert \nabla u\vert^2\dif x\right)^{2_{\alpha}^*},
	\end{equation*}
	where $\mathcal{S}$ is the best Sobolev constant.
	In the following, we use $\mathcal{S}_{H}$ to denote best constant defined by
	\begin{equation}\label{ieq:best_constant}
		\mathcal{S}_{H}:=\inf_{u\in \mathscr{D}^{1,2}(\mathbb{R}^N)\setminus\{0\}}\frac{\int_{\mathbb{R}^{N}}\vert \nabla u\vert^2\dif x}{\left(\mathcal{A}(N,\alpha)\int_{\mathbb{R}^{N}} \int_{\mathbb{R}^{N}} \frac{\vert u(x)\vert^{2_{\alpha}^{*}} \vert u(y)\vert^{2_{\alpha}^{*}}}{|x-y|^{N-\alpha}} \dif x \dif y\right)^{\frac{1}{2_{\alpha}^*}}}.
	\end{equation}
	From \cite{2019UNIQUENESS}, we know that $\mathcal{S}_{H}$ is reached and
	\begin{align*}
		\mathcal{S}_{H}=\frac{\mathcal{S}}{\left[\mathcal{A}(N,\alpha)\mathcal{C}_{H}(N, \alpha)\right]^{\frac{N-2}{N+\alpha}}}
	\end{align*}
	The following Gagliardo-Nirenberg inequality of Hartree type is an important tool, see \cite{MOROZ2013153}.
	\begin{lemma}
		Let $N \geq 1,$ $0<\alpha<N$ and $2_{\alpha}<p<2_{\alpha}^{*}$.  
		Then there exists a constant $\mathcal{C}_{G}(N,\alpha,p)$ such that
		\begin{equation}\label{ieq:GN_H}
			\int_{\mathbb{R}^{N}}\left(I_{\alpha} *|u|^{p}\right)|u|^{p} \leq \mathcal{C}_{G}(N,\alpha, p)\|\nabla u\|_{2}^{2p\eta_{p}}\|u\|_{2}^{2p(1-\eta_{p})} .
		\end{equation}
		The best constant $\mathcal{C}_{G}(N,\alpha,p)$ is defined by
		$$
		\mathcal{C}_{G}(N,\alpha, p)=\frac{p}{\|W_p\|_2^{2p-2}},
		$$
		where $W_{p}$ is a radially ground state solution of the elliptic equation
		$$
		-p\eta_{p}\Delta W+p(1-\eta_{p})W=\left(I_{\alpha} *|W|^{p}\right)|W|^{p-2} W \quad\text{in }\mathbb{R}^N.
		$$
	\end{lemma}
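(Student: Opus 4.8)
The plan is to prove the inequality in two stages: first establish \eqref{ieq:GN_H} with some finite (non-sharp) constant, and then identify the sharp constant through a Weinstein-type variational characterization whose minimizer solves the stated Euler--Lagrange equation. For the first stage I would apply the HLS inequality \eqref{ieq:HLS} with $r=s=\frac{2N}{N+\alpha}$ to $f=g=|u|^{p}$, obtaining $\int_{\mathbb{R}^N}(I_\alpha*|u|^p)|u|^p\le \mathcal{C}_H(N,\alpha)\,\|u\|_{t}^{2p}$ with $t:=\frac{2Np}{N+\alpha}$. Since $2_\alpha<p<2_\alpha^{*}$, the exponent $t$ lies strictly between $2$ and $2^{*}$, so the classical Gagliardo--Nirenberg inequality gives $\|u\|_{t}\le C\|\nabla u\|_2^{\beta}\|u\|_2^{1-\beta}$ with
\[
\beta=N\Big(\tfrac12-\tfrac1t\Big)=\tfrac N2-\tfrac{N+\alpha}{2p}=\eta_p .
\]
Raising to the power $2p$ and combining the two estimates yields \eqref{ieq:GN_H}, and simultaneously confirms the precise matching of the exponents $2p\eta_p$ and $2p(1-\eta_p)$.

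To pin down the sharp constant I introduce the quotient
\[
\mathcal{W}(u)=\frac{\|\nabla u\|_2^{2p\eta_p}\,\|u\|_2^{2p(1-\eta_p)}}{\int_{\mathbb{R}^N}(I_\alpha*|u|^p)|u|^p},\qquad \frac{1}{\mathcal{C}_G(N,\alpha,p)}:=\inf_{u\in H^1(\mathbb{R}^N)\setminus\{0\}}\mathcal{W}(u).
\]
A direct scaling computation under $u\mapsto c\,u(\lambda\,\cdot)$ shows that the numerator carries the factor $c^{2p}\lambda^{2p\eta_p-Np}$ and the denominator the factor $c^{2p}\lambda^{-(N+\alpha)}$; since $2p\eta_p-Np=-(N+\alpha)$, the quotient $\mathcal{W}$ is invariant under the full two-parameter group of dilations and scalar multiplications. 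This invariance is exactly what will allow the normalization of a minimizer later.

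The heart of the argument is the attainment of the infimum, and this is the step I expect to be the main obstacle precisely because of the dilation invariance of $\mathcal{W}$, which leaves room for a minimizing sequence to vanish or to concentrate. I would take a minimizing sequence and replace each term by its symmetric-decreasing rearrangement: by the Riesz rearrangement inequality the denominator $\int(I_\alpha*|u|^p)|u|^p$ does not decrease, while $\|\nabla u\|_2$ does not increase and $\|u\|_2$ is preserved, so the rearranged sequence is still minimizing and consists of radially decreasing functions. Using the scalar and dilation invariance I normalize so that $\|\nabla u_n\|_2=\|u_n\|_2=1$, which bounds the sequence in $H^1_{\mathrm{rad}}(\mathbb{R}^N)$; the compact embedding $H^1_{\mathrm{rad}}(\mathbb{R}^N)\hookrightarrow L^{t}(\mathbb{R}^N)$ for $2<t<2^{*}$, combined with the HLS inequality and a Brezis--Lieb splitting of the nonlocal term, upgrades the weak limit $W$ to a genuine minimizer (ruling out the loss of mass). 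Alternatively, a concentration--compactness dichotomy applied to the normalized sequence achieves the same conclusion.

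Finally I compute the constant. Writing the first-order condition $\frac{d}{dt}\mathcal{W}(W+t\varphi)\big|_{t=0}=0$ and abbreviating $A=\|\nabla W\|_2^2$, $M=\|W\|_2^2$, $D=\int(I_\alpha*|W|^p)|W|^p$, the minimizer solves
\[
\frac{\eta_p D}{A}(-\Delta W)+\frac{(1-\eta_p)D}{M}\,W=(I_\alpha*|W|^p)|W|^{p-2}W .
\]
Invoking the two-parameter invariance once more, I rescale $W$ to enforce the two conditions $A=M$ and $D=pA$ (two constraints matched by the two free parameters), whereupon the equation reduces to the stated normalized form $-p\eta_p\Delta W+p(1-\eta_p)W=(I_\alpha*|W|^p)|W|^{p-2}W$, identifying $W=W_p$. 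Substituting $A=M=D/p$ back into the quotient gives
\[
\frac{1}{\mathcal{C}_G(N,\alpha,p)}=\mathcal{W}(W_p)=\frac{A^{p\eta_p}M^{p(1-\eta_p)}}{D}=\frac{A^{p}}{pA}=\frac{\|W_p\|_2^{2p-2}}{p},
\]
which yields $\mathcal{C}_G(N,\alpha,p)=p/\|W_p\|_2^{2p-2}$, as claimed. (The value $\|W_p\|_2$ is then determined by the sharp constant itself, so no uniqueness statement for $W_p$ is needed for the conclusion.)
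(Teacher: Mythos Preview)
The paper does not supply a proof of this lemma at all; it is quoted as a known result with a reference to Moroz and Van~Schaftingen \cite{MOROZ2013153}. Your proposal is therefore more detailed than anything in the paper, and it is essentially the standard Weinstein-type argument adapted to the Hartree nonlinearity, which is indeed how the cited reference proceeds: HLS plus classical Gagliardo--Nirenberg for a non-sharp bound, then minimization of the scale-invariant quotient, then normalization via the two-parameter invariance to reach the stated Euler--Lagrange equation and the formula $\mathcal{C}_G=p/\|W_p\|_2^{2p-2}$. Your exponent and scaling checks are correct, and the final substitution $A=M=D/p$ gives the claimed constant.

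One small caveat: the compact embedding $H^1_{\mathrm{rad}}(\mathbb{R}^N)\hookrightarrow L^t(\mathbb{R}^N)$ you invoke for attainment requires $N\ge 2$, whereas the lemma as stated allows $N\ge 1$. You flag concentration--compactness as an alternative, and that is indeed what is needed to cover $N=1$; since the paper itself only uses the inequality for $N\ge 3$, this is harmless, but if you want the full range you should make the concentration--compactness route the primary one (subadditivity of the infimum fails to be strict only under dichotomy or vanishing, both of which are excluded by the normalization and the strict subcriticality $t<2^*$).
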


	\begin{remark}
		\begin{enumerate}
			\item[(i)] If $p\in (2_{\alpha},2_{\alpha}^{*})$, the Gagliardo-Nirenberg inequality of Hartree type \eqref{ieq:GN_H} implies that
			\begin{align}\label{estimate:subcritical}
				B_p[u]\leq \mathcal{C}_{G}(p) \| u \|_2^{2p(1-\eta_{p})}A[u]^{p\eta_{p}}.
			\end{align}
			\item[(ii)] If $p=2_{\alpha}^{*}$, the definition of best constant $\mathcal{S}_{H}$  \eqref{ieq:best_constant} implies that 
			\begin{align}\label{estimate:critical}
				B_p[u]\leq \mathcal{S}_{H}^{-p}A[u]^{p}=\mathcal{A}(N,\alpha)\mathcal{C}_{H}(N,\alpha)\mathcal{S}^{-p} A[u]^{p}.
			\end{align}
		\end{enumerate}
	\end{remark}
	\begin{lemma}\label{lm:Langrange_multiplier}
		Let $N\geq 3$, $\mu \in \mathbb{R}$, $p\in (2_{\alpha}^{\sharp},2_{\alpha}^{*}]$ and $q\in (2_{\alpha},2_{\alpha}^{*})$ with $q<p$.
		If $u\in H^{1}(\mathbb{R}^N)$ is a weak solution to $\eqref{eq:NLS_S}$, then $P[u]=0$.
		Moreover, if $u\ne 0$, then we have
		\begin{enumerate}
			\item[(i)] $\lambda>0$ provided that $p=2_{\alpha}^{*}$ and $\mu<0$;
			\item[(ii)] $\lambda<0$  provided that $p\in (2_{\alpha}^{\sharp},2_{\alpha}^{*}]$ and $\mu>0$.
		\end{enumerate}
	\end{lemma}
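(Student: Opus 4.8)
The plan is to establish the Pohozaev identity $P[u]=0$ first, and then to extract sign information on $\lambda$ from the two identities $P[u]=0$ and the testing identity \eqref{eq:testing_1}. For the first part, recall from the excerpt that any weak solution $u\in H^1(\mathbb R^N)$ of \eqref{eq:NLS_S} satisfies both the Pohozaev-type identity \eqref{eq:Pohozaev} and the testing identity \eqref{eq:testing_1}; eliminating the term $\lambda\|u\|_2^2$ between these two (multiply \eqref{eq:testing_1} by $N/2$ and subtract) yields exactly
\begin{align*}
	\left(\tfrac{N-2}{2}-\tfrac N2\right)A[u]+\left(\tfrac N2-\tfrac{N+\alpha}{2p}\right)B_p[u]+\left(\tfrac N2-\tfrac{N+\alpha}{2q}\right)\mu B_q[u]=0,
\end{align*}
which, after dividing by $-1$ and recalling $\eta_r=\frac{Nr-N-\alpha}{2r}$, is precisely $A[u]-\eta_pB_p[u]-\mu\eta_qB_q[u]=P[u]=0$. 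So $P[u]=0$ is immediate once \eqref{eq:Pohozaev} is granted, which the excerpt already attributes to the argument of \cite[Proposition 3.1]{MOROZ2013153}.

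For the sign of $\lambda$, I would isolate $\lambda\|u\|_2^2$ from \eqref{eq:testing_1}, namely $\lambda\|u\|_2^2 = A[u]-B_p[u]-\mu B_q[u]$, and then use $P[u]=0$ in the form $A[u]=\eta_pB_p[u]+\mu\eta_qB_q[u]$ to substitute for $A[u]$. This gives
\begin{align*}
	\lambda\|u\|_2^2 = (\eta_p-1)B_p[u] + \mu(\eta_q-1)B_q[u].
\end{align*}
Now one reads off the signs from the elementary inequalities recalled in the excerpt: $2r\eta_r=Nr-N-\alpha$, so $\eta_r<1$ exactly when $r<2^*_\alpha$ and $\eta_r=1$ when $r=2^*_\alpha$; also $B_p[u],B_q[u]>0$ whenever $u\neq0$ (positivity of the Riesz potential convolution). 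For case (ii), $p\in(2^{\sharp}_\alpha,2^*_\alpha]$ and $\mu>0$: here $q<p\le 2^*_\alpha$ forces $\eta_q<1$, so $\mu(\eta_q-1)B_q[u]<0$, while $\eta_p-1\le 0$, hence the right-hand side is strictly negative and $\lambda<0$. For case (i), $p=2^*_\alpha$ (so $\eta_p=1$, the $B_p$ term vanishes) and $\mu<0$: then $\mu(\eta_q-1)B_q[u]>0$ since both factors $\mu$ and $\eta_q-1$ are negative, so $\lambda\|u\|_2^2>0$, giving $\lambda>0$. In both cases $\|u\|_2^2>0$ for $u\ne0$, so the sign transfers to $\lambda$.

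There is no serious obstacle here; the only point needing a word of care is the justification that a general $H^1$ weak solution actually satisfies the Pohozaev identity \eqref{eq:Pohozaev} — this requires a regularity/decay argument for the nonlocal terms (Moroz–Van Schaftingen bootstrapping, so that $u$ is regular enough to run the Pohozaev cutoff argument and so that the convolution integrals behave well at infinity), but this is exactly what is cited from \cite{MOROZ2013153} and \cite{MR4106818}, so I would simply invoke it. The strict positivity $B_s[u]>0$ for $u\not\equiv0$ should be noted explicitly since it is what makes the inequalities strict. Everything else is the two-line linear algebra above.
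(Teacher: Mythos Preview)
Your proposal is correct and follows essentially the same route as the paper: derive $P[u]=0$ by eliminating $\lambda\|u\|_2^2$ between the Pohozaev identity \eqref{eq:Pohozaev} and the testing identity \eqref{eq:testing_1}, then read off the sign of $\lambda$ from the resulting expression $\lambda\|u\|_2^2=(\eta_p-1)B_p[u]+\mu(\eta_q-1)B_q[u]$. The paper actually writes down two separate linear combinations (one isolating $A[u]$ for case (i), one isolating $B_p[u]$ for case (ii)), but when $\eta_p=1$ both reduce to the same thing, so your single-formula treatment is a mild streamlining of the same argument.
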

	\begin{proof}
		Testing \eqref{eq:NLS_S} by $u$, we obtain that
		\begin{align}\label{eq:testing}
			A[u]-\lambda \|u\|_2^2-B_p[u]-\mu B_q[u]=0.
		\end{align}
		Combing \eqref{eq:Pohozaev} and $\eqref{eq:testing}$, we obtain that $P[u]=0$.
		Using \eqref{eq:Pohozaev} and \eqref{eq:testing} again, we get
		\begin{align*}
			\lambda \eta_{p}\|u\|_2^2=(1-\eta_{p})A[u]+(\eta_{q}-\eta_{p})\mu B_q[u].
		\end{align*}
		Then if $p=2_{\alpha}^{*}$ with $\mu<0$, we have $\eta_{p}=1$, and hence $\lambda>0$.
		Again, using \eqref{eq:Pohozaev} and \eqref{eq:testing}, we have
		\begin{align*}
			\lambda \|u\|_2^2=(\eta_{p}-1)B_p[u]+\mu(\eta_{q}-1) B_q[u],
		\end{align*}
		which implies that if $p\in (2_{\alpha}^{\sharp},2_{\alpha}^{*}]$ and $\mu>0$, we have $\lambda<0$.
	\end{proof}
	\begin{lemma}\label{lm:regularity}
		Let $N\geq 3$, $\mu \in \mathbb{R}$, $p\in (2_{\alpha}^{\sharp},2_{\alpha}^{*}]$ and $q\in (2_{\alpha},2_{\alpha}^{*})$ with $q<p$.
		Suppose that  $u\in H^{1}(\mathbb{R}^N)$ is a weak solution to $\eqref{eq:NLS_S}$, then we have
		\begin{enumerate}
			\item[(i)] $u$ belongs to  $\left(\bigcap\limits_{r> 1}W^{2,r}_{\operatorname{loc}}(\mathbb{R}^N)\right)\cap\mathscr{C}(\mathbb{R}^N)$; 
			\item[(ii)] $u>0$ if $\mu>0$ and $u\geq 0$ with $u\not\equiv 0$.
		\end{enumerate}
	\end{lemma}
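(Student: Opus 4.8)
The plan is to bootstrap regularity from the integrability of the nonlocal terms and then apply the strong maximum principle. First I would record that since $u\in H^1(\mathbb{R}^N)$, the Hardy--Littlewood--Sobolev and Sobolev embeddings guarantee $|u|^p, |u|^q\in L^{2N/(N+\alpha)}(\mathbb{R}^N)$ (using $2_\alpha< q<p\le 2_\alpha^*$, so $2N/(N+\alpha)\le \frac{2N}{N-2}/\text{(relevant power)}$ lands in the admissible range), hence $I_\alpha*|u|^p$ and $I_\alpha*|u|^q$ belong to suitable $L^r$ spaces by \eqref{ieq:HLS}. Consequently the right-hand side $f:=\lambda u+(I_\alpha*|u|^p)|u|^{p-2}u+\mu(I_\alpha*|u|^q)|u|^{q-2}u$ lies in $L^r_{\mathrm{loc}}(\mathbb{R}^N)$ for some $r>1$; elliptic $L^r$-regularity ($-\Delta u=f$) then gives $u\in W^{2,r}_{\mathrm{loc}}$, which improves the integrability of $u$ via Sobolev embedding, feeding back into a better exponent for $f$. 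Iterating this Brezis--Kato / Moser-type bootstrap argument — exactly as in \cite[Proposition 4.1]{MOROZ2013153} or the standard Choquard regularity references the paper already cites — yields $u\in W^{2,r}_{\mathrm{loc}}(\mathbb{R}^N)$ for every $r>1$, and then Morrey's embedding gives $u\in\mathscr{C}(\mathbb{R}^N)$, proving (i).

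For (ii), I would first note $u\ge0$: replacing $u$ by $|u|$ does not change $E$ (all terms depend on $|u|$), and by the usual argument a minimizer/critical point obtained via our variational scheme may be taken nonnegative; alternatively, if $u$ is any nonnegative weak solution with $u\not\equiv0$, write the equation as $-\Delta u + c(x)u = 0$ where, after moving the nonnegative nonlocal terms appropriately, one checks $-\Delta u\ge \lambda u$, i.e. $-\Delta u - \lambda u = (I_\alpha*|u|^p)|u|^{p-2}u+\mu(I_\alpha*|u|^q)|u|^{q-2}u\ge 0$ when $\mu>0$ since each Riesz potential of a nonnegative function is (strictly) positive. Hence $u$ is a nonnegative supersolution of a linear uniformly elliptic equation with locally bounded coefficient $-\lambda$ (here $\lambda<0$ by Lemma \ref{lm:Langrange_multiplier}(ii), which only helps); by the strong maximum principle and Harnack inequality, either $u\equiv0$ or $u>0$ everywhere. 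Since $u\not\equiv0$, we conclude $u>0$.

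The main obstacle I expect is purely technical bookkeeping in the bootstrap: one must track carefully which $L^r$ the convolution term $(I_\alpha*|u|^p)|u|^{p-2}u$ lies in at each stage, because the HLS exponent relation $1/r+(N-\alpha)/N+1/s=2$ couples the integrability gained from $I_\alpha*|u|^p$ with the (high) power $|u|^{p-1}$, and in the HLS-critical case $p=2_\alpha^*$ the starting integrability is borderline, so the first step must be done in $\mathscr{D}^{1,2}$ with the exact critical Sobolev exponent rather than with room to spare. This is where invoking the Choquard-specific regularity lemma of Moroz--Van Schaftingen (which handles precisely this borderline situation and is already cited in the paper for the Pohozaev identity) is cleanest, so I would simply reduce (i) to that reference after verifying its hypotheses, and spend the written proof mainly on the nonnegativity/positivity argument in (ii).
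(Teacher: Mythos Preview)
Your proposal is correct and follows essentially the same approach as the paper. For (i) the paper simply invokes \cite[Theorem~2.1]{MR4106818} (Li--Ma, which covers the critical endpoint $p=2_\alpha^*$) in place of \cite[Proposition~4.1]{MOROZ2013153}, but the strategy---reduce to an existing Choquard regularity result, then Sobolev/Morrey---is identical; for (ii) the paper does exactly what you outline: use $\lambda<0$ from Lemma~\ref{lm:Langrange_multiplier}, observe $-\Delta u-\lambda u\ge 0$ when $\mu>0$, and apply the strong maximum principle (Gilbarg--Trudinger) to rule out an interior zero.
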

	\begin{proof}
		\begin{enumerate}
			\item[(i)]Applying \cite[Theorem 2.1]{MR4106818} to $f(u)=\vert u \vert^{p-2}u+\mu\vert u \vert^{q-2}u$ and $g(u)\equiv0$, we obtain that $u\in W^{2,r}_{\operatorname{loc}}(\mathbb{R}^N)$ for every $r>1$.
			As a consequence, we obtain that $u \in \mathscr{C}(\mathbb{R}^N)$.
			\item[(ii)] We shall argue by contradiction. 
			By Lemma \ref{lm:Langrange_multiplier}, we obtain that $\lambda<0$.
			Suppose there exits $x_0\in\mathbb{R}^N$ such that $u(x_0)=0$ and define an elliptic operator $L=-\Delta - \lambda$.
			For all $R>\vert x_0 \vert$, we have $u \in W^{1,2}(B_R)$  and $Lu\geq 0$ since $\mu>0$.
			By the strong maximum principle \cite[Lemma 8.20]{MR1814364} and $u\geq 0$, we obtain that $u$ is a constant in $B_{R}$ and hence $u\equiv 0$ in $B_R$, which contradicts the fact $u \not\equiv 0$ since $R$ can be large enough.
		\end{enumerate}
	\end{proof}
	The following lemma can be founded in \cite[Radial Lemma A.IV]{MR695535}.
	\begin{lemma}\label{lm:decay}
		Let $N\geq 3$ and $t\in [1,+\infty)$. If $u \in L^t(\mathbb{R}^N)$ is a radial nonincreasing function, then it holds that 
		\begin{align*}
			\vert u(x) \vert \leq \vert x \vert^{-\frac{N}{t}}{N}^{\frac{1}{t}}\omega^{-\frac{1}{t}}\|u\|_t,\quad x\ne 0,
		\end{align*}
		where $\omega$ is the area of unit sphere in $\mathbb{R}^N$.
	\end{lemma}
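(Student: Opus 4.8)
The plan is to deduce the pointwise bound directly from the monotonicity of $u$ via a volume comparison on a ball, so that no compactness or variational machinery is needed. Fix $x\neq 0$ and put $R=|x|$. Since $u$ is radial and nonincreasing in the radial variable, for every $y$ with $|y|\leq R$ one has $|u(y)|\geq |u(x)|$. I would then restrict the integral defining $\|u\|_t^t$ to the ball $B_R$ and use this pointwise lower bound:
\begin{align*}
	\|u\|_t^t\;\geq\;\int_{B_R}|u(y)|^t\dif y\;\geq\;|u(x)|^t\int_{B_R}\dif y\;=\;|u(x)|^t\,\frac{\omega}{N}\,R^N,
\end{align*}
where $|B_R|=\frac{\omega}{N}R^N$ is the Lebesgue measure of the ball of radius $R$, with $\omega$ the area of the unit sphere $S^{N-1}$; this is the only quantitative ingredient, and it follows from $\Gamma(\tfrac N2+1)=\tfrac N2\,\Gamma(\tfrac N2)$ together with the standard formulas for the volume of a Euclidean ball and the area of $S^{N-1}$. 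Rearranging and taking $t$-th roots yields
\[
	|u(x)|\;\leq\;|x|^{-\frac{N}{t}}\,N^{\frac1t}\,\omega^{-\frac1t}\,\|u\|_t,
\]
which is exactly the asserted inequality.

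There is no genuine obstacle here — the statement is classical, cf. \cite[Radial Lemma A.IV]{MR695535} — only two bookkeeping points worth noting. First, if $\|u\|_t=+\infty$ the inequality is vacuous, so one may assume $u\in L^t(\mathbb{R}^N)$ from the outset. Second, an $L^t$ function is defined only up to a null set, so I would fix once and for all the canonical everywhere-defined, radially nonincreasing representative of $u$; the displayed chain of inequalities is valid for that representative, hence for $u$ at a.e. $x\neq 0$, which is all that is needed in the applications. With these conventions the argument above is complete.
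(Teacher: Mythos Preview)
Your argument is correct and is precisely the classical proof of the Radial Lemma: the paper does not supply its own proof but only cites \cite[Radial Lemma A.IV]{MR695535}, and your volume-comparison on $B_{|x|}$ is exactly the argument given there. Your remarks on the a.e.\ representative and the trivial case $\|u\|_t=+\infty$ are appropriate bookkeeping.
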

	Following \cite[Section 8]{MR695536}, we recall that, for $a>0$, $S(a)$ is a submanifold of the Hilbert manifold $H^{1}(\mathbb{R}^N)$ with codimension 1.
	The tangent space at $u\in S(c)$ is defined by
	\begin{align*}
		T_uS(a):=\{\phi\in H^{1}(\mathbb{R}^N): \langle u,\phi\rangle_2=0\}.
	\end{align*}
	It is easy to see that $\left.E\right\vert_{S(a)}:S(a)\rightarrow \mathbb{R}$ is a $\mathscr{C}^1$ functional on $S(a)$ and for any $u\in S(a)$ and $\phi\in T_uS(a)$, we get
	\begin{align*}
		\langle \left.\dif E\right\vert_{S(a)},\phi\rangle=\langle \dif E,\phi\rangle.
	\end{align*}

	We shall use the following compact embedding theorem, see \cite[Lemma 2]{MR454365} or \cite[Corollary 1.26]{MR1400007}.
	\begin{lemma}\label{lm:compact_embedding}
		Let $N\geq 3$ and $p\in (2,2^*)$. 
		Then the following embedding is compact:
		\begin{equation}
			H^1_{\operatorname{rad}}(\mathbb{R}^N)\subset L^{p}(\mathbb{R}^N),
		\end{equation}
		where $H^1_{\operatorname{rad}}(\mathbb{R}^N)$ is the subspace of $H^1(\mathbb{R}^N)$ consisting of radial functions.
	\end{lemma}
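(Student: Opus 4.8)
The plan is to deduce the compactness from the classical local Rellich--Kondrachov theorem together with the Strauss-type pointwise decay of radial $H^1$-functions. Fix $p\in(2,2^*)$ and a bounded sequence $\{u_n\}\subset H^1_{\operatorname{rad}}(\mathbb{R}^N)$; to prove the embedding is compact it suffices to exhibit an $L^p(\mathbb{R}^N)$-convergent subsequence, so after passing to a subsequence we may assume $u_n\rightharpoonup u$ weakly in $H^1(\mathbb{R}^N)$ (the limit $u$ being again radial, since $H^1_{\operatorname{rad}}$ is a closed subspace and hence weakly closed), and the goal becomes $u_n\rightarrow u$ strongly in $L^p(\mathbb{R}^N)$. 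The scheme is to split $\mathbb{R}^N=B_R\cup(\mathbb{R}^N\setminus B_R)$: on $B_R$ one has $u_n\rightarrow u$ in $L^p(B_R)$ for each fixed $R$ by Rellich--Kondrachov, while on the complement the radial decay forces the $L^p$-mass to be uniformly small once $R$ is large.

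The first --- and, I expect, main --- step is the radial decay estimate: there is $C_N>0$ with
\begin{align*}
|v(x)|\leq C_N|x|^{-\frac{N-1}{2}}\|v\|_{H^1(\mathbb{R}^N)}\quad\text{for a.e.\ }|x|\geq 1,\ \ \forall\,v\in H^1_{\operatorname{rad}}(\mathbb{R}^N).
\end{align*}
By density it is enough to verify this for smooth, compactly supported radial $v$, in which case writing $v(x)=g(|x|)$ and integrating $\frac{\dif}{\dif s}\bigl(s^{N-1}g(s)^2\bigr)=(N-1)s^{N-2}g(s)^2+2s^{N-1}g(s)g'(s)$ from $r\geq 1$ to $+\infty$ (the boundary term at infinity vanishing), then using $s^{N-2}\leq s^{N-1}$ and $2|g||g'|\leq g^2+(g')^2$, gives $r^{N-1}g(r)^2\lesssim\int_r^{\infty}s^{N-1}\bigl(g(s)^2+(g'(s))^2\bigr)\dif s\lesssim\|v\|_{H^1}^2$, which is the claim.

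Set $M:=C_N\sup_n\|u_n\|_{H^1}<\infty$, so that $|u_n(x)-u(x)|\leq 2M|x|^{-(N-1)/2}$ for a.e.\ $|x|\geq 1$. Interpolating between $L^\infty$ and $L^2$ on $\mathbb{R}^N\setminus B_R$ for $R\geq 1$,
\begin{align*}
\int_{\mathbb{R}^N\setminus B_R}|u_n-u|^p\,\dif x\leq\bigl(2MR^{-\frac{N-1}{2}}\bigr)^{p-2}\int_{\mathbb{R}^N\setminus B_R}|u_n-u|^2\,\dif x\leq C\,R^{-\frac{(N-1)(p-2)}{2}},
\end{align*}
with $C$ independent of $n$ because $\{u_n-u\}$ is bounded in $L^2(\mathbb{R}^N)$; since $p>2$ this tends to $0$ as $R\to\infty$, uniformly in $n$. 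Given $\varepsilon>0$, I would first choose $R$ so that the tail above is $<\varepsilon$ for every $n$, then invoke the compactness of $H^1(B_R)\hookrightarrow L^p(B_R)$ to pick $n_0$ with $\int_{B_R}|u_n-u|^p\,\dif x<\varepsilon$ for $n\geq n_0$; summing the two pieces yields $\|u_n-u\|_p^p<2\varepsilon$ for $n\geq n_0$, completing the argument.
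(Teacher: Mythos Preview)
Your argument is correct and is precisely the classical Strauss proof: the pointwise radial decay $|v(x)|\lesssim|x|^{-(N-1)/2}\|v\|_{H^1}$ combined with local Rellich--Kondrachov and an $L^\infty$--$L^2$ interpolation on the tail. The paper does not give its own proof of this lemma at all; it simply cites \cite[Lemma~2]{MR454365} (Strauss) and \cite[Corollary~1.26]{MR1400007} (Willem), and your sketch reproduces exactly the argument one finds in those references. One cosmetic remark: in your derivation of the decay estimate the step ``$s^{N-2}\leq s^{N-1}$'' is unnecessary, since after integrating you have $r^{N-1}g(r)^2=-(N-1)\int_r^\infty s^{N-2}g^2\,\dif s-2\int_r^\infty s^{N-1}gg'\,\dif s$ and the first integral is already nonnegative, so it may simply be dropped; but this affects only the constant, not the validity.
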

	The following lemma is a general version of \cite[Lemma 2.1]{MR2230354}.
	\begin{lemma}\label{lm:compact_operator_nonlocal}
		If $\{u_n\}$ is a sequence satisfying $u_n \rightharpoonup u$ weakly in $H^1_{\operatorname{rad}}(\mathbb{R}^N)$, then we have $B_q[u_n]\rightarrow B_q[u]$ for $q\in (2_{\alpha},2_{\alpha}^{*})$.
	\end{lemma}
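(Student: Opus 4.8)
The plan is to reduce the claim to strong convergence of $\{u_n\}$ in a single Lebesgue space and then invoke the bilinear form of the Hardy--Littlewood--Sobolev inequality \eqref{ieq:HLS}. Put $r:=\frac{2Nq}{N+\alpha}$, so that $\bigl\||v|^q\bigr\|_{2N/(N+\alpha)}=\|v\|_r^{q}$ and $B_q[v]=\mathcal{A}(N,\alpha)\int_{\mathbb{R}^N}\int_{\mathbb{R}^N}\frac{|v(x)|^q|v(y)|^q}{|x-y|^{N-\alpha}}\dif x\dif y$. A direct computation shows that $q=2_{\alpha}$ gives $r=2$ and $q=2_{\alpha}^{*}$ gives $r=2^{*}$, so $q\in(2_{\alpha},2_{\alpha}^{*})$ forces $r\in(2,2^{*})$. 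Since $u_n\rightharpoonup u$ weakly in $H^1_{\operatorname{rad}}(\mathbb{R}^N)$, the sequence is bounded in $H^1(\mathbb{R}^N)$, and Lemma \ref{lm:compact_embedding} yields $u_n\to u$ strongly in $L^r(\mathbb{R}^N)$; in particular $\sup_n\|u_n\|_r<\infty$.

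Next I would bound the difference. Writing
\[
|u_n(x)|^q|u_n(y)|^q-|u(x)|^q|u(y)|^q=\bigl(|u_n(x)|^q-|u(x)|^q\bigr)|u_n(y)|^q+|u(x)|^q\bigl(|u_n(y)|^q-|u(y)|^q\bigr),
\]
and applying \eqref{ieq:HLS} with both exponents equal to $\frac{2N}{N+\alpha}$ to each of the two resulting double integrals, one gets
\[
|B_q[u_n]-B_q[u]|\le \mathcal{A}(N,\alpha)\,\mathcal{C}_{H}(N,\alpha)\,\bigl\||u_n|^q-|u|^q\bigr\|_{\frac{2N}{N+\alpha}}\bigl(\|u_n\|_r^{q}+\|u\|_r^{q}\bigr).
\]
The second factor is bounded, so it suffices to prove $\bigl\||u_n|^q-|u|^q\bigr\|_{2N/(N+\alpha)}\to0$, i.e.\ that $v\mapsto|v|^q$ is continuous from $L^r(\mathbb{R}^N)$ into $L^{r/q}(\mathbb{R}^N)$ (note $r/q=\frac{2N}{N+\alpha}$).

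For this I would pass to an arbitrary subsequence: from $u_n\to u$ in $L^r$ extract a further subsequence, still written $u_n$, with $u_n\to u$ a.e.\ and $|u_n|\le h$ for some $h\in L^r(\mathbb{R}^N)$. Since $2_{\alpha}=1+\frac{\alpha}{N}>1$ we have $q>1$, so the elementary inequality $\bigl||a|^q-|b|^q\bigr|\le q\bigl(|a|^{q-1}+|b|^{q-1}\bigr)|a-b|$ combined with $|u_n|\le h$ gives $\bigl||u_n|^q-|u|^q\bigr|^{r/q}\le C\,h^{r}\in L^1(\mathbb{R}^N)$, while the left-hand side tends to $0$ a.e.; dominated convergence then gives $\bigl\||u_n|^q-|u|^q\bigr\|_{2N/(N+\alpha)}\to0$ along this subsequence, hence $B_q[u_n]\to B_q[u]$. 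Because the limit does not depend on the subsequence, the whole sequence converges, which proves the lemma.

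There is no substantial obstacle here; the only points deserving care are the bookkeeping that $q\in(2_{\alpha},2_{\alpha}^{*})$ translates into exactly $r\in(2,2^{*})$, so that the compact embedding of Lemma \ref{lm:compact_embedding} is applicable, and the standard subsequence argument upgrading a.e.\ and dominated convergence to convergence of the full sequence. One could alternatively quote the continuity of the Nemytskii operator $v\mapsto|v|^q$ from $L^r$ into $L^{r/q}$ directly, bypassing the explicit domination step.
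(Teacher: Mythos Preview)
Your proof is correct and follows essentially the same route as the paper: both use the HLS inequality to reduce to showing $|u_n|^q\to|u|^q$ in $L^{2N/(N+\alpha)}$, and both rely on the compact embedding $H^1_{\operatorname{rad}}\hookrightarrow L^r$ with $r=\frac{2Nq}{N+\alpha}\in(2,2^{*})$. The only cosmetic difference is that the paper establishes the last convergence via the pointwise bound $\bigl||a|^q-|b|^q\bigr|\le C_q|a-b|(|a|^{q-1}+|b|^{q-1})$ followed by H\"older, obtaining directly a factor $\|u_n-u\|_r\to0$, whereas you pass through a subsequence plus dominated convergence; both are standard and equivalent here.
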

	\begin{proof}
		It is easy to check that
		$$
		B_q[u_n]-B_q[u]=\int_{\mathbb{R}^{N}}\left(I_{\alpha} *\left(\left|u_{n}\right|^{q}+|u|^{q}\right)\right)\left(\left|u_{n}\right|^{q}-|u|^{q}\right)\dif x .
		$$
		It follows from $2<\frac{2Nq}{N+\alpha}<2^{*}$ and H\"{o}lder inequality that 
		$$
		\begin{aligned}
			&\vert B_q[u_n]-B_q[u]\vert  \\
			\lesssim& \left(\int_{\mathbb{R}^{N}}\left(\left|u_{n}\right|^{q}+|u|^{q}\dif x\right)^{\frac{2 N}{ N+\alpha}}\right)^{\frac{N+\alpha}{2 N}}\left(\int_{\mathbb{R}^{N}}\left\vert \vert u_{n}\vert^{q}-\vert u\vert^{q}\right\vert^{\frac{2 N}{N+\alpha}}\dif x\right)^{\frac{N+\alpha}{2 N}} \\
			\lesssim& \left(\int_{\mathbb{R}^{N}}\left(\left|u_{n}\right|^{q-1}+|u|^{q-1}\right)^{\frac{2 N}{N+\alpha}}\left|u_{n}-u\right|^{\frac{2 N}{N+\alpha}}\dif x\right)^{\frac{N+\alpha}{2 N}} 
			\left(\int_{\mathbb{R}^{N}}\left(\left|u_{n}\right|^{q}+|u|^{q}\right)^{\frac{2 N}{N+\alpha}}\dif x\right)^{\frac{N+\alpha}{2 N}} \\
			\lesssim& \int_{\mathbb{R}^{N}}\left(\left|u_{n}\right|^{\frac{2 N q}{N+\alpha}}+|u|^{\frac{2 N q}{N+\alpha}}\dif x\right)^{\frac{N+\alpha}{N}\left(1-\frac{1}{2 q}\right)}\left(\int_{\mathbb{R}^{N}}\left|u_{n}-u\right|^{\frac{2 N q}{N+\alpha}}\dif x\right)^{\frac{N+\alpha}{2 N q}}\rightarrow 0,
		\end{aligned}
		$$
		where we have used $\vert \vert a \vert^p-\vert b\vert^p\vert\leq C_p\vert a-b\vert(\vert a\vert^{p-1}+\vert b \vert^{p-1})$ for every $a,b \in \mathbb{R}$ and Lemma \ref{lm:compact_embedding}.
	\end{proof}
	We shall use the Brezis–Lieb type lemma for the nonlocal term, see \cite[Lemma 3.4]{MR2088936} for the subcritical case and \cite[Lemma 2.2]{MR3817173} for the critical case.
	\begin{lemma}\label{lm:Brezis–Lieb_nonlocal}
		Let $N \geq 3$, $0<\alpha<N$ and $2_{\alpha} \leq p \leq 2_{\alpha}^{*}$. If $\left\{u_{n}\right\}_{n\in\mathbb{N}}$ is a bounded sequence in $L^{2^{*}}(\mathbb{R}^N)$ such that $u_{n} \rightarrow u$ a.e. in $\mathbb{R}^N$ as $n \rightarrow \infty$, then we have
		$$
		B_p[u_n]=B_p[u_n-u]+B_p[u]+o_n(1)
		$$
		as $n \rightarrow \infty$.
	\end{lemma}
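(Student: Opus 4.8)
The plan is to reduce the nonlocal splitting, via the bilinearity and symmetry of the Riesz convolution, to the classical Brezis--Lieb lemma for the functions $|u_n|^p$ in the Lebesgue space $L^{2N/(N+\alpha)}(\mathbb{R}^N)$, and then to pass to the limit using the Hardy--Littlewood--Sobolev inequality \eqref{ieq:HLS} together with a weak-convergence argument. First I would record the algebraic identity. Writing $\langle f,g\rangle:=\int_{\mathbb{R}^N}(I_\alpha * f)\,g\dif x$ (with the kernel $I_\alpha$ already carrying the constant $\mathcal{A}(N,\alpha)$), this is a symmetric bilinear form on $L^{2N/(N+\alpha)}(\mathbb{R}^N)$ by \eqref{ieq:HLS}, and $B_p[u]=\langle |u|^p,|u|^p\rangle$. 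Hence, with $a_n:=|u_n|^p$, $b_n:=|u_n-u|^p$ and $w:=|u|^p$,
\[
B_p[u_n]-B_p[u_n-u]=\langle a_n,a_n\rangle-\langle b_n,b_n\rangle=\langle a_n-b_n,\,a_n+b_n\rangle ,
\]
so it suffices to prove that $\langle a_n-b_n,\,a_n+b_n\rangle\to\langle w,w\rangle=B_p[u]$.

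The key analytic step is the claim that $a_n-b_n\to w$ strongly in $L^{s}(\mathbb{R}^N)$, where $s:=2N/(N+\alpha)>1$. Note that $ps=2Np/(N+\alpha)\in[2,2^*]$ for $2_\alpha\le p\le 2_\alpha^*$, so $\{u_n\}$ is bounded in $L^{ps}(\mathbb{R}^N)$ under our hypotheses (for $p=2_\alpha^*$ this is exactly the assumed $L^{2^*}$-bound; in the intermediate range it comes from the $H^1$-boundedness available in the applications), and $u\in L^{ps}(\mathbb{R}^N)$ by Fatou. From the elementary inequality $\bigl||a+b|^p-|a|^p\bigr|\le\varepsilon|a|^p+C_\varepsilon|b|^p$ and Young's inequality one obtains, for every $\varepsilon>0$, applying the bound with $a=u_n-u$ and $b=u$,
\[
\bigl||u_n|^p-|u_n-u|^p-|u|^p\bigr|^{s}\le \varepsilon\,|u_n-u|^{ps}+C_\varepsilon\,|u|^{ps}\qquad\text{a.e. in }\mathbb{R}^N .
\]
Setting $G_n^\varepsilon:=\bigl(\bigl||u_n|^p-|u_n-u|^p-|u|^p\bigr|^{s}-\varepsilon|u_n-u|^{ps}\bigr)^+$, one has $0\le G_n^\varepsilon\le C_\varepsilon|u|^{ps}\in L^1(\mathbb{R}^N)$ and $G_n^\varepsilon\to 0$ a.e. (since $u_n\to u$ a.e.), so dominated convergence gives $\int G_n^\varepsilon\to0$; since $\|a_n-b_n-w\|_s^{s}\le\int G_n^\varepsilon+\varepsilon\sup_n\|u_n-u\|_{ps}^{ps}$, letting $n\to\infty$ and then $\varepsilon\to0$ proves the claim.

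Finally I would pass to the limit in $\langle a_n-b_n,\,a_n+b_n\rangle$ by splitting it as $\langle a_n-b_n-w,\,a_n+b_n\rangle+\langle w,\,a_n+b_n\rangle$. For the first term, \eqref{ieq:HLS} gives $|\langle a_n-b_n-w,\,a_n+b_n\rangle|\le \mathcal{C}_H(N,\alpha)\,\|a_n-b_n-w\|_s\,\|a_n+b_n\|_s$; here $\|a_n+b_n\|_s\le\|u_n\|_{ps}^p+\|u_n-u\|_{ps}^p$ stays bounded while $\|a_n-b_n-w\|_s\to0$ by the claim, so this term vanishes. For the second term, $I_\alpha*w\in L^{2N/(N-\alpha)}(\mathbb{R}^N)$ by \eqref{ieq:HLS}, which is the dual of $L^{s}(\mathbb{R}^N)$; moreover $a_n+b_n\to w$ a.e. and is bounded in the reflexive space $L^{s}(\mathbb{R}^N)$, hence $a_n+b_n\rightharpoonup w$ weakly there, and therefore $\langle w,\,a_n+b_n\rangle=\int_{\mathbb{R}^N}(I_\alpha*w)(a_n+b_n)\dif x\to\int_{\mathbb{R}^N}(I_\alpha*w)\,w\dif x=B_p[u]$. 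Combining the two pieces yields $B_p[u_n]-B_p[u_n-u]\to B_p[u]$, which is the assertion.

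I expect the main obstacle to be essentially bookkeeping rather than conceptual: one must check that every Lebesgue exponent remains admissible at the two endpoints $p=2_\alpha$ and $p=2_\alpha^*$, where \eqref{ieq:HLS} is invoked with the equality exponents $r=s=2N/(N+\alpha)$ and there is no room to spare, and one must be careful that in the critical case $p=2_\alpha^*$ the argument uses precisely the assumed $L^{2^*}$-bound on $\{u_n\}$ (which forces $a_n,b_n$ to live in $L^{2N/(N+\alpha)}$) and nothing weaker. Once these exponent constraints are respected, the a.e.-convergence-plus-boundedness weak-convergence step and the Brezis--Lieb truncation step are routine.
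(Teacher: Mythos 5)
The paper offers no proof of this lemma, citing instead \cite[Lemma 3.4]{MR2088936} for the subcritical case and \cite[Lemma 2.2]{MR3817173} for the critical case; your argument --- the bilinear splitting $B_p[u_n]-B_p[u_n-u]=\langle a_n-b_n,a_n+b_n\rangle$, the Brezis--Lieb refinement giving $|u_n|^p-|u_n-u|^p\to|u|^p$ strongly in $L^{2N/(N+\alpha)}(\mathbb{R}^N)$, and then \eqref{ieq:HLS} together with the weak convergence of $|u_n|^p+|u_n-u|^p$ in that (reflexive) space --- is precisely the standard proof found in those references, with all exponents checked correctly, and it is sound. Your caveat is also well taken: for $p<2_\alpha^*$ the stated hypothesis (boundedness only in $L^{2^*}(\mathbb{R}^N)$) does not by itself control $\|u_n\|_{2Np/(N+\alpha)}$, so the lemma as written implicitly relies on the $H^1$-boundedness available at every point where it is invoked in the paper.
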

	
	The following lemma plays an important role in this paper:
	\begin{lemma}\label{lm:coercive}
		Let $N\geq 3$, $\alpha\in (0,N)$ and $a>0$. 
		Then we have $E$ restricted to $\mathcal{P}(a)$ is coercive on $H^{1}(\mathbb{R}^N)$ ( i.e., 
		if $\{u_n\}\subset \mathcal{P}(a)$ satisfies $\|u_n\|\rightarrow +\infty$, then $E[u_n]\rightarrow +\infty$), 
		provided that one of the following holds:
		\begin{enumerate}
			\item[(i)] $\mu\leq 0$;
			\item[(ii)] $2_{\alpha}<q<2_{\alpha}^{\sharp}<p\leq 2_{\alpha}^{*}$ and $\mu>0$;
			\item[(iii)] $q=2_{\alpha}^{\sharp}<p\leq 2_{\alpha}^{*}$ and $0<\mu a^{2q-2}<q\mathcal{C}_{G}(N,\alpha,q)^{-1}$;
			\item[(iv)] $p=2_{\alpha}^{*}$, $q\in(2_{\alpha}^{\sharp},2_{\alpha}^{*})$ and $\mu>0$.
		\end{enumerate} 
	\end{lemma}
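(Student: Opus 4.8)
The plan is to express $E[u]$ for $u \in \mathcal{P}(a)$ purely in terms of $A[u]$ and $B_q[u]$ (eliminating $B_p[u]$ via the constraint $P[u]=0$), and then bound $B_q[u]$ from above by $A[u]$ using the Gagliardo–Nirenberg–Hartree inequality \eqref{estimate:subcritical}, so that the dominant power of $A[u]$ is strictly larger than $1$; since $\|u_n\|\to+\infty$ on $S(a)$ forces $A[u_n]\to+\infty$, coercivity follows. Concretely, from $P[u]=0$ we get $B_p[u] = \eta_p^{-1}\big(A[u] - \mu\eta_q B_q[u]\big)$, and substituting into $E[u]=\tfrac12 A[u]-\tfrac{1}{2p}B_p[u]-\tfrac{\mu}{2q}B_q[u]$ yields
\begin{align*}
	E[u] = \Big(\tfrac12 - \tfrac{1}{2p\eta_p}\Big)A[u] + \mu\Big(\tfrac{\eta_q}{2p\eta_p} - \tfrac{1}{2q}\Big)B_q[u].
\end{align*}
Since $p>2_\alpha^\sharp$ gives $p\eta_p>1$, the coefficient $\tfrac12-\tfrac{1}{2p\eta_p}$ of $A[u]$ is a positive constant, call it $c_1>0$. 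So $E[u] = c_1 A[u] + \mu c_2 B_q[u]$ with $c_2 = \tfrac{\eta_q}{2p\eta_p}-\tfrac{1}{2q}$ (note $c_2<0$ since $\eta_q<\eta_p$ and $q<p$, so $\tfrac{\eta_q}{p\eta_p}<\tfrac1p<\tfrac1q$; the sign of $c_2$ does not matter in case (i) where $\mu\le0$, as then $\mu c_2 B_q[u]\ge0$, giving $E[u]\ge c_1 A[u]\to+\infty$ immediately).

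For cases (ii)–(iv) with $\mu>0$, I would use \eqref{estimate:subcritical}: $B_q[u] \le \mathcal{C}_G(q)\,a^{2q(1-\eta_q)}A[u]^{q\eta_q}$, and hence
\begin{align*}
	E[u] \ge c_1 A[u] - \mu|c_2|\,\mathcal{C}_G(q)\,a^{2q(1-\eta_q)}\,A[u]^{q\eta_q}.
\end{align*}
In case (ii), $q<2_\alpha^\sharp$ means $q\eta_q<1$, so the subtracted term is of lower order in $A[u]$ and coercivity on $S(a)$ is clear. In case (iv), $p=2_\alpha^*$ means $\eta_p=1$, so $c_1 = \tfrac12-\tfrac{1}{2p}>0$ is fixed, and $q\in(2_\alpha^\sharp,2_\alpha^*)$ gives $q\eta_q>1$ — here the naive bound above is not enough, so instead I would use the sharper algebraic consequence of $P[u]=0$ together with the critical estimate \eqref{estimate:critical}: from $A[u] = B_p[u] + \mu\eta_q B_q[u] \le \mathcal{S}_H^{-p}A[u]^p + \mu\eta_q\mathcal{C}_G(q)a^{2q(1-\eta_q)}A[u]^{q\eta_q}$, one deduces $A[u]$ stays away from $0$, and then rewrite $E[u]$ using $P[u]=0$ in the alternative form $E[u] = \big(\tfrac1N\big)A[u] - \mu\big(\tfrac1{2q}-\tfrac{\eta_q}{2\cdot 2_\alpha^*}\big)B_q[u]$ wait — more cleanly, combine $E[u]=\tfrac12 A[u]-\tfrac{1}{2p}B_p[u]-\tfrac{\mu}{2q}B_q[u]$ with $B_p[u]=A[u]-\mu\eta_q B_q[u]$ to get the displayed formula with $\eta_p=1$, namely $E[u] = \tfrac1N A[u] + \mu\big(\tfrac{\eta_q}{2p}-\tfrac1{2q}\big)B_q[u]$, and bound $B_q[u]\le\mathcal{C}_G(q)a^{2q(1-\eta_q)}A[u]^{q\eta_q}$ with $q\eta_q<2=p$ — so $E[u]\ge \tfrac1N A[u] - C A[u]^{q\eta_q}$; since $q\eta_q > 1$ this looks bad, so one must instead feed back the bound $B_p[u]\le\mathcal{S}_H^{-p}A[u]^p$ into $P[u]=0$ to control $B_q[u]$ in terms of $A[u]^p$ and $A[u]$, obtaining $\mu\eta_q B_q[u]\ge A[u]-\mathcal{S}_H^{-p}A[u]^p$, which combined with the $E$-formula again yields $E[u]\ge \tfrac1N A[u] + (\text{something})$; the precise bookkeeping is routine but must be done carefully. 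In case (iii), $q=2_\alpha^\sharp$ gives $q\eta_q=1$ exactly, so $B_q[u]\le\mathcal{C}_G(q)a^{2q-2}A[u]$, and
\begin{align*}
	E[u] \ge \big(c_1 - \mu|c_2|\,\mathcal{C}_G(q)\,a^{2q-2}\big)A[u];
\end{align*}
the hypothesis $0<\mu a^{2q-2}<q\,\mathcal{C}_G(q)^{-1}$ is precisely what makes the bracketed constant positive (one checks $|c_2| = \tfrac1{2q}-\tfrac{\eta_q}{2p\eta_p}$ and $c_1=\tfrac12-\tfrac1{2p\eta_p}$ combine so that positivity is equivalent to $\mu\mathcal{C}_G(q)a^{2q-2}<q$), giving coercivity.

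The main obstacle is case (iv) (and the $p=2_\alpha^*$ instance of case (ii)): when $p$ is HLS-critical one cannot use \eqref{estimate:subcritical} for the leading term, and when moreover $q\eta_q>1$ the perturbation term is genuinely competing with $A[u]$ at a higher power, so a crude Gagliardo–Nirenberg estimate fails. The resolution is to exploit the constraint $P[u]=0$ twice — once to eliminate $B_p$ from the energy and once, via \eqref{estimate:critical}, to obtain a lower bound on $\mu\eta_q B_q[u]$ of the form $A[u] - \mathcal{S}_H^{-p}A[u]^p$ — and then observe that after substitution all the genuinely large powers of $A[u]$ cancel or appear with the right sign, leaving $E[u]$ bounded below by a positive multiple of $A[u]$ minus lower-order terms. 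I would carry out the four cases in the order (i), (ii), (iii), (iv), since each reuses the elimination identity $B_p[u]=\eta_p^{-1}(A[u]-\mu\eta_q B_q[u])$ and only the final estimate changes; throughout, the key background fact is that $\|u_n\|\to+\infty$ with $\|u_n\|_2=a$ fixed forces $A[u_n]=\|\nabla u_n\|_2^2\to+\infty$.
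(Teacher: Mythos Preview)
Your treatment of cases (i), (ii), and (iii) is correct and is exactly what the paper does: eliminate $B_p[u]$ via $P[u]=0$, obtain
\[
E[u]=\Big(\tfrac12-\tfrac{1}{2p\eta_p}\Big)A[u]-\tfrac{\mu}{2q}\Big(1-\tfrac{q\eta_q}{p\eta_p}\Big)B_q[u],
\]
and then either drop the $B_q$-term (if $\mu\le 0$) or control it by the Gagliardo--Nirenberg--Hartree inequality. Your verification that the constant in case (iii) is positive precisely under the stated hypothesis is also right.

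Case (iv), however, has a genuine gap. After eliminating $B_p[u]$ you have $E[u]=c_1A[u]-\mu|c_2|B_q[u]$ with $c_1,|c_2|>0$, so to bound $E[u]$ from \emph{below} you need an \emph{upper} bound on $\mu B_q[u]$. The inequality you propose, $\mu\eta_q B_q[u]\ge A[u]-\mathcal{S}_H^{-p}A[u]^p$, goes the wrong way: substituting it yields an \emph{upper} bound for $E[u]$, not a lower one, and nothing cancels. The ``routine bookkeeping'' you defer cannot succeed because the sign is against you.

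The paper in fact abandons the direct approach here (it explicitly says ``the direct method in points (i)--(iii) is invalid'') and argues by contradiction: assuming $A[u_n]\to+\infty$ with $E[u_n]$ bounded, it rescales $v_n(x)=t_n^{N/2}u_n(t_nx)$ with $t_n^2=A[u_n]^{-1}$ so that $A[v_n]=1$, and then runs Lions' vanishing/nonvanishing dichotomy on $\{v_n\}$ to reach a contradiction in each branch.

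That said, there \emph{is} a one-line direct argument the paper overlooks and that fixes your approach: eliminate $B_q[u]$ rather than $B_p[u]$. From $P[u]=0$ with $\eta_p=1$ one gets $\mu B_q[u]=\eta_q^{-1}(A[u]-B_p[u])$, hence
\[
E[u]=\Big(\tfrac12-\tfrac{1}{2q\eta_q}\Big)A[u]+\Big(\tfrac{1}{2q\eta_q}-\tfrac{1}{2p}\Big)B_p[u]\ \ge\ \Big(\tfrac12-\tfrac{1}{2q\eta_q}\Big)A[u],
\]
since both coefficients are positive (using $1<q\eta_q<p\eta_p=p$) and $B_p[u]\ge 0$. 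This gives coercivity immediately, without any critical Sobolev estimate or concentration-compactness.
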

	\begin{proof}
			To prove $(i)-(iii)$, let $u\in \mathcal{P}(a)$, then we have 
			$$\frac{1}{\eta_{p}}(A[u]-\mu\eta_{q}B_q[u])=B_p[u].$$
			Hence for $2_{\alpha}\leq q\leq 2^{\sharp}_{\alpha}< p\leq 2_{\alpha}^{*}$ and $\mu\in\mathbb{R}$, we have
			\begin{align*}
				E[u]=&
				(\frac{1}{2}-\frac{1}{2p\eta_{p}})A[u]-\frac{\mu}{2q}(1-\frac{2q\eta_q}{2p\eta_p})B_q[u]\\
				\geq &
				\begin{cases}
					(\frac{1}{2}-\frac{1}{2p\eta_{p}})A[u]-\frac{\mu}{2q}(1-\frac{2q\eta_q}{2p\eta_p})\mathcal{C}_{G}(q)a^{2q(1-\eta_{q})}A[u]^{q\eta_{q}},&\text{ if }\ \mu>0,\\
					(\frac{1}{2}-\frac{1}{2p\eta_{p}})A[u],&\text{ if }\ \mu\leq 0.
				\end{cases} 
			\end{align*}
			This concludes the proof except for $q=2_{\alpha}^{\sharp}$ with $\mu>0$. In this case, we have
			\begin{equation*}
				E[u]\geq (\frac{1}{2}-\frac{1}{2p\eta_{p}})(1-\frac{\mu}{q}\mathcal{C}_{G}(N,\alpha,q)a^{2q-2})A[u],
			\end{equation*}
			and we complete the proof since $0<\mu a^{2q-2}<q\mathcal{C}_{G}(N,\alpha,q)^{-1}$ for $q=2_{\alpha}^{\sharp}$.

			$(iv)$ The direct method in point $(i)-(iii)$ is invalid. 
			Motivated by \cite[Proposition 2.7]{MR2557725} and  \cite[Lemma 2.5]{MR4150876}, we shall argue by contradiction.
			Notice that for $u\in \mathcal{P}(a) $, we have
			\begin{align*}
				E[u]=E[u]-\frac{1}{2}P[u]=\frac{p-1}{2p}B_p[u]+\mu\frac{q\eta_{q}-1}{2q}B_q[u]\geq 0,
			\end{align*}
			which implies that $\inf\limits_{u\in\mathcal{P}(a)} E[u] \geq 0$.
			Suppose that $\{u_n\}\subset \mathcal{P}(a)$ is a sequence such that $A[u_n]\rightarrow +\infty$ and $E[u_n]$ is bounded form above.
			The key step is using different scaling for each $\{u_n\}$.
			More precisely, for $\{t_n\}\subset \mathbb{R}^{+}$, set $$t_n^2=\frac{1}{A[u_n]}>0\quad \text{and} \quad v_n(x):=t_n^{\frac{N}{2}}u_n(t_nx).$$
			Obverse that $t_n \rightarrow 0^{+}$ as $n \rightarrow +\infty$ and 
			\begin{align*}
				\|v_n\|_2=\|u_n\|_2=a,\quad A[v_n]=t_n^2A[u_n]=1,\quad B_s[v_n]=t_n^{2s\eta_s}B_s[u_n].
			\end{align*}
			Consequently, $\{v_n\}$ is a bounded sequence in $H^1(\mathbb{R}^N)$.
			Let $$\rho:=\limsup_{n\rightarrow+\infty}\left(\sup_{y\in\mathbb{R}^N}\int_{B(y,1)}\vert v_n \vert^2\dif x\right).$$
			To derive a contradiction, we consider two cases: \textbf{nonvanishing} and \textbf{vanishing}.
			\begin{itemize}
				\item \textbf{Nonvanishing}: that is $\rho >0$.
				Up to a subsequence, there exists a sequence $\{z_n\}\subset \mathbb{R}^N$ and $w\in H^{1}(\mathbb{R}^N)\setminus \{0\}$ satisfying
				\begin{align*}
					w_n:=v_n(\cdot + z_n)\rightharpoonup w\quad \text{in } H^{1}(\mathbb{R}^N)\quad \text{and}\quad w_n \rightarrow w\quad \text{a.e. in } \mathbb{R}^N.
				\end{align*}
				Then we obtain that
				\begin{align*}
					0\leq \frac{E[u_n]}{A[u_n]}=&\frac{p-1}{2p}-\frac{\mu}{2q}\left(1-\frac{q\eta_{q}}{p}\right)\frac{B_q[u_n]}{A[u_n]}\\
					=&\frac{p-1}{2p}-\frac{\mu}{2q}\left(1-\frac{q\eta_{q}}{p}\right)t_n^{2+N+\alpha}B_q[t_n^{-\frac{N}{2}}v_n]\\
					=&C_1-C_2t_n^{-2(q\eta_{q}-1)}\int_{\mathbb{R}^N}\int_{\mathbb{R}^N}\frac{\vert v_n(x) \vert^q \vert v_n(y) \vert^q}{\vert x-y \vert^{N-\alpha}}\dif x \dif y
					\rightarrow -\infty.
				\end{align*}
				This leads to a contradiction.
				\item \textbf{Vanishing}: that ia $\rho=0$. By Lions' vanishing Lemma \cite[Lemma I.1]{MR778974}, we obtain that $v_n \rightarrow 0$ in $L^s({\mathbb{R}^N})$ for $s\in (2,2^{*})$.
				Then we have
				\begin{align*}
					E[u_n]=E[t_n^{-\frac{N}{2}}v_n(t_n^{-1}\cdot)]
					\geq& E[t^{-\frac{N}{2}}v_n(t^{-1}\cdot)]
					\geq \frac{p-1}{2p}t^2-C_3t^{-2s\eta_{q}}\|v_n\|_{\frac{2q}{N+\alpha}}^{2q}=\frac{p-1}{2p}t^2+o_n(1),
				\end{align*}
				where we have used the Hardy-Littewood-Sobolev inequality \eqref{ieq:HLS} and $q\in (2_{\alpha}, 2_{\alpha}^{*})$ implying that $\frac{2q}{N+\alpha} \in (2, 2^{*})$ for $N\geq 3$.
				Therefore, we obtain a contradiction since $t$ can be chosen large enough and $E$ is bounded form above.
			\end{itemize}
	\end{proof}
	\section{$L^2$-supercritical leading term with focusing perturbation}
	In this section, we shall always assume that $\mu,a>0$, $2_{\alpha}^{\sharp}<p\leq2_{\alpha}^{*}$ and $2_{\alpha}<q<p$.
	\subsection{$L^2$-subcritical perturbation}
	\subsubsection{The existence of bounded Palais-Smale sequences}
	According to $2_{\alpha}< q< 2^{\sharp}_{\alpha}< p\leq 2_{\alpha}^{*}$ and $\mu,a>0$, it follows that $0<q\eta_{q}<1<p\eta_{p}$ and $f^{\prime\prime}_u(s)$ defined by \eqref{eq:scaling_function_2nd_derivative} possesses a unique zero $s_u^{\diamond}=\left[\frac{\eta_{q}(1-q\eta_{q})B_q[u]}{\eta_{p}(p\eta_{p}-1)B_p[u]}\right]^{\frac{1}{p\eta_{p}-q\eta_{q}}}$. 
	Moreover, it holds that
	$$f^{\prime\prime}_u(s_u^{\diamond})=0,\quad f^{\prime\prime}_u(s)>0\text{ if } 0<s<s_u^{\diamond},\quad f^{\prime\prime}_u(s)<0\text{ if } s>s_u^{\diamond}.$$
	\begin{lemma}\label{lm:derivative_positive}
		Let $N\geq 3$, $\alpha\in (0,N)$, $2_{\alpha}< q< 2^{\sharp}_{\alpha}< p\leq 2_{\alpha}^{*}$ and $\mu,a>0$.
		Suppose that \eqref{assumptions:basic} is true.
		For any  $u\in S(a)$, it holds that $f^{\prime}_u(s_u^{\diamond})>0$.
	\end{lemma}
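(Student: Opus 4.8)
The plan is to evaluate $f_u'$ at the critical point $s_u^{\diamond}$ of $f_u''$ and reduce everything to the single scaling-invariant quantity appearing in \eqref{assumptions:basic}. Recall from \eqref{eq:scaling_function_1st_derivative} that
\[
f_u'(s)=A[u]-\eta_{p}s^{p\eta_{p}-1}B_p[u]-\mu\eta_{q}s^{q\eta_{q}-1}B_q[u].
\]
Since $2_{\alpha}<q<2_{\alpha}^{\sharp}<p\le 2_{\alpha}^{*}$ and $\mu,a>0$ give $0<q\eta_q<1<p\eta_p$, the point $s_u^{\diamond}=\left[\frac{\eta_{q}(1-q\eta_{q})B_q[u]}{\eta_{p}(p\eta_{p}-1)B_p[u]}\right]^{1/(p\eta_{p}-q\eta_{q})}$ is well defined and strictly positive for every $u\in S(a)\setminus\{0\}$. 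First I would substitute this explicit value into $f_u'$; both remaining terms become monomials in $B_p[u]$ and $B_q[u]$ with exponents summing to one, so after collecting powers one obtains
\[
f_u'(s_u^{\diamond})=A[u]-\kappa\,B_p[u]^{\frac{1-q\eta_q}{p\eta_p-q\eta_q}}B_q[u]^{\frac{p\eta_p-1}{p\eta_p-q\eta_q}},
\]
for an explicit positive constant $\kappa=\kappa(N,\alpha,p,q,\mu)$ arising from the algebra (it is $\eta_p\big(\tfrac{\eta_q(1-q\eta_q)}{\eta_p(p\eta_p-1)}\big)^{\frac{p\eta_p-1}{p\eta_p-q\eta_q}}+\mu\eta_q\big(\tfrac{\eta_q(1-q\eta_q)}{\eta_p(p\eta_p-1)}\big)^{-\frac{1-q\eta_q}{p\eta_p-q\eta_q}}$, which one simplifies to a single product of powers).

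Next I would bound $B_p[u]$ and $B_q[u]$ from above using the Gagliardo–Nirenberg/HLS estimates \eqref{estimate:subcritical} and (when $p=2_{\alpha}^{*}$) \eqref{estimate:critical}, namely $B_q[u]\le \mathcal{C}_G(q)a^{2q(1-\eta_q)}A[u]^{q\eta_q}$ and $B_p[u]\le \mathcal{C}_G(p)a^{2p(1-\eta_p)}A[u]^{p\eta_p}$ (resp.\ $B_p[u]\le \mathcal{S}_H^{-p}A[u]^{p}$). Plugging these in, the exponents of $A[u]$ combine as $p\eta_p\cdot\tfrac{1-q\eta_q}{p\eta_p-q\eta_q}+q\eta_q\cdot\tfrac{p\eta_p-1}{p\eta_p-q\eta_q}=1$, so the upper bound for the subtracted term is $C\cdot A[u]$ with $C$ depending only on $N,\alpha,p,q,\mu,a$ — crucially of the same homogeneity in $A[u]$ as the leading term. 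Therefore
\[
f_u'(s_u^{\diamond})\ge A[u]\Bigl(1-C(N,\alpha,p,q,\mu,a)\Bigr),
\]
and the factor $C$ is exactly (after collecting all constants and the powers of $a$) the left-hand side of \eqref{assumptions:basic} divided by $\Xi(N,\alpha,p,q)$. Hence assumption \eqref{assumptions:basic} is precisely the statement $C<1$, giving $f_u'(s_u^{\diamond})>A[u](1-C)>0$ since $A[u]>0$ for $u\in S(a)$.

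The main obstacle is the bookkeeping: one must verify that after inserting $s_u^{\diamond}$ and the two sharp inequalities, the resulting constant matches $\Xi(N,\alpha,p,q)$ on the nose in each of the relevant cases ($\mu>0$ with $p<2_{\alpha}^{*}$ versus $p=2_{\alpha}^{*}$, where $\eta_p=1$ forces $\mathcal{C}_G(p)$ to be replaced by $\mathcal{S}_H^{-p}$ and $p\eta_p$ by $p$). This is only a careful exponent computation — tracking the powers $\tfrac{1-q\eta_q}{p\eta_p-q\eta_q}$ and $\tfrac{p\eta_p-1}{p\eta_p-q\eta_q}$ through the HLS constants and the factors $a^{2q(1-\eta_q)}$, $a^{2p(1-\eta_p)}$ — and it is the reason \eqref{assumptions:basic} is stated in exactly that homogeneous form. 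No compactness or PDE input is needed; the lemma is purely a pointwise inequality on $S(a)$, valid for all $u$, which is why it will feed into the later fibering-map analysis of $\mathcal{P}^{\pm}(a)$.
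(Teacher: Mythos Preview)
Your proposal is correct and follows essentially the same approach as the paper: substitute $s_u^{\diamond}$ into $f_u'$, collapse the two nonlocal terms into a single expression of the form $\kappa\,B_p[u]^{(1-q\eta_q)/(p\eta_p-q\eta_q)}B_q[u]^{(p\eta_p-1)/(p\eta_p-q\eta_q)}$, apply \eqref{estimate:subcritical} (resp.\ \eqref{estimate:critical} when $p=2_{\alpha}^{*}$), observe that the exponents of $A[u]$ add to $1$, and reduce to the scalar inequality \eqref{assumptions:basic}. The only cosmetic difference is that the paper first uses the relation $f_u''(s_u^{\diamond})=0$ to merge the two subtracted terms into the single expression $\mu\eta_q\frac{p\eta_p-q\eta_q}{p\eta_p-1}(s_u^{\diamond})^{q\eta_q-1}B_q[u]$ before inserting the explicit value of $s_u^{\diamond}$, which slightly streamlines the constant-tracking; your direct substitution achieves the same thing. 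One small caveat: the formula for $s_u^{\diamond}$ you quote (and which appears in the paper's preamble to \S3.1.1) is missing the factor $\mu$ in the numerator---the correct expression from $f_u''(s)=0$ is $s_u^{\diamond}=\bigl[\mu\eta_q(1-q\eta_q)B_q[u]/(\eta_p(p\eta_p-1)B_p[u])\bigr]^{1/(p\eta_p-q\eta_q)}$, and this $\mu$ is needed for the constants to match $\Xi$ exactly.
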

	\begin{proof}
		We first consider the case $p\in (2_{\alpha}^{\sharp},2_{\alpha}^{*})$. 
		It follows from $f^{\prime\prime}_u(s_u^{\diamond})=0$ that
		$$\eta_{p}(p\eta_{p}-1)(s_u^{\diamond})^{p\eta_{p}-1}B_p[u]=\mu\eta_{q}(1-q\eta_{q}) (s_u^{\diamond})^{q\eta_{q}-1}B_q[u].$$ 
		Consequently, by \eqref{assumptions:basic} and \eqref{estimate:subcritical}, it follows that
		\begin{align*}
			f^{\prime}_u(s_u^{\diamond})=&A[u]-\eta_{p}(s_u^{\diamond})^{p\eta_{p}-1}B_p[u]-\mu\eta_{q} (s_u^{\diamond})^{q\eta_{q}-1}B_q[u]\\
			= & A[u]-\mu\eta_{q}\frac{p\eta_{p}-q\eta_{q}}{p\eta_{p}-1}(s_u^{\diamond})^{q\eta_{q}-1}B_q[u]\\
			= & A[u]-\mu\eta_{q}\frac{p\eta_{p}-q\eta_{q}}{p\eta_{p}-1}\left[\frac{\mu\eta_{q}(1-q\eta_{q})B_q[u]}{\eta_{p}(p\eta_{p}-1)B_p[u]}\right]^{\frac{q\eta_{q}-1}{p\eta_{p}-q\eta_{q}}}B_q[u]\\
			=& (B_p[u])^{\frac{1-q\eta_{q}}{p\eta_{p}-q\eta_{q}}}\left\{A[u](B_p[u])^{\frac{q\eta_{q}-1}{p\eta_{p}-q\eta_{q}}}-\mu^{\frac{p\eta_{p}-1}{p\eta_{p}-q\eta_{q}}}\eta_{q}\frac{p\eta_{p}-q\eta_{q}}{p\eta_{p}-1}\left[\frac{\eta_{q}(1-q\eta_{q})}{\eta_{p}(p\eta_{p}-1)}\right]^{\frac{q\eta_{q}-1}{p\eta_{p}-q\eta_{q}}}(B_q[u])^{\frac{p\eta_{p}-1}{p\eta_{p}-q\eta_{q}}}\right\}\\
			\geq &
			(B_p[u])^{\frac{1-q\eta_{q}}{p\eta_{p}-q\eta_{q}}}(A[u])^{\frac{p\eta_{p}-1}{p\eta_{p}-q\eta_{q}}}\left\{(\mathcal{C}_{G}(p)a^{2p(1-\eta_{p})})^{\frac{q\eta_{q}-1}{p\eta_{p}-q\eta_{q}}}\right.\\
			&\left.-\mu^{\frac{p\eta_{p}-1}{p\eta_{p}-q\eta_{q}}}\eta_{q}\frac{p\eta_{p}-q\eta_{q}}{p\eta_{p}-1}\left[\frac{\eta_{q}(1-q\eta_{q})}{\eta_{p}(p\eta_{p}-1)}\right]^{\frac{q\eta_{q}-1}{p\eta_{p}-q\eta_{q}}}(\mathcal{C}_{G}(q)a^{2q(1-\eta_{q})})^{\frac{p\eta_{p}-1}{p\eta_{p}-q\eta_{q}}}\right\}
			>0.
		\end{align*}
		The main difference of proof for the case $p=2_{\alpha}^{*}$ is replacing the estimate \eqref{estimate:subcritical} by \eqref{estimate:critical} and we omit the details.
	\end{proof}
	\begin{lemma}
		Let $N\geq 3$, $\alpha\in (0,N)$, $2_{\alpha}< q< 2^{\sharp}_{\alpha}< p\leq 2_{\alpha}^{*}$ and $\mu,a>0$.
		Suppose that \eqref{assumptions:basic} holds.
		Then we have  $\mathcal{P}^{0}(a)=\emptyset$.
	\end{lemma}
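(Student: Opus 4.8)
The goal is to show that $\mathcal{P}^0(a)=\emptyset$, i.e. no $u\in S(a)$ can satisfy simultaneously $f_u'(1)=0$ and $f_u''(1)=0$. The natural plan is to argue by contradiction: suppose $u\in\mathcal{P}^0(a)$. Then both $f_u'$ and $f_u''$ vanish at $s=1$. By the discussion immediately preceding Lemma \ref{lm:derivative_positive}, the function $f_u''$ has a \emph{unique} zero on $(0,+\infty)$, namely $s_u^\diamond$, with $f_u''>0$ on $(0,s_u^\diamond)$ and $f_u''<0$ on $(s_u^\diamond,+\infty)$. Hence $f_u''(1)=0$ forces $1=s_u^\diamond$. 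But then $f_u'(1)=f_u'(s_u^\diamond)$, and Lemma \ref{lm:derivative_positive} (which applies precisely because $u\in S(a)$ and \eqref{assumptions:basic} holds) gives $f_u'(s_u^\diamond)>0$, contradicting $f_u'(1)=0$.

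So the entire proof reduces to combining the uniqueness of the zero of $f_u''$ (a convexity/monotonicity fact about the explicit one-variable function in \eqref{eq:scaling_function_2nd_derivative}, valid since $0<q\eta_q<1<p\eta_p$ and $B_p[u],B_q[u]>0$ for $u\neq 0$) with the strict positivity $f_u'(s_u^\diamond)>0$ established in Lemma \ref{lm:derivative_positive}. There is essentially no obstacle here; the real work was already done in Lemma \ref{lm:derivative_positive}, where assumption \eqref{assumptions:basic} was used together with the Gagliardo--Nirenberg bound \eqref{estimate:subcritical} (or the HLS bound \eqref{estimate:critical} when $p=2_\alpha^*$) to control $A[u]$ from below in terms of $B_p[u]$ and $B_q[u]$ on $S(a)$. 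I would write the proof in three lines: assume $u\in\mathcal{P}^0(a)$; note $f_u''(1)=0$ and the uniqueness of $s_u^\diamond$ give $1=s_u^\diamond$; then $0=f_u'(1)=f_u'(s_u^\diamond)>0$ by Lemma \ref{lm:derivative_positive}, a contradiction. Therefore $\mathcal{P}^0(a)=\emptyset$.

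One point worth a sentence of care: this uses that $B_q[u]>0$ and $B_p[u]>0$ for every $u\in S(a)$, so that $s_u^\diamond$ is well defined and positive and the sign analysis of $f_u''$ is valid; this is immediate since $(I_\alpha\ast|u|^s)|u|^s$ is a positive kernel convolution and $u\not\equiv0$. With that in hand the argument is complete.
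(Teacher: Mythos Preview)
Your proof is correct and follows essentially the same approach as the paper: argue by contradiction, use the uniqueness of the zero $s_u^\diamond$ of $f_u''$ to force $s_u^\diamond=1$, and then invoke Lemma~\ref{lm:derivative_positive} to get $f_u'(1)=f_u'(s_u^\diamond)>0$, contradicting $f_u'(1)=0$. The extra remark about $B_p[u],B_q[u]>0$ ensuring $s_u^\diamond$ is well defined is a nice point of care that the paper leaves implicit.
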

	\begin{proof}
		We shall argue by contradiction and suppose that there exists a nontrivial function $u$ belonging to $\mathcal{P}^{0}(a)$.
		According to the fact $f_{u}^{\prime\prime}(1)=0$ and $s_{u}^{\diamond}$ is the unique zero of $f^{\prime\prime}_u(s)$ on $(0,+\infty)$, it follows that $s_u^{\diamond}=1$, and hence  $f^{\prime}_u(s_u^{\diamond})=f^{\prime}_u(1)=0$.
		This leads to a contradiction since Lemma \ref{lm:derivative_positive} implies that $f^{\prime}_u(1)=f^{\prime}_u(s_u^{\diamond})>0$.
	\end{proof}
	\begin{lemma}\label{lm:unique_minimum_maximum}
		Let $N\geq 3$, $\alpha\in (0,N)$, $2_{\alpha}< q< 2^{\sharp}_{\alpha}< p\leq 2_{\alpha}^{*}$ and $\mu,a>0$.
		Suppose that \eqref{assumptions:basic} holds.
		For any $u\in S(a)$, there exists
		\begin{enumerate}
			\item[(i)] a unique $s_{u}^{+} \in\left(0, s_u^{\diamond}\right)$ such that $s_{u}^{+}$ is a unique local minimum point for $f_{u}$ and $s_{u}^{+}  \circ  u \in \mathcal{P}^{+}(a)$; 
			\item[(ii)] a unique $s_{u}^{-} \in\left(s_u^{\diamond}, \infty\right)$ such that $s_{u}^{-}$is a unique local maximum point for $f_{u}$ and $s_{u}^{-}  \circ  u \in \mathcal{P}^{-}(a)$.
		\end{enumerate}
		Moreover, the maps $u \in S(a) \mapsto s_{u}^{+} \in \mathbb{R}$ and $u \in S(a) \mapsto s_{u}^{-} \in \mathbb{R}$ are of class $\mathcal{C}^{1}$.
	\end{lemma}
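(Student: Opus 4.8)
The plan is to read off everything from the sign structure of $f_u'$ and $f_u''$ recorded just before Lemma~\ref{lm:derivative_positive}, combined with the strict inequality $f_u'(s_u^\diamond)>0$ from that lemma. Fix $u\in S(a)$; since $\mu>0$ we have $B_p[u],B_q[u]>0$ and $0<q\eta_q<1<p\eta_p$, so from \eqref{eq:scaling_function_1st_derivative} we get $f_u'(s)\to-\infty$ as $s\to0^+$ (the term $\mu\eta_q s^{q\eta_q-1}B_q[u]$ blows up) and $f_u'(s)\to-\infty$ as $s\to+\infty$ (the term $\eta_p s^{p\eta_p-1}B_p[u]$ dominates). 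Recalling that $f_u''$ vanishes only at $s_u^\diamond$, is positive on $(0,s_u^\diamond)$ and negative on $(s_u^\diamond,\infty)$ (as noted before Lemma~\ref{lm:derivative_positive}), the function $f_u'$ is strictly increasing on $(0,s_u^\diamond)$ and strictly decreasing on $(s_u^\diamond,\infty)$; hence $f_u'$ attains its global maximum over $(0,\infty)$ precisely at $s_u^\diamond$, and this maximum value is $f_u'(s_u^\diamond)>0$ by Lemma~\ref{lm:derivative_positive}.

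Existence and uniqueness of $s_u^\pm$ then follow by the intermediate value theorem. On $(0,s_u^\diamond)$ the continuous strictly increasing function $f_u'$ runs from $-\infty$ up to $f_u'(s_u^\diamond)>0$, so it has a unique zero $s_u^+\in(0,s_u^\diamond)$; since $f_u''(s_u^+)>0$, this $s_u^+$ is a strict local minimum of $f_u$, and because $f_u'<0$ on $(0,s_u^+)$ and $f_u'>0$ on $(s_u^+,s_u^\diamond)$ it is the only critical point of $f_u$ in $(0,s_u^\diamond)$. Symmetrically, on $(s_u^\diamond,\infty)$ the strictly decreasing $f_u'$ runs from $f_u'(s_u^\diamond)>0$ down to $-\infty$, so there is a unique zero $s_u^-\in(s_u^\diamond,\infty)$, with $f_u''(s_u^-)<0$, i.e.\ a strict local maximum of $f_u$; since $f_u'(s_u^\diamond)\ne0$, these are all the critical points of $f_u$ on $(0,\infty)$. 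To identify membership in $\mathcal{P}^\pm(a)$, I would use the scaling identity $\tau\circ(s\circ u)=(s\tau)\circ u$, which by \eqref{eq:scaling_function} gives $f_{s\circ u}(\tau)=2E[(s\tau)\circ u]=f_u(s\tau)$, hence $f_{s\circ u}'(1)=s\,f_u'(s)$ and $f_{s\circ u}''(1)=s^2 f_u''(s)$. Taking $s=s_u^+$ yields $f_{s_u^+\circ u}'(1)=0$ and $f_{s_u^+\circ u}''(1)=(s_u^+)^2 f_u''(s_u^+)>0$, so $s_u^+\circ u\in\mathcal{P}^+(a)$; taking $s=s_u^-$ yields $s_u^-\circ u\in\mathcal{P}^-(a)$.

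For the $\mathcal{C}^1$-regularity of $u\mapsto s_u^\pm$ I would apply the implicit function theorem to $\Phi(s,v):=A[v]-\eta_p s^{p\eta_p-1}B_p[v]-\mu\eta_q s^{q\eta_q-1}B_q[v]$, which is $\mathcal{C}^1$ on $(0,\infty)\times(H^1(\mathbb{R}^N)\setminus\{0\})$ because $v\mapsto(A[v],B_p[v],B_q[v])$ is. At $(s_u^+,u)$ one has $\Phi=0$ and $\partial_s\Phi=f_u''(s_u^+)>0$, so there is a $\mathcal{C}^1$ map $v\mapsto\sigma(v)$ near $u$ with $\sigma(u)=s_u^+$ and $\Phi(\sigma(v),v)\equiv0$; by continuity $\sigma(v)\in(0,s_v^\diamond)$ for $v$ close to $u$, and the uniqueness just established forces $\sigma(v)=s_v^+$. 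Composing with the $\mathcal{C}^1$ inclusion $S(a)\hookrightarrow H^1(\mathbb{R}^N)$ shows $u\mapsto s_u^+$ is $\mathcal{C}^1$ on $S(a)$; the case $s_u^-$ is identical, using $\partial_s\Phi=f_u''(s_u^-)<0$.

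The only delicate ingredient is the strict positivity $f_u'(s_u^\diamond)>0$, i.e.\ Lemma~\ref{lm:derivative_positive}, which is precisely where assumption \eqref{assumptions:basic} and the Gagliardo-Nirenberg/HLS estimates \eqref{estimate:subcritical}--\eqref{estimate:critical} are used; with that in hand the argument is elementary one-variable calculus together with the implicit function theorem, and I do not expect further obstacles.
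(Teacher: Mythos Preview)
Your proof is correct and follows essentially the same route as the paper: both use the sign pattern of $f_u''$ together with Lemma~\ref{lm:derivative_positive} and the endpoint behaviour $f_u'(s)\to-\infty$ as $s\to0^+,\,+\infty$ to locate the two critical points, and both invoke the implicit function theorem for the $\mathcal{C}^1$-dependence (the paper simply cites \cite[Lemma~5.3(4)]{SOAVE20206941} at that step, whereas you spell it out). Your explicit verification that $s_u^\pm\circ u\in\mathcal{P}^\pm(a)$ via the scaling identity $f_{s\circ u}(\tau)=f_u(s\tau)$ is a nice addition the paper leaves implicit.
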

	\begin{proof}
		Since $0<q\eta_q<1<p\eta_p$ and the formula \eqref{eq:scaling_function_1st_derivative}, it follows that
		$$ f^{\prime}_u(s)\rightarrow -\infty \text{ as } s\rightarrow 0+,\quad \text{and}\quad
		f^{\prime}_u(s)\rightarrow -\infty \text{ as } s\rightarrow +\infty.$$
		By Lemma \ref{lm:derivative_positive}, it holds that $f^{\prime}_u(s_u^{\diamond})>0$.
		Therefore, there exists at least two zeros $s_u^+,s_u^-$ of $f_u(s)$ with $0<s_u^+<s_u^{\diamond}<s_u^{-}$ on $\mathbb{R}^{+}$.
		Taking into consideration that
		$$f^{\prime\prime}_u(s_u^{\diamond})=0,\quad f^{\prime\prime}_u(s)>0\text{ if } 0<s<s_u^{\diamond},\quad f^{\prime\prime}_u(s)<0\text{ if } s>s_u^{\diamond},$$
		we have $s_{u}^{+}$ (resp. $s_{u}^{-}$) is a unique local minimum (resp. maximum) point for $f_{u}(s)$ and $s_{u}  \circ  u^+ \in \mathcal{P}^{+}(a)$ (resp. $s_{u}^{-}  \circ  u \in \mathcal{P}^{-}(a)$).
		The remain parts is similar to \cite[Lemma 5.3(4)]{SOAVE20206941}.
	\end{proof}
	
	\begin{lemma}\label{lm:A_bounded_below}
		Let $N\geq 3$, $\alpha\in (0,N)$, $2_{\alpha}< q< 2^{\sharp}_{\alpha}< p\leq 2_{\alpha}^{*}$ and $\mu,a>0$.
		Suppose that \eqref{assumptions:basic} holds.
		For any $u\in S(a)$, there exists
		\begin{enumerate}
			\item[(i)] $E[u]<0$ for all $u\in \mathcal{P}^{+}(a)$;
			\item[(ii)] there exists a $\delta>0$ such that $A[u]\geq \delta$ for all $u\in\mathcal{P}^{-}(a)$.
		\end{enumerate}
	\end{lemma}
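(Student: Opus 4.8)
The plan is to exploit the already-established structure of the scaling function $f_u$ together with the Pohozaev-type constraint $P[u]=0$. For part (i), recall that $u\in\mathcal{P}^{+}(a)$ means $f'_u(1)=0$ and $f''_u(1)>0$, so by the analysis preceding Lemma \ref{lm:unique_minimum_maximum} we have $1=s_u^{+}\in(0,s_u^{\diamond})$ and $1$ is the unique local minimum point of $f_u$ on $(0,+\infty)$. Since $f_u(s)=sA[u]-\frac{\mu}{q}s^{q\eta_q}B_q[u]-\frac1p s^{p\eta_p}B_p[u]$ and $0<q\eta_q<1$, we have $f_u(s)\to 0^{-}$ is not quite right; rather $f_u(s)\sim sA[u]>0$ only if the subcritical term were absent. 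The correct observation is that as $s\to 0^{+}$, the dominant term is $-\frac{\mu}{q}s^{q\eta_q}B_q[u]<0$ (since $q\eta_q<1$), hence $f_u(s)<0$ for $s$ small. Because $1=s_u^{+}$ is the unique local minimum in $(0,s_u^{\diamond})$ and $f'_u<0$ on $(0,s_u^{+})$ while $f'_u>0$ on $(s_u^{+},s_u^{\diamond})$, $f_u$ is decreasing then increasing near $1$, so $f_u(1)\le f_u(s)$ for $s$ in a left-neighborhood of $1$ where $f_u<0$; thus $f_u(1)<0$, i.e. $2E[u]=f_u(1)<0$.

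For part (ii), the goal is a uniform lower bound $A[u]\ge\delta>0$ on $\mathcal{P}^{-}(a)$. The natural route is: from $P[u]=0$ we have $A[u]=\eta_p B_p[u]+\mu\eta_q B_q[u]$, and from $f''_u(1)<0$ we get $\eta_p(p\eta_p-1)B_p[u]>\mu\eta_q(1-q\eta_q)B_q[u]$, so in particular $A[u]\ge\eta_p B_p[u]$ up to combining these with the sign of $\mu>0$. Now apply the Gagliardo–Nirenberg–Hartree inequality \eqref{estimate:subcritical} (or \eqref{estimate:critical} when $p=2_\alpha^{*}$) to bound $B_p[u]\le \mathcal{C}_G(p)a^{2p(1-\eta_p)}A[u]^{p\eta_p}$ (resp. $\le \mathcal{S}_H^{-p}A[u]^{p}$). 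Substituting gives, schematically, $A[u]\le C\,A[u]^{p\eta_p}+C'\mu\,A[u]^{q\eta_q}$ or a refined version using $f''_u(1)<0$ to control the $B_q$ term by the $B_p$ term, ultimately yielding $A[u]\le C\,A[u]^{p\eta_p}$ for some constant $C=C(N,\alpha,p,q,a)$ (and when $\mu>0$ one must check the $B_q$ contribution does not spoil this — here the inequality $f''_u(1)<0$ is exactly what lets us absorb it, since it forces $B_q[u]\lesssim B_p[u]$). Since $p\eta_p>1$, this forces $A[u]^{p\eta_p-1}\ge C^{-1}$, i.e. $A[u]\ge\delta:=C^{-1/(p\eta_p-1)}>0$.

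The main obstacle will be handling the perturbation term $\mu\eta_q B_q[u]$ in part (ii) cleanly: a crude estimate $A[u]=\eta_p B_p[u]+\mu\eta_q B_q[u]$ mixes two different powers $A[u]^{p\eta_p}$ and $A[u]^{q\eta_q}$ with $q\eta_q<1<p\eta_p$, and the $A[u]^{q\eta_q}$ term goes the "wrong way" for a contradiction argument. The resolution is to use the defining inequality $f''_u(1)<0$ of $\mathcal{P}^{-}(a)$ to trade the $B_q$ term against the $B_p$ term — concretely, $\mu\eta_q(1-q\eta_q)B_q[u]<\eta_p(p\eta_p-1)B_p[u]$ implies $\mu\eta_q B_q[u]<\frac{p\eta_p-1}{1-q\eta_q}\eta_p B_p[u]$, so $A[u]=\eta_p B_p[u]+\mu\eta_q B_q[u]<\big(1+\frac{p\eta_p-1}{1-q\eta_q}\big)\eta_p B_p[u]=\frac{p\eta_p-q\eta_q}{1-q\eta_q}\eta_p B_p[u]$, after which the single GNH/HLS bound on $B_p[u]$ closes the argument. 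One should double-check the borderline case $p=2_\alpha^{*}$ uses \eqref{estimate:critical} in place of \eqref{estimate:subcritical} but is otherwise identical, and that $\delta$ indeed depends only on $N,\alpha,p,q,a$ (and not on the particular $u$), which is clear from the explicit form of the constants.
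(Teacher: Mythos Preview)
Your proposal is correct. For part~(ii) your argument is essentially identical to the paper's: from $f''_u(1)<0$ you get $\mu\eta_q B_q[u]<\frac{p\eta_p-1}{1-q\eta_q}\eta_p B_p[u]$, combine with $P[u]=0$ to obtain $A[u]<\frac{\eta_p(p\eta_p-q\eta_q)}{1-q\eta_q}B_p[u]$, and then close with \eqref{estimate:subcritical} or \eqref{estimate:critical}. This is exactly what the paper does.

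For part~(i) you take a genuinely different route. The paper proceeds by direct algebraic elimination: using $P[u]=0$ it rewrites $E[u]=\frac{p\eta_p-1}{2p}B_p[u]+\frac{\mu(q\eta_q-1)}{2q}B_q[u]$, then substitutes the inequality $B_p[u]<\frac{\mu\eta_q(1-q\eta_q)}{\eta_p(p\eta_p-1)}B_q[u]$ coming from $f''_u(1)>0$ to obtain the explicit bound $E[u]<-\frac{\mu(1-q\eta_q)(p\eta_p-q\eta_q)}{2pq\eta_p}B_q[u]<0$. Your argument instead exploits the global shape of the fibering map established in Lemma~\ref{lm:unique_minimum_maximum}: since $u\in\mathcal{P}^+(a)$ forces $s_u^+=1$, and $f'_u<0$ on $(0,s_u^+)$, the function $f_u$ is strictly decreasing on $(0,1]$ from $f_u(0)=0$, so $2E[u]=f_u(1)<0$. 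Your approach is shorter and more conceptual once Lemma~\ref{lm:unique_minimum_maximum} is available, while the paper's computation has the advantage of giving a quantitative negative upper bound on $E[u]$ in terms of $B_q[u]$. One minor comment: your write-up of (i) is a bit convoluted (the detour through ``$f_u(s)<0$ for $s$ small'' and ``a left-neighborhood of $1$'' is unnecessary); the cleanest statement is simply $f_u(1)<f_u(0)=0$ by strict monotonicity on $(0,1]$.
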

	\begin{proof}
		\begin{enumerate}
			\item[(i)] If $u\in \mathcal{P}^{+}(a)$, we have
			\begin{align*}
				0=A[u]-\eta_{p}B_p[u]-\mu\eta_{q} B_q[u],\quad
				0<-\eta_{p}(p\eta_{p}-1)B_p[u]+\mu\eta_{q}(1-q\eta_{q}) B_q[u].
			\end{align*}
			Therefore, it follows that
			\begin{align*}
				E[u]=&
				\frac{p\eta_{p}-1}{2p}B_p[u]+\frac{\mu(q\eta_{q}-1)}{2q}B_q[u]
				< 
				-\frac{\mu(1-q\eta_{q})(p\eta_{p}-q\eta_{q})}{2pq\eta_{p}}B_q[u].
			\end{align*}
			Since $2_{\alpha}<q<2_{\alpha}^{\sharp}<p\leq 2_{\alpha}^{*}$, it holds that $E[u]<0$ for all $u\in \mathcal{P}^{+}(a)$.
			\item[(ii)] 
			If $u\in \mathcal{P}^{-}(a)$, we have
			\begin{align*}
				0=A[u]-\eta_{p}B_p[u]-\mu\eta_{q} B_q[u],\quad
				0>-\eta_{p}(p\eta_{p}-1)B_p[u]+\mu\eta_{q}(1-q\eta_{q}) B_q[u].
			\end{align*}
			Similarly, we obtain
			\begin{align*}
				A[u]
				<\frac{\eta_{p}(p\eta_{p}-q\eta_{q})}{1-q\eta_{q}}B_p[u]
				\leq  \begin{cases}
					\frac{\eta_{p}(p\eta_{p}-q\eta_{q})}{1-q\eta_{q}}\mathcal{C}_{G}(p)a^{2p(1-\eta_{p})}A[u]^{p\eta_{p}},&\text{if } p\in (2_{\alpha}^{\sharp},2_{\alpha}^{*}),\\
					\frac{p-q\eta_{q}}{1-q\eta_{q}}\mathcal{S}_{H}^{-p}A[u]^{p},&\text{if } p=2_{\alpha}^{*}.\\
				\end{cases} 
			\end{align*}
			Consequently, it follows from $2_{\alpha}^{\sharp}<p\leq 2_{\alpha}^{*}$ that there exists a $\delta>0$ satisfying $A[u]\geq \delta$ for all $u\in\mathcal{P}^{-}(a)$.
		\end{enumerate}
	\end{proof}
	We define 
	$$S_r(a)=S(a)\cap H^1_{\operatorname{rad}}(\mathbb{R}^N),\quad  \mathcal{P}_{r}(a)=\mathcal{P}(a)\cap H^1_{\operatorname{rad}}(\mathbb{R}^N),\quad \mathcal{P}_{r}^{\pm}(a)=\mathcal{P}^{\pm}(a)\cap H^1_{\operatorname{rad}}(\mathbb{R}^N)$$
	and
	$$I^{\pm}:S(a)\rightarrow\mathbb{R},\quad I^{\pm}[u]:=E[s_u^{\pm}\circ u].$$
	\begin{lemma}\label{lm:inf_Pohozaev_pm}
		Let $N\geq 3$, $\alpha\in (0,N)$, $2_{\alpha}< q< 2^{\sharp}_{\alpha}< p\leq 2_{\alpha}^{*}$ and $\mu>0$.
		Suppose that \eqref{assumptions:basic} holds.
		For any $u\in S(a)$, there exists
		$$
		\gamma^{\pm}(a):=\inf_{u\in\mathcal{P}^{\pm}(a)}E[u]=\inf_{u\in\mathcal{P}^{\pm}_{r}(a)}E[u].
		$$
		Moreover, if $\inf\limits_{u\in\mathcal{P}^{\pm}(a)}E[u]$ is reached by $u_0$, then $u_0$ must be a Schwarz symmetric function.
	\end{lemma}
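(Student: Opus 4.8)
The plan is to establish the identity $\gamma^{\pm}(a)=\inf_{u\in\mathcal{P}^{\pm}_r(a)}E[u]$ by a symmetrization argument combined with the scaling structure already developed in Lemma~\ref{lm:unique_minimum_maximum}. Since $\mathcal{P}^{\pm}_r(a)\subset\mathcal{P}^{\pm}(a)$, the inequality $\gamma^{\pm}(a)\leq\inf_{u\in\mathcal{P}^{\pm}_r(a)}E[u]$ is immediate; the work is in the reverse inequality. Given any $u\in\mathcal{P}^{\pm}(a)$, I would pass to its Schwarz symmetrization $u^*\in S_r(a)$. The key facts are: $\|u^*\|_2=\|u\|_2=a$, the Pólya--Szegő inequality $A[u^*]\leq A[u]$, and the fact that the Riesz potential energy does not decrease under symmetrization, i.e. $B_s[u^*]\geq B_s[u]$ for $s\in\{p,q\}$ (Riesz rearrangement inequality, valid since $I_\alpha$ is a symmetric decreasing kernel). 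Thus $A[u^*]\leq A[u]$ while both $B_p[u^*]\geq B_p[u]$ and $B_q[u^*]\geq B_q[u]$.

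Next I would apply Lemma~\ref{lm:unique_minimum_maximum} to $u^*$: there is a unique $s^\pm_{u^*}>0$ with $s^\pm_{u^*}\circ u^*\in\mathcal{P}^\pm(a)$, and it lies in $H^1_{\operatorname{rad}}(\mathbb{R}^N)$ since the $\circ$-scaling preserves radial symmetry, so $s^\pm_{u^*}\circ u^*\in\mathcal{P}^\pm_r(a)$. It then remains to show $E[s^\pm_{u^*}\circ u^*]\leq E[u]$. Here I would use that $u\in\mathcal{P}^\pm(a)$ means $s=1$ is exactly the local minimum (for $\mathcal{P}^+$) or local maximum (for $\mathcal{P}^-$) of $f_u$, so $E[u]=\tfrac12 f_u(1)=\tfrac12\max/\min$ of the relevant branch. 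For the $\mathcal{P}^-$ case: $2E[u]=f_u(1)=\max_{s>0}f_u(s)$ (the global maximum on the decreasing-at-infinity branch past $s^\diamond_u$, which by the shape of $f_u$ from Lemma~\ref{lm:unique_minimum_maximum} equals $\max_{s\geq s^\diamond_u}f_u(s)$), whereas $2E[s^-_{u^*}\circ u^*]=f_{u^*}(s^-_{u^*})=\max_{s>0}f_{u^*}(s)$; comparing coefficient-by-coefficient in $f_u(s)=sA[u]-\tfrac{\mu}{q}s^{q\eta_q}B_q[u]-\tfrac1p s^{p\eta_p}B_p[u]$ and using $A[u^*]\leq A[u]$, $B_p[u^*]\geq B_p[u]$, $B_q[u^*]\geq B_q[u]$ together with $\mu>0$ gives $f_{u^*}(s)\leq f_u(s)$ for every $s>0$, hence $\max_s f_{u^*}(s)\leq\max_s f_u(s)$, i.e. $E[s^-_{u^*}\circ u^*]\leq E[u]$. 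For the $\mathcal{P}^+$ case, the same pointwise bound $f_{u^*}\leq f_u$ gives $\min_{s\in(0,s^\diamond)}f_{u^*}(s)\leq f_u(1)$ directly (evaluate the left minimum of $f_{u^*}$, which is $\leq f_{u^*}(1)\leq f_u(1)$ when $1$ is admissible, and otherwise argue via the location of the minima); in all cases $\gamma^\pm(a)\geq\inf_{\mathcal{P}^\pm_r(a)}E$, completing the equality.

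For the final assertion, suppose $u_0$ attains $\inf_{\mathcal{P}^\pm(a)}E$. Running the argument above with $u=u_0$ yields $v:=s^\pm_{u_0^*}\circ u_0^*\in\mathcal{P}^\pm_r(a)$ with $E[v]\leq E[u_0]=\gamma^\pm(a)$, forcing $E[v]=\gamma^\pm(a)$ and hence equality throughout: $A[u_0^*]=A[u_0]$, $B_p[u_0^*]=B_p[u_0]$, $B_q[u_0^*]=B_q[u_0]$, and $s^\pm_{u_0^*}=1$ (by uniqueness of the critical scaling and $u_0\in\mathcal{P}^\pm(a)$). The equality case in the Pólya--Szegő inequality $A[u_0^*]=A[u_0]$, applied to $|u_0|$ (and noting $E[|u_0|]\le E[u_0]$ by the rearrangement inequality for the $B_s$ terms, with equality, so we may assume $u_0\ge 0$), then forces $u_0$ to be a translate of a radially symmetric decreasing function, i.e. Schwarz symmetric up to translation.

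\textbf{Main obstacle.} The delicate point is the equality case of Pólya--Szegő: $A[u^*]=A[u]$ does not by itself imply $u$ is a translate of $u^*$ without a regularity/nonvanishing hypothesis (one needs, e.g., that the level sets $\{|u|>t\}$ have finite measure and that $u^*$ has no flat parts, or an argument via Brothers--Ziemer). I would handle this by first using the equation: any minimizer $u_0$ is, after the symmetrization step, seen to satisfy \eqref{eq:NLS_S} (as $\gamma^\pm(a)$ is a critical value), and then Lemma~\ref{lm:regularity} gives $u_0\in\mathscr{C}(\mathbb{R}^N)$ with $u_0>0$ when $\mu>0$, so the Brothers--Ziemer rigidity applies and yields that $u_0$ is radially symmetric decreasing about some point. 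A secondary technical nuisance is the $\mathcal{P}^+$ branch, where $f_u$ is not monotone and one must be slightly careful that the pointwise inequality $f_{u^*}\le f_u$ on $(0,\infty)$ still transfers to the local-minimum values; this is handled by noting $s^\diamond_{u^*}\le s^\diamond_u$ is not needed — only that both local minima are bounded above by the common value $f_u(1)$, which follows since $f_{u^*}(s^+_{u^*})=\min_{(0,s^\diamond_{u^*})}f_{u^*}\le f_{u^*}(\tilde s)\le f_u(\tilde s)$ for a suitable $\tilde s$ in the overlap of the two admissible intervals.
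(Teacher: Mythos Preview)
Your approach is the same as the paper's: pass to the Schwarz rearrangement $u^*$, use $A[u^*]\le A[u]$ and $B_s[u^*]\ge B_s[u]$ to get the pointwise comparison $f_{u^*}(s)\le f_u(s)$, then project back onto $\mathcal P^\pm$ via the unique scalings from Lemma~\ref{lm:unique_minimum_maximum}. Two refinements from the paper are worth noting.

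First, your handling of the $\mathcal P^+$ branch (and implicitly also $\mathcal P^-$) is loose: you never justify that the relevant critical values line up, and the phrase ``when $1$ is admissible / overlap of the two admissible intervals'' is not made precise. The paper closes this gap by comparing \emph{derivatives} rather than values: since $A[u^*]\le A[u]$ and $B_s[u^*]\ge B_s[u]$ one has $f'_{u^*}(s)\le f'_u(s)$ for every $s>0$, hence $\{s:f'_{u^*}(s)>0\}\subset\{s:f'_u(s)>0\}$, which gives the nesting $s_u^+\le s_{u^*}^+<s_{u^*}^-\le s_u^-$. Combined with the min--max characterizations $\gamma^+(a)=\inf_{u\in S(a)}\min_{s\in[0,s_u^+]}E[s\circ u]$ and $\gamma^-(a)=\inf_{u\in S(a)}\max_{s\in[s_u^+,s_u^-]}E[s\circ u]$, the desired inequalities follow immediately and uniformly for both branches, with no case analysis.

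Second, for the rigidity assertion you reach for the equality case in P\'olya--Szeg\H{o} (Brothers--Ziemer), which, as you correctly flag, requires a regularity detour. The paper instead uses the equality case in the \emph{Riesz} rearrangement inequality (Lieb--Loss, Theorem~3.9): once $A[u_0^*]=A[u_0]$, $B_p[u_0^*]=B_p[u_0]$ and $B_q[u_0^*]=B_q[u_0]$ are forced by minimality, the fact that $I_\alpha$ is a strictly symmetric decreasing kernel makes equality in $B_s[u_0^*]=B_s[u_0]$ already imply that $|u_0|$ is a translate of a radially decreasing function. This avoids the Brothers--Ziemer machinery entirely.
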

	\begin{proof}
		 Since $\mathcal{P}_r^{\pm}(a) \subset \mathcal{P}^{\pm}(a)$, it follows that
		$$
		\inf _{u \in \mathcal{P}_r^{\pm}(a)} E[u] \geq \inf _{u \in \mathcal{P}^{\pm}(a)} E[u] .
		$$
		Therefore, it suffices to show that
		$$
		\inf _{u \in \mathcal{P}_r^{\pm}(a)} E[u] \leq \inf _{u \in \mathcal{P}^{\pm}(a)} E[u].
		$$
		By a similar proof in \cite[Lemma 8.1]{MR4096725}, it holds that 
		\begin{align*}
			\inf_{u \in \mathcal{P}^{+}(a)}E[u]=\inf_{u\in S(a)}\min_{s\in[0,s_u^{+}]}E[s\circ u]\quad \text{and}\quad
			 \inf_{u \in \mathcal{P}^{-}(a)}E[u]=\inf_{u\in S(a)}\max_{s\in[s_u^{+},s_u^{-}]}E[s\circ u].
		\end{align*}
		Let $u\in S(a)$ and $v\in S_r(a)$ be the Schwarz rearrangement of $\vert u \vert$, see \cite[Section 3.3]{1999Analysisby}.
		According to the Riesz's rearrangement inequality (see \cite[Section 3.7]{1999Analysisby}), we have
		\begin{align*}
			A[v]\leq A[u],\quad\text{and}\quad B_s[v]\geq B_s[u],\  \forall s\in (2_{\alpha},2_{\alpha}^{*}],
		\end{align*}
		As a consequence, for any $s>0$, it follows that
		\begin{align*}
			E[s\circ v]=\frac{1}{2}sA[v]-\frac{1}{2p}s^{p\eta_{p}}B_p[v]-\frac{\mu}{2q}s^{q\eta_{q}}B_q[v]\leq E[s\circ u].
		\end{align*}
		Besides, it holds that
		\begin{align*}
			f_v^{\prime}(s)\leq f_{u}^{\prime}(s)<0,\quad \forall s>0,
		\end{align*}
		which implies that $0<s_u^{+}\leq s_v^{+}<s_v^{-}\leq s_u^{-}$.
		Therefore, we obtain that
		\begin{align*}
			\min_{s\in[0,s_v^{+}]}E[s\circ v]\leq \min_{s\in[0,s_u^{+}]}E[s\circ u]\quad 
			\text{ and}\quad \max_{s\in[s_v^{+},s_v^{-}]}E[s\circ v]\leq \max_{s\in[s_u^{+},s_u^{-}]}E[s\circ u].
		\end{align*}
		This completes the proof of first part.
		 Now if $u_0 \in \mathcal{P}^{+}(a)$ satisfies $E[u_0]=\inf\limits_{u \in \mathcal{P}^{+}(a)} E(u)$, we find that $v$, the Schwarz rearrangement of $\left|u_0\right|$, belongs to $\mathcal{P}_r^{+}(a)$. 
		 Indeed, if one of the following holds:
		 \begin{align*}
		 	A[v]<A[u_0],\quad B_q[v]>B_q[u_0],\quad B_p[v]>B_p[u_0],
		 \end{align*} 
	 	then we have $E[s\circ v]<E[s\circ u]$. 
	 	Consequently, it follows that
		\begin{align*}
			\inf _{u \in \mathcal{P}^{+}(a)} E[u]=&\inf_{u\in S(a)}\min_{s\in[0,s_u^{+}]}E[s\circ u] \leq \min_{s\in[0,s_v^{+}]}E[s\circ v]
			<\min_{s\in[0,s_{u_0}^{+}]}E[s\circ u_0]=\inf _{u \in \mathcal{P}^{+}(a)} E[u],
		\end{align*}
		which leads to a contradiction. 
		Hence it holds that $A[v]=A[u_0]$, $B_q[v]=B_q[u_0]$ and $B_p[v]=B_p[u_0]$.
		By \cite[Section 3.9]{1999Analysisby}, we obtain that $v\in\mathcal{P}_r^{+}(a)$ and $E[v]=E[u_0]$.
		The other case is similar and this completes the proof.
	\end{proof}
	By a similar proof of \cite[Proposition 2.9]{MR2557725} or \cite[Lemma 3.8]{JEANJEAN2021277}, it follows that
	\begin{lemma}
		Let $N\geq 3$, $\alpha\in (0,N)$, $2_{\alpha}< q< 2^{\sharp}_{\alpha}< p\leq 2_{\alpha}^{*}$ and $\mu,a>0$. 
		Suppose that \eqref{assumptions:basic} is true.
		Then it holds that  $\langle \dif I^{+}[u],\psi\rangle=\langle \dif E[s_u^+\star u],s_u^+\star \psi\rangle$ and $\langle \dif I^{-}[u],\psi\rangle=\langle \dif E[s_u^-\star u],s_u^-\star \psi\rangle$ for any $u \in S(a), \psi \in T_{u} S(a)$.
	\end{lemma}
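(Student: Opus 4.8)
The statement to prove is that $\langle \dif I^{\pm}[u],\psi\rangle = \langle \dif E[s_u^{\pm}\star u], s_u^{\pm}\star\psi\rangle$ for all $u\in S(a)$ and $\psi\in T_uS(a)$, where $I^{\pm}[u]=E[s_u^{\pm}\circ u]$. The plan is to differentiate the composition $I^{\pm} = E\circ\Phi^{\pm}$, where $\Phi^{\pm}:S(a)\to S(a)$ is the map $u\mapsto s_u^{\pm}\circ u = s_u^{\pm}\star u$ (recall $\circ$ is the $\theta=\frac12$, $2\zeta-\theta N=0$ special case of $\star$). The key ingredient is that $u\mapsto s_u^{\pm}$ is of class $\mathscr{C}^1$ on $S(a)$, which was already established in Lemma \ref{lm:unique_minimum_maximum}; together with the smoothness of the scaling action $(s,u)\mapsto s\star u$ this makes $\Phi^{\pm}$ a $\mathscr{C}^1$ map, and hence $I^{\pm}\in\mathscr{C}^1(S(a))$.

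First I would apply the chain rule: for $u\in S(a)$ and $\psi\in T_uS(a)$,
\begin{align*}
	\langle \dif I^{\pm}[u],\psi\rangle = \langle \dif E[s_u^{\pm}\star u], \dif\Phi^{\pm}[u]\psi\rangle.
\end{align*}
Writing $s=s_u^{\pm}$ and using that $\Phi^{\pm}(v) = s_v^{\pm}\star v$, the differential $\dif\Phi^{\pm}[u]\psi$ splits, by the product/Leibniz rule for the bilinear scaling action, into two contributions: one from varying the scaling parameter, namely $\langle \dif(s_{\cdot}^{\pm})[u],\psi\rangle\cdot\partial_s(s\star u)\big|_{s=s_u^{\pm}}$, and one from varying $v$ in the second slot, namely $s_u^{\pm}\star\psi$. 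Therefore
\begin{align*}
	\langle \dif I^{\pm}[u],\psi\rangle = \big\langle \dif E[s\star u],\, s\star\psi\big\rangle + \langle \dif(s_{\cdot}^{\pm})[u],\psi\rangle\,\big\langle \dif E[s\star u],\, \partial_s(s\star u)\big|_{s=s_u^{\pm}}\big\rangle.
\end{align*}
The second term is where the definition of $\mathcal{P}^{\pm}(a)$ enters: since $s_u^{\pm}\star u\in\mathcal{P}^{\pm}(a)\subset\mathcal{P}(a)$, one has $P[s_u^{\pm}\star u]=0$, i.e. $f_u'(s_u^{\pm})=0$ because $P[s\circ u]=f_u'(s)$ as noted in the preliminaries. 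Now $\big\langle \dif E[s\star u],\partial_s(s\star u)\big|_{s=s}\big\rangle = \frac{d}{ds}E[s\star u]\big|_{s=s} = \tfrac12 f_u'(s) = 0$ at $s=s_u^{\pm}$. Hence the second term vanishes identically and we are left with exactly $\langle \dif I^{\pm}[u],\psi\rangle = \langle \dif E[s\star u], s\star\psi\rangle$, as claimed.

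The main obstacle is purely bookkeeping: one must verify that $\psi\in T_uS(a)$ implies $s_u^{\pm}\star\psi\in T_{s_u^{\pm}\star u}S(a)$ (which follows from $\|s\star v\|_2=\|v\|_2$, so the scaling action preserves $S(a)$ and differentiating the constraint $\|\cdot\|_2^2=a^2$ shows tangent vectors map to tangent vectors), so that the pairings above make sense as pairings of $\dif E$ restricted to $S(a)$; and one must justify interchanging $\dif E$ with the $s$-derivative, which is legitimate because $E\in\mathscr{C}^1(H^1(\mathbb{R}^N))$ and $s\mapsto s\star u$ is smooth into $H^1(\mathbb{R}^N)$. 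This is precisely the argument of \cite[Proposition 2.9]{MR2557725} and \cite[Lemma 3.8]{JEANJEAN2021277} adapted to the nonlocal functional $E$; no new analytic input beyond Lemma \ref{lm:unique_minimum_maximum} is needed, since the nonlocal terms $B_p,B_q$ scale as pure powers of $s$ under $\circ$ and are of class $\mathscr{C}^1$ on $H^1(\mathbb{R}^N)$.
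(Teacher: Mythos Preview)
Your argument is correct and is exactly the standard chain-rule computation from \cite[Proposition 2.9]{MR2557725} and \cite[Lemma 3.8]{JEANJEAN2021277}, which is precisely what the paper invokes (the paper does not write out a proof of this lemma, but simply cites those references). The key steps---$\mathscr{C}^1$-regularity of $u\mapsto s_u^{\pm}$ from Lemma~\ref{lm:unique_minimum_maximum}, the chain rule splitting, and the vanishing of the $\partial_s$-term via $f_u'(s_u^{\pm})=0$---are all in order.
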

	Now we can proof the main lemma in this subsection.
	\begin{lemma}\label{lm:PS_sequence_existence_mass_subcritical}
		Let $N\geq 3$, $\alpha\in (0,N)$, $2_{\alpha}< q< 2^{\sharp}_{\alpha}< p\leq 2_{\alpha}^{*}$ and $\mu,a>0$. 
		Suppose that \eqref{assumptions:basic} holds.
		Then there exists a Palais-Smale sequence $\{u_n^+\}_{n\in\mathbb{N}}\subset \mathcal{P}^+(a)$ for $E$ restricted to $S_r(a)$ at level $\gamma^{+}(a)$ and a Palais-Smale sequence $\{u_n^-\}_{n\in\mathbb{N}}\subset \mathcal{P}^-(a)$ for $E$ restricted to $S_r(a)$ at level $\gamma^{-}(a)$.
	\end{lemma}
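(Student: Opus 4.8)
The plan is to run the ``fibering $+$ natural constraint'' scheme in the radial class. By Lemma~\ref{lm:inf_Pohozaev_pm} one has $\gamma^{\pm}(a)=\inf_{\mathcal{P}^{\pm}_r(a)}E$, so it is enough to produce the two Palais--Smale sequences inside $H^1_{\operatorname{rad}}(\mathbb{R}^N)$. The main device is the ``dual‑set'' minimax principle of \cite{1993Duality} (in the spirit of \cite{MR3976588,MR4150876}), which I would apply on the $\mathcal{C}^1$‑manifold $S_r(a)$ with the Pohozaev sets $\mathcal{P}^{\pm}_r(a)$ in the role of the dual sets: the output in each case is a Palais--Smale sequence $\{w_n\}\subset S_r(a)$ for $\left.E\right|_{S_r(a)}$ at level $\gamma^{\pm}(a)$ that additionally satisfies $\operatorname{dist}_{H^1}(w_n,\mathcal{P}^{\pm}_r(a))\to0$. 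Granting such a sequence, the conclusion follows quickly: writing $w_n=p_n+r_n$ with $p_n\in\mathcal{P}^{\pm}(a)$ and $\|r_n\|\to0$, one has $E[p_n]=E[w_n]+o_n(1)\to\gamma^{\pm}(a)$, so $\{p_n\}$ is bounded in $H^1(\mathbb{R}^N)$ by the coercivity of $E$ on $\mathcal{P}(a)$ (Lemma~\ref{lm:coercive}(ii)), hence so is $\{w_n\}$; moreover $P[w_n]=P[p_n]+o_n(1)=o_n(1)$, since $P$ is Lipschitz on bounded sets and vanishes on $\mathcal{P}(a)$. Finally I replace $w_n$ by $u_n^{\pm}:=s^{\pm}_{w_n}\circ w_n\in\mathcal{P}^{\pm}(a)$: because $P[w_n]=f'_{w_n}(1)\to0$, because $A[w_n]$ stays bounded away from $0$ along the sequence (for the ``$-$'' branch this is Lemma~\ref{lm:A_bounded_below}(ii); for the ``$+$'' branch it follows from $E[p_n]\to\gamma^{+}(a)<0$ and Lemma~\ref{lm:A_bounded_below}(i)), and because $f''_{w_n}(1)$ remains uniformly away from $0$ there, the unique minimum/maximum point $s^{\pm}_{w_n}$ of $f_{w_n}$ converges to $1$ (Lemma~\ref{lm:unique_minimum_maximum}); hence $\|u_n^{\pm}-w_n\|\to0$, and $\{u_n^{\pm}\}\subset\mathcal{P}^{\pm}(a)$ is still a Palais--Smale sequence for $\left.E\right|_{S_r(a)}$ at level $\gamma^{\pm}(a)$.

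For the level $\gamma^{+}(a)$ the relevant geometry is that of a local minimum. Combining the two constraints defining $\mathcal{P}^{+}(a)$ with \eqref{estimate:subcritical}/\eqref{estimate:critical} and assumption \eqref{assumptions:basic}, one verifies that $A[\cdot]$ is bounded on $\mathcal{P}^{+}(a)$, say by some $k_1>0$, so that $\mathcal{P}^{+}_r(a)$ lies in the interior of $V_r(a):=\{u\in S_r(a):A[u]<k_1\}$, while Lemma~\ref{lm:A_bounded_below}(i) and Lemma~\ref{lm:unique_minimum_maximum}(i) yield $\gamma^{+}(a)=\inf_{V_r(a)}E<0$ and separate this value, by a positive gap, from $\inf_{\partial V_r(a)}E$. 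Thus $\left.E\right|_{S_r(a)}$ restricted to $V_r(a)$ is a genuine constrained minimization, and Ekeland's variational principle — equivalently, the minimax principle of \cite{1993Duality} applied to the family of singletons in $V_r(a)$ with dual set $\mathcal{P}^{+}_r(a)$ — furnishes the sequence $\{w_n\}$ of the first paragraph; the reduction there then produces $\{u_n^{+}\}\subset\mathcal{P}^{+}(a)$.

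For the level $\gamma^{-}(a)$ I would use a min-max. The structural input is Lemma~\ref{lm:unique_minimum_maximum}: for every $u\in S_r(a)$ the fiber $s\mapsto f_u(s)=2E[s\circ u]$ has a negative local minimum at $s_u^{+}$, a strict local maximum at $s_u^{-}>s_u^{+}$, and tends to $-\infty$ as $s\to+\infty$; moreover one checks (using Lemma~\ref{lm:A_bounded_below}(ii) and $P[\cdot]=0$) that $\gamma^{-}(a)>0$. Hence the curves $\sigma_u:t\mapsto\bigl((1-t)s_u^{+}+tR_u\bigr)\circ u$, with $R_u$ large enough that $E[\sigma_u(1)]<0$, form a homotopy-stable family $\mathcal{F}$ on $S_r(a)$ whose ``boundary'' (the set of curve endpoints) lies in $\{E<0\}$; every member of $\mathcal{F}$ meets $\mathcal{P}^{-}_r(a)$, namely at the point $s=s_u^{-}$ where $E$ along $\sigma_u$ is maximal, and conversely each $v\in\mathcal{P}^{-}_r(a)$ is the maximum point of $\sigma_v$, so that
\[
\inf_{A\in\mathcal{F}}\ \max_{w\in A}E[w]\;=\;\inf_{u\in S_r(a)}\ \max_{s\ge s_u^{+}}E[s\circ u]\;=\;\inf_{v\in\mathcal{P}^{-}_r(a)}E[v]\;=\;\gamma^{-}(a).
\]
Applying the minimax principle of \cite{1993Duality} to $\left.E\right|_{S_r(a)}$ with this family and with dual set $\mathcal{P}^{-}_r(a)$ then yields a Palais--Smale sequence $\{w_n\}\subset S_r(a)$ at level $\gamma^{-}(a)$ with $\operatorname{dist}_{H^1}(w_n,\mathcal{P}^{-}_r(a))\to0$, and the first paragraph converts it into $\{u_n^{-}\}\subset\mathcal{P}^{-}(a)$.

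The main obstacle is the $\gamma^{-}$ part, namely making the two ingredients above rigorous and compatible: (a) that the min-max value of $\mathcal{F}$ equals $\gamma^{-}(a)=\inf_{\mathcal{P}^{-}_r(a)}E$ — which needs both that every admissible curve crosses $\mathcal{P}^{-}(a)$ at its top and that $\mathcal{P}^{-}_r(a)$ is exhausted by such tops, i.e. the full strength of the maximum-point analysis in Lemma~\ref{lm:unique_minimum_maximum}(ii); and (b) that Ghoussoub's deformation argument can be carried out on the constrained manifold $S_r(a)$ with the curve endpoints left undisturbed, so that the abstract theorem of \cite{1993Duality} applies and the extracted Palais--Smale sequence indeed carries the localization $\operatorname{dist}_{H^1}(w_n,\mathcal{P}^{-}_r(a))\to0$ that makes the final rescaling onto $\mathcal{P}^{-}(a)$ possible. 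By contrast, once this localized sequence is available, everything else — the boundedness (via Lemma~\ref{lm:coercive}), the estimate $P[w_n]\to0$, and the convergence $s^{\pm}_{w_n}\to1$ of the fibering scales — is routine and uses only the lemmas already established.
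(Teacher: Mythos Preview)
Your approach is correct in outline and would succeed, but it is organized differently from the paper. The paper does \emph{not} split into a local-minimum argument for $\gamma^{+}$ and a mountain-pass argument for $\gamma^{-}$; instead it works uniformly with the auxiliary $\mathcal{C}^1$ functionals $I^{\pm}[u]:=E[s_u^{\pm}\circ u]$ on $S_r(a)$, applies Ghoussoub's min-max theorem \cite[Theorem~3.2]{1993Duality} to $I^{\pm}$ with the homotopy-stable family of \emph{singletons} (empty boundary), and uses the homotopy $\eta^{\pm}(t,u)=(1-t+t\,s_u^{\pm})\circ u$ to replace each minimizing singleton by one lying in $\mathcal{P}^{\pm}(a)$. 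This yields a Palais--Smale sequence $\{v_n^{\pm}\}$ for $I^{\pm}$ with $\operatorname{dist}(v_n^{\pm},G_n^{\pm})\to0$ and $G_n^{\pm}\subset\mathcal{P}^{\pm}(a)$; setting $u_n^{\pm}:=s_{v_n^{\pm}}^{\pm}\circ v_n^{\pm}$ and invoking the chain rule $\langle dI^{\pm}[u],\psi\rangle=\langle dE[s_u^{\pm}\circ u],s_u^{\pm}\circ\psi\rangle$, one only needs $C^{-1}\le s_n^{\pm}\le C$ (not $s_n^{\pm}\to1$) to conclude that $\{u_n^{\pm}\}$ is Palais--Smale for $E|_{S(a)}$.

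What this buys over your route: (i) no curve family has to be built for the $\gamma^{-}$ branch, so the homotopy-stability and boundary issues you flag as the ``main obstacle'' simply do not arise; (ii) the conversion step needs only two-sided bounds on $s_n^{\pm}$---obtained from coercivity (Lemma~\ref{lm:coercive}), Lemma~\ref{lm:A_bounded_below}, and $\operatorname{dist}(v_n^{\pm},G_n^{\pm})\to0$---rather than the sharper claim $s^{\pm}_{w_n}\to1$, which in your scheme rests on controlling $f''_{w_n}(1)$ away from zero. Conversely, your route has the advantage of keeping $E$ as the ambient functional throughout and making the underlying geometry (local well versus mountain pass) explicit; it also reuses the description $\gamma^{+}(a)=\inf_{\overline{V(a)}}E$ that the paper proves separately in Lemma~\ref{lm:minizing}. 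Either way leads to the lemma; the paper's choice is the more economical one here.
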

	\begin{proof}
		Let $\mathcal{F}$ be the class of all singletons belonging to $S_r(a)$, which is a homotopy-stable family of compact subsets of $S_r(a)$ with closed boundary $B=\emptyset$ in the sense of \cite[Definition 3.1]{1993Duality}.
		Set $e^{\pm}(a):=\inf\limits_{D \in \mathcal{F}}\max\limits_{u \in D}I^{\pm}[u]$. 
		It follows from Lemma \ref{lm:inf_Pohozaev_pm} that $e^{\pm}(a)=\gamma^{\pm}(a)$. 
		Let $\{D_n^{\pm}\}\subset \mathcal{F}$ be a sequence satisfying
		\begin{equation*}
			\max_{u\in D_n} I^{\pm}[u]<e^{\pm}(a)+\frac{1}{n},\quad \forall n\in \mathbb{N}.
		\end{equation*}
		Then we define the homotopy mapping
		\begin{equation*}
			\eta^{\pm}:[0,1]\times S(a) \rightarrow S(a),\quad \eta^{\pm}(t,u)=(1-t+ts_u^{\pm})\circ u,
		\end{equation*}
		which implies  
		$$G_{n}^{\pm}:=\eta^{\pm}(\{1\}\times D_n^{\pm})=\{s_u^{\pm}\circ u:u\in D_n^{\pm}\}\in\mathcal{F},\quad \forall n\in \mathbb{N}.$$
		It follows form Lemma \ref{lm:unique_minimum_maximum} that $G_{n}^{\pm}$ is a subset of $\mathcal{P}^{\pm}(a)$ for every $n\in\mathbb{N}$. 
		Since $I^{\pm}[s_u^{\pm}\circ u]=E[1\circ(s_u^{\pm}\circ u)]=E[s_u^{\pm}\circ u]=I^{\pm}[u]$, it holds that $\max\limits_{u\in D_{n}^{\pm}}I^{\pm}[u]=\max\limits_{u\in G_{n}^{\pm}}I^{\pm}[u]$.
		By the min-max theorem \cite[Theorem 3.2]{1993Duality}, there exists a Palais-Smale sequence $\{v_n^{\pm}\}$ at level $e^{\pm}(a)$ in $S_r(a)$ with $\operatorname{dist}(v_n^{\pm},G_{n}^{\pm})\rightarrow 0$ as $n\rightarrow \infty$. 
		If $\{v_n^{\pm}\}\subset \mathcal{P}^{\pm}(a)$, this concludes the proof.
		If not, we put $s_n^{\pm}:=s_{v_n^{\pm}}^{\pm}$ for every $n\in \mathbb{N}$ due to Lemma \ref{lm:unique_minimum_maximum} and consider the sequence $\{u_n^{\pm}:=s_n^{\pm}\circ v_n^{\pm}\}\subset\mathcal{P}^{\pm}(a)$.
		It is enough to prove that $\{u_n^{\pm}\}$ is a Palais-Smale sequence at level $e^{\pm}(a)$ in $S_r(a)$.
		
		\textbf{Claim}: There exists a constant $C>0$ such that $C^{-1}<(s_n^{\pm})^2<C$.
		
		Indeed, it holds that
		\begin{equation*}
			(s_n^{\pm})^2=\frac{A[u_n^{\pm}]}{A[v_n^{\pm}]}.
		\end{equation*}
		We first prove that there exists a constant $M>0$ satisfying
		$$M^{-1}<A[u_n^{\pm}]<M,\quad \forall n\in \mathbb{N}.$$
		Suppose that $\limsup\limits_{n\rightarrow+\infty} A[u_n^{\pm}]=+\infty$.
		According to the fact that $\{u_n^{\pm}\}\subset \mathcal{P}(a)$, it follows from Lemma \ref{lm:coercive} that 
		\begin{equation*}
			\limsup\limits_{n\rightarrow+\infty}E[u_n^{\pm}]=+\infty,
		\end{equation*}
		which contradicts the assumption $$-\infty<\lim\limits_{n\rightarrow\infty}E[u_n^+]=\gamma^{+}(a)<0<\lim\limits_{n\rightarrow\infty}E[u_n^-]=\gamma^{-}(a)<+\infty.$$
		As a result, we have proved that $\{A[u_n^{\pm}]\}$ is uniformly bounded above.
		It follows from Lemma \ref{lm:A_bounded_below} that $\{A[u_n^-]\}$ is uniformly bounded below.
		To prove that $\{A[u_n^{+}]\}$ is uniformly bounded below, we notice that $$E[u_n^+]=I^+[v_n^+]\rightarrow e^+(a)=\gamma^+(a)<0 \quad \text{as}\quad n\rightarrow \infty,$$ which implies that, for $n$ large enough, it holds that
		\begin{align*}
			0>E[u_n]=&(\frac{1}{2}-\frac{1}{2p\eta_{p}})A[u_n^+]-\frac{\mu}{2q}(1-\frac{2q\eta_q}{2p\eta_p})B_q[u_n^+]\\
			\geq &(\frac{1}{2}-\frac{1}{2p\eta_{p}})A[u_n^+]-\frac{\mu}{2q}(1-\frac{2q\eta_q}{2p\eta_p})\mathcal{C}_{G}(q)a^{2q(1-\eta_{q})}A[u_n^+]^{q\eta_{q}}.
		\end{align*}
		Since $2_{\alpha}<q<2_{\alpha}^{\sharp}<p\leq 2_{\alpha}^*$, it is clear that $\{A[u_n^{+}]\}$ is uniformly bounded below.
		
		On the other hand, by Lemma \ref{lm:coercive} and the fact $G_{n}^{\pm} \subset \mathcal{P}^{\pm}(a)$ for every $n\in\mathbb{N}$, it holds that $\{G_{n}\}$ is uniformly bounded above in $H^{1}\left(\mathbb{R}^{N}\right)$.
		It follows from the fact $\operatorname{dist}(v_n^{\pm},G_{n}^{\pm})\rightarrow 0$ as $n\rightarrow \infty$ that  $\limsup \limits_{n\rightarrow +\infty} A[v_n^{\pm}]<\infty$, that is, $\{A[v_n^{\pm}]\}$ is uniformly bounded above.
		Besides, since $G_{n}^{\pm}$ is compact for every $n \in \mathbb{N}$, there exists a function $w_{n}^{\pm} \in G_{n}$ for every $n\in \mathbb{N}$ satisfying $\left\|v_{n}^{\pm}-w_{n}^{\pm}\right\|_{H^{1}\left(\mathbb{R}^{N}\right)} \rightarrow 0$ as $n \rightarrow +\infty$.
		According to the triangle inequality, it follows that there exists a $\delta>0$ such that for $n$ large enough, 
		$$
		A[v_{n}^{\pm}] \geq A[w_n^{\pm}]-A[w_n^{\pm}-v_n^{\pm}] \geq \frac{\delta}{2},
		$$
		that is, $\{A[v_n^{\pm}]\}$ is uniformly bounded from below.
		This completes the claim. 
		
		Consequently, we have
		\begin{align*}
			\|\left.\dif E\right|_{S(a)}[u_n^{\pm}]\|_{H^{-1}(\mathbb{R}^N)}=&\sup\limits_{\|\psi\|\leq 1,\psi\in T_uS(a)}\vert \langle \dif E[u_n^{\pm}],\psi\rangle\vert\\
			=&\sup\limits_{\|\psi\|\leq 1,\psi\in T_uS(a)}\vert \langle \dif E[s_n^{\pm}\circ v_n^{\pm}],s_n^{\pm}\circ( (s_n^{\pm})^{-1}\circ\psi)\rangle\vert\\
			=&\sup\limits_{\|\psi\|\leq 1,\psi\in T_uS(a)}\vert \langle \dif I^{\pm}[ v_n^{\pm}],( s_n^{\pm})^{-1}\circ\psi\rangle\vert\\
			\lesssim&\sup\limits_{\|\psi\|\leq 1,\psi\in T_uS(a)}\|\dif I^{\pm}[ v_n^{\pm}]\|_{H^{-1}(\mathbb{R}^N)}\|\psi\|_{H^{1}(\mathbb{R}^N)}\\
			\leq&\|\dif I^{\pm}[ v_n^{\pm}]\|_{H^{-1}(\mathbb{R}^N)},
		\end{align*}
		where we have used the claim $\{s_n^{\pm}\}$ is bounded below.
		It follows that $\{u_n^{\pm}\}$ is a Palais-Smale sequence at level $\gamma^{\pm}(a)$ in $S_r(a)$ respectively.
	\end{proof}
	\begin{remark}
		It follows from Lemma \ref{lm:coercive} that the Palais-Smale sequence obtained in Lemma \ref{lm:PS_sequence_existence_mass_subcritical} is bounded. 
		In the following part, we always assume this fact holds without a proof.
	\end{remark}
	\subsubsection{The compactness of Palais-Smale sequences under the HLS subcritical leading trem}
	\begin{lemma}\label{lm:nontirvial_weak_limit}
		Let $N\geq 3$, $\alpha\in (0,N)$, $2_{\alpha}< q< 2^{\sharp}_{\alpha}< p< 2_{\alpha}^{*}$ and $\mu,a>0$. 
		Suppose that \eqref{assumptions:basic} holds.
		If either $\{u_n\}\subset \mathcal{P}^{+}(a)$ is a minimizing sequence for $\gamma^{+}(a)$ or
		$\{u_n\}\subset \mathcal{P}^{-}(a)$ is a minimizing sequence for $\gamma^{-}(a)$, it weakly converges,
		up to a subsequence, to a nontrivial limit.
	\end{lemma}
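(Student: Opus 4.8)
The plan is to argue by contradiction: suppose that the minimizing sequence $\{u_n\}$ (for $\gamma^+(a)$ or for $\gamma^-(a)$) satisfies $u_n \rightharpoonup 0$ weakly in $H^1_{\operatorname{rad}}(\mathbb{R}^N)$, up to a subsequence. Since the sequence is bounded (Lemma \ref{lm:coercive} and the remark after Lemma \ref{lm:PS_sequence_existence_mass_subcritical}), we may extract such a weak limit. The key leverage is that $\{u_n\}$ is radial, so Lemma \ref{lm:compact_operator_nonlocal} applies: $u_n \rightharpoonup 0$ in $H^1_{\operatorname{rad}}(\mathbb{R}^N)$ forces $B_q[u_n]\to B_q[0]=0$ and $B_p[u_n]\to 0$ for $q,p\in(2_\alpha,2_\alpha^*)$ (here we crucially use $p<2_\alpha^*$, which is why this lemma is stated only for the HLS-subcritical leading term).

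First I would treat the case $\{u_n\}\subset\mathcal{P}^-(a)$. From $P[u_n]=0$ we have $A[u_n]=\eta_p B_p[u_n]+\mu\eta_q B_q[u_n]\to 0$, contradicting Lemma \ref{lm:A_bounded_below}(ii), which gives a uniform lower bound $A[u_n]\geq\delta>0$ on $\mathcal{P}^-(a)$. So in the $\gamma^-(a)$ case the contradiction is immediate and clean.

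Next I would treat the case $\{u_n\}\subset\mathcal{P}^+(a)$. Again $P[u_n]=0$ gives $A[u_n]=\eta_p B_p[u_n]+\mu\eta_q B_q[u_n]\to 0$, whence $E[u_n]=\frac12 A[u_n]-\frac{1}{2p}B_p[u_n]-\frac{\mu}{2q}B_q[u_n]\to 0$. But $\gamma^+(a)=\lim_n E[u_n]$, so this would force $\gamma^+(a)=0$, contradicting the fact that $\gamma^+(a)<0$ (which follows from Lemma \ref{lm:A_bounded_below}(i), $E[u]<0$ on $\mathcal{P}^+(a)$, once one knows $\mathcal{P}^+(a)\neq\emptyset$ and $\gamma^+(a)$ is attained-or-approached by genuinely negative values; more directly, Lemma \ref{lm:unique_minimum_maximum} produces for any $u\in S(a)$ an element $s_u^+\circ u\in\mathcal{P}^+(a)$ with $E[s_u^+\circ u]<0$, so $\gamma^+(a)<0$ strictly). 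Hence the weak limit cannot be trivial.

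The step I expect to require the most care is not the contradiction argument itself — which is short — but making sure the hypotheses are correctly invoked: namely that boundedness of $\{u_n\}$ in $H^1(\mathbb{R}^N)$ is legitimately available (via Lemma \ref{lm:coercive}, whose hypothesis (ii) matches exactly the regime $2_\alpha<q<2_\alpha^\sharp<p\leq 2_\alpha^*$ here), that radiality is preserved so Lemma \ref{lm:compact_operator_nonlocal} genuinely applies (the Palais–Smale sequences of Lemma \ref{lm:PS_sequence_existence_mass_subcritical} lie in $S_r(a)$), and that the strict sign $\gamma^+(a)<0$ and the lower bound on $\mathcal{P}^-(a)$ are quoted correctly. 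Once these are in place, the nonvanishing of the weak limit falls out by combining $P[u_n]=0$ with $B_p[u_n],B_q[u_n]\to 0$.
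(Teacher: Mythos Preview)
Your argument is essentially the same as the paper's: assume the weak limit is trivial, deduce $B_p[u_n],B_q[u_n]\to 0$, use $P[u_n]=0$ to force $A[u_n]\to 0$, and then contradict either the uniform lower bound on $A$ over $\mathcal{P}^-(a)$ or the strict negativity of $\gamma^+(a)$. The only cosmetic difference is that you invoke Lemma~\ref{lm:compact_operator_nonlocal} directly, while the paper phrases the same vanishing via Lions' lemma plus the HLS inequality.

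One point worth flagging: the lemma as stated does not assume $\{u_n\}\subset H^1_{\operatorname{rad}}(\mathbb{R}^N)$, yet both your argument and the paper's (which jumps from ``$u_a=0$'' to ``$\{u_n\}$ is vanishing'') need radiality to pass from weak convergence to zero to $L^r$-convergence to zero. You correctly flag this and justify it by the fact that the Palais--Smale sequences actually used come from Lemma~\ref{lm:PS_sequence_existence_mass_subcritical} and live in $S_r(a)$; the paper leaves this implicit. So your treatment of this step is, if anything, more careful.
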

	\begin{proof}
		According to Lemma \ref{lm:coercive}, it follows that $E$ restricted to $\mathcal{P}(a)$ is coercive on $H^1(\mathbb{R}^N)$, which implies that $\{u_n\}$ is bounded.
		Therefore, up to a subsequence, it holds that $u_n \rightharpoonup u_a$ in $H^1({\mathbb{R}^N})$.
		We shall argue by contradiction. 
		Suppose that $u_a=0$, that is, $\{u_n\}$ is vanishing. 
		Using Lion's vanishing lemma (see \cite[Lemma I.1]{MR778974}), we deduce that, for every $q\in (2,2^{*})$,
		\begin{align*}
			\|u_n\|_q \rightarrow 0, \quad\text{as}\quad n \rightarrow +\infty.
		\end{align*}
		As a result, according to the Hardy-Littlewood-Sobolev inequality \eqref{ieq:HLS}, it is clear that,  for every $s\in (2_{\alpha},2_{\alpha}^{*})$,
		\begin{align*}
			0\leq B_s[u_n]\leq \mathcal{C}_{H}(s)\|u\|_{\frac{2Ns}{N+\alpha}}^{2s}\rightarrow 0,\quad \text{as}\quad n\rightarrow +\infty.
		\end{align*} 
		It follows that $\{u_n\}\subset\mathcal{P}(a)$,  which implies that
		\begin{align*}
			A[u_n]=\eta_{p}B_p[u_n]+\mu\eta_{q}B_q[u_n]\rightarrow 0.
		\end{align*}
		If $\{u_n\}\subset \mathcal{P}^{-}(a)$, then by Lemma \ref{lm:derivative_positive}, it is easy to see that  $\{A[u_n]\}$ is bounded below.
		This leads to a contradiction.
		If $\{u_n\}\subset \mathcal{P}^{+}(a)$ is a Palais-Smale sequence at level $\gamma^+(a)<0$, then we have
		\begin{align*}
			\gamma^{+}(a)+o_n(1)=E[u_n]=\frac{1}{2}A[u_n]-\frac{1}{2p}B_p[u_n]-\frac{\mu}{2q}B_q[u_n]\rightarrow 0,\quad \text{as}\quad n\rightarrow +\infty,
		\end{align*}
		which is impossible.
		This completes the proof.
	\end{proof}
	\begin{lemma}\label{lm:convergence_sub}
		Let $N\geq 3$, $\alpha\in (0,N)$, $2_{\alpha}< q< 2^{\sharp}_{\alpha}< p< 2_{\alpha}^{*}$ and $\mu,a>0$. 
		Suppose that \eqref{assumptions:basic} holds.
		If $\{u_n\}\subset \mathcal{P}_{r}(a)$ is a bounded Palais-Smale sequence for $E$ restricted to $S(a)$, then, up to a subsequence, it holds that $u_n \rightarrow u_a$ strongly in $H^1_{\operatorname{rad}}(\mathbb{R}^N)$ with $u_a\in H^{1}_{\operatorname{rad}}(\mathbb{R}^N)\setminus\{0\}$. 
		In particular, $u_a$ is a radial solution to \eqref{eq:NLS_S} for some $\lambda_a<0$ and $\| u_a \|_2=a$.
	\end{lemma}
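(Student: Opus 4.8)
The plan is to convert weak convergence into strong convergence using the compact embedding $H^1_{\operatorname{rad}}(\mathbb{R}^N)\hookrightarrow L^r(\mathbb{R}^N)$, $r\in(2,2^*)$ (Lemma~\ref{lm:compact_embedding}), which in the HLS subcritical regime $q<p<2_\alpha^*$ forces the nonlocal terms to pass to the limit, together with the Lagrange multiplier rule and the sign $\lambda_a<0$ supplied by Lemma~\ref{lm:Langrange_multiplier}. First, since $\{u_n\}$ is bounded in $H^1_{\operatorname{rad}}(\mathbb{R}^N)$, up to a subsequence $u_n\rightharpoonup u_a$ in $H^1_{\operatorname{rad}}(\mathbb{R}^N)$, $u_n\to u_a$ in $L^r(\mathbb{R}^N)$ for every $r\in(2,2^*)$, and $u_n\to u_a$ a.e. Because $\tfrac{2Nq}{N+\alpha},\tfrac{2Np}{N+\alpha}\in(2,2^*)$ when $2_\alpha<q<p<2_\alpha^*$, Lemma~\ref{lm:compact_operator_nonlocal} gives $B_q[u_n]\to B_q[u_a]$ and $B_p[u_n]\to B_p[u_a]$, and the same Hölder--HLS estimate shows $\langle(I_\alpha*|u_n|^s)|u_n|^{s-2}u_n,\varphi_n\rangle\to\langle(I_\alpha*|u_a|^s)|u_a|^{s-2}u_a,\varphi\rangle$ whenever $\varphi_n\to\varphi$ in $L^{\frac{2Ns}{N+\alpha}}$, for $s\in\{p,q\}$. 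Finally $u_a\neq 0$: the Palais--Smale sequences in play are those constructed in Lemma~\ref{lm:PS_sequence_existence_mass_subcritical}, i.e.\ minimizing sequences for $\gamma^{\pm}(a)$ contained in $\mathcal{P}^{\pm}(a)$, so Lemma~\ref{lm:nontirvial_weak_limit} applies.

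Next I would invoke the Lagrange multiplier rule: for the bounded Palais--Smale sequence $\{u_n\}$ of $E|_{S(a)}$, setting $\lambda_n:=\tfrac1{a^2}\langle\dif E[u_n],u_n\rangle$ one has $\dif E[u_n]-\lambda_n u_n\to 0$ in $H^{-1}(\mathbb{R}^N)$, and boundedness of $\{u_n\}$ together with the HLS and Sobolev inequalities makes $\{\lambda_n\}$ bounded, so $\lambda_n\to\lambda_a$ along a subsequence. Passing to the limit in $\dif E[u_n]-\lambda_n u_n\to 0$, and using the convergence of the nonlocal terms, $u_a$ is a nontrivial radial weak solution of \eqref{eq:NLS_S} with multiplier $\lambda_a$; since $\mu>0$ and $p\in(2_\alpha^\sharp,2_\alpha^*)$, Lemma~\ref{lm:Langrange_multiplier}(ii) forces $\lambda_a<0$.

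For the strong convergence, test $\dif E[u_n]-\lambda_n u_n\to 0$ against $u_n-u_a$. The nonlocal contributions vanish by the first step, while $A[u_n,u_n-u_a]=A[u_n-u_a]+o_n(1)$ and $\langle u_n,u_n-u_a\rangle_2=\|u_n-u_a\|_2^2+o_n(1)$ by weak convergence; combined with $\lambda_n\to\lambda_a$ this yields
\[
A[u_n-u_a]-\lambda_a\|u_n-u_a\|_2^2=o_n(1).
\]
Since $\lambda_a<0$, both nonnegative quantities $A[u_n-u_a]$ and $|\lambda_a|\,\|u_n-u_a\|_2^2$ tend to $0$, hence $u_n\to u_a$ strongly in $H^1_{\operatorname{rad}}(\mathbb{R}^N)$; in particular $\|u_a\|_2=a$ and $u_a$ is a radial solution of \eqref{eq:NLS_S} with $\lambda_a<0$. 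The only delicate point is the strict negativity of $\lambda_a$: it is exactly what turns the last identity into a sum of two nonnegative vanishing terms, and it is where the hypotheses $\mu>0$ and $p>2_\alpha^\sharp$ enter; the remaining ingredients are routine once the radial compactness is available.
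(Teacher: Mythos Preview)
Your proof is correct and follows essentially the same route as the paper: weak compactness in $H^1_{\operatorname{rad}}$, strong $L^r$-convergence for $r\in(2,2^*)$ via Lemma~\ref{lm:compact_embedding}, convergence of the nonlocal terms $B_p,B_q$ through Lemma~\ref{lm:compact_operator_nonlocal} (crucially using $p<2_\alpha^*$), extraction of the Lagrange multiplier, and the sign $\lambda_a<0$ from Lemma~\ref{lm:Langrange_multiplier}(ii) to upgrade to strong $H^1$-convergence. The only cosmetic difference is in the last step: the paper tests the equation for $u_n$ with $u_n$ and the limit equation with $u_a$, obtaining $A[u_n]-\lambda_a\|u_n\|_2^2\to A[u_a]-\lambda_a\|u_a\|_2^2$, whereas you test directly against $u_n-u_a$ to get $A[u_n-u_a]-\lambda_a\|u_n-u_a\|_2^2=o_n(1)$; these are equivalent manipulations. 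Your remark that nontriviality of $u_a$ comes from Lemma~\ref{lm:nontirvial_weak_limit} matches the paper's (implicit) reliance on that lemma; strictly speaking this restricts the statement to Palais--Smale sequences at the levels $\gamma^{\pm}(a)$, but those are the only ones used downstream.
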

	\begin{proof}
		Since $\{u_n\}$ is bounded in $H^1_{\operatorname{rad}}(\mathbb{R}^N)$, up to a subsequence, there exists a nontrivial function $u_a\in H^1_{\operatorname{rad}}(\mathbb{R}^N)$ satisfying $u_n \rightharpoonup u_a$ weakly in $H^1_{\operatorname{rad}}(\mathbb{R}^N)$. 
		By Lemma \ref{lm:compact_embedding}, it follows that $u_n\rightarrow u_a$ strongly in $L^{q}(\mathbb{R}^N)$ for all $q\in (2,2^{*})$ and a.e. in $\mathbb{R}^N$. 
		According to the fact that $\{u_n\}\subset S_r(a)$ and \cite[Lemma 3]{MR695536}, it is clear that 
		\begin{align*}
			&\left.\dif E\right\vert_{S(a)}[u_n]\rightarrow 0 \quad\text{in} \quad H^{-1}(\mathbb{R}^N)   
			\iff \dif E[u_n]-\frac{1}{a^2}\langle \dif E[u_n],u_n\rangle u_n \rightarrow 0 \quad\text{in} \quad H^{-1}(\mathbb{R}^N).
		\end{align*}
		Consequently, for any $w\in H^{1}(\mathbb{R}^N)$, we have
		\begin{align}\label{eq:differential_restricted}
			o_n(1)=&\left\langle \dif E[u_n]-\frac{1}{a^2}\langle \dif E[u_n],u_n\rangle u_n,w\right\rangle\notag\\
			=&\int_{\mathbb{R}^N}\nabla u_n\nabla w - \lambda_n \int_{\mathbb{R}^N} u_n w-\int_{\mathbb{R}^N}(I_{\alpha} * \vert u_n\vert^p)\vert u_n\vert^{p-2}u_n w
			-\mu\int_{\mathbb{R}^N}(I_{\alpha} * \vert u_n\vert^q)\vert u_n\vert^{q-2}u_n w,
		\end{align}
		which implies that 
		\begin{align*}
			\lambda_na^2=&
			(\eta_{p}-1)B_p[u_n]-\mu(1-\eta_{q})B_q[u_n]
			\rightarrow (\eta_{p}-1)B_p[u_a]-\mu(1-\eta_{q})B_q[u_a]=:\lambda_aa^2,
		\end{align*}
		where we have used Lemma \ref{lm:compact_operator_nonlocal} and $\{u_n\}\subset \mathcal{P}(a)$. 
		By Lemma \ref{lm:compact_operator_nonlocal}, the weak convergence $u_n\rightharpoonup u_a$ in $H^1_{\operatorname{rad}}(\mathbb{R}^N)$ and $\lambda_n \rightarrow \lambda_a$ in $\mathbb{R}$, the equation \eqref{eq:differential_restricted} leads to
		\begin{equation}\label{eq:differential_criticl_point}
			0=\int_{\mathbb{R}^N}\nabla u_a\nabla w - \lambda_a \int_{\mathbb{R}^N} u_a w-\int_{\mathbb{R}^N}(I_{\alpha} * \vert u_a\vert^p)\vert u_a\vert^{p-2}u_a w
			-\mu\int_{\mathbb{R}^N}(I_{\alpha} * \vert u_a\vert^q)\vert u_a\vert^{q-2}u_a w.
		\end{equation}
		Therefore, it holds that 
		\begin{equation*}
			-\Delta u_a=\lambda_a u_a+ (I_{\alpha} * \vert u_a \vert^p)\vert u_a \vert^{p-2}u_a+\mu (I_{\alpha} * \vert u_a \vert^q)\vert u_a \vert^{q-2}u_a \quad \text{in}\quad H^{-1}(\mathbb{R}^N).
		\end{equation*}
		According to Lemma \ref{lm:Langrange_multiplier}, it follows that $P[u_a]=0$ and $\lambda_a< 0$.
		Putting $w=u_n$ in \eqref{eq:differential_restricted} and $w=u_n$ in \eqref{eq:differential_criticl_point}, we  obtain that
		\begin{align*}
			\int_{\mathbb{R}^N}\vert \nabla u_n\vert^2\dif x -\lambda_n \int_{\mathbb{R}^N} \vert u_n\vert^2\dif x-B_p[u_n]-\mu B_q[u_n]=&o_n(1),\\
			\int_{\mathbb{R}^N}\vert \nabla u_a\vert^2\dif x -\lambda_a \int_{\mathbb{R}^N} \vert u_a\vert^2\dif x-B_p[u_a]-\mu B_q[u_a]=&0.
		\end{align*}
		By Lemma \ref{lm:compact_operator_nonlocal} and $\lambda_n\rightarrow \lambda_a<0$, it holds that 
		\begin{equation*}
			\int_{\mathbb{R}^N}\vert \nabla u_n\vert^2\dif x-\lambda_a \int_{\mathbb{R}^N} \vert u_n\vert^2\dif x\rightarrow\int_{\mathbb{R}^N}\vert \nabla u_a\vert^2\dif x -\lambda_a \int_{\mathbb{R}^N}\vert u_a\vert^2\dif x,\quad \text{as}\quad n\rightarrow \infty,
		\end{equation*}
		which implies that $u_n\rightarrow u_a$ strongly in $H^1_{\operatorname{rad}}(\mathbb{R}^N)$.
		The proof is completed.
	\end{proof}
	Now we can prove Theorem \ref{thm:mass_subcritical} for $p \in (2_{\alpha}^{\sharp},2_{\alpha}^{*})$:
	\begin{proof}[Proof of Theorem \ref{thm:mass_subcritical} for $p \in (2_{\alpha}^{\sharp},2_{\alpha}^{*})$]
		According to Lemma \ref{lm:PS_sequence_existence_mass_subcritical}, it follows that there exist  bounded Palais-Smale sequences $\{u_n^{\pm}\}$ for $\gamma^{\pm}(a)$ respectively.
		Then using Lemma \ref{lm:convergence_sub}, we obtain that $(i)$ and $(ii)$ hold.
		The proof is completed.
	\end{proof}
	\subsubsection{The compactness of Palais-Smale sequences in the HLS critical case}
	In this subsection, we always assume that $p=2_{\alpha}^{*}$.
	\begin{lemma}\label{lm:PS_strong_convergence}
		Set $N\geq 3$, $\alpha\in (0,N)$, $2_{\alpha}<q<p=2^{*}_{\alpha}$ and $\mu,a>0$. 
		Suppose that \eqref{assumptions:basic} holds.
		Let $\{u_n\}\subset \mathcal{P}^+_r(a)$ or $\{u_n\}\subset \mathcal{P}^-_r(a)$ be a Palais-Smale sequence for $E$ restricted to $S(a)$ at levet $m\in\mathbb{R}$ which is weakly convergent, up to a subsequence, to the function $u_a$. If $\{u_n\}\subset \mathcal{P}^+_r(a)$ we assume that $m\ne 0$ and if $\{u_n\}\subset \mathcal{P}^-_r(a)$ we assume that
		\begin{equation*}
			m<\frac{\alpha+2}{2(N+\alpha)}\mathcal{S}^{\frac{N+\alpha}{\alpha+2}}_{H}.
		\end{equation*}
		Then $u_a$ is nontrivial and one of the following alternatives holds:
		\begin{enumerate}
			\item[(i)] $E[u_a]\leq m-\frac{\alpha+2}{2(N+\alpha)}\mathcal{S}^{\frac{N+\alpha}{\alpha+2}}_{H};$
			\item[(ii)] $u_n\rightarrow u_a$ strongly in $H^1_{\operatorname{rad}}(\mathbb{R}^N).$
		\end{enumerate} 
	\end{lemma}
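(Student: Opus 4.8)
The plan is to run a concentration–compactness dichotomy for the sequence $v_n:=u_n-u_a$ (working along a subsequence with $u_n\rightharpoonup u_a$ in $H^1_{\operatorname{rad}}(\mathbb{R}^N)$, so that $\{u_n\}$ is bounded and $v_n\rightharpoonup 0$), quantizing the energy carried off to infinity against the sharp constant $\mathcal{S}_H$, in the spirit of the local critical case of \cite{MR4096725}; throughout we use that $p=2_\alpha^*$ forces $\eta_p=1$. \emph{Step 1: $u_a\neq 0$.} If $u_a=0$, radial compactness (Lemma \ref{lm:compact_operator_nonlocal}) gives $B_q[u_n]\to 0$, and then $P[u_n]=0$ with $\eta_p=1$ gives $A[u_n]=B_p[u_n]+o_n(1)$; plugging this into \eqref{estimate:critical} yields $A[u_n]\le\mathcal{S}_H^{-p}A[u_n]^p+o_n(1)$, so along a subsequence either $A[u_n]\to 0$ or $\liminf_{n}A[u_n]\ge\mathcal{S}_H^{\frac{N+\alpha}{\alpha+2}}$. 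When $\{u_n\}\subset\mathcal{P}^+_r(a)$, the sign condition $f_{u_n}''(1)>0$ with $\eta_p=1$ forces $(p-1)B_p[u_n]<\mu\eta_q(1-q\eta_q)B_q[u_n]\to 0$, hence $A[u_n]\to 0$ and $m=\lim E[u_n]=0$, contradicting $m\neq 0$. When $\{u_n\}\subset\mathcal{P}^-_r(a)$, the possibility $A[u_n]\to 0$ is excluded by $A[u_n]\ge\delta>0$ (Lemma \ref{lm:A_bounded_below}(ii)); in the remaining case $E[u_n]=E[u_n]-\tfrac12P[u_n]=\tfrac{p-1}{2p}B_p[u_n]+\tfrac{\mu(q\eta_q-1)}{2q}B_q[u_n]=\tfrac{p-1}{2p}A[u_n]+o_n(1)$ yields $m\ge\tfrac{p-1}{2p}\mathcal{S}_H^{\frac{N+\alpha}{\alpha+2}}=\tfrac{\alpha+2}{2(N+\alpha)}\mathcal{S}_H^{\frac{N+\alpha}{\alpha+2}}$, contradicting the standing bound on $m$. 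Hence $u_a\neq 0$.

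\emph{Step 2: $u_a$ is a solution with $\lambda_a<0$.} Following Lemma \ref{lm:convergence_sub}, the Palais–Smale condition on $S(a)$ produces $\lambda_n:=a^{-2}\langle\dif E[u_n],u_n\rangle$ with $\lambda_na^2=(\eta_p-1)B_p[u_n]+\mu(\eta_q-1)B_q[u_n]=\mu(\eta_q-1)B_q[u_n]\to\mu(\eta_q-1)B_q[u_a]=:\lambda_aa^2$, and $\lambda_a<0$ since $\mu>0$, $\eta_q<1$, $B_q[u_a]>0$. Passing to the limit in $\dif E[u_n]-\lambda_nu_n\to 0$ in $H^{-1}$ — the gradient term by weak convergence, the subcritical Choquard term by local strong convergence, and the critical Choquard term using that $(I_\alpha*|u_n|^{2_\alpha^*})|u_n|^{2_\alpha^*-2}u_n$ is bounded in $L^{(2^*)'}$ and a.e.\ convergent, hence weakly convergent — shows that $u_a$ is a weak solution of \eqref{eq:NLS_S} with multiplier $\lambda_a$, so $P[u_a]=0$ by Lemma \ref{lm:Langrange_multiplier}.

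\emph{Step 3: decomposition and conclusion.} By the Brezis–Lieb lemma for $A$, for $B_p$ (Lemma \ref{lm:Brezis–Lieb_nonlocal}) and for $\|\cdot\|_2^2$, and by $B_q[u_n]\to B_q[u_a]$, we obtain $A[u_n]=A[u_a]+A[v_n]+o_n(1)$, $B_p[u_n]=B_p[u_a]+B_p[v_n]+o_n(1)$, $B_q[v_n]\to 0$ and $\|v_n\|_2^2\to a^2-\|u_a\|_2^2$. Subtracting $P[u_a]=0$ from $P[u_n]=0$ gives $A[v_n]-B_p[v_n]=o_n(1)$; subtracting the equation for $u_a$ tested against $u_a$ from $\langle\dif E[u_n]-\lambda_nu_n,u_n\rangle=o_n(1)$ gives $A[v_n]-B_p[v_n]-\lambda_a\lim_n\|v_n\|_2^2=o_n(1)$, whence $\lambda_a\lim_n\|v_n\|_2^2=0$ and, since $\lambda_a<0$, $\|v_n\|_2\to 0$. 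Writing $\ell:=\lim_nA[v_n]=\lim_nB_p[v_n]$ along a subsequence and using $B_p[v_n]\le\mathcal{S}_H^{-p}A[v_n]^p$ forces $\ell\le\mathcal{S}_H^{-p}\ell^p$, so $\ell=0$ or $\ell\ge\mathcal{S}_H^{\frac{N+\alpha}{\alpha+2}}$. A final Brezis–Lieb splitting of $E$ gives $E[u_n]=E[u_a]+\tfrac12A[v_n]-\tfrac1{2p}B_p[v_n]+o_n(1)$, i.e.\ $m=E[u_a]+\tfrac{p-1}{2p}\ell$. If $\ell=0$ then $A[v_n]\to 0$ and $\|v_n\|_2\to 0$, so $u_n\to u_a$ strongly in $H^1_{\operatorname{rad}}(\mathbb{R}^N)$, which is alternative (ii); if $\ell\ge\mathcal{S}_H^{\frac{N+\alpha}{\alpha+2}}$ then $m\ge E[u_a]+\tfrac{\alpha+2}{2(N+\alpha)}\mathcal{S}_H^{\frac{N+\alpha}{\alpha+2}}$, which is alternative (i).

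The main obstacle is twofold: first, passing to the limit in the HLS-critical nonlocal term in Step 2, where no compactness is available and one relies only on the weak closedness of the critical Choquard operator; second, the bookkeeping of Step 3, where the interplay of the Pohozaev relations $P[u_n]=0=P[u_a]$, the Brezis–Lieb decomposition and the sharp inequality \eqref{estimate:critical} simultaneously produces the energy gap $\ell\in\{0\}\cup[\mathcal{S}_H^{\frac{N+\alpha}{\alpha+2}},\infty)$ and, via $\lambda_a<0$, annihilates the $L^2$-defect $\lim_n\|v_n\|_2^2$. The dimension $N$ and the parameter $\alpha$ enter only through the explicit lost quantum $\tfrac{\alpha+2}{2(N+\alpha)}\mathcal{S}_H^{\frac{N+\alpha}{\alpha+2}}$, which is exactly the threshold imposed on $m$ in the $\mathcal{P}^-_r(a)$ case.
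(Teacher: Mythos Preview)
Your proposal is correct and follows essentially the same route as the paper's proof: nontriviality of $u_a$ via the dichotomy $A[u_n]\to 0$ or $A[u_n]\to\ell\ge\mathcal{S}_H^{\frac{N+\alpha}{\alpha+2}}$, passage to the limit in the equation to get $P[u_a]=0$ and $\lambda_a<0$, and then the Brezis--Lieb splitting combined with the Pohozaev constraints to quantize $\ell:=\lim A[v_n]=\lim B_p[v_n]$. The only organizational difference is in Step~3: you first extract $\|v_n\|_2\to 0$ by subtracting the Nehari relation from the Pohozaev relation (so that $\|u_a\|_2=a$ is established regardless of which alternative occurs), and then use this to conclude strong convergence when $\ell=0$; the paper instead handles the $k=0$ case directly by comparing $A[u_n]-\lambda_a\|u_n\|_2^2$ with $A[u_a]-\lambda_a\|u_a\|_2^2$ via the tested equations and $\lambda_a<0$, without isolating $\|v_n\|_2\to 0$ as a preliminary step. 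Both arrive at the same conclusion with the same ingredients.
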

	\begin{proof}
		We first prove that $u_a$ is nontrivial. 
		If the statement was not true, then we have $u_a=0$.
		Since $u_n\rightharpoonup u_a$ weakly in $H^1_{\operatorname{rad}}(\mathbb{R}^N)$, by Lemma \ref{lm:compact_embedding}, it holds that $u_n\rightarrow u_a$ strongly in $L^{r}(\mathbb{R}^N)$ for all $r\in (2,2^*)$, i.e.,  $\|u_n\|_q\rightarrow 0$ as $n\rightarrow \infty$. 
		According to the Hardy-Littlewood-Sobolev inequality \eqref{ieq:HLS} and $\frac{2Nq}{N+\alpha}\in (2,2^*)$, it follows that 
		$$0\leq B_q[u_n]\lesssim\|u_n\|_{\frac{2Nq}{N+\alpha}}^{2q}\rightarrow 0.$$
		Therefore, we deduce form $\{u_n\}\subset \mathcal{P}(a)$  that 
		$$A[u_n]=B_p[u_n]+\mu B_q[u_n]=B_p[u_n]+o_n(1).$$
		As a result, we suppose that  
		$$\lim_{n\rightarrow\infty}A[u_n]=\lim_{n\rightarrow\infty}B_p[u_n]=:l\geq 0.$$
		By the definition of $\mathcal{S}_H$ in \eqref{ieq:best_constant}, it holds that 
		$$l=\lim_{n\rightarrow\infty}A[u_n]\leq \lim_{n\rightarrow\infty}\mathcal{S}_H^{-p}A[u_n]^{p}=\mathcal{S}_H^{-p}l^{p},$$
		which implies that either $l=0$ or $l\geq \mathcal{S}_H^{\frac{p}{p-1}}=\mathcal{S}^{\frac{N+\alpha}{\alpha+2}}_{H}>0.$
		If $\{u_n\}\subset \mathcal{P}^+_r(a)$, then we have $l=0$ since
		$$0\leq(p-1)B_p[u_n]<\mu\eta_{q}(1-q\eta_{q})B_q[u_n]\rightarrow 0,\quad \text{as}\quad  n\rightarrow +\infty.$$
		However, the above inequality also implies that $E[u_n]=o_n(1)$ and this contradicts the assumption that $E[u_n]+o_n(1)=m\ne 0$. 
		If $\{u_n\}\subset \mathcal{P}^-_r(a)$, Lemma \ref{lm:A_bounded_below} implies that $l\geq \mathcal{S}^{\frac{N+\alpha}{\alpha+2}}_{H}>0.$
		It follows from $\{u_n\}\subset \mathcal{P}(a)$ that
		\begin{align*}
			m+o_n(1)=&E[u_n]=\frac{p-1}{2p}A[u_n]-\frac{\mu}{2q}(1-\frac{2q\eta_q}{2p})B_q[u_n]
			=\frac{\alpha+2}{2(N+\alpha)}l+o_n(1)\geq \frac{\alpha+2}{2(N+\alpha)}\mathcal{S}^{\frac{N+\alpha}{\alpha+2}}_{H}+o_n(1).
		\end{align*}
		This leads to a contradiction.
		
		It follows from  $\{u_n\}\subset S_r(a)$ and \cite[Lemma 3]{MR695536}  that 
		\begin{align*}
			&\left.\dif E\right\vert_{S(a)}[u_n]\rightarrow 0 \text{ in } H^{-1}(\mathbb{R}^N)   
			\iff \dif E[u_n]-\frac{1}{a^2}\langle \dif E[u_n],u_n\rangle u_n \rightarrow 0 \text{ in } H^{-1}(\mathbb{R}^N).
		\end{align*}
		Consequently, for any $w\in H^{1}(\mathbb{R}^N)$, we have
		\begin{align}\label{eq:differential_restricted_critical}
			o_n(1)=&\left\langle \dif E[u_n]-\frac{1}{a^2}\langle \dif E[u_n],u_n\rangle u_n,w\right\rangle\notag\\
			=&\int_{\mathbb{R}^N}\nabla u_n\nabla w - \lambda_n \int_{\mathbb{R}^N} u_n w-\int_{\mathbb{R}^N}(I_{\alpha} * \vert u_n\vert^p)\vert u_n\vert^{p-2}u_n w
			-\mu\int_{\mathbb{R}^N}(I_{\alpha} * \vert u_n\vert^q)\vert u_n\vert^{q-2}u_n w,
		\end{align}
		which implies that 
		\begin{align*}
			\lambda_na^2=&-\mu(1-\eta_{q})B_q[u_n]
			\rightarrow -\mu(1-\eta_{q})B_q[u_a]=:\lambda_aa^2,
		\end{align*}
		where we have used Lemma \ref{lm:compact_operator_nonlocal} and $P[u_n]=0$ for every $n\in \mathbb{N}$. 
		By Lemma \ref{lm:compact_operator_nonlocal}, Lemma \ref{lm:Brezis–Lieb_nonlocal}, the weak convergence $u_n\rightharpoonup u_a$ in $H^1_{\operatorname{rad}}(\mathbb{R}^N)$ and $\lambda_n \rightarrow \lambda_a$ in $\mathbb{R}$, the equation \eqref{eq:differential_restricted_critical} leads to
		\begin{align}\label{eq:differential_criticl_point_critical}
			0=\int_{\mathbb{R}^N}\nabla u_a\nabla w\dif x -& \lambda_a \int_{\mathbb{R}^N} u_a w\dif x-\int_{\mathbb{R}^N}(I_{\alpha} * \vert u_a\vert^p)\vert u_a\vert^{p-2}u_a w\dif x
			-\mu\int_{\mathbb{R}^N}(I_{\alpha} * \vert u_a\vert^q)\vert u_a\vert^{q-2}u_a w\dif x.
		\end{align}
		As a consequence, we have 
		\begin{align*}
			-\Delta u_a=\lambda_a u_a+ (I_{\alpha} * \vert u_a \vert^p)\vert u_a \vert^{p-2}u_a+\mu (I_{\alpha} * \vert u_a \vert^q)\vert u_a \vert^{q-2}u_a \quad\text{in}\  H^{-1}(\mathbb{R}^N).
		\end{align*}
		Therefore, it holds that $P[u_a]=0$ and $\lambda_a< 0$ due to Lemma \ref{lm:Langrange_multiplier}.
		
		Note that $v_n:=u_n-u_a\rightharpoonup 0$ weakly in $H^1_{\operatorname{rad}}(\mathbb{R}^N)$. 
		Using Brezis-Lieb type lemma for nonlocal term (see Lemma \ref{lm:Brezis–Lieb_nonlocal}), we have 
		$$A[u_n]=A[u_a]+A[v_n]+o_n(1),\quad B_s[u_n]=B_s[u_a]+B_s[v_n]+o_n(1),\quad \forall s\in (2_{\alpha},2_{\alpha}^{*}].$$
		Therefore, according to $P[u_n]=0$ and Lemma \ref{lm:compact_operator_nonlocal}, it follows that 
		$$ A[u_a]+A[v_n]-B_p[u_a]-B_p[v_n]-\mu\eta_qB_q[u_a]=o_n(1).$$
		By $P[u_a]=0$, we can suppose that
		$$\lim_{n\rightarrow\infty} A[v_n]=\lim_{n\rightarrow\infty}B_p[v_n]=:k\geq 0.$$
		Again, by the definition of $\mathcal{S}_H$ in \eqref{ieq:best_constant}, it holds that either $k=0$ or $k\geq S^{\frac{N+\alpha}{\alpha+2}}_{H}>0.$
		
		If $k\geq S^{\frac{N+\alpha}{\alpha+2}}_{H}>0$ holds, then we have
		\begin{align*}
			m+o_n(1)=E[u_n]=&(\frac{1}{2}-\frac{1}{2p})A[u_n]-\frac{\mu}{2q}(1-\frac{2q\eta_q}{2p})B_q[u_n]\\
			=&(\frac{1}{2}-\frac{1}{2p})(A[u_a]+A[v_n])-\frac{\mu}{2q}(1-\frac{2q\eta_q}{2p})B_q[u_a]+o_n(1)\\
			=&E[u_a]+(\frac{1}{2}-\frac{1}{2p})k+o_n(1)\\
			\geq& E[u_a]+\frac{\alpha+2}{2(N+\alpha)}\mathcal{S}^{\frac{N+\alpha}{\alpha+2}}_{H}+o_n(1),
		\end{align*}
		which proves that the alternative (i) holds.
		
		If $k=0$ holds, then  putting $w=u_n$ in \eqref{eq:differential_restricted_critical} and $w=u_n$ in \eqref{eq:differential_criticl_point_critical}, we arrive at
		\begin{align*}
			\int_{\mathbb{R}^N}\vert \nabla u_n\vert^2\dif x -\lambda_n \int_{\mathbb{R}^N} \vert u_n\vert^2\dif x-B_p[u_n]-\mu B_q[u_n]=&o_n(1),\\
			\int_{\mathbb{R}^N}\vert \nabla u_a\vert^2\dif x -\lambda_a \int_{\mathbb{R}^N} \vert u_a\vert^2\dif x-B_p[u_a]-\mu B_q[u_a]=&0.
		\end{align*}
		By Lemma \ref{lm:compact_operator_nonlocal}, Lemma \ref{lm:Brezis–Lieb_nonlocal},  $\lambda_n\rightarrow \lambda_a<0$ and $k=0$, it follows that 
		\begin{align*}
			\int_{\mathbb{R}^N}\vert \nabla u_n\vert^2\dif x-\lambda_a \int_{\mathbb{R}^N} \vert u_n\vert^2\dif x\rightarrow\int_{\mathbb{R}^N}\vert \nabla u_a\vert^2\dif x -\lambda_a \int_{\mathbb{R}^N}\vert u_a\vert^2\dif x,\quad \text{as}\quad n\rightarrow \infty,
		\end{align*}
		which implies that $u_n\rightarrow u_a$ strongly in $H^1_{\operatorname{rad}}(\mathbb{R}^N)$, that is, the alternative $(ii)$ holds.
		This completes the proof.
	\end{proof}
	A direct application is to obtain the existence of one ground state in Theorem \ref{thm:mass_subcritical}(i):
	\begin{proof}[Proof of Theorem \ref{thm:mass_subcritical}(i) for $p=2_{\alpha}^{*}$]
		By Lemma \ref{lm:PS_sequence_existence_mass_subcritical}, there exists a bounded Palais-Smale sequence $\{u_n\}\subset \mathcal{P}^{+}(a)$ for $\left.E\right\vert_{S(a)}$ at level $\gamma^{+}(a)<0$.
		Up to a subsequence, there exists a nontrivial $u_a \in H^1(\mathbb{R}^N)$  satisfying $u_n\rightharpoonup u_a$ weakly in $H^1(\mathbb{R}^N)$. 
		By Lemma \ref{lm:PS_strong_convergence} and Lemma \ref{lm:nonincreasing} below, we obtain that $\gamma^{+}(a)$ is reached.
		The remain part is similar to the proof for $p\in (2_{\alpha}^{\sharp},2_{\alpha}^*)$ in part 3.1.2.
	\end{proof}
	Now we shall consider the second critical point for $\gamma^{-}(a)$. 
	Our proof is inspired by \cite[Lemma 3.1]{MR4433054} and \cite[Lemma 3.14]{JEANJEAN2021277}.
	Let $u_{\varepsilon}$ be an extremal function for the Sobolev inequality in $\mathbb{R}^{N}$ defined by
	$$
	u_{\varepsilon}(x):=\frac{\left[N(N-2) \varepsilon^{2}\right]^{\frac{N-2}{4}}}{\left[\varepsilon^{2}+|x|^{2}\right]^{\frac{N-2}{2}}}, \quad \varepsilon>0, \quad x \in \mathbb{R}^{N}
	$$
	Let $\xi \in \mathscr{C}_{0}^{\infty}\left(\mathbb{R}^{N}\right)$ be a radial non-increasing cut-off function with $\xi \equiv 1$ in $B_{1}, \xi \equiv 0$ in $\mathbb{R}^{N} \backslash B_{2}$.  Put $U_{\varepsilon}(x)=\xi(x) u_{\varepsilon}(x)$.
	We recall the following result, see \cite{2021Jeanjean}[Lemma 7.1].
	\begin{lemma}\label{lm:estimate_bubble}
		Let $N \geq 3$ and $\omega$ be the area of the unit sphere in $\mathbb{R}^{N}$. Then it holds that
		\begin{enumerate}
			\item[(i)] 
			$$
			\left\|\nabla U_{\varepsilon}\right\|_2=\mathcal{S}^{\frac{N}{2}}+O\left(\varepsilon^{N-2}\right) \quad \text {and}\quad \left\|U_{\varepsilon}\right\|_{2^{*}}^{2^{*}}=\mathcal{S}^{\frac{N}{2}}+O\left(\varepsilon^{N}\right).
			$$
			\item[(ii)] For some positive constant $K>0$,
			$$
			\left\|U_{\varepsilon}\right\|_q^q= 
			\begin{cases}
				K \varepsilon^{N-\frac{(N-2)}{2} q}+o\left(\varepsilon^{N-\frac{(N-2)}{2} q}\right), & \text { if } q\in(\frac{N}{N-2},2^*),\\ 
				\omega \varepsilon^{\frac{N}{2}}|\log \varepsilon|+O\left(\varepsilon^{\frac{N}{2}}\right), & \text { if } q=\frac{N}{N-2}, \\
				\omega\left(\int_{0}^{2} \frac{\xi^{q}(r)}{r^{(N-2) q-(N-1)}} \right) \varepsilon^{\frac{q(N-2)}{2}}+o(\varepsilon^{\frac{q(N-2)}{2}}), & \text { if }q\in[1,\frac{N}{N-2}) .
			\end{cases}
			$$
		\end{enumerate}
	\end{lemma}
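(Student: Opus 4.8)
The plan is to treat Lemma~\ref{lm:estimate_bubble} as a routine asymptotic analysis of the Brezis--Nirenberg truncated instanton $U_\varepsilon=\xi u_\varepsilon$, the only inputs being the explicit form of $u_\varepsilon$, the scaling identity $u_\varepsilon(x)=\varepsilon^{-\frac{N-2}{2}}u_1(x/\varepsilon)$ with $u_1(y)=[N(N-2)]^{\frac{N-2}{4}}(1+|y|^2)^{-\frac{N-2}{2}}$, and the exact values $\|\nabla u_1\|_2^2=\|u_1\|_{2^*}^{2^*}=\mathcal{S}^{N/2}$. For part~(i), since $\xi\equiv 1$ on $B_1$ and $\xi\equiv 0$ outside $B_2$, I would write $\nabla U_\varepsilon=\xi\nabla u_\varepsilon+u_\varepsilon\nabla\xi$ and split $\int_{\mathbb{R}^N}|\nabla U_\varepsilon|^2\dif x$ into the contribution of $B_1$, equal to $\mathcal{S}^{N/2}-\int_{\mathbb{R}^N\setminus B_1}|\nabla u_\varepsilon|^2\dif x$, and that of the annulus $B_2\setminus B_1$. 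On $\mathbb{R}^N\setminus B_1$ one has $|\nabla u_\varepsilon(x)|^2\lesssim\varepsilon^{N-2}|x|^{-2(N-1)}$, integrable at infinity since $2(N-1)>N$, so the tail is $O(\varepsilon^{N-2})$; on the annulus both $u_\varepsilon$ and $|\nabla u_\varepsilon|$ are $O(\varepsilon^{\frac{N-2}{2}})$ over a set of fixed volume, so $\int_{B_2\setminus B_1}(\xi^2|\nabla u_\varepsilon|^2+2\xi u_\varepsilon\nabla\xi\cdot\nabla u_\varepsilon+u_\varepsilon^2|\nabla\xi|^2)\dif x=O(\varepsilon^{N-2})$ as well. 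The $L^{2^*}$ estimate is even shorter: $\|U_\varepsilon\|_{2^*}^{2^*}=\mathcal{S}^{N/2}-\int_{\mathbb{R}^N\setminus B_1}(1-\xi^{2^*})u_\varepsilon^{2^*}\dif x$, and $u_\varepsilon^{2^*}\lesssim\varepsilon^{N}|x|^{-2N}$ on $\mathbb{R}^N\setminus B_1$ gives $O(\varepsilon^N)$.

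For part~(ii), the key move is the rescaling $x=\varepsilon y$, which yields $\|U_\varepsilon\|_q^q=\varepsilon^{N-\frac{(N-2)q}{2}}\int_{\mathbb{R}^N}\xi(\varepsilon y)^q\,u_1(y)^q\dif y$, after which everything hinges on comparing $q$ with the threshold $\frac{N}{N-2}$ at which $u_1(y)^q\sim c\,|y|^{-(N-2)q}$ stops being integrable at infinity. If $q\in(\frac{N}{N-2},2^*)$, then $(N-2)q>N$, so $u_1^q\in L^1(\mathbb{R}^N)$ and, since $0\le\xi(\varepsilon y)^q\le 1$ with $\xi(\varepsilon y)\to 1$ pointwise, dominated convergence gives $\int_{\mathbb{R}^N}\xi(\varepsilon y)^q u_1^q\dif y\to\int_{\mathbb{R}^N}u_1^q\dif y=:K$, whence the first line. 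If $q=\frac{N}{N-2}$, then $N-\frac{(N-2)q}{2}=\frac{N}{2}$, and since $\xi(\varepsilon y)$ is supported in $\{|y|\le 2/\varepsilon\}$ while $u_1(y)^q\sim c\,|y|^{-N}$, integration in polar coordinates over $B_{2/\varepsilon}$ produces a leading term $\omega|\log\varepsilon|+O(1)$, hence the logarithmic term. If $q\in[1,\frac{N}{N-2})$, it is cleaner to keep $u_\varepsilon$: away from the origin $u_\varepsilon(x)\sim[N(N-2)\varepsilon^2]^{\frac{N-2}{4}}|x|^{-(N-2)}$, so the bulk of $\int_{\mathbb{R}^N}\xi^q u_\varepsilon^q\dif x$ comes from $B_2$ and equals $\omega\big(\int_0^2\xi^q(r)\,r^{N-1-(N-2)q}\dif r\big)\varepsilon^{\frac{(N-2)q}{2}}$ up to lower order, the cross-over region $|x|\sim\varepsilon$ contributing only $O(\varepsilon^{N-\frac{(N-2)q}{2}})$, which is strictly smaller precisely because $q<\frac{N}{N-2}$.

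The only genuinely delicate point is the bookkeeping: one has to verify in each of the three ranges of $q$ that the announced power of $\varepsilon$ is the dominant one — in particular that the core region $|x|\lesssim\varepsilon$ is negligible — and to pin down the logarithmic resonance exactly at $q=\frac{N}{N-2}$. No idea beyond the classical Brezis--Nirenberg expansions is required, and since the statement is quoted verbatim from \cite{2021Jeanjean} (Lemma~7.1), I would simply cite that reference, optionally appending the computation above for completeness.
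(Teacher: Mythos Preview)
Your proposal is correct and coincides with the paper's treatment: the paper does not prove Lemma~\ref{lm:estimate_bubble} at all but simply recalls it from \cite{2021Jeanjean}, Lemma~7.1, exactly as you suggest. Your optional sketch of the classical Brezis--Nirenberg expansion is sound and would serve as a self-contained complement, though the paper itself omits any such computation.
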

	The proof of following lemma is similar to \cite{MR3582271, MR3817173} .
	\begin{lemma}\label{lm:estimate_bubble_nonlocal}
		Let $N\geq 3$ and $\alpha \in (0,N)$. Then 
		\begin{enumerate}
			\item[(i)] it holds that
			\begin{align*}
				B_{2_{\alpha}^*}[U_{\epsilon}]
				\begin{cases}
					\geq \left(\mathcal{A}(N,\alpha)\mathcal{C}_{H}(N,\alpha)\right)^{\frac{N}{2}}\mathcal{S}_{H}^{\frac{N+\alpha}{2}}-O(\epsilon^{\frac{N+\alpha}{2}}),\\
					\leq \left(\mathcal{A}(N,\alpha)\mathcal{C}_{H}(N,\alpha)\right)^{\frac{N}{2}}\mathcal{S}_{H}^{\frac{N+\alpha}{2}}+O(\epsilon^{N-2}).
				\end{cases} 
			\end{align*}
			\item[(ii)]
			For $q\in (2_{\alpha},2_{\alpha}^{*})$, it is true that
			\begin{align*}
				B_q[U_{\epsilon}]\geq O(\epsilon^{N+\alpha-(N-2)q})=O(\epsilon^{2q(1-\eta_{q})}).
			\end{align*}
		\end{enumerate}
	\end{lemma}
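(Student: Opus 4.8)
The plan is to reduce both estimates to the exact value of $B_{2_{\alpha}^*}$ on the \emph{untruncated} Aubin--Talenti profile $u_{\varepsilon}$. Since $\mathcal{S}_H$ in \eqref{ieq:best_constant} is attained (by \cite{2019UNIQUENESS}) precisely at the functions $u_{\varepsilon}$, and since $|u_{\varepsilon}|^{2_{\alpha}^*}$ is exactly of the form $C(\varepsilon^2+|x|^2)^{-(N+\alpha)/2}$ that saturates the Hardy--Littlewood--Sobolev inequality \eqref{ieq:HLS} with $r=s=\tfrac{2N}{N+\alpha}$, one has $B_{2_{\alpha}^*}[u_{\varepsilon}]=\mathcal{A}(N,\alpha)\mathcal{C}_H(N,\alpha)\|u_{\varepsilon}\|_{2^*}^{2\cdot 2_{\alpha}^*}$. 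Combining this with the normalization $\|u_{\varepsilon}\|_{2^*}^{2^*}=\mathcal{S}^{N/2}$ and the identity $\mathcal{S}_H=\mathcal{S}\,[\mathcal{A}(N,\alpha)\mathcal{C}_H(N,\alpha)]^{-(N-2)/(N+\alpha)}$ recalled in Section~2 gives, after a short computation (and using that $B_{2_{\alpha}^*}$ is invariant under the $\mathscr{D}^{1,2}$-scaling relating the $u_{\varepsilon}$'s, so that there is no $\varepsilon$-dependence),
\begin{equation*}
B_{2_{\alpha}^*}[u_{\varepsilon}]=\bigl(\mathcal{A}(N,\alpha)\mathcal{C}_H(N,\alpha)\bigr)^{N/2}\mathcal{S}_H^{(N+\alpha)/2}=:\Lambda .
\end{equation*}
So part~(i) only requires controlling $\Lambda-B_{2_{\alpha}^*}[U_{\varepsilon}]$, and part~(ii) only requires a crude lower bound for $B_q[U_{\varepsilon}]$.

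For the upper bound in (i): from $0\le U_{\varepsilon}\le u_{\varepsilon}$ and the positivity of the Riesz kernel, $B_{2_{\alpha}^*}[U_{\varepsilon}]\le B_{2_{\alpha}^*}[u_{\varepsilon}]=\Lambda\le\Lambda+O(\varepsilon^{N-2})$. For the lower bound I would use the elementary inequality $ab-cd\le a(b-d)+b(a-c)$, valid for $0\le c\le a$ and $0\le d\le b$, applied with $a=|u_{\varepsilon}|^{2_{\alpha}^*}(x)$, $b=|u_{\varepsilon}|^{2_{\alpha}^*}(y)$, $c=|U_{\varepsilon}|^{2_{\alpha}^*}(x)$, $d=|U_{\varepsilon}|^{2_{\alpha}^*}(y)$; integrating against $\mathcal{A}(N,\alpha)|x-y|^{-(N-\alpha)}$, using symmetry and then \eqref{ieq:HLS} yields
\begin{equation*}
0\le \Lambda-B_{2_{\alpha}^*}[U_{\varepsilon}]\lesssim \|u_{\varepsilon}\|_{2^*}^{2_{\alpha}^*}\,\bigl\|\,|u_{\varepsilon}|^{2_{\alpha}^*}-|U_{\varepsilon}|^{2_{\alpha}^*}\,\bigr\|_{\frac{2N}{N+\alpha}} .
\end{equation*}
The first factor is $O(1)$; the second is supported in $\{|x|\ge 1\}$, where $u_{\varepsilon}(x)\lesssim\varepsilon^{(N-2)/2}|x|^{-(N-2)}$, so (using $(N-2)\cdot 2^*=2N>N$, hence the tail integral converges) $\bigl\|\,|u_{\varepsilon}|^{2_{\alpha}^*}-|U_{\varepsilon}|^{2_{\alpha}^*}\,\bigr\|_{\frac{2N}{N+\alpha}}\lesssim\bigl(\int_{|x|\ge 1}u_{\varepsilon}^{2^*}\,\dif x\bigr)^{(N+\alpha)/(2N)}=O(\varepsilon^{(N+\alpha)/2})$, which is the claimed remainder.

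For (ii) I would discard everything in the double integral except the region $B_{\varepsilon}\times B_{\varepsilon}$, on which $\xi\equiv 1$ (so $U_{\varepsilon}=u_{\varepsilon}$) and $|x-y|\le 2\varepsilon$, which gives
\begin{equation*}
B_q[U_{\varepsilon}]\ge \frac{\mathcal{A}(N,\alpha)}{(2\varepsilon)^{N-\alpha}}\left(\int_{B_{\varepsilon}}u_{\varepsilon}^{\,q}\,\dif x\right)^{2}.
\end{equation*}
Since $u_{\varepsilon}\gtrsim\varepsilon^{-(N-2)/2}$ on $B_{\varepsilon}$, one has $\int_{B_{\varepsilon}}u_{\varepsilon}^{\,q}\,\dif x\gtrsim\varepsilon^{N-(N-2)q/2}$, and therefore $B_q[U_{\varepsilon}]\gtrsim\varepsilon^{\,2N-(N-2)q-(N-\alpha)}=\varepsilon^{\,N+\alpha-(N-2)q}=\varepsilon^{\,2q(1-\eta_q)}$, using the identity $2q(1-\eta_q)=N+\alpha-(N-2)q$ recorded before Lemma~\ref{lm:estimate_bubble_nonlocal} and the fact that $q<2_{\alpha}^*$ makes this exponent positive, so the bound is meaningful.

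The computations above are routine changes of variables in the spirit of Lemma~\ref{lm:estimate_bubble} and of \cite{MR3582271,MR3817173}; the one point one genuinely has to get right is the localization scale in (ii). Restricting the double integral merely to a fixed ball $B_1$, or bounding $B_q[U_{\varepsilon}]$ by an $\|U_{\varepsilon}\|_q$-type quantity, would yield only $\varepsilon^{2N-(N-2)q}$, a strictly smaller power than $\varepsilon^{N+\alpha-(N-2)q}$ because $\alpha<N$; the gain comes exactly from the blow-up $|x-y|^{-(N-\alpha)}\gtrsim\varepsilon^{-(N-\alpha)}$ of the Riesz kernel on the concentration scale $\varepsilon$. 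Likewise, in the lower bound of (i) one must pass through \eqref{ieq:HLS} rather than a pointwise estimate, otherwise the sharp remainder exponent $(N+\alpha)/2$ is lost.
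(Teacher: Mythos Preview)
Your argument is correct and follows the standard route of \cite{MR3582271,MR3817173} that the paper invokes: reducing (i) to the exact value $\Lambda$ on the untruncated bubble via $0\le U_{\varepsilon}\le u_{\varepsilon}$ and an HLS-controlled difference, and obtaining (ii) by localizing the double integral to $B_{\varepsilon}\times B_{\varepsilon}$ so as to capture the $\varepsilon^{-(N-\alpha)}$ gain from the Riesz kernel. Your remark that restricting to a fixed ball would lose exactly the factor $\varepsilon^{-(N-\alpha)}$ is precisely the point, and your upper bound $B_{2_{\alpha}^*}[U_{\varepsilon}]\le\Lambda$ is in fact slightly sharper than the stated $\Lambda+O(\varepsilon^{N-2})$.
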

	\begin{lemma}\label{lm:q_bounded}
		There exist two constants $C_1,C_2>0$ such that for any $p,q\in[1,+\infty)$,  
		\begin{align*}
			C_1\|U_{\varepsilon}\|_q^q\leq \int_{\mathbb{R}^N}\vert u_a^+(x)\vert^p\vert U_{\varepsilon}(x)\vert^q\dif x\leq C_2\|U_{\varepsilon}\|_q^q.
		\end{align*}
	\end{lemma}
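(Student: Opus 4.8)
The plan is to reduce both estimates to a pointwise bound for the ground state $u_a^+$ on the fixed compact ball $\overline{B_2}$, exploiting that the truncated Aubin--Talenti bubble $U_\varepsilon=\xi u_\varepsilon$ is supported in $\overline{B_2}$ by the choice of the cut-off $\xi$. Since $\mu>0$ and $u_a^+\in H^1(\mathbb{R}^N)$ solves \eqref{eq:NLS_S}, Lemma \ref{lm:regularity} gives $u_a^+\in\mathscr{C}(\mathbb{R}^N)$ and $u_a^+>0$ on all of $\mathbb{R}^N$; moreover $u_a^+$ is radially decreasing. Hence on the compact set $\overline{B_2}$ the function $u_a^+$ attains a strictly positive minimum $m$ (on $\partial B_2$) and a finite maximum $M=u_a^+(0)$, and these numbers depend only on $u_a^+$ and $p$ — in particular not on $\varepsilon$.

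Next, because $\xi\equiv 0$ on $\mathbb{R}^N\setminus B_2$ one has $\operatorname{supp}U_\varepsilon\subset\overline{B_2}$, so both integrals in the statement are actually over $\overline{B_2}$ only; in particular $\|U_\varepsilon\|_q^q=\int_{\overline{B_2}}|U_\varepsilon|^q\dif x$. Bounding the factor $|u_a^+(x)|^p$ pointwise on $\overline{B_2}$ by $m^p\le |u_a^+(x)|^p\le M^p$ and integrating against $|U_\varepsilon|^q$ yields
\[
m^p\,\|U_\varepsilon\|_q^q\ \le\ \int_{\mathbb{R}^N}|u_a^+(x)|^p|U_\varepsilon(x)|^q\dif x\ \le\ M^p\,\|U_\varepsilon\|_q^q,
\]
which is precisely the desired conclusion with $C_1=m^p$ and $C_2=M^p$.

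There is no genuine obstacle here; the only point deserving care is the lower bound, which requires $u_a^+$ to be bounded away from $0$ on $\overline{B_2}$ — this is exactly where the strict positivity in Lemma \ref{lm:regularity}(ii) (available because $\mu>0$) is used, continuity alone being insufficient. It is also worth stressing that $C_1,C_2$ are independent of $\varepsilon$ (and of $q$), which is what makes the lemma usable when it is later combined with the asymptotics of $\|U_\varepsilon\|_q^q$ from Lemma \ref{lm:estimate_bubble}(ii) to estimate the level $\gamma^-(a)$ in the Hardy--Littlewood--Sobolev critical case $p=2_{\alpha}^{*}$.
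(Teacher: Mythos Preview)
Your argument is correct and is essentially identical to the paper's proof: both use that $U_\varepsilon$ is supported in $\overline{B_2}$, then bound $|u_a^+|^p$ from above by its supremum and from below by its (strictly positive) minimum on $\overline{B_2}$, invoking exactly the continuity and positivity from Lemma~\ref{lm:regularity}. Your additional remark that $u_a^+$ is radially decreasing (hence the extrema are attained at $0$ and on $\partial B_2$) is a harmless refinement, and your observation that $C_1,C_2$ may depend on $p$ but not on $\varepsilon$ or $q$ matches what the paper's proof actually delivers.
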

	\begin{proof}
		On one hand, since $u_{a}^{+}$ is bounded, it follows  that
		$$
		\int_{\mathbb{R}^{N}}\left|u_{a}^{+}(x)\right|^{p}\left|U_{\varepsilon}(x)\right|^{q} \dif x \leq\left\|u_{a}^{+}\right\|_{\infty}^{p} \|U_{\varepsilon}\|_q^q.
		$$
		On the other hand, since $u_{a}^{+}>0$ on $\mathbb{R}^{N}$ is continuous and the function $U_{\varepsilon}$ is compactly support in $B_{2}$, it holds that
		$$
		\begin{aligned}
			\int_{\mathbb{R}^{N}}\left|u_{a}^{+}(x)\right|^{p}\left|U_{\varepsilon}(x)\right|^{q} \dif x =\int_{B_{2}}\left|u_{a}^{+}(x)\right|^{p}\left|U_{\varepsilon}(x)\right|^{q} \dif x 
			\geq \min _{x \in B_{2}}\left|u_{a}^{+}(x)\right|^{p} \int_{B_{2}}\left|U_{\varepsilon}(x)\right|^{q} \dif x 
			=\min _{x \in B_{2}}\left|u_{a}^{+}(x)\right|^{p} \|U_{\varepsilon}\|_q^q.
		\end{aligned}
		$$
		The proof is complete.
	\end{proof}
	For any $\epsilon>0$ and any $t>0$, define $w_{\epsilon,t}=u_a^++tU_{\epsilon}$, we have
	\begin{align*}
		\|w_{\epsilon,t}\|_2^2=&\|u_a^++tU_{\epsilon}\|_2^2=\|u_a\|_2^2+\|tU_{\epsilon}\|_2^2+2\int_{\mathbb{R}^N}u_a^+(x)(tU_{\epsilon}(x))\dif x,\\
		A[w_{\epsilon,t}]=&A[u_a^++tU_{\epsilon}]=A[u_a^+]+A[tU_{\epsilon}]+2\int_{\mathbb{R}^N}\nabla u_a^+(x)\cdot \nabla(tU_{\epsilon}(x))\dif x,\\
		B_q[w_{\epsilon,t}]=&B_q[u_a^++tU_{\epsilon}]=\mathcal{A}(N,\alpha)\int_{\mathbb{R}^N}\int_{\mathbb{R}^N}\frac{\vert u_a^+(x)+tU_{\epsilon}(x)\vert^q \vert u_a^+(y)+tU_{\epsilon}(y)\vert^q}{\vert x-y\vert^{N-\alpha}}\dif x\dif y\\
		\geq&\begin{cases}
			B_q[u_a^+]+B_q[tU_{\epsilon}],\quad &\text{if } q>1,\\
			B_q[u_a^+]+B_q[tU_{\epsilon}]+2q\int_{\mathbb{R}^N}(I_{\alpha}*\vert u^+_a\vert^q)\vert u^+_a\vert^{q-2}u^+_a(tU_{\epsilon})\dif x, \quad &\text{if } q\geq 2,\\
			B_q[u_a^+]+B_q[tU_{\epsilon}]+2q\int_{\mathbb{R}^N}(I_{\alpha}*\vert u^+_a\vert^q)\vert u^+_a\vert^{q-2}u^+_a(tU_{\epsilon})\dif x \\
			+2q\int_{\mathbb{R}^N}(I_{\alpha}*\vert tU_{\epsilon}\vert^q)\vert tU_{\epsilon}\vert^{q-2}tU_{\epsilon}(u_a^+)\dif x, \quad &\text{if } q\geq 3.
		\end{cases}
	\end{align*}
	The last inequality follows by the following fact:
	\begin{align*}
		(a+b)^r\geq 
		\begin{cases}
			a^r+b^r,&\text{if } r\geq 1,\\
			a^r+b^r+rab^{r-1},&\text{if } r\geq 2,\\
			a^r+b^r+rab^{r-1}+ra^{r-1}b,&\text{if } r\geq 3,
		\end{cases}
	\end{align*}
	where $a,b\geq 0$.
	Since $u_a^+$ is a solution of the following equation
	\begin{equation*}
		-\Delta u=\lambda_a^+ u+ (I_{\alpha} * \vert u \vert^p)\vert u \vert^{p-2}u+\mu (I_{\alpha} * \vert u \vert^q)\vert u \vert^{q-2}u\quad\text{ in }\mathbb{R}^N,
	\end{equation*}
	by testing above equation via $tU_{\epsilon}$, it holds that
	\begin{align}\label{eq:differential_criticl_point_plus}
		\lambda_a^+ \int_{\mathbb{R}^N} u_a^+ (tU_{\epsilon})+\mu\int_{\mathbb{R}^N}(I_{\alpha} * \vert u_a^+\vert^q)\vert u_a^+\vert^{q-2}u_a^+ (tU_{\epsilon})
		=\int_{\mathbb{R}^N}\nabla u_a^+\nabla (tU_{\epsilon})  -\int_{\mathbb{R}^N}(I_{\alpha} * \vert u_a^+\vert^p)\vert u_a^+\vert^{p-2}u_a^+ (tU_{\epsilon}).
	\end{align}
	Put $\overline{w}_{\epsilon,t}(x)=\theta^{\frac{N-2}{2}}w_{\epsilon,t}(\theta x)$ and it is clear that
	\begin{align*}
		\|\overline{w}_{\epsilon,t}\|_2^2=\theta^{-2}\|w_{\epsilon,t}\|_2^2,\quad A[\overline{w}_{\epsilon,t}]=A[w_{\epsilon,t}],\quad B_q[\overline{w}_{\epsilon,t}]=\theta^{q(N-2)-N-\alpha}B_q[w_{\epsilon,t}].
	\end{align*}
	In particular, $B_p[\overline{w}_{\epsilon,t}]=B_p[w_{\epsilon,t}]$ with $p=2_{\alpha}^*$ and choose $\theta =\frac{\|w_{\epsilon,t}\|_2}{a}$, then it follows that $\|\overline{w}_{\epsilon,t}\|_2=a$, i.e., 
	$\overline{w}_{\epsilon,t}\in S(a)$.
	By Lemma \ref{lm:unique_minimum_maximum}, there exists a $s_{\epsilon,t}^->0$ such that $s_{\epsilon,t}^-\circ\overline{w}_{\epsilon,t}\in\mathcal{P}^{-}(a)$.
	It follows that
	\begin{align*}
		A[s_{\epsilon,t}^- \circ \overline{w}_{\epsilon,t}]=B_p[s_{\epsilon,t}^- \circ \overline{w}_{\epsilon,t}]+\mu\eta_{q}B_q[s_{\epsilon,t}^- \circ \overline{w}_{\epsilon,t}],
	\end{align*}
	or equivalently, 
	\begin{align*}
		(s_{\epsilon,t}^-)^2A[  \overline{w}_{\epsilon,t}]=(s_{\epsilon,t}^-)^{2p}B_p[  \overline{w}_{\epsilon,t}]+\mu\eta_{q}(s_{\epsilon,t}^-)^{2q\eta_{q}}B_q[ \overline{w}_{\epsilon,t}],
	\end{align*}
	which implies that 
	\begin{align*}
		A[\overline{w}_{\epsilon,t}]>(s_{\epsilon,t}^-)^{2p-2}B_p[  \overline{w}_{\epsilon,t}].
	\end{align*}
	However, according to the H\"{o}lder inequality, Lemma \ref{lm:estimate_bubble} and Lemma \ref{lm:estimate_bubble_nonlocal}, it follows that
	\begin{align*}
		A[\overline{w}_{\epsilon,t}]=&A[u_a^+]+A[tU_{\epsilon}]+2\int_{\mathbb{R}^N}\nabla u_a^+(x)\cdot \nabla(tU_{\epsilon}(x))\dif x\\
		\leq & A[u_a^+]+t^2A[U_{\epsilon}]+2tA[u_a^+]^{\frac{1}{2}}A[U_{\epsilon}]^{\frac{1}{2}}\\
		\rightarrow&  A[u_a^+]+2\mathcal{S}^{\frac{N}{2}}A[u_a^+]^{\frac{1}{2}}t+\mathcal{S}^Nt^2\quad\text{ as}\quad\epsilon\rightarrow 0,
	\end{align*}
	and 
	\begin{align*}
		B_p[\overline{w}_{\epsilon,t}]\geq& B_p[u_a^+]+B_p[tU_{\epsilon}]\geq t^{2p}B_p[U_{\epsilon}]
		\geq  t^{2p}\mathcal{A}(N,\alpha)^{\frac{N}{2}}\mathcal{C}_{H}({N,\alpha})^{\frac{N}{2}}\mathcal{S}_H^{\frac{N+\alpha}{2}}-O(\epsilon^{\frac{N+\alpha}{2}}).
	\end{align*}
	Since $p=2_{\alpha}^*>1$, we find that for $\epsilon>0$ small enough, it holds that 
	\begin{align*}
		A[u_a^+]+2\mathcal{S}^{\frac{N}{2}}A[u_a^+]^{\frac{1}{2}}t+\mathcal{S}^Nt^2\geq (s_{\epsilon,t}^-)^{2p-2}t^{2p}\mathcal{A}(N,\alpha)^{\frac{N}{2}}\mathcal{C}_{H}({N,\alpha})^{\frac{N}{2}}\mathcal{S}_H^{\frac{N+\alpha}{2}},
	\end{align*}
	which implies that $s_{\epsilon,t}^-\rightarrow 0$ as $t\rightarrow +\infty$.
	Note that $\overline{w}_{\epsilon,0}=u_a^+\in\mathcal{P}^+(a)$. 
	It follows from Lemma \ref{lm:unique_minimum_maximum}  that $s_{\epsilon,0}^->1$ and hence there exists $t_{\epsilon}>0$ such that $s_{\epsilon,t_{\epsilon}}^-=1$ and $\overline{w}_{\epsilon,t_{\epsilon}}\in\mathcal{P}^-(a)$.
	Consequently, by the definition of $\gamma^-(a)$, it is clear that
	\begin{align*}
		\sup_{t\geq 0}E[\overline{w}_{\epsilon,t}]\geq E[\overline{w}_{\epsilon,t_{\epsilon}}]\geq \gamma^-(a).
	\end{align*}
	Indeed, we have proved the following lemma:
	\begin{lemma}
		For $\epsilon>0$ small enough, it holds that $\gamma^-(a)\leq\sup\limits_{t\geq 0}E[\overline{w}_{\epsilon,t}]$.
	\end{lemma}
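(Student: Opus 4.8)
The plan is to exhibit, for each small $\epsilon>0$, a point of $\mathcal{P}^-(a)$ lying on the rescaled curve $t\mapsto\overline{w}_{\epsilon,t}$ and then invoke the definition $\gamma^-(a)=\inf_{\mathcal{P}^-(a)}E$. Recall $w_{\epsilon,t}=u_a^++tU_\epsilon$ and $\overline{w}_{\epsilon,t}(x)=\theta^{(N-2)/2}w_{\epsilon,t}(\theta x)$ with $\theta=\|w_{\epsilon,t}\|_2/a$, so that $\overline{w}_{\epsilon,t}\in S(a)$; moreover $A[\overline{w}_{\epsilon,t}]=A[w_{\epsilon,t}]$ and, since $p=2_\alpha^*$ makes the scaling exponent $p(N-2)-N-\alpha$ vanish, $B_p[\overline{w}_{\epsilon,t}]=B_p[w_{\epsilon,t}]$. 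By Lemma~\ref{lm:unique_minimum_maximum} there is a unique $s_{\epsilon,t}^->0$ with $s_{\epsilon,t}^-\circ\overline{w}_{\epsilon,t}\in\mathcal{P}^-(a)$, and the $\mathscr{C}^1$-dependence asserted in that lemma makes $t\mapsto s_{\epsilon,t}^-$ continuous. It therefore suffices to find $t_\epsilon>0$ with $s_{\epsilon,t_\epsilon}^-=1$, because then $\overline{w}_{\epsilon,t_\epsilon}=1\circ\overline{w}_{\epsilon,t_\epsilon}\in\mathcal{P}^-(a)$ and hence $\sup_{t\ge 0}E[\overline{w}_{\epsilon,t}]\ge E[\overline{w}_{\epsilon,t_\epsilon}]\ge\gamma^-(a)$.

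To obtain such a $t_\epsilon$ I would show $s_{\epsilon,0}^->1$ and $s_{\epsilon,t}^-\to 0$ as $t\to+\infty$, and then apply the intermediate value theorem to the continuous function $t\mapsto s_{\epsilon,t}^-$. The first point is immediate: $\overline{w}_{\epsilon,0}=u_a^+\in\mathcal{P}^+(a)$ forces $s_{u_a^+}^+=1$ (that is, $1$ is the local minimum of $f_{u_a^+}$, the unique critical point in $(0,s_{u_a^+}^\diamond)$), so by the ordering $s_u^+<s_u^\diamond<s_u^-$ from Lemma~\ref{lm:unique_minimum_maximum} we get $s_{\epsilon,0}^-=s_{u_a^+}^->1$. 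For the second point I would use $s_{\epsilon,t}^-\circ\overline{w}_{\epsilon,t}\in\mathcal{P}(a)$, which after dropping the nonnegative $\mu\eta_q B_q$ term yields $A[\overline{w}_{\epsilon,t}]>(s_{\epsilon,t}^-)^{2p-2}B_p[\overline{w}_{\epsilon,t}]$, i.e. $(s_{\epsilon,t}^-)^{2p-2}<A[\overline{w}_{\epsilon,t}]/B_p[\overline{w}_{\epsilon,t}]$. Expanding $A[\overline{w}_{\epsilon,t}]=A[w_{\epsilon,t}]$ via Cauchy--Schwarz and Lemma~\ref{lm:estimate_bubble} shows it converges, as $\epsilon\to 0$ and for fixed $t$, to $A[u_a^+]+2\mathcal{S}^{N/2}A[u_a^+]^{1/2}t+\mathcal{S}^N t^2$, while $B_p[\overline{w}_{\epsilon,t}]\ge B_p[tU_\epsilon]\ge t^{2p}(\mathcal{A}(N,\alpha)\mathcal{C}_{H}(N,\alpha))^{N/2}\mathcal{S}_H^{(N+\alpha)/2}-O(\epsilon^{(N+\alpha)/2})$ by Lemma~\ref{lm:estimate_bubble_nonlocal}(i). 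Since $2p=2\cdot 2_\alpha^*>2$, for $\epsilon$ small the right-hand side of the bound on $(s_{\epsilon,t}^-)^{2p-2}$ is $O(t^{2-2p})$ as $t\to+\infty$, whence $s_{\epsilon,t}^-\to 0$.

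The main obstacle is precisely this growth/decay bookkeeping along the two-parameter family $\overline{w}_{\epsilon,t}$: one must control $A$ and $B_p$ (and, where the rescaling factor $\theta$ enters, $\|w_{\epsilon,t}\|_2$) simultaneously in $t$ and $\epsilon$, and ensure the $O(\epsilon)$-type errors from truncating the Talenti bubble $U_\epsilon$ do not overwhelm the leading $t^{2p}$ behaviour of $B_p[tU_\epsilon]$ — which is exactly where $p=2_\alpha^*$, hence $2p-2>0$, is decisive. Once these estimates are assembled, the continuity of the fiber map, the intermediate value step, and the comparison with $\gamma^-(a)$ are routine; indeed the computations carried out just above in this subsection already furnish precisely these bounds, so the proof amounts to collecting them.
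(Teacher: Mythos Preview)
Your proposal is correct and follows essentially the same route as the paper's own argument: you use the $\mathscr{C}^1$-dependence from Lemma~\ref{lm:unique_minimum_maximum} to get continuity of $t\mapsto s_{\epsilon,t}^-$, show $s_{\epsilon,0}^->1$ via $u_a^+\in\mathcal{P}^+(a)$, show $s_{\epsilon,t}^-\to 0$ as $t\to+\infty$ from the Pohozaev identity together with the bubble estimates in Lemmas~\ref{lm:estimate_bubble}--\ref{lm:estimate_bubble_nonlocal}, and conclude by the intermediate value theorem. This is exactly the paper's proof, with the estimates already laid out in the paragraphs preceding the lemma.
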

	Next, we will prove a key lemma.
	\begin{lemma}\label{lm:energy_critical_sub}
		Let $N\geq 3$, $2_{\alpha}<q<2_{\alpha}^{\sharp}<p=2_{\alpha}^{*}$ and $\mu,a>0$. 
		Assume that \eqref{assumptions:basic} is true and one of the following conditions holds:
		\begin{enumerate}
			\item[(i)] $N\geq 6$, $0<\alpha<N-2$ and $\max\{\frac{2N-2+\alpha}{2N-4},\frac{N+\alpha-2}{N-2}\}<q<2_{\alpha}^{\sharp}$;
			\item[(ii)] $N=5$, $0<\alpha<3$ and $\max\{\frac{8\alpha}{6},\frac{7+2\alpha}{6}\}<q<2_{\alpha}^{\sharp}$;
			\item[(iii)] $3\leq N\leq 5$, $\max\{N-2, 2N-6\}\leq\alpha<N$ and $2\leq q<2_{\alpha}^{\sharp}$.
		\end{enumerate}
		Then it holds that $\gamma^-(a)<\gamma^+(a)+\frac{\alpha+2}{2(N+\alpha)}\mathcal{S}^{\frac{N+\alpha}{\alpha+2}}_{H}.$
	\end{lemma}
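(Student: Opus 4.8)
The plan is to use the test family $\{\overline{w}_{\epsilon,t}\}_{t\ge 0}\subset S(a)$ constructed above, for which the preceding lemma already gives $\gamma^-(a)\le\sup_{t\ge 0}E[\overline{w}_{\epsilon,t}]$ once $\epsilon>0$ is small. Since $\overline{w}_{\epsilon,0}=u_a^+$ we have $E[\overline{w}_{\epsilon,0}]=\gamma^+(a)$, while $E[\overline{w}_{\epsilon,t}]\to-\infty$ as $t\to\infty$ (because $B_p[\overline{w}_{\epsilon,t}]=B_p[w_{\epsilon,t}]\ge t^{2p}B_p[U_\epsilon]$ and $B_p[U_\epsilon]$ is bounded away from $0$ by Lemma~\ref{lm:estimate_bubble_nonlocal}(i)); combining these one reduces matters to estimating $E[\overline{w}_{\epsilon,t}]$ for $t$ in a fixed compact subset of $(0,\infty)$, and it suffices to show $\sup_{t\ge 0}E[\overline{w}_{\epsilon,t}]<\gamma^+(a)+c_*$ for $\epsilon$ small, where $c_*:=\frac{\alpha+2}{2(N+\alpha)}\mathcal{S}_H^{\frac{N+\alpha}{\alpha+2}}$.

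The heart of the argument is a careful expansion of $E[\overline{w}_{\epsilon,t}]$. One uses the scaling identities $A[\overline{w}_{\epsilon,t}]=A[w_{\epsilon,t}]$, $B_p[\overline{w}_{\epsilon,t}]=B_p[w_{\epsilon,t}]$, $B_q[\overline{w}_{\epsilon,t}]=\theta^{-2q(1-\eta_q)}B_q[w_{\epsilon,t}]$ (with $\theta=\|w_{\epsilon,t}\|_2/a\ge 1$), the elementary inequalities $(a+b)^r\ge a^r+b^r$ with the extra terms $rab^{r-1}$ when $r\ge 2$ and $ra^{r-1}b$ when $r\ge 3$ applied to the two Hartree double integrals, and the weak equation \eqref{eq:differential_criticl_point_plus} for $u_a^+$ tested against $tU_\epsilon$ to eliminate $\int\nabla u_a^+\cdot\nabla(tU_\epsilon)$. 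This produces two crucial cancellations: first, when $p=2_{\alpha}^{*}\ge 2$, the term $t\int(I_\alpha*|u_a^+|^p)|u_a^+|^{p-2}u_a^+U_\epsilon$ coming from the gradient cross term is cancelled exactly by the same term picked up in the lower bound for $B_p[w_{\epsilon,t}]$; second, since $\eta_{2_{\alpha}^{*}}=1$, Lemma~\ref{lm:Langrange_multiplier} yields $\lambda_a^+a^2=-\mu(1-\eta_q)B_q[u_a^+]$, so that together with the elementary bound $1-\theta^{-2q(1-\eta_q)}\le q(1-\eta_q)(\theta^2-1)$ the $O(\|U_\epsilon\|_1)$ contribution of $\lambda_a^+t\int u_a^+U_\epsilon$ cancels the corresponding part of the rescaling factor in $-\frac{\mu}{2q}\theta^{-2q(1-\eta_q)}B_q[u_a^+]$, leaving only an $O(\|U_\epsilon\|_2^2)$ remainder. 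What survives is an inequality of the form
\[
E[\overline{w}_{\epsilon,t}]\le\gamma^+(a)+\frac{t^2}{2}\|\nabla U_\epsilon\|_2^2-\frac{t^{2p}}{2p}B_p[U_\epsilon]-c_1B_q[U_\epsilon]+\mathcal{R}_\epsilon(t),
\]
with $c_1>0$, where $\mathcal{R}_\epsilon(t)\ge 0$ is a finite sum of error terms whose orders in $\epsilon$ are supplied by Lemmas~\ref{lm:estimate_bubble}, \ref{lm:estimate_bubble_nonlocal} and \ref{lm:q_bounded}: namely $O(\epsilon^{N-2})$ from $\|\nabla U_\epsilon\|_2^2$, $O(\epsilon^{(N+\alpha)/2})$ from $B_p[U_\epsilon]$, $O(\|U_\epsilon\|_2^2)$ from the leftover of the second cancellation, and (only when $q<2$) an uncancelled interaction term of order $O(\|U_\epsilon\|_1)$.

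Maximizing the bubble part $\frac{t^2}{2}\|\nabla U_\epsilon\|_2^2-\frac{t^{2p}}{2p}B_p[U_\epsilon]$ over $t\ge 0$ gives $\frac{p-1}{2p}\big(\|\nabla U_\epsilon\|_2^2/B_p[U_\epsilon]^{1/p}\big)^{p/(p-1)}$, which by Lemma~\ref{lm:estimate_bubble}(i), Lemma~\ref{lm:estimate_bubble_nonlocal}(i) and the identity $\mathcal{S}_H=\mathcal{S}/[\mathcal{A}(N,\alpha)\mathcal{C}_H(N,\alpha)]^{(N-2)/(N+\alpha)}$ equals $c_*+O(\epsilon^{\min\{N-2,(N+\alpha)/2\}})$, while $-c_1B_q[U_\epsilon]\le-c_2\epsilon^{\,N+\alpha-(N-2)q}=-c_2\epsilon^{\,2q(1-\eta_q)}$ by Lemma~\ref{lm:estimate_bubble_nonlocal}(ii), with $c_2>0$. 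Collecting, one gets $\sup_{t\ge 0}E[\overline{w}_{\epsilon,t}]\le\gamma^+(a)+c_*-c_2\epsilon^{\,2q(1-\eta_q)}+O(\epsilon^{\kappa})$, where $\kappa$ is the smallest exponent among all positive errors (those in $\mathcal{R}_\epsilon$ and in the correction to the bubble maximum). The proof is finished by checking, in each of the three regimes (i)--(iii), that the hypotheses on $(N,\alpha,q)$ force $2q(1-\eta_q)=N+\alpha-(N-2)q<\kappa$, so that the strictly negative term dominates and the right-hand side stays below $\gamma^+(a)+c_*$. I expect this last step, the comparison of exponents, to be the main obstacle; it is precisely what pins down the thresholds $q>\frac{2N-2+\alpha}{2N-4}$, $q>\frac{N+\alpha-2}{N-2}$ and their $N=5$ and large-$\alpha$ variants. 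Two further points need care: the borderline case $p=2_{\alpha}^{*}<2$ (that is, $\alpha<N-4$), where the superadditivity of $B_p$ carries no cross term and the first cancellation must be replaced by a direct estimate of the surviving interaction $t\int(I_\alpha*|u_a^+|^p)|u_a^+|^{p-2}u_a^+U_\epsilon=O(\|U_\epsilon\|_1)$; and the low-dimensional regime (iii), where $q\ge 2$ (which removes the $O(\|U_\epsilon\|_1)$ errors) and $p=2_{\alpha}^{*}\ge 3$ (which brings in additional cross terms, hence more cancellation in $\mathcal{R}_\epsilon$) are both needed to keep $\kappa$ larger than $2q(1-\eta_q)$.
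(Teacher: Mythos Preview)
Your outline matches the paper's argument closely in cases (i) and (ii): the same test family $\overline{w}_{\epsilon,t}$, the same two cancellations (the $p$--cross term via the weak equation, and the $\lambda_a^+$ term against the leading part of $1-\theta^{-2q(1-\eta_q)}$), and the same final reduction to a competition between $-c_2\,\epsilon^{2q(1-\eta_q)}$ from $B_q[U_\epsilon]$ and a positive remainder of order $\epsilon^{\kappa}$. For those two cases your exponent bookkeeping is exactly what pins down the thresholds on $q$.

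There is, however, a real gap in your treatment of case (iii). You write that the role of $p=2_\alpha^{*}\ge 3$ is to bring ``more cancellation in $\mathcal{R}_\epsilon$'' so as to keep $\kappa>2q(1-\eta_q)$. That cannot work: in the low-dimensional regime one always has $2q(1-\eta_q)=N+\alpha-(N-2)q\ge \kappa$ (for instance, when $N=3$ and $\alpha\ge 1$ one has $2q(1-\eta_q)\ge 2$ while the unavoidable error $\|U_\epsilon\|_2^2=O(\epsilon)$ gives $\kappa=1$; similar obstructions occur for $N=4,5$). Hence the term $-c_1B_q[U_\epsilon]$ is \emph{never} the dominant negative contribution in case (iii). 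What actually drives the estimate there is the \emph{extra} cross term produced by the inequality $(a+b)^p\ge a^p+b^p+pab^{p-1}+pa^{p-1}b$ (valid since $p\ge 3$), namely
\[
-\int_{\mathbb{R}^N}\bigl(I_\alpha*|tU_\epsilon|^{p}\bigr)|tU_\epsilon|^{p-2}(tU_\epsilon)\,u_a^+\,\dif x,
\]
which, because $u_a^+$ is continuous and strictly positive on $B_2$, is bounded above by $-c\,\epsilon^{(N-2)/2}$ for $t\in[t_0,t_1]$. This is a new negative \emph{leading} term, not a reduction of $\mathcal{R}_\epsilon$: it is of order $\epsilon^{1/2}$, $\epsilon^{1}$, $\epsilon^{3/2}$ for $N=3,4,5$ respectively, and beats the remaining positive error $O(\|U_\epsilon\|_2^2)$ in each case. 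Your display $E[\overline{w}_{\epsilon,t}]\le\gamma^+(a)+\tfrac{t^2}{2}\|\nabla U_\epsilon\|_2^2-\tfrac{t^{2p}}{2p}B_p[U_\epsilon]-c_1B_q[U_\epsilon]+\mathcal{R}_\epsilon(t)$ with $\mathcal{R}_\epsilon\ge 0$ therefore needs to be replaced, in regime (iii), by one that isolates this additional negative interaction term; without it the argument does not close.
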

	\begin{proof}
		A direct calculation shows that
		\begin{align*}
			E[\overline{w}_{\epsilon,t}]=&\frac{1}{2}A[w_{\epsilon,t}]-\frac{1}{2p}B_p[w_{\epsilon,t}]-\theta^{q(N-2)-N-\alpha}\frac{\mu}{2q}B_q[w_{\epsilon,t}]\\
			\leq&\frac{1}{2}\left(A[u_a^+]+A[tU_{\epsilon}]+2\int_{\mathbb{R}^N}\nabla u_a^+(x) \nabla(tU_{\epsilon}(x))\dif x\right)-\frac{\mu\theta^{-2q(1-\eta_{q})}}{2q}B_q[u_a^+]-\frac{1}{2p}\left(B_p[u_a^+]+B_p[tU_{\epsilon}]\right)\\
			\leq&E[u_a^+]+\frac{\mu(1-\theta^{-2q(1-\eta_{q})})}{2q}B_q[u_a^+]-\frac{t^{2p}}{2p}B_p[U_{\epsilon}]+tA[u_a^+]^{\frac{1}{2}}A[U_{\epsilon}]^{\frac{1}{2}}+\frac{t^2A[U_{\epsilon}]}{2}=:I(t).
		\end{align*}
		By Lemma \ref{lm:estimate_bubble} and Lemma \ref{lm:estimate_bubble_nonlocal}, it follows that, for $\epsilon>0$ small, 
		\begin{align*}
			&-\frac{t^{2p}\mathcal{A}(N,\alpha)^{\frac{N}{2}}\mathcal{C}_{H}({N,\alpha})^{\frac{N}{2}}\mathcal{S}_H^{\frac{N+\alpha}{2}}}{2p}+tA[u_a^+]^{\frac{1}{2}}\mathcal{S}^{\frac{N}{2}}+\frac{t^2\mathcal{S}^N}{2}\\
			\leq& -\frac{t^{2p}B_p[U_{\epsilon}]}{2p}+tA[u_a^+]^{\frac{1}{2}}A[U_{\epsilon}]^{\frac{1}{2}}
			+\frac{t^2A[U_{\epsilon}]}{2}\\
			\leq&-\frac{t^{2p}\mathcal{A}(N,\alpha)^{\frac{N}{2}}\mathcal{C}_{H}({N,\alpha})^{\frac{N}{2}}\mathcal{S}_H^{\frac{N+\alpha}{2}}}{2p}+tA[u_a^+]^{\frac{1}{2}}\mathcal{S}^{\frac{N}{2}}+\frac{t^2\mathcal{S}^N}{2},
		\end{align*}
		which implies that $I(t)\rightarrow -\infty$ as $t\rightarrow +\infty$ and $I(t)\rightarrow E[u_a^+]$ as $t\rightarrow 0^{+}$ due to $\theta\rightarrow 1$.
		Therefore, there exists $\epsilon_0>0$ and $0<t_0<t_1<+\infty$ such that
		\begin{align*}
			E[\overline{w}_{\epsilon,t}]<\gamma^+(a)+\frac{\alpha+2}{2(N+\alpha)}\mathcal{S}^{\frac{N+\alpha}{\alpha+2}}_{H}
		\end{align*}
		for $ t\in (0,t_0)\cup(t_1,+\infty)$ and $\epsilon\in (0,\epsilon_0]$.
		It remains to consider the case $t\in [t_0,t_1]$, that is, $t$ is bounded.
		Similarly, a direct calculation gives that
		\begin{align}\label{estimate:energy_subcritical}
			E[\overline{w}_{\epsilon,t}]=&\frac{1}{2}A[w_{\epsilon,t}]-\frac{1}{2p}B_p[w_{\epsilon,t}]-\theta^{q(N-2)-N-\alpha}\frac{\mu}{2q}B_q[w_{\epsilon,t}]\notag\\
			\leq&\frac{1}{2}\left(A[u_a^+]+A[tU_{\epsilon}]+2\int_{\mathbb{R}^N}\nabla u_a^+(x) \nabla(tU_{\epsilon}(x))\dif x\right)\notag\\
			&-\frac{\mu}{2q}\left(B_q[u_a^+]+B_q[tU_{\epsilon}]\right)+(1-\theta^{-2q(1-\eta_{q})})\frac{\mu}{2q}\left(B_q[u_a^+]+B_q[tU_{\epsilon}]\right)\notag\\
			&-\frac{1}{2p}\left(B_p[u_a^+]+B_p[tU_{\epsilon}]+2p\int_{\mathbb{R}^N}(I_{\alpha}*\vert u^+_a\vert^p)\vert u^+_a\vert^{p-2}u^+_a(tU_{\epsilon})\dif x\right)\notag\\
			=&E[u_a^+]+E[tU_{\epsilon}]+(1-\theta^{-2q(1-\eta_{q})})\frac{\mu}{2q}\left(B_q[u_a^+]+B_q[tU_{\epsilon}]\right)\notag\\
			&+\int_{\mathbb{R}^N}\nabla u_a^+(x)\cdot \nabla(tU_{\epsilon}(x))\dif x
			-\int_{\mathbb{R}^N}(I_{\alpha}*\vert u^+_a\vert^p)\vert u^+_a\vert^{p-2}u^+_a(tU_{\epsilon})\dif x\notag\\
			=&E[u_a^+]+E[tU_{\epsilon}]+(1-\theta^{-2q(1-\eta_{q})})\frac{\mu}{2q}\left(B_q[u_a^+]+B_q[tU_{\epsilon}]\right)\notag \\
			&+\lambda_a^+ \int_{\mathbb{R}^N} u_a^+ (tU_{\epsilon})\dif x+\mu\int_{\mathbb{R}^N}(I_{\alpha} * \vert u_a^+\vert^q)\vert u_a^+\vert^{q-2}u_a^+ (tU_{\epsilon})\dif x,
		\end{align}
		where the last equality comes from \eqref{eq:differential_criticl_point_plus}. 
		It follows from Lemma \ref{lm:estimate_bubble} that
		\begin{align*}
			\theta^2=&\frac{\|w_{\epsilon,t}\|_2^2}{a^2}=1+\frac{t^2}{a^2}\|U_{\epsilon}\|_2^2+\frac{2}{a^2}\int_{\mathbb{R}^N}u_a^+(x)(tU_{\epsilon}(x))\dif x\\
			=&1+\frac{2}{a^2}\int_{\mathbb{R}^N}u_a^+(x)(tU_{\epsilon}(x))\dif x+
			\begin{cases}
				O(\epsilon^2),&\text{if } N\geq 5,\\
				O(\epsilon^2\vert \log\epsilon\vert),&\text{if } N=4,\\
				O(\epsilon),&\text{if } N=3.
			\end{cases}
		\end{align*}
		According to Lemma \ref{lm:estimate_bubble} and Lemma \ref{lm:q_bounded}, it is clear that
		\begin{align*}
			\int_{\mathbb{R}^N}u_a^+(x)(tU_{\epsilon}(x))\dif x\sim \|U_{\epsilon}\|_1=O(\epsilon^{\frac{N-2}{2}}).
		\end{align*}
		Using Talor's expansion, it holds that 
		\begin{align*}
			1-\theta^{-2q(1-\eta_{q})}=&1-\left(1+\frac{2}{a^2}\int_{\mathbb{R}^N}u_a^+(x)(tU_{\epsilon}(x))\dif x+\frac{t^2}{a^2}\|U_{\epsilon}\|_2^2\right)^{-q(1-\eta_{q})}\\
			=&\frac{2q(1-\eta_{q})}{a^2}\int_{\mathbb{R}^N}u_a^+(x)(tU_{\epsilon}(x))\dif x+\begin{cases}
				O(\epsilon^2),&\text{if } N\geq 5,\\
				O(\epsilon^2\vert \log\epsilon\vert),&\text{if } N=4,\\
				O(\epsilon),&\text{if } N=3.
			\end{cases}
		\end{align*}
		By the Hardy-Littlewood-Sobolev inequality \eqref{ieq:HLS}, Lemma \ref{lm:estimate_bubble} and Lemma \ref{lm:estimate_bubble_nonlocal}, we have
		\begin{align*}
			\int_{\mathbb{R}^N}(I_{\alpha} * \vert u_a^+\vert^q)\vert u_a^+\vert^{q-2}u_a^+ (tU_{\epsilon})\dif x\lesssim&\|\vert u_a^+\vert ^q\|_{\frac{2N}{N+\alpha}}\|\vert u_a^+\vert ^{q-1}(tU_{\epsilon})\|_{\frac{2N}{N+\alpha}}\\
			=&\| u_a^+\|_{\frac{2Nq}{N+\alpha}}^q\|\vert u_a^+\vert ^{q-1}(tU_{\epsilon})\|_{\frac{2N}{N+\alpha}}
			\lesssim \|U_{\epsilon}\|_{\frac{2N}{N+\alpha}}\\
			=&\begin{cases}
				O(\epsilon^{\frac{\alpha+2}{2}}),&\text{if }0<\alpha<(N-4)_{+},\\
				O(\epsilon^{\frac{N+\alpha}{4}}\vert\log\epsilon\vert^{\frac{N+\alpha}{2N}}),&\text{if }\alpha=(N-4)_{+},\\
				O(\epsilon^{\frac{N-2}{2}}),&\text{if }(N-4)_{+}<\alpha<N.\\
			\end{cases}
		\end{align*}
		Note that $P[u_a]=0$ and hence we arrive at
		\begin{align*}
			\lambda_a^+a^2=\mu(\eta_{q}-1)B_q[u_a^+].
		\end{align*}
		According to \eqref{estimate:energy_subcritical}, it holds that
		\begin{align}\label{estimate_w}
			E[\overline{w}_{\epsilon,t}]\leq& E[u_a^+]+E[tU_{\epsilon}]+\lambda_a^+ \int_{\mathbb{R}^N} u_a^+ (tU_{\epsilon})\dif x+\frac{\mu (1-\eta_{q})}{a^2}B_q[u_a^+]\int_{\mathbb{R}^N}u_a^+(x)(tU_{\epsilon}(x))\dif x\notag\\
			&+\mu\int_{\mathbb{R}^N}(I_{\alpha} * \vert u_a^+\vert^q)\vert u_a^+\vert^{q-2}u_a^+ (tU_{\epsilon})\dif x+\begin{cases}
				O(\epsilon^2),&\text{if } N\geq 5,\\
				O(\epsilon^2\vert \log\epsilon\vert),&\text{if } N=4,\\
				O(\epsilon),&\text{if } N=3,
			\end{cases}\notag\\
			\leq & E[u_a^+]+E[tU_{\epsilon}]+\begin{cases}
				O(\epsilon^{\frac{\alpha+2}{2}}),&\text{if }0<\alpha<(N-4)_{+},\\
				O(\epsilon^{\frac{N-2}{2}}\vert\log\epsilon\vert^{\frac{N-2}{N}}
				),&\text{if }\alpha=(N-4)_{+},\\
				O(\epsilon^{\frac{N-2}{2}}),&\text{if }(N-4)_{+}<\alpha<N.\\
			\end{cases}+\begin{cases}
				O(\epsilon^2),&\text{if } N\geq 5,\\
				O(\epsilon^2\vert \log\epsilon\vert),&\text{if } N=4,\\
				O(\epsilon),&\text{if } N=3.
			\end{cases}
		\end{align}
		It is time to estimate $E[tU_{\epsilon}]$. 
		It follows from Lemma \ref{lm:estimate_bubble} and Lemma \ref{lm:estimate_bubble_nonlocal} that 
		\begin{align}\label{estimate_energy}
			\max_{t\in[t_0,t_1]}E[tU_{\epsilon}]=&\max_{t\in[t_0,t_1]}\left(\frac{1}{2}A[tU_{\epsilon}]-\frac{1}{2p}B_p[tU_{\epsilon}]-\frac{\mu}{2q}B_q[tU_{\epsilon}]\right)\notag\\
			\leq & \max_{t\in[t_0,t_1]}\left(\frac{1}{2}A[tU_{\epsilon}]-\frac{1}{2p}B_p[tU_{\epsilon}]\right)-O(\epsilon^{2q(1-\eta_{q})})\notag\\
			\leq&  \max_{t>0}\left(\frac{t}{2}A[U_{\epsilon}]-\frac{t^{p}}{2p}B_p[U_{\epsilon}]\right)-O(\epsilon^{2q(1-\eta_{q})})\notag\\
			=&\frac{p-1}{2p}\left(A[U_{\epsilon}]\right)^{\frac{p}{p-1}}\left(B_p[U_{\epsilon}]\right)^{-\frac{1}{p-1}}-O(\epsilon^{2q(1-\eta_{q})})\notag\\
			\leq&\frac{\alpha+2}{2(N+\alpha)}\mathcal{S}^{\frac{N+\alpha}{\alpha+2}}_{H}+O(\epsilon^{\min\{N-2,\frac{N+\alpha}{2}\}})-O(\epsilon^{2q(1-\eta_{q})}).
		\end{align}
		Combing \eqref{estimate_w} and \eqref{estimate_energy}, we have
		\begin{align}
			E[\overline{w}_{\epsilon,t}]\leq&\gamma^{+}(a)+\frac{\alpha+2}{2(N+\alpha)}\mathcal{S}^{\frac{N+\alpha}{\alpha+2}}_{H}-O(\epsilon^{2q(1-\eta_{q})})+O(\epsilon^{\min\{N-2,\frac{N+\alpha}{2}\}})\notag\\
			&+\begin{cases}
				O(\epsilon^{\frac{\alpha+2}{2}}),&\text{if }0<\alpha<(N-4)_{+},\\
				O(\epsilon^{\frac{N-2}{2}}\vert\log\epsilon\vert^{\frac{N-2}{N}}
				),&\text{if }\alpha=(N-4)_{+},\\
				O(\epsilon^{\frac{N-2}{2}}),&\text{if }(N-4)_{+}<\alpha<N.\\
			\end{cases}+\begin{cases}
				O(\epsilon^2),&\text{if } N\geq 5,\\
				O(\epsilon^2\vert \log\epsilon\vert),&\text{if } N=4,\\
				O(\epsilon),&\text{if } N=3.
			\end{cases}
		\end{align}
		The remain part is to compare the growth of the negative leading term and the positive leading term as $\epsilon\rightarrow 0^{+}$ which depends on $N,\alpha$ and $q$.
		\begin{enumerate}
			\item[(a)] $N\geq 6$.
			\begin{itemize}
				\item If $0<\alpha\leq 2\leq N-4$, we need $2q(1-\eta_{q})<\frac{\alpha+2}{2}\leq 2$, which implies $q>\frac{2N-2+\alpha}{2N-4}$. 
				It is easy to check that $2_{\alpha}<\frac{2N-2+\alpha}{2N-4}<2_{\alpha}^{\sharp}$.
				Consequently, it is sufficient that $0<\alpha\leq 2$ and $\frac{2N-2+\alpha}{2N-4}<q<2_{\alpha}^{\sharp}$.
				\item If $2<\alpha<N-2$, we need $2q(1-\eta_{q})< 2$, which implies $q>\frac{N+\alpha-2}{N-2}$.
				Notice that $2_{\alpha}<\frac{N+\alpha-2}{N-2}<2_{\alpha}^{\sharp}$. 
				Therefore, it is sufficient that $2<\alpha<N-2$ and $\frac{N+\alpha-2}{N-2}<q<2_{\alpha}^{\sharp}$.
				\item If $N-2\leq\alpha<N$, it always holds that $2q(1-\eta_{q})\geq 2$ and our method does not work.
			\end{itemize}
			\item[(b)] $N=5$.
			\begin{itemize}
				\item If $0<\alpha\leq 1$, we need $2q(1-\eta_{q})<\frac{\alpha+2}{2}\leq 2$, which implies $q>\frac{8+\alpha}{6}$. 
				Thus it is sufficient that $0<\alpha\leq 1$ and $\frac{8+\alpha}{6}<q<2_{\alpha}^{\sharp}$.
				\item If $1<\alpha<3$, we need $2q(1-\eta_{q})< \frac{3}{2}<2$, which implies $q>\frac{7+2\alpha}{6}$.
				This condition must be admissible with $q<2_{\alpha}^{\sharp}$, which implies that $\alpha<\frac{7}{4}$. 
				Therefore, it is sufficient that $1<\alpha<\frac{7}{4}$ and $\frac{7+2\alpha}{6}<q<2_{\alpha}^{\sharp}$. 
				\item If $3\leq \alpha<5$, we find that $2q(1-\eta_{q})\geq 2$ and the above method does not work.
				\item If $4\leq \alpha <5$, similar to the second case in $N=3$ below, the positive leading term is $O(\epsilon^2)$ and we will add a negative leading  term $-O(\epsilon^{\frac{3}{2}})$. 
				Therefore, it is sufficient that $4\leq\alpha<5$ and $q\in (2_{\alpha},2_{\alpha}^{\sharp})$.
			\end{itemize}
			\item[(c)] $N=4$.
			\begin{itemize}
				\item If $0<\alpha<4$, we need that $2q(1-\eta_{q})< 1<2$, which gives that $q>\frac{3+\alpha}{2}$. 
				However, it is not admissible with $q<2_{\alpha}^{\sharp}$.
				\item If $2\leq\alpha<4$, similar to the second case in $N=3$ below, the positive leading term is $O(\epsilon^2\vert \log\epsilon\vert)$ and we will add a negative leading  term $-O(\epsilon^{1})$. 
				Therefore, it is sufficient that $2\leq\alpha<4$ and $q\in (2_{\alpha},2_{\alpha}^{\sharp})$.
			\end{itemize}
			\item[(d)] $N=3$.
				\begin{itemize}
					\item If $0<\alpha<3$, similar to the first case in $N=4$, the above method does not work. 
					\item If $1\leq\alpha<3$, we find that $q>2_{\alpha}\geq 2$, which means that we have
					\begin{align*}
							B_q[w_{\epsilon,t}]\geq B_q[u_a^+]+B_q[tU_{\epsilon}]+2q\int_{\mathbb{R}^N}(I_{\alpha}*\vert u^+_a\vert^q)\vert u^+_a\vert^{q-2}u^+_a(tU_{\epsilon})\dif x,
					\end{align*}
					and hence  the term $\mu\int_{\mathbb{R}^N}(I_{\alpha}*\vert u^+_a\vert^q)\vert u^+_a\vert^{q-2}u^+_a(tU_{\epsilon})\dif x$  will not appear, which means the  positive leading term is $O(\epsilon)$.
					Besides, we find that $2_{\alpha}^{*}=3+\alpha>3$, which implies that 
					\begin{align*}
						B_p[w_{\epsilon,t}]\geq& B_p[u_a^+]+B_p[tU_{\epsilon}]+2p\int_{\mathbb{R}^N}(I_{\alpha}*\vert u^+_a\vert^p)\vert u^+_a\vert^{p-2}u^+_a(tU_{\epsilon})\dif x \\
						&+2p\int_{\mathbb{R}^N}(I_{\alpha}*\vert tU_{\epsilon}\vert^p)\vert tU_{\epsilon}\vert^{p-2}tU_{\epsilon}(u_a^+)\dif x.
					\end{align*}
					Consequently, the negative adding term is given by 
					\begin{align*}
						-\int_{\mathbb{R}^N}(I_{\alpha}*\vert tU_{\epsilon}\vert^p)\vert tU_{\epsilon}\vert^{p-2}tU_{\epsilon}(u_a^+)\dif x\leq -O(\epsilon^{\frac{1}{2}}),
					\end{align*}
					and hence it is sufficient that $1\leq\alpha<3$ and $q\in (2_{\alpha},2_{\alpha}^{\sharp})$.
				\end{itemize}
		\end{enumerate}
	\end{proof}
	Now we can prove Theorem \ref{thm:mass_subcritical}(ii) for $p=2_{\alpha}^*$:
	\begin{proof}[Proof of Theorem \ref{thm:mass_subcritical}(ii) for $p=2_{\alpha}^*$]
		By Lemma \ref{lm:PS_sequence_existence_mass_subcritical}, there exists a bounded Palais-Smale sequence $\{u_n\}\subset \mathcal{P}^{-}(a)$ for $\left.E\right\vert_{S(a)}$ at level $\gamma^{-}(a)$.
		Up to a subsequence, we get $u_n\rightharpoonup u_a$ weakly in $H^1(\mathbb{R}^N)$. 
		Now applying Lemma \ref{lm:PS_strong_convergence} to $m=\gamma^{-}(a)$, we suppose that the alternative $(i)$ holds. 
		Then, using Lemma \ref{lm:energy_critical_sub}, we get that $$E[u_a^-]<\gamma^{-}(a)-\frac{\alpha+2}{2(N+\alpha)}\mathcal{S}_{H}^{\frac{N+\alpha}{\alpha+2}}<\gamma^{+}(a)<0,$$
		which is impossible.
		Hence we obtain that $u_n \rightarrow u_a^-$ in $H^1(\mathbb{R}^N)$ and $\gamma^{-}(a)=E[u_a^-]$.
		The remain proof is similar to Theorem \ref{thm:mass_subcritical}(i).
	\end{proof}
	\subsubsection{The compactness of any minimizing sequences related to $\gamma^{+}(a)$ in the HLS critical case}
	We shall prove Theorem \ref{thm:minizing} in this part.
	\begin{lemma}\label{lm:minizing}
		Let $N\geq 3$, $\alpha \in (0,N)$, $2_{\alpha}<q<2_{\alpha}^{\sharp}<p\leq 2_{\alpha}^{*}$ and $\mu,a>0$. 
		Suppose that \eqref{assumptions:basic} holds.
		Then we have
		\begin{enumerate}
			\item[(i)] if $u \in \mathcal{P}^+(a)$, then $A[u]<k_0a^{\frac{2q(1-\eta_{q})}{1-q\eta_{q}}}$, where $k_0=\left( \frac{\mu\eta_{q}\mathcal{C}_{G}(q)(p\eta_{p}-q\eta_{q})}{p\eta_{p}-1}\right)^{\frac{1}{1-q\eta_{q}}}$;
			\item[(ii)] $\mathcal{P}^{+}(a)\subset V(a)$ and $\gamma^{+}(a)=\inf\limits_{u \in \mathcal{P}^{+}(a)}E[u]=\inf\limits_{u \in \overline{V(a)}}E[u]$;
			\item[(iii)] if $u_a$ is a minimum for the minimization problem 
			$$\inf\limits_{u \in \mathcal{P}^{+}(a)}E[u]=\inf\limits_{u \in \overline{V(a)}}E[u],$$
			 then $u_a\in V(a)$ and $\gamma^+(a)$ is reached.
		\end{enumerate}
	\end{lemma}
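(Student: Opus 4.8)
The plan is to combine the fibering analysis of $f_u$ from Lemma~\ref{lm:unique_minimum_maximum} with the explicit shape of the constants $k_0,k_1,a_1$, the crucial input being a quantitative lower bound for $A[\cdot]$ on $\mathcal{P}^-(a)$. First I would dispatch (i) together with the inclusion $\mathcal{P}^+(a)\subset V(a)$. If $u\in\mathcal{P}^+(a)$, then $f^{\prime}_u(1)=0$ gives $A[u]=\eta_pB_p[u]+\mu\eta_qB_q[u]$ by \eqref{eq:scaling_function_1st_derivative}, while $f^{\prime\prime}_u(1)>0$ gives $\eta_p(p\eta_p-1)B_p[u]<\mu\eta_q(1-q\eta_q)B_q[u]$ by \eqref{eq:scaling_function_2nd_derivative}; eliminating $B_p[u]$ yields $A[u]<\mu\eta_q\frac{p\eta_p-q\eta_q}{p\eta_p-1}B_q[u]$. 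Bounding $B_q[u]$ by the Gagliardo--Nirenberg inequality \eqref{estimate:subcritical} and using $\|u\|_2=a$, I get $A[u]^{1-q\eta_q}<\mu\eta_q\frac{p\eta_p-q\eta_q}{p\eta_p-1}\mathcal{C}_G(q)a^{2q(1-\eta_q)}$, i.e. $A[u]<k_0a^{\frac{2q(1-\eta_q)}{1-q\eta_q}}$, which is (i); since $\frac{2q(1-\eta_q)}{1-q\eta_q}>0$ and $a<a_1$, the right-hand side is $<k_1$, hence $u\in V(a)$.

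The heart of (ii) and (iii) is the claim that $A[v]>k_1$ for every $v\in\mathcal{P}^-(a)$. From the estimate performed in the proof of Lemma~\ref{lm:A_bounded_below}(ii) one obtains $A[v]>\delta(a)$, where for $p<2_{\alpha}^{*}$ one has $\delta(a)=\left(\frac{1-q\eta_q}{\eta_p(p\eta_p-q\eta_q)\mathcal{C}_G(p)}\right)^{1/(p\eta_p-1)}a^{-2p(1-\eta_p)/(p\eta_p-1)}$ (and the analogous expression with $\mathcal{C}_G(p)$ replaced by $\mathcal{S}_H^{-p}$ and $\eta_p=1$ when $p=2_{\alpha}^{*}$); in either case $\delta$ is nonincreasing in $a$. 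I would then verify the algebraic identity $\delta(a_1)=k_1$ by unwinding the definition \eqref{assumptiobs_basic_equality} of $a_1$ and the value of $\Xi(N,\alpha,p,q)$ in \eqref{assumptions:basic}, so that $A[v]>\delta(a)\ge\delta(a_1)=k_1$ because $a<a_1$.

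For (ii), the inclusion $\mathcal{P}^+(a)\subset V(a)\subset\overline{V(a)}$ already gives $\inf_{\overline{V(a)}}E\le\gamma^+(a)$. For the converse I take $u\in\overline{V(a)}$, so $A[u]\le k_1$, and use that $s_u^+\circ u\in\mathcal{P}^+(a)$ and $s_u^-\circ u\in\mathcal{P}^-(a)$ (Lemma~\ref{lm:unique_minimum_maximum}); part (i) and the previous paragraph give $s_u^+A[u]<k_1$ and $s_u^-A[u]>k_1\ge A[u]$, hence $s_u^+<k_1/A[u]$ and $1\le k_1/A[u]<s_u^-$, so $s_u^+,1\in(0,s_u^-)$. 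Since $f^{\prime}_u<0$ on $(0,s_u^+)$ and $f^{\prime}_u>0$ on $(s_u^+,s_u^-)$ (from the sign behaviour of $f^{\prime\prime}_u$ recorded before Lemma~\ref{lm:derivative_positive}), $f_u$ attains its minimum over $(0,s_u^-)$ at $s_u^+$, whence $E[u]=\frac{1}{2}f_u(1)\ge\frac{1}{2}f_u(s_u^+)=E[s_u^+\circ u]\ge\gamma^+(a)$; this proves $\gamma^+(a)=\inf_{\overline{V(a)}}E$. Moreover, when $A[u]=k_1$ the same estimates force $s_u^+<1<s_u^-$, so $f^{\prime}_u(1)>0$ and the inequality is strict, i.e. $E>\gamma^+(a)$ on $\partial V(a)$.

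For (iii), let $u_a\in\overline{V(a)}$ realize $\inf_{\overline{V(a)}}E=\gamma^+(a)$; the strict inequality on $\partial V(a)$ forces $u_a\in V(a)$, and since $V(a)$ is relatively open in $S(a)$ and $u_a$ minimizes $E$ there, $\left.\dif E\right|_{S(a)}[u_a]=0$, so by Lemma~\ref{lm:Langrange_multiplier} $P[u_a]=0$, i.e. $u_a\in\mathcal{P}(a)=\mathcal{P}^+(a)\cup\mathcal{P}^-(a)$ (using $\mathcal{P}^0(a)=\emptyset$); the bound $A[\cdot]>k_1$ on $\mathcal{P}^-(a)$ excludes $u_a\in\mathcal{P}^-(a)$, so $u_a\in\mathcal{P}^+(a)$ and $\gamma^+(a)$ is reached. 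The one genuinely delicate point is the identity $\delta(a_1)=k_1$: it requires correctly pairing the exponents $(p\eta_p-1)$ and $(1-q\eta_q)$ and cancelling the common factor $\left(\eta_q\mathcal{C}_G(q)\frac{p\eta_p-q\eta_q}{p\eta_p-1}\right)^{p\eta_p-1}$ that appears both in $k_0$ and, through $\Xi$, in $a_1$. Everything else is routine; the transition from $p<2_{\alpha}^{*}$ to $p=2_{\alpha}^{*}$ only amounts to replacing \eqref{estimate:subcritical} by \eqref{estimate:critical} in the bound on $B_p$ and setting $\eta_p=1$ throughout.
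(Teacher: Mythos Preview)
Your proposal is correct and follows essentially the same route as the paper. Part (i) is identical; for (ii) the paper also proves $\mathcal{P}^-(a)\cap\overline{V(a)}=\emptyset$ by deriving the lower bound $A[v]>\delta(a)$ and then invoking \eqref{assumptions:basic}, though it leaves the verification of $\delta(a_1)=k_1$ implicit where you make it explicit; the fibering argument comparing $s_u^+<k_1/A[u]\le s_u^-$ and concluding $E[u]\ge E[s_u^+\circ u]$ is the same in both. For (iii) the paper simply asserts ``$A[u_a]=k_1$, which is impossible'' without justification; your strictness argument $E>\gamma^+(a)$ on $\partial V(a)$ (via $s_u^+<1<s_u^-$ when $A[u]=k_1$) is precisely what is needed to fill that gap, and the rest of (iii) matches verbatim.
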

	\begin{proof}
		\begin{enumerate}
			\item[(i)] Since $u \in \mathcal{P}^{+}(a)$,
			\begin{align*}
				A[u]=\eta_{p}B_p[u]+\mu\eta_{q}B_q[u],\quad\text{and}\quad \mu \eta_{q}(1-q\eta_{q})B_q[u]>\eta_{p}(p\eta_{p}-1)B_p[u],
			\end{align*}
			which implies that 
			\begin{align*}
				A[u]<\frac{\mu\eta_{q}(p\eta_{p}-q\eta_{q})}{p\eta_{p}-1}B_q[u]\leq \frac{\mu\eta_{q}\mathcal{C}_{G}(q)(p\eta_{p}-q\eta_{q})}{p\eta_{p}-1}a^{2q(1-\eta_{q})}A[u]^{q\eta_{q}}.
			\end{align*}
			It follows that 
			\begin{align*}
				A[u]<\left( \frac{\mu\eta_{q}\mathcal{C}_{G}(q)(p\eta_{p}-q\eta_{q})}{p\eta_{p}-1}\right)^{\frac{1}{1-q\eta_{q}}}a^{\frac{2q(1-\eta_{q})}{1-q\eta_{q}}}=k_0a^{\frac{2q(1-\eta_{q})}{1-q\eta_{q}}}<k_0a_1^{\frac{2q(1-\eta_{q})}{1-q\eta_{q}}}=k_1.
			\end{align*}
			\item[(ii)]From point $(i)$, we obtain that $\mathcal{P}^+(a)\subset V(a)$, and it suffices to prove that 
			\begin{align*}
				\inf\limits_{u \in \mathcal{P}^{+}(a)}E[u]\leq\inf\limits_{u \in \overline{V(a)}}E[u].
			\end{align*}
			It is not hard to show that $\mathcal{P}^-(a)\cap \overline{V(a)}=\emptyset$.
			Indeed, if not, suppose that there exists a nontrivial $v \in \mathcal{P}^-(a)\cap \overline{V(a)}$, then we have
			\begin{align*}
				A[v]<\frac{\eta_{p}(p\eta_{p}-q\eta_{q})}{1-q\eta_{q}}B_p[v]\leq
				\begin{cases}
					\frac{\eta_{p}\mathcal{C}_{G}(p)(p\eta_{p}-q\eta_{q})}{1-q\eta_{q}}a^{2p(1-\eta_{p})}A[v]^{p\eta_{p}},&\text{if } p\in (2_{\alpha},2_{\alpha}^{*}),\\
					\frac{\eta_{p}\mathcal{S}_{H}^{-p}(p-q\eta_{q})}{1-q\eta_{q}}A[v]^{p},&\text{if } p=2_{\alpha}^{*},
				\end{cases}
			\end{align*}
			which implies that 
			\begin{align*}
				A[v]>
				\begin{cases}
					\left(\frac{\eta_{p}\mathcal{C}_{G}(p)(p\eta_{p}-q\eta_{q})}{1-q\eta_{q}}a^{2p(1-\eta_{p})}\right)^{-\frac{1}{p\eta_{p}-1}},&\text{if } p\in (2_{\alpha},2_{\alpha}^{*}),\\
					\left(\frac{\eta_{p}\mathcal{S}_{H}^{-p}(p-q\eta_{q})}{1-q\eta_{q}}\right)^{-\frac{1}{p-1}},&\text{if } p=2_{\alpha}^{*}.
				\end{cases}
			\end{align*}
			This leads to a contradiction due to \eqref{assumptions:basic}.
			Now consider $u\in S(a)$ and there exists a unique $s_u^1$ such that $A[s_u^1\circ u]=k_1$.
			Using Lemma \ref{lm:unique_minimum_maximum}, we have
			\begin{align*}
				A[s_u^+ \circ u]< A[s_u^1 \circ u]< A[s_u^- \circ u],
			\end{align*}
			which implies that $s_u^+<s_u^1<s_u^-$.
			By the monotonicity of $f_u(s)$ and the fact that $s_u^+$ is the unique local minimum point, we obtain that 
			\begin{align*}
				E[s_u^+ \circ u]=f_u(s_u^+)\leq f_u(s)=E[s \circ u],\quad \forall s \in (0,s_u^1].
			\end{align*} 
			It follows from $u \in \overline{V(a)}$ that 
			\begin{align*}
				E[u]\geq \min_{u\in\overline{V(a)}} E[u]=\min\{E[s\circ u]:s\in \mathbb{R}^+,\ A[s\circ u]\leq k_1\}=\min\{E[s\circ u]:s\in (0,s_u^1]\}=E[s_u^+\circ u].
			\end{align*}
			\item[(iii)]
			Suppose that $u_a \in \partial V(a)$, then we have $A[u_a]=k_1$, which is impossible. 
			Hence we obtain that $u_a\in V(a)$. 
			Since $V(a)$ is relatively open in $S(a)$, then we have that $u_a$ is a weak solution to \eqref{eq:NLS_S} and $P[u_a]=0$. 
			Note that $V(a)\cap \mathcal{P}^-(a)=\emptyset$ and $\mathcal{P}^0(a)=\emptyset$, we get $u_a\in\mathcal{P}^+(a)$ and thus $\gamma^+(a)$ is reached.
		\end{enumerate}
	\end{proof}
	We need the following lemma:
	\begin{lemma}\label{lm:nonincreasing}
		Let $N\geq 3$, $\alpha \in (0,N)$, $2_{\alpha}<q<2_{\alpha}^{\sharp}<p\leq 2_{\alpha}^{*}$ and $\mu,a>0$. 
		Suppose that \eqref{assumptions:basic} holds.
		It is true that
		\begin{enumerate}
			\item[(i)] $\gamma^{+}(a)<0,\quad \forall a \in (0,a_1)$;
			\item[(ii)] The mapping $a\in (0,a_1) \mapsto \gamma^{+}(a)$ is continuous:
			\item[(iii)] Let $a\in (0,a_1)$, for all $b\in (0,a)$, we have $\gamma^{+}(a)\leq \gamma^{+}(b)+\gamma^{+}(a-b)$ and if one of $\gamma^{+}(a)$ and $\gamma^{+}(a-b)$ is reached, then the inequality is strict.
		\end{enumerate}
	\end{lemma}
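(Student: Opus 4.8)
The plan is to prove the three items in turn, leaning throughout on the two facts supplied by Lemma~\ref{lm:minizing}: the variational identity $\gamma^{+}(a)=\inf_{\overline{V(a)}}E$, and the quantitative inclusion $\mathcal{P}^{+}(a)\subset V(a)$ together with the bound $A[u]<k_0a^{\gamma_0}$ on $\mathcal{P}^{+}(a)$, where I abbreviate $\gamma_0:=\frac{2q(1-\eta_{q})}{1-q\eta_{q}}$; the only arithmetical input needed is $\gamma_0>2$, which is equivalent to $q>1$. I will also use repeatedly that every $a\in(0,a_1)$ satisfies \eqref{assumptions:basic}: for $\mu>0$ fixed the left side of \eqref{assumptions:basic} is a strictly increasing function of $a$ that by \eqref{assumptiobs_basic_equality} equals $\Xi$ at $a=a_1$, hence is $<\Xi$ for $a<a_1$, so every lemma proved so far is available on $(0,a_1)$. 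Item (i) is then immediate: Lemma~\ref{lm:unique_minimum_maximum} gives $\mathcal{P}^{+}(a)\neq\emptyset$ and Lemma~\ref{lm:A_bounded_below}(i) gives $E<0$ on $\mathcal{P}^{+}(a)$, whence $\gamma^{+}(a)=\inf_{\mathcal{P}^{+}(a)}E<0$.

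For item (ii) I would prove upper and lower semicontinuity separately by rescaling the $L^{2}$-mass. Given $a_n\to a$ in $(0,a_1)$ (so $a_n<a_1$ for $n$ large), for the upper bound fix $\varepsilon>0$ and $u\in\mathcal{P}^{+}(a)$ with $E[u]\le\gamma^{+}(a)+\varepsilon$; then $\frac{a_n}{a}u\in S(a_n)$ with $A[\frac{a_n}{a}u]=(a_n/a)^{2}A[u]\to A[u]<k_1$, so $\frac{a_n}{a}u\in V(a_n)$ eventually, whence $\gamma^{+}(a_n)\le E[\frac{a_n}{a}u]\to E[u]\le\gamma^{+}(a)+\varepsilon$ and $\limsup_n\gamma^{+}(a_n)\le\gamma^{+}(a)$. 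For the lower bound choose $u_n\in\mathcal{P}^{+}(a_n)$ with $E[u_n]\le\gamma^{+}(a_n)+\frac1n$; item (i) and the coercivity estimate of Lemma~\ref{lm:coercive}(ii) (whose lower bound is a continuous function of $a_n$, hence bounded on a neighbourhood of $a$) bound $\{\gamma^{+}(a_n)\}$ and $\{A[u_n]\}$, so $\{u_n\}$ is bounded in $H^{1}(\mathbb{R}^N)$; then $A[\frac{a}{a_n}u_n]=(a/a_n)^{2}A[u_n]<(a/a_n)^{2}k_0a_n^{\gamma_0}\to k_0a^{\gamma_0}<k_1$, so $\frac{a}{a_n}u_n\in V(a)$ eventually, and since rescaling the mass perturbs $A,B_p,B_q$ by $o(1)$ we get $\gamma^{+}(a)\le E[\frac{a}{a_n}u_n]=E[u_n]+o(1)\le\gamma^{+}(a_n)+o(1)$, i.e. $\gamma^{+}(a)\le\liminf_n\gamma^{+}(a_n)$.

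The heart of item (iii) is a scaling monotonicity:
\[
\gamma^{+}(d)\le(d/c)^{2}\,\gamma^{+}(c)\qquad\text{for all }0<c\le d<a_1 .
\]
When $c<d$ this follows by taking $\vartheta:=d/c>1$ and $u\in\mathcal{P}^{+}(c)$: since $A[\vartheta u]=\vartheta^{2}A[u]<\vartheta^{2}k_0c^{\gamma_0}=k_0d^{2}c^{\gamma_0-2}\le k_0d^{\gamma_0}<k_1$ (here $\gamma_0>2$ and $d<a_1$ are used), one has $\vartheta u\in V(d)$ and hence $\gamma^{+}(d)\le E[\vartheta u]$, while $E[\vartheta u]=\frac{\vartheta^{2}}{2}A[u]-\frac{\vartheta^{2p}}{2p}B_p[u]-\frac{\mu\vartheta^{2q}}{2q}B_q[u]<\vartheta^{2}E[u]$ because $\vartheta^{2p},\vartheta^{2q}>\vartheta^{2}$ and $B_p[u],B_q[u]>0$; taking the infimum over $\mathcal{P}^{+}(c)$ gives the displayed bound. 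In particular, combined with (i), $a\mapsto\gamma^{+}(a)$ is strictly decreasing on $(0,a_1)$. Now take $0<b<a<a_1$ and, by symmetry, assume $b\ge a-b$; applying the displayed inequality with $(c,d)=(b,a)$ and with $(c,d)=(a-b,b)$, and using $a+b>a-b$ together with $\gamma^{+}(b)<0$,
\[
\gamma^{+}(a)\le\tfrac{a^{2}}{b^{2}}\gamma^{+}(b)=\gamma^{+}(b)+\tfrac{(a-b)(a+b)}{b^{2}}\gamma^{+}(b)<\gamma^{+}(b)+\tfrac{(a-b)^{2}}{b^{2}}\gamma^{+}(b)\le\gamma^{+}(b)+\gamma^{+}(a-b),
\]
the strict middle step making the whole chain strict; this proves (iii), in fact with strict inequality and without any attainment hypothesis (which only reinforces the stated version).

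The one place that requires care — and the only genuine obstacle — is keeping the rescaled competitors inside $V(\cdot)$, both in item (ii) and in the scaling monotonicity above: this is exactly the role of $\gamma_0=\frac{2q(1-\eta_{q})}{1-q\eta_{q}}>2$, without which neither $A[\vartheta u]<k_1$ nor $(a/a_n)^{2}k_0a_n^{\gamma_0}\to k_0a^{\gamma_0}<k_1$ would hold and the identity $\gamma^{+}(a)=\inf_{\overline{V(a)}}E$ could not be brought to bear. Everything else is routine rescaling and elementary inequalities.
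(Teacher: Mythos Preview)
Your proof is correct and, for items (i)--(ii), follows the same rescaling-of-mass strategy as the paper: pick near-minimizers in $\mathcal{P}^{+}(\cdot)$, multiply by $a_n/a$ or $a/a_n$, and use Lemma~\ref{lm:minizing} to see that the rescaled function lands in $\overline{V(\cdot)}$. The observation that $\gamma_0=\frac{2q(1-\eta_q)}{1-q\eta_q}>2$ (equivalently $q>1$) is exactly the arithmetic that makes $A[\vartheta u]<k_1$ work, and the paper uses it implicitly in the same place.

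For item (iii) the two arguments diverge slightly. The paper proves the weaker relation $\gamma^{+}(\theta b)\le\theta\,\gamma^{+}(b)$ for $\theta>1$ and then splits $\gamma^{+}(a)=\tfrac{a-b}{a}\gamma^{+}(a)+\tfrac{b}{a}\gamma^{+}(a)$, applying the scaling bound to each summand; the strict-inequality-under-attainment clause is asserted but not spelled out. Your route uses the sharper bound $\gamma^{+}(d)\le(d/c)^{2}\gamma^{+}(c)$ and the elementary identity $\tfrac{a^{2}}{b^{2}}=1+\tfrac{(a-b)(a+b)}{b^{2}}$; the step $(a+b)>(a-b)$ together with $\gamma^{+}(b)<0$ then gives a \emph{strict} inequality unconditionally, which is in fact stronger than what the lemma states. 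Both arguments rest on the same key ingredient---the scaling inequality $E[\vartheta u]<\vartheta^{2}E[u]$ for $\vartheta>1$---so this is a variation in packaging rather than a genuinely different method, but your version is a little cleaner and delivers more.
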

	\begin{proof}
		$(i)$ is a direct corollary of Lemma \ref{lm:A_bounded_below}. 
		To show $(ii)$, let $a\in (0,a_1)$ be arbitrary and choose $\{a_n\}\subset (0,a_1)$ with $a_n\rightarrow a$ as $n\rightarrow +\infty$. 
		According to the definition of $\gamma^{+}(a_n)$, for any $\epsilon>0$, there exists a $u_n \in \mathcal{P}^{+}(a_n)$ satisfying 
		\begin{align*}
			E[u_n]\leq \gamma^{+}(a_n)+\epsilon.
		\end{align*}
		Our goal is to show that $\gamma^+(a_n)\rightarrow \gamma^+(a)$ as $n\rightarrow +\infty$.
		Let $\{t_n\}$ be a sequence satisfying $v_n(x)=t_n u_n(x)$ for every $n\in \mathbb{N}$.
		It follows that
		\begin{align*}
			\|v_n\|_2^2=t_n^2\|u_n\|_2^2=a^2,\quad A[v_n]=t_n^2A[u_n],\quad B_s[v_n]=t_n^{2s}B_s[u_n],
		\end{align*}
		which implies that 
		\begin{align*}
			E[v_n]=\frac{t_n^2}{2}A[u_n]-\frac{t_n^{2p}}{2p}B_p[u_n]-\frac{t_n^{2q}}{2q}\mu B_q[u_n]\leq t_n^2E[u_n].
		\end{align*}
		According to the fact that $t_n=aa_n^{-1}\rightarrow 1$ as $n\rightarrow +\infty$, it follows that
		\begin{align*}
			A[v_n]=t_n^2A[u_n]=(aa_n^{-1})^2A[u_n]\leq k_0a^2a_n^{-2}a_n^{\frac{2q(1-\eta_{q})}{1-q\eta_{q}}}\leq k_0 a^{\frac{2q(1-\eta_{q})}{1-q\eta_{q}}}<k_1,
		\end{align*}
		and hence $v_n \in \overline{V(a)}$, which implies that 
		\begin{align*}
			\gamma^{+}(a)\leq E[v_n]=E[u_n]+o_n(1)\leq \gamma^{+}(a_n)+\epsilon+o_n(1).
		\end{align*}
		Similarly, it holds that
		\begin{align*}
			\gamma^{+}(a_n)\leq \gamma^{+}(a)+\epsilon+o_n(1)
		\end{align*}
		and consequently $\gamma^+(a_n)\rightarrow \gamma^+(a)$ as $n\rightarrow +\infty$.
		
		$(iii)$ Using a similar proof in point $(ii)$, it is easy to show that 
		\begin{align*}
			\forall \theta \in (1,\frac{a}{b}],\quad \gamma^{+}(\theta b)\leq \theta \gamma^{+}(b).
		\end{align*}
		Then we have
		\begin{align*}
			\gamma^{+}(a)=\frac{a-b}{a}\gamma^{+}(a)+\frac{b}{a}\gamma^{+}(a)=\frac{a-b}{a}\gamma^{+}(\frac{a}{a-b}(a-b))+\frac{b}{a}\gamma^{+}(\frac{a}{b}b)\leq \gamma^+(a-b)+\gamma^+(b).
		\end{align*}
	\end{proof}
	Now we can prove the main theorem in this subsection:
	\begin{proof}[Proof of Theorem \ref{thm:minizing}]
		Let $\{u_n\}\subset \overline{V(a)}$ be an minimizing sequence for $E$ and it is bounded in $H^{1}(\mathbb{R}^N)$. 
		Let $$\rho:=\limsup_{n\rightarrow+\infty}\left(\sup_{y\in\mathbb{R}^N}\int_{B(y,1)}\vert v_n \vert^2\dif x\right).$$
		We consider an alternative: \textbf{nonvanishing} or \textbf{vanishing}.
		\begin{itemize}
			\item \textbf{Vanishing}, that is, $\rho=0$.
			According to Lion's vanishing Lemma \cite[Lemma I.1]{MR778974}, it follows that $u_n \rightarrow 0$ in $L^{s}(\mathbb{R}^N)$ for all $s\in (2,2^*)$.
			therefore, it holds that $B_q[u_n]=o_n(1)$ by the Hardy-Littlewood-Sobolev inequality \eqref{ieq:HLS}.
			It is clear that
			\begin{align*}
				E[u_n]
				\geq 
				\begin{cases}
					\frac{1}{2}A[u_n]-\frac{\mathcal{C}_{G}(p)}{2p}a^{2p(1-\eta_{p})}A[u_n]^{p\eta_{p}}+o_n(1),&\text{if } p\in (2_{\alpha}^{\sharp},2_{\alpha}^{*}),\\
					\frac{1}{2}A[u_n]-\frac{\mathcal{S}_{H}^{-p}}{2p}A[u_n]^{p}+o_n(1),&\text{if } p=2_{\alpha}^{*}.
				\end{cases}
			\end{align*}
			Since $A[u_n]\leq k_1$ and \eqref{assumptions:basic} hold, it holds that $\liminf\limits_{n\rightarrow+\infty}E[u_n]\geq 0$, which contradicts to the fact $E[u_n]\rightarrow \gamma^{+}(a)<0$.
			\item \textbf{Nonvanishing}, that is, $\rho> 0$.
			Then up to a subsequence, there exists a sequence $\{z_n\}\subset \mathbb{R}^N$ and $u_a\in H^1(\mathbb{R}^N)\setminus\{0\}$ such that 
			\begin{align*}
				w_n:=u_n(\cdot + z_n)\rightharpoonup u_a\quad \text{in } H^{1}(\mathbb{R}^N)\quad \text{and}\quad w_n \rightarrow u_a\quad \text{a.e. in } \mathbb{R}^N.
			\end{align*}
			Define $v_n(x):=w_n(x)-u_a(x)$, and we claim that $v_n \rightarrow 0$ strongly in $H^1(\mathbb{R}^N)$.
			To prove this, we shall prove $\|v_n\|_2\rightarrow 0$ and $A[v_n]\rightarrow 0$ respectively.
			Since $H^{1}(\mathbb{R}^N)$ is a Hilbert space and $v_n \rightharpoonup 0$, it is easy to show that
			\begin{align*}
				\|u_n\|_2^2=\|v_n\|_2^2+\|u_a\|_2^2+o_n(1),\quad A[u_n]=A[v_n]+A[u_a]+o_n(1).
			\end{align*}
			By lemma \ref{lm:Brezis–Lieb_nonlocal} and the transnational invariance, it follows that
			\begin{align*}
				E[u_n]=E[v_n]+E[u_a]+o_n(1).
			\end{align*} 
			By the weak lower semicontinuity of the norm $\|\cdot\|_2$, we obtain that $\|u_a\|_2\leq a$.
			Put $b:=\|u_a\|_2$ and suppose that $0<b<a$.
			For $n$ large enough, $\|v_n\|_2\leq a$ and  $A[v_n]\leq A[u_n]\leq k_1$.
			Therefore, we obtain that $v_n \in \overline{V(\|v_n\|_2)}$ and $E[v_n]\geq \gamma^+(\|v_n\|_2)$.
			Consequently, it holds that
			\begin{align*}
				\gamma^{+}(a)+o_n(1)=E[u_n]+o_n(1)=E[v_n]+E[u_a]\geq \gamma^+(\|v_n\|_2)+E[u_a].
			\end{align*}
			According to the continuity of mapping $a \mapsto \gamma^+(a)$, it follows that
			\begin{align*}
				\gamma^{+}(a)\geq \gamma^+(a-b)+E[u_a].
			\end{align*}
			By Lemma \ref{lm:nonincreasing}(iii), we arrive at
			\begin{align*}
				\gamma^{+}(b)+\gamma^+(a-b)\geq\gamma^{+}(a)\geq \gamma^+(a-b)+E[u_a],
			\end{align*}
			that is, $E[u_a]\leq \gamma^{+}(b)$.
			It follows from $u_a \in \overline{V(b)}$ that $E[u_a]\geq \gamma^{+}(b)$.
			As a consequence, $E[u_a]=\gamma^{+}(b)$.
			However, the strict inequality holds in Lemma \ref{lm:nonincreasing}(iii) if $E[u_a]=\gamma^{+}(b)$, which leads to a contradiction.
			We have proved that $\|u_a\|_2=a$ and hence $\|w_n\|_2=o_n(1)$.
			
			The last step is to show that $A[v_n]\rightarrow 0$.
			It follows from $\|v_n\|_2=o_n(1)$ that $B_q[v_n]=o_n(1)$.
			By lemma \ref{lm:Brezis–Lieb_nonlocal}, it holds that
			\begin{align*}
				E[u_n]=E[v_n]+E[u_a]+o_n(1)\rightarrow \gamma^{+}(a).
			\end{align*}
			Notice that $u_a\in \overline{V(a)}$, we obtain that $\gamma^{+}(a)\leq E[u_a],$ which gives that $E[v_n]\leq o_n(1)$.
			According to the fact $A[v_n]\leq A[u_n]\leq k_1$, similar to the vanishing case, it follows that $A[v_n]=o_n(1)$.
		\end{itemize}
		Consequently, $u_n\rightarrow u_a\in \overline{V(a)}$ strongly in $H^1(\mathbb{R}^N)$. Then by Lemma \ref{lm:minizing}(iii), we have $u_a\in V(a)$ and hence $\gamma^{+}(a)$ is reached.
	\end{proof}
	\subsection{$L^2$-critical perturbation}
	In this subsection, we shall always assume that $q=2_{\alpha}^{\sharp}<p\leq 2_{\alpha}^{\sharp}$ and $\mu,a>0$. 
	Note that we have $q\eta_{q}=1$.
	The main proof in this subsection is similar to the $L^2$-subcritical focusing perturbation.
	So we shall give some lemmas without details and focus on the differences of proof.
	\subsubsection{The existence of bounded Palais-Smale sequences}
	Now we have
	\begin{align*}
		f_u(s)=&sA[u]-\frac{\mu}{q}sB_q[u]-\frac{1}{p}s^{p\eta_{p}}B_p[u],\\
		f_u^{\prime}(s)=&A[u]-\frac{\mu}{q}B_q[u]-\eta_{p}s^{p\eta_{p}-1}B_p[u],\\
		f_u^{\prime\prime}(s)=&-\eta_{p}(p\eta_{p}-1)s^{p\eta_{p}-2}B_p[u]\leq 0,
	\end{align*}
	which implies that $\mathcal{P}(a)=\mathcal{P}^{-}(a)\cup\mathcal{P}^{0}(a)$.
	If $\mathcal{P}^{0}(a)\ne \emptyset,$ then we assume that there exists a nontrivial function $ u\in\mathcal{P}^{0}(a)$.
	It follows that
	\begin{align*}
		A[u]=\frac{\mu}{q}B_q[u]+\eta_{p}B_p[u],\quad B_p[u]=0,
	\end{align*}
	which implies that
	\begin{align*}
		A[u]=\frac{\mu}{q}B_q[u]\leq \frac{\mu}{q}\mathcal{C}_{G}(q)a^{2q-2}A[u],
	\end{align*}
	that is, $\mu a^{2q-2}\geq q\mathcal{C}_{G}(q)^{-1}$. 
	Indeed, we have proved the following lemma:
	\begin{lemma}\label{lm:P_manifold_mass_critical}
		Let $N\geq 3$, $\alpha\in (0,N)$, $q=2_{\alpha}^{\sharp}<p\leq 2_{\alpha}^{\sharp}$ and $\mu,a>0$. 
		If $\mu a^{2q-2}<q\mathcal{C}_{G}(q)^{-1}$, then we have $\mathcal{P}(a)=\mathcal{P}^{-}(a)$.
	\end{lemma}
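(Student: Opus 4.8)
The plan is to read the statement off directly from the structure of the fibering map $f_u$ recorded in \eqref{eq:scaling_function}--\eqref{eq:scaling_function_2nd_derivative}, specialized to the $L^2$-critical exponent $q=2_\alpha^\sharp$ (for which $q\eta_q=1$, since $\eta_q=\tfrac{Nq-N-\alpha}{2q}=\tfrac1q$). First I would observe that in this regime the second derivative simplifies to
\[
f_u''(s)=-\eta_p(p\eta_p-1)s^{p\eta_p-2}B_p[u],
\]
and since $p>2_\alpha^\sharp$ forces $2p\eta_p=Np-N-\alpha>2$, i.e. $\eta_p(p\eta_p-1)>0$, while the positivity of the Riesz kernel gives $B_p[u]\ge 0$, we conclude $f_u''(s)\le 0$ on $(0,\infty)$ for every $u\in S(a)$. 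Hence $\mathcal{P}^+(a)=\emptyset$, and the decomposition $\mathcal{P}(a)=\mathcal{P}^+(a)\cup\mathcal{P}^0(a)\cup\mathcal{P}^-(a)$ collapses to $\mathcal{P}(a)=\mathcal{P}^0(a)\cup\mathcal{P}^-(a)$. It therefore suffices to prove $\mathcal{P}^0(a)=\emptyset$ under the hypothesis $\mu a^{2q-2}<q\,\mathcal{C}_G(q)^{-1}$.

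To this end I would argue by contradiction: suppose $u\in\mathcal{P}^0(a)$, so that $u\in S(a)$ with $f_u'(1)=0$ and $f_u''(1)=0$. From $f_u''(1)=0$ and $\eta_p(p\eta_p-1)>0$ we obtain $B_p[u]=0$; substituting this into $f_u'(1)=0$, namely $A[u]=\tfrac{\mu}{q}B_q[u]+\eta_pB_p[u]$, yields $A[u]=\tfrac{\mu}{q}B_q[u]$. Now invoke the Gagliardo--Nirenberg inequality of Hartree type \eqref{estimate:subcritical} with $q=2_\alpha^\sharp$, where $q\eta_q=1$ and $2q(1-\eta_q)=2q-2$, to get $B_q[u]\le \mathcal{C}_G(q)\,a^{2q-2}A[u]$. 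Since $u\ne 0$ implies $A[u]>0$ (a function in $H^1(\mathbb{R}^N)$ with vanishing gradient is constant, hence identically $0$), dividing by $A[u]$ gives $1\le \tfrac{\mu}{q}\mathcal{C}_G(q)a^{2q-2}$, i.e. $\mu a^{2q-2}\ge q\,\mathcal{C}_G(q)^{-1}$, contradicting the hypothesis. Therefore $\mathcal{P}^0(a)=\emptyset$ and $\mathcal{P}(a)=\mathcal{P}^-(a)$.

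There is no genuinely hard step here; the only points that need care are the exponent bookkeeping---checking $\eta_q=1/q$ and $2q(1-\eta_q)=2q-2$ at $q=2_\alpha^\sharp$, and the sign $\eta_p(p\eta_p-1)>0$ coming from $p>2_\alpha^\sharp$---together with recalling that the positivity of $I_\alpha$ makes $B_p[u]\ge 0$, so that $f_u''\le 0$ holds globally and $\mathcal{P}^+(a)$ is empty. (One could shortcut the contradiction by noting that $B_p[u]=0$ already forces $u=0$; but I would present the argument through \eqref{estimate:subcritical}, since it is the version that transfers verbatim to the neighboring $L^2$-supercritical case.)
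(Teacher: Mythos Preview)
Your proof is correct and follows essentially the same approach as the paper: specialize the fibering map to $q\eta_q=1$, observe $f_u''\le 0$ so that $\mathcal{P}^+(a)=\emptyset$, then argue by contradiction that $u\in\mathcal{P}^0(a)$ forces $B_p[u]=0$ and hence $A[u]=\tfrac{\mu}{q}B_q[u]\le\tfrac{\mu}{q}\mathcal{C}_G(q)a^{2q-2}A[u]$, contradicting the hypothesis. Your exposition is in fact slightly more careful with the exponent bookkeeping than the paper's, and your parenthetical remark that $B_p[u]=0$ alone already forces $u=0$ is a valid shortcut (and is in spirit what the paper uses in the adjacent $L^2$-supercritical lemma).
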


	Suppose that $0<\mu a^{2q-2}<q\mathcal{C}_{G}(q)^{-1}$. 
	Put $s_u=\left(\frac{A[u]-\mu \eta_{q}B_q[u]}{\eta_{p}B_p[u]}\right)^{\frac{1}{p-1}}$, and notice that $f_u^{\prime}(s_u)=0$.
	If $s\in [0,s_u),$ $f_u^{\prime}(s)>0$ and if $s\in (s_u,+\infty),$ $f_u^{\prime}(s)<0$. 
	Then $s_u$ is the unique maximum point for $f_u$ on $[0,+\infty)$ and $s_u\circ u\in\mathcal{P}^{-}(a)$. Following the argument in \cite[Lemma 5.3]{SOAVE20206941}, we can prove that the map $u\in S(a) \mapsto s_u^{\diamond}\in\mathbb{R}$ is of class $\mathscr{C}^1$.
	
	\begin{lemma}\label{lm:A_bounded_below_mass_critical}
	Let $N\geq 3$, $q=2_{\alpha}^{\sharp}<p\leq 2_{\alpha}^{\sharp}$ and $\mu,a>0$.
	Suppose that $\mu a^{2q-2}<q\mathcal{C}_{G}(q)^{-1}$. Then there exists a $\delta>0$ such that $A[u]\geq \delta$ for all $u\in \mathcal{P}^{-}(a)$.
	\end{lemma}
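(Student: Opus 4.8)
The plan is to exploit the Pohozaev constraint satisfied by every $u\in\mathcal{P}^{-}(a)$ together with the two Gagliardo--Nirenberg type bounds \eqref{estimate:subcritical}--\eqref{estimate:critical} recorded in Section~2. First I would note that, by Lemma~\ref{lm:P_manifold_mass_critical}, the hypothesis $\mu a^{2q-2}<q\mathcal{C}_{G}(q)^{-1}$ gives $\mathcal{P}^{-}(a)=\mathcal{P}(a)$, so any $u\in\mathcal{P}^{-}(a)$ satisfies $P[u]=0$, that is
\[
A[u]=\frac{\mu}{q}B_q[u]+\eta_{p}B_p[u],
\]
where I have used $q\eta_{q}=1$ (equivalently $\eta_{q}=1/q$), which holds since $q=2_{\alpha}^{\sharp}$.

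Second, I would apply \eqref{estimate:subcritical} to the $L^2$-critical term: $B_q[u]\leq\mathcal{C}_{G}(q)a^{2q(1-\eta_{q})}A[u]^{q\eta_{q}}=\mathcal{C}_{G}(q)a^{2q-2}A[u]$. Inserting this into the identity above yields
\[
\Bigl(1-\tfrac{\mu}{q}\mathcal{C}_{G}(q)a^{2q-2}\Bigr)A[u]\leq\eta_{p}B_p[u].
\]
This is where the smallness assumption enters: $\mu a^{2q-2}<q\mathcal{C}_{G}(q)^{-1}$ makes the constant $c:=1-\frac{\mu}{q}\mathcal{C}_{G}(q)a^{2q-2}$ strictly positive.

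Third, I would bound $B_p[u]$ from above in terms of $A[u]$. For $p\in(2_{\alpha}^{\sharp},2_{\alpha}^{*})$, use \eqref{estimate:subcritical} once more to get $c\,A[u]\leq\eta_{p}\mathcal{C}_{G}(p)a^{2p(1-\eta_{p})}A[u]^{p\eta_{p}}$; since $p\eta_{p}-1>0$ this forces $A[u]\geq\bigl(c\,\eta_{p}^{-1}\mathcal{C}_{G}(p)^{-1}a^{-2p(1-\eta_{p})}\bigr)^{1/(p\eta_{p}-1)}=:\delta>0$. For $p=2_{\alpha}^{*}$, use \eqref{estimate:critical} instead (with $\eta_{p}=1$): $c\,A[u]\leq\mathcal{S}_{H}^{-p}A[u]^{p}$, whence $A[u]\geq(c\,\mathcal{S}_{H}^{p})^{1/(p-1)}=:\delta>0$. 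In either regime $\delta$ depends only on $N,\alpha,p,q,\mu,a$ and not on $u$, which is the assertion.

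There is no genuine obstacle in this lemma; it is the exact analogue of Lemma~\ref{lm:A_bounded_below}(ii) with $q$ now sitting at the $L^2$-critical threshold rather than strictly below it. The only points requiring a little care are ensuring the coefficient $c$ is strictly positive (automatic for fixed $\mu,a$ under the strict inequality) and selecting the correct upper bound for $B_p[u]$ according to whether $p$ is HLS-subcritical or HLS-critical.
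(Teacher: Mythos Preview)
Your proposal is correct and follows essentially the same approach as the paper: use the constraint $P[u]=0$ together with the Gagliardo--Nirenberg estimate \eqref{estimate:subcritical} for $B_q$ (giving the positive coefficient $1-\mu\eta_{q}\mathcal{C}_{G}(q)a^{2q-2}$) and then \eqref{estimate:subcritical} or \eqref{estimate:critical} for $B_p$ to extract the uniform lower bound on $A[u]$. The only cosmetic difference is that the paper bounds both $B_q$ and $B_p$ in one line before rearranging, while you isolate the $B_q$-contribution first; the invocation of Lemma~\ref{lm:P_manifold_mass_critical} is harmless but unnecessary since $\mathcal{P}^-(a)\subset\mathcal{P}(a)$ already gives $P[u]=0$.
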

	\begin{proof}
		If $u\in \mathcal{P}^{-}(a)$, we have $B_p[u]<0$ and hence 
		\begin{align*}
			A[u]=&\mu \eta_{q}B_q[u]+\eta_{p}B_p[u]\\
			\leq& 
			\begin{cases}
				\mu \eta_{q}\mathcal{C}_{G}(q)a^{2q-2}A[u]+\eta_{p}\mathcal{C}_H(p)a^{2p(1-\eta_{p})}A[u]^{p \eta_{p}},&\text{if }p\in (2_{\alpha}^{\sharp},2_{\alpha}^{*}),\\
				\mu \eta_{q}\mathcal{C}_{G}(q)a^{2q-2}A[u]+S_{H}^{-p}A[u]^p,&\text{if }p=2_{\alpha}^{*}.
			\end{cases}
		\end{align*}
		The Lemma is complete by noticing that $p\eta_{p}>1$ and $1-\mu \eta_{q}\mathcal{C}_{G}(q)a^{2q-2}>0$.
	\end{proof}
	Similar to Lemma \ref{lm:inf_Pohozaev_pm}, we can prove that 
	\begin{lemma}
		Let $N\geq 3$, $\alpha\in (0,N)$, $q=2_{\alpha}^{\sharp}<p\leq 2_{\alpha}^{\sharp}$ and $\mu,a>0$.
		Set $\mu a^{2q-2}<q\mathcal{C}_{G}(q)^{-1}$. 
		Then we have
		$$\gamma(a)=\inf\limits_{\mathcal{P}(a)} E[u]=\inf\limits_{\mathcal{P}^{-}(a)} E[u]=\inf\limits_{\mathcal{P}_r^{-}(a)} E[u]>0.$$
	\end{lemma}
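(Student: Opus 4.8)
The plan is to follow the scheme of Lemma~\ref{lm:inf_Pohozaev_pm}, adjusted to the degenerate fiber map of the $L^2$-critical perturbation. First I would exploit the smallness hypothesis $\mu a^{2q-2}<q\mathcal{C}_G(q)^{-1}$. By Lemma~\ref{lm:P_manifold_mass_critical} it gives $\mathcal{P}(a)=\mathcal{P}^-(a)$, so $\gamma(a)=\inf_{\mathcal{P}(a)}E=\inf_{\mathcal{P}^-(a)}E$ holds tautologically. Moreover, since $q\eta_q=1$, the Gagliardo--Nirenberg inequality \eqref{estimate:subcritical} applied to $u\in S(a)$ gives $\mu\eta_qB_q[u]=\frac{\mu}{q}B_q[u]\le\frac{\mu}{q}\mathcal{C}_G(q)a^{2q-2}A[u]<A[u]$, which is exactly the positivity of $A[u]-\mu\eta_qB_q[u]$ needed for the scaling $s_u$ in the discussion preceding Lemma~\ref{lm:A_bounded_below_mass_critical} to be well defined; hence $s_u$ is the unique global maximum of $f_u$ on $(0,\infty)$ and $s_u\circ u\in\mathcal{P}^-(a)$. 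As in \cite[Lemma~8.1]{MR4096725} this yields the mountain-pass type description
\[
\inf_{u\in\mathcal{P}^-(a)}E[u]=\inf_{u\in S(a)}\max_{s>0}E[s\circ u].
\]

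Next I would prove $\gamma(a)>0$. For $u\in\mathcal{P}(a)=\mathcal{P}^-(a)$ one has $P[u]=0$, i.e. $A[u]=\eta_pB_p[u]+\mu\eta_qB_q[u]$; inserting this into $E$ and using $q\eta_q=1$ collapses everything to the identity $E[u]=\frac{p\eta_p-1}{2p}B_p[u]$, and $p>2_\alpha^\sharp$ gives $p\eta_p>1$. It therefore suffices to bound $B_p[u]$ away from $0$ on $\mathcal{P}^-(a)$. Lemma~\ref{lm:A_bounded_below_mass_critical} provides $\delta>0$ with $A[u]\ge\delta$ there, while the estimate of the first paragraph gives $\mu\eta_qB_q[u]\le\frac{\mu}{q}\mathcal{C}_G(q)a^{2q-2}A[u]$, so that
\[
\eta_pB_p[u]=A[u]-\mu\eta_qB_q[u]\ge\Bigl(1-\tfrac{\mu}{q}\mathcal{C}_G(q)a^{2q-2}\Bigr)A[u]\ge\Bigl(1-\tfrac{\mu}{q}\mathcal{C}_G(q)a^{2q-2}\Bigr)\delta>0 .
\]
Consequently $\gamma(a)\ge\frac{p\eta_p-1}{2p\eta_p}\bigl(1-\tfrac{\mu}{q}\mathcal{C}_G(q)a^{2q-2}\bigr)\delta>0$.

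Finally, for the equality $\inf_{\mathcal{P}^-(a)}E=\inf_{\mathcal{P}_r^-(a)}E$ I would symmetrize, exactly as in Lemma~\ref{lm:inf_Pohozaev_pm}. The inclusion $\mathcal{P}_r^-(a)\subset\mathcal{P}^-(a)$ gives one inequality. For the reverse, take $u\in S(a)$ and let $v\in S_r(a)$ be the Schwarz rearrangement of $|u|$; by the Schwarz rearrangement inequalities (as used in the proof of Lemma~\ref{lm:inf_Pohozaev_pm}), $A[v]\le A[u]$ and $B_s[v]\ge B_s[u]$ for all $s\in(2_\alpha,2_\alpha^*]$, whence $f_v(s)\le f_u(s)$ for every $s>0$ and therefore $\max_{s>0}E[s\circ v]\le\max_{s>0}E[s\circ u]$. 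Since $s_v\circ v$ is radial and lies in $\mathcal{P}^-(a)$, it belongs to $\mathcal{P}_r^-(a)$, so $\inf_{\mathcal{P}_r^-(a)}E\le\max_{s>0}E[s\circ v]\le\max_{s>0}E[s\circ u]$; taking the infimum over $u\in S(a)$ and using the characterization from the first paragraph closes the chain of equalities. The step requiring the most care is the shape analysis of $f_u$: the hypothesis $\mu a^{2q-2}<q\mathcal{C}_G(q)^{-1}$ is precisely what forces $A[u]-\mu\eta_qB_q[u]>0$, hence makes $\mathcal{P}(a)=\mathcal{P}^-(a)$ a genuine natural constraint with the desired mountain-pass geometry; once that is in hand, the remaining arguments are a routine transcription of the $L^2$-subcritical case.
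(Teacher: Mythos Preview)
Your proposal is correct and follows essentially the same approach that the paper indicates (the paper simply writes ``Similar to Lemma~\ref{lm:inf_Pohozaev_pm}'' without details). You have supplied precisely the expected details: Lemma~\ref{lm:P_manifold_mass_critical} for $\mathcal{P}(a)=\mathcal{P}^-(a)$, the min--max characterization $\inf_{\mathcal{P}^-(a)}E=\inf_{S(a)}\max_{s>0}E[s\circ u]$, Schwarz symmetrization for the radial reduction, and the positivity via $E[u]=\tfrac{p\eta_p-1}{2p}B_p[u]$ combined with Lemma~\ref{lm:A_bounded_below_mass_critical} and the Gagliardo--Nirenberg bound.
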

	Again, we can define $I: S(a)\rightarrow \mathbb{R}$ by $ I[u]=E[S_u\circ u]$ and it holds that $\langle \dif I[u],\psi\rangle=\langle \dif E[S_u\circ u], S_u\circ \psi\rangle$ for any $u\in S(a)$ and $\psi\in T_uS(a)$.
	Using the method in Lemma \ref{lm:PS_sequence_existence_mass_subcritical}, it follows that
	\begin{lemma}\label{lm:PS_existence_mass_critical}
		Let $N\geq 3$, $0<\alpha<N$, $q=2_{\alpha}^{\sharp}<p\leq 2_{\alpha}^{\sharp}$ and $\mu,a>0$.
		Set $\mu a^{2q-2}<q\mathcal{C}_{G}(q)^{-1}$.
		Then there exists a bounded Palais-Smale sequence $\{u_n\}\subset \mathcal{P}(a)$ for $E$ restricted to $S_r(a)$ at level $\gamma(a)$.
	\end{lemma}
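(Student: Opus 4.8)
The plan is to follow verbatim the scheme of Lemma \ref{lm:PS_sequence_existence_mass_subcritical}, which becomes slightly simpler here because in the $L^2$-critical regime Lemma \ref{lm:P_manifold_mass_critical} gives $\mathcal{P}(a)=\mathcal{P}^-(a)$, so a single minimax family suffices. Let $\mathcal{F}$ be the class of all singletons in $S_r(a)$, a homotopy-stable family of compact subsets of $S_r(a)$ with empty boundary in the sense of \cite[Definition 3.1]{1993Duality}, and recall $I:S(a)\to\mathbb{R}$, $I[u]=E[s_u\circ u]$, where $s_u$ is the unique maximum point of $f_u$ on $[0,+\infty)$; in particular $I[u]=\max_{s\geq 0}E[s\circ u]$ and $s_u\circ u\in\mathcal{P}^-(a)$. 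Set $e(a):=\inf_{D\in\mathcal{F}}\max_{u\in D}I[u]$. First I would record $e(a)=\gamma(a)$: since $D$ ranges over singletons, $e(a)=\inf_{u\in S_r(a)}I[u]=\inf_{u\in S_r(a)}\max_{s\geq 0}E[s\circ u]$, which equals $\gamma(a)$ by the preceding lemma characterizing $\gamma(a)=\inf_{\mathcal{P}_r^-(a)}E$.

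Next I would run the Ghoussoub duality argument. Choose $D_n=\{w_n\}\subset\mathcal{F}$ with $I[w_n]<e(a)+1/n$, introduce the homotopy $\eta(t,u)=(1-t+ts_u)\circ u$, and put $G_n:=\eta(\{1\}\times D_n)=\{s_{w_n}\circ w_n\}\subset\mathcal{P}^-(a)$; since $I[s_{w_n}\circ w_n]=I[w_n]$, the minimax level is unchanged. By the min-max theorem \cite[Theorem 3.2]{1993Duality}, there exists a Palais-Smale sequence $\{v_n\}\subset S_r(a)$ for $E$ restricted to $S_r(a)$ at level $e(a)=\gamma(a)$ with $\operatorname{dist}(v_n,G_n)\to 0$. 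If $\{v_n\}\subset\mathcal{P}(a)$ we are done; otherwise replace $v_n$ by $u_n:=s_{v_n}\circ v_n\in\mathcal{P}^-(a)$, which leaves the energy level unchanged, and it remains to verify that $\{u_n\}$ is still a Palais-Smale sequence for $E|_{S_r(a)}$.

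The crux, exactly as in the subcritical case, is the two-sided bound $C^{-1}\leq s_{v_n}^{2}=A[u_n]/A[v_n]\leq C$. For the upper bound on $A[u_n]$: $\{u_n\}\subset\mathcal{P}(a)$ and $E[u_n]=I[v_n]\to\gamma(a)$ stays bounded, so the coercivity of $E$ on $\mathcal{P}(a)$ from Lemma \ref{lm:coercive}(iii) — which is precisely where the hypothesis $\mu a^{2q-2}<q\mathcal{C}_{G}(q)^{-1}$ is used — forbids $A[u_n]\to\infty$. The lower bound $A[u_n]\geq\delta$ is Lemma \ref{lm:A_bounded_below_mass_critical}. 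For $A[v_n]$: each $G_n\subset\mathcal{P}(a)$ is a singleton on which $E$ is bounded, hence $\sup_n A[G_n]<\infty$ again by Lemma \ref{lm:coercive}(iii), and $\operatorname{dist}(v_n,G_n)\to 0$ bounds $A[v_n]$ above; picking $w_n'\in G_n$ with $\|v_n-w_n'\|_{H^1}\to 0$ and using the uniform bound $A[w_n']\geq\delta$ on $\mathcal{P}^-(a)$ together with the triangle inequality bounds $A[v_n]$ below. With $s_{v_n}$ bounded away from $0$ and $\infty$, the scaling identity $\langle\dif I[v_n],\psi\rangle=\langle\dif E[s_{v_n}\circ v_n],s_{v_n}\circ\psi\rangle$ yields $\|\dif E|_{S(a)}[u_n]\|_{H^{-1}(\mathbb{R}^N)}\lesssim\|\dif I[v_n]\|_{H^{-1}(\mathbb{R}^N)}\to 0$, so $\{u_n\}$ is the desired Palais-Smale sequence; its $H^1$-boundedness follows once more from Lemma \ref{lm:coercive}(iii).

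The main obstacle is the two-sided control of the scaling parameter $s_{v_n}$; everything else is a transcription of the subcritical argument. The genuinely new input is the coercivity of $E$ on $\mathcal{P}(a)$ in this regime, available exactly because $\mu a^{2q-2}<q\mathcal{C}_{G}(q)^{-1}$ — this simultaneously rules out the escape $A[u_n]\to\infty$ and a degenerate scaling $s_{v_n}\to 0$ (and, via $\gamma(a)>0$, a degenerate scaling $s_{v_n}\to\infty$).
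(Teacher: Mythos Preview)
Your proposal is correct and follows exactly the approach the paper indicates: the paper simply states that the lemma follows ``using the method in Lemma \ref{lm:PS_sequence_existence_mass_subcritical}'', and you have faithfully transcribed that argument to the $L^2$-critical setting, invoking Lemma \ref{lm:P_manifold_mass_critical}, Lemma \ref{lm:A_bounded_below_mass_critical}, and Lemma \ref{lm:coercive}(iii) at the appropriate places. The only minor imprecision is the parenthetical remark attributing the exclusion of $s_{v_n}\to\infty$ to $\gamma(a)>0$; in fact this is ruled out directly by the upper bound on $A[u_n]$ (coercivity) together with the lower bound on $A[v_n]$, exactly as in your main argument.
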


	\subsubsection{The compactness of Palais-Smale sequences under the HLS critical leading term}
	In this part, we assume $p=2_{\alpha}^{*}$.
	Now we shall investigate the properties of $\gamma(a)$. 
	\begin{lemma}\label{lm:energy_mass_critical}
		Let $N\geq 4$ with $\alpha\in (0,N)$ or $N=3$ with $\alpha\in (0,1)$, $q=2_{\alpha}^{\sharp}<p=2_{\alpha}^{*}$ and $\mu,a>0$. 
		Then we have
		\begin{enumerate}
			\item[(i)] If $0<\mu a^{2q-2}<q\mathcal{C}_{G}(q)^{-1}$, then $0<\gamma(a)<\frac{\alpha+2}{2N+2\alpha}S^{\frac{N+\alpha}{\alpha+2}}_{H}$;
			\item[(ii)] Fix $a>0$, $\gamma(a, \mu)$ is strictly decreasing with respective to $\mu$ for $0<\mu a^{2q-2}<q\mathcal{C}_{G}(q)^{-1}$ and is nonincreasing with respective to $\mu$ for $\mu a^{2q-2}\geq q\mathcal{C}_{G}(q)^{-1}$;
			\item[(iii)] It holds that $\gamma(a)=0$ for $\mu a^{2q-2}\geq q\mathcal{C}_{G}(q)^{-1}$.
		\end{enumerate}  
	\end{lemma}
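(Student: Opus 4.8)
The plan is to reduce the whole lemma to the explicit value of the energy on the Pohozaev set. Since $q=2_{\alpha}^{\sharp}$ forces $q\eta_{q}=1$ and $p=2_{\alpha}^{*}$ forces $\eta_{p}=1$, every $u\in\mathcal{P}(a)$ satisfies $P[u]=0$ and $E[u]=E[u]-\tfrac12P[u]=\tfrac{p-1}{2p}B_{p}[u]$; because $f_{u}''(1)<0$ makes $B_{p}[u]>0$ while $B_{p}[u]=0$ would force $u\equiv0$, in fact $\mathcal{P}(a)=\mathcal{P}^{-}(a)$ and $f_{u}$ is strictly concave on $(0,\infty)$ for each $u\in S(a)$. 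Combining Lemma~\ref{lm:P_manifold_mass_critical} with the $\mathscr{C}^{1}$ map $u\mapsto s_{u}$ gives $\gamma(a)=\inf_{u\in S(a)}\tfrac12\max_{s>0}f_{u}(s)$; writing $f_{u}(s)=s\bigl(A[u]-\tfrac{\mu}{q}B_{q}[u]\bigr)-\tfrac1p s^{p}B_{p}[u]$ and noting that $A[u]-\tfrac{\mu}{q}B_{q}[u]>0$ on $S(a)$ whenever $\mu a^{2q-2}<q\mathcal{C}_{G}(q)^{-1}$ (by \eqref{ieq:GN_H}), an elementary one–variable maximisation together with $\tfrac{p}{p-1}=\tfrac{N+\alpha}{\alpha+2}$, $\tfrac{1}{p-1}=\tfrac{N-2}{\alpha+2}$ and $\tfrac{p-1}{2p}=\tfrac{\alpha+2}{2(N+\alpha)}$ yields
\begin{equation*}
\gamma(a)\;\le\;\frac{\alpha+2}{2(N+\alpha)}\;\frac{\bigl(A[u]-\tfrac{\mu}{q}B_{q}[u]\bigr)^{\frac{N+\alpha}{\alpha+2}}}{B_{p}[u]^{\frac{N-2}{\alpha+2}}}\qquad\text{for all }u\in S(a)\text{ with }A[u]>\tfrac{\mu}{q}B_{q}[u].
\end{equation*}

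For the lower bound in (i), Lemma~\ref{lm:A_bounded_below_mass_critical} gives $A[u]\ge\delta$ on $\mathcal{P}^{-}(a)$, and $P[u]=0$ with \eqref{ieq:GN_H} gives $B_{p}[u]=A[u]-\tfrac{\mu}{q}B_{q}[u]\ge\bigl(1-\tfrac{\mu}{q}\mathcal{C}_{G}(q)a^{2q-2}\bigr)\delta>0$, whence $\gamma(a)=\inf\tfrac{p-1}{2p}B_{p}[u]>0$. The crucial point is the strict upper bound in (i): I would test the displayed estimate with $\bar v_{\varepsilon}(x):=\sigma_{\varepsilon}^{(N-2)/2}U_{\varepsilon}(\sigma_{\varepsilon}x)$, where $\sigma_{\varepsilon}:=\|U_{\varepsilon}\|_{2}/a$, so that $\bar v_{\varepsilon}\in S(a)$, $A[\bar v_{\varepsilon}]=A[U_{\varepsilon}]$, $B_{p}[\bar v_{\varepsilon}]=B_{p}[U_{\varepsilon}]$ and $B_{q}[\bar v_{\varepsilon}]=\sigma_{\varepsilon}^{\,2-2q}B_{q}[U_{\varepsilon}]$ (here $A[\bar v_{\varepsilon}]-\tfrac{\mu}{q}B_{q}[\bar v_{\varepsilon}]>0$ again by \eqref{ieq:GN_H}). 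Using Lemma~\ref{lm:estimate_bubble}(i), Lemma~\ref{lm:estimate_bubble_nonlocal}(i) and $\mathcal{S}_{H}=\mathcal{S}\bigl(\mathcal{A}(N,\alpha)\mathcal{C}_{H}(N,\alpha)\bigr)^{-\frac{N-2}{N+\alpha}}$, the leading order of $A[U_{\varepsilon}]^{\frac{N+\alpha}{\alpha+2}}B_{p}[U_{\varepsilon}]^{-\frac{N-2}{\alpha+2}}$ equals exactly $\mathcal{S}_{H}^{\frac{N+\alpha}{\alpha+2}}$, with error $O\bigl(\varepsilon^{\min\{N-2,(N+\alpha)/2\}}\bigr)$, whereas the $B_{q}$–term produces a \emph{negative} correction of size $\asymp\mu\,\sigma_{\varepsilon}^{\,2-2q}B_{q}[U_{\varepsilon}]$. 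By Lemma~\ref{lm:estimate_bubble}(ii) (which gives $\|U_{\varepsilon}\|_{2}^{2}\asymp\varepsilon^{2}$ if $N\ge5$, $\asymp\varepsilon^{2}|\log\varepsilon|$ if $N=4$, $\asymp\varepsilon$ if $N=3$) together with Lemma~\ref{lm:estimate_bubble_nonlocal}(ii) one checks that $\sigma_{\varepsilon}^{\,2-2q}B_{q}[U_{\varepsilon}]$ is bounded below by a positive constant when $N\ge5$, by $|\log\varepsilon|^{\,1-q}$ when $N=4$, and by $\varepsilon^{\,q-1}$ when $N=3$; comparing with $O(\varepsilon^{\min\{N-2,(N+\alpha)/2\}})$ gives $\gamma(a)<\tfrac{\alpha+2}{2(N+\alpha)}\mathcal{S}_{H}^{\frac{N+\alpha}{\alpha+2}}$ for $\varepsilon$ small in every dimension $N\ge4$, and precisely when $q-1=\tfrac{\alpha+2}{3}<1$, i.e. $\alpha<1$, when $N=3$. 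This growth comparison is the main obstacle and the reason for the dimensional hypothesis.

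For (iii), $\gamma(a)\ge0$ is immediate from $E[u]=\tfrac{p-1}{2p}B_{p}[u]\ge0$ on $\mathcal{P}(a)$, and since $B_{p}[u]>0$ for $u\in\mathcal{P}(a)=\mathcal{P}^{-}(a)$ the value $0$ can never be attained, so \eqref{eq:NLS_S} has no ground state. For $\gamma(a)\le0$ when $\mu a^{2q-2}\ge q\mathcal{C}_{G}(q)^{-1}$ I would perturb the Gagliardo--Nirenberg extremal $W_{q}$: choosing $u_{n}\in S(a)$ along a path joining a scaling of $W_{q}$ to a spread–out profile so that $B_{q}[u_{n}]/A[u_{n}]\uparrow q/\mu$ while $A[u_{n}]$ stays bounded and $B_{p}[u_{n}]$ stays bounded away from $0$, the projection $s_{u_{n}}\circ u_{n}\in\mathcal{P}^{-}(a)$ satisfies $B_{p}[s_{u_{n}}\circ u_{n}]=\bigl(A[u_{n}]-\tfrac{\mu}{q}B_{q}[u_{n}]\bigr)^{\frac{p}{p-1}}B_{p}[u_{n}]^{-\frac{1}{p-1}}\to0$, hence $E[s_{u_{n}}\circ u_{n}]\to0$.

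Finally (ii): for fixed $u\in S(a)$ the map $\mu\mapsto f_{u}(s)$ is nonincreasing for every $s>0$ (as $B_{q}[u]\ge0$), so $\mu\mapsto\gamma(a,\mu)=\inf_{u\in S(a)}\tfrac12\max_{s>0}f_{u}(s)$ is nonincreasing, which together with (iii) settles the range $\mu a^{2q-2}\ge q\mathcal{C}_{G}(q)^{-1}$. For strictness on $\{0<\mu a^{2q-2}<q\mathcal{C}_{G}(q)^{-1}\}$, take $\mu_{1}<\mu_{2}$ there and a bounded minimizing sequence $\{u_{n}\}\subset\mathcal{P}^{-}(a)$ (formed with $\mu_{1}$) for $\gamma(a,\mu_{1})$; if $B_{q}[u_{n}]\to0$ then $A[u_{n}]=B_{p}[u_{n}]+o_{n}(1)$ and the definition of $\mathcal{S}_{H}$ forces $\liminf A[u_{n}]\ge\mathcal{S}_{H}^{\frac{N+\alpha}{\alpha+2}}$, so $\gamma(a,\mu_{1})=\lim\tfrac{p-1}{2p}B_{p}[u_{n}]\ge\tfrac{\alpha+2}{2(N+\alpha)}\mathcal{S}_{H}^{\frac{N+\alpha}{\alpha+2}}$, contradicting (i); hence $B_{q}[u_{n}]\ge\kappa>0$ along a subsequence. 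Projecting each such $u_{n}$ onto the Pohozaev set of $\mu_{2}$, the projection scalars are bounded below by some $s_{0}>0$ (again by \eqref{ieq:GN_H}, $A[u_{n}]\ge\delta$ and $B_{p}[u_{n}]$ bounded above), and using $s=1$ maximises $f_{u_{n}}$ for $\mu_{1}$ one obtains $\gamma(a,\mu_{2})\le E_{\mu_{1}}[u_{n}]-\tfrac{(\mu_{2}-\mu_{1})\kappa s_{0}}{2q}$, so letting $n\to\infty$ gives $\gamma(a,\mu_{2})\le\gamma(a,\mu_{1})-\tfrac{(\mu_{2}-\mu_{1})\kappa s_{0}}{2q}<\gamma(a,\mu_{1})$.
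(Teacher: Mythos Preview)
Your argument is correct. Part~(i) follows the paper almost verbatim: your test function $\bar v_{\varepsilon}$ differs from the paper's $V_{\varepsilon}=aU_{\varepsilon}/\|U_{\varepsilon}\|_{2}$ only by a dilation that is invisible in the explicit max--value formula (both yield $A[U_{\varepsilon}]-\tfrac{\mu a^{2q-2}}{q}\|U_{\varepsilon}\|_{2}^{2-2q}B_{q}[U_{\varepsilon}]$ in the numerator), and your exponent bookkeeping matches the paper's since $q-1=(\alpha+2)/N$.

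Parts~(ii)--(iii) take a genuinely different route. For the strict monotonicity in (ii) the paper argues directly from $\gamma(a,\mu)=\inf_{S(a)}\max_{s}E_{\mu}[s\circ u]$ via a short chain of inequalities; your approach instead extracts a bounded minimizing sequence for $\mu_{1}$, uses (i) to force $B_{q}[u_{n}]\ge\kappa>0$, and then projects onto the $\mu_{2}$--Pohozaev set with a uniform lower bound on the scaling factors. Your argument is in fact tighter here: the paper's displayed inequality ``$\le\inf_{u}E_{\mu_{1}}[s_{u,\mu_{2}}\circ u]-\tfrac{\mu_{2}-\mu_{1}}{2q}s_{u,\mu_{2}}\inf_{u}B_{q}[u]$'' is problematic as written (the $s_{u,\mu_{2}}$ on the right still depends on $u$, and $\inf_{S(a)}B_{q}=0$), whereas your quantitative bound $\gamma(a,\mu_{2})\le\gamma(a,\mu_{1})-\tfrac{(\mu_{2}-\mu_{1})\kappa s_{0}}{2q}$ is clean. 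The price is that your (ii) now rests on (i), hence on the dimensional hypothesis, while the paper's intended argument does not. For (iii) the paper lets $\mu\uparrow q\mathcal{C}_{G}(q)^{-1}a^{2-2q}$ along Gagliardo--Nirenberg near-extremals and then invokes monotonicity, whereas you keep $\mu$ fixed and slide along a path in $S(a)$ on which $A/B_{q}$ passes continuously through $\mu/q$; your sketch is terse but sound, since on any compact $H^{1}$--path $B_{p}$ stays bounded away from zero and the explicit formula for the projected energy then forces it to $0$.
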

	\begin{proof}
		\begin{enumerate}
			\item[(i)] Assume that $0<\mu a^{2q-2}<q\mathcal{C}_{G}(q)^{-1}$. Let $\xi\in\mathscr{C}_{c}^{\infty}$ be a radial cut-off function with $\xi\equiv 1$ in $B_1$,   $\operatorname{supp}(\xi)\subset B_2$ and $0\leq \xi(x)\leq 1,$ $\forall x\in\mathbb{R}^N$. 
			We define                 
			\begin{align*}
				U_{\epsilon}(x):=\xi(x) u_{\epsilon}(x),\quad \text{and}\quad V_{\epsilon}(x):=a
				\frac{U_{\epsilon}(x)}{\vert U_{\epsilon}\vert_2}.
			\end{align*} 
			Notice that $U_{\epsilon}\in \mathscr{C}_c^{\infty}(\mathbb{R}^N)$ and $V_{\epsilon}\in S_r(a)$. Therefore, it holds that 
			\begin{align*}
				\gamma(a)=\inf\limits_{\mathcal{P}^{-}(a)}E\leq E[s_{V_{\epsilon}}^{\diamond}\circ V_{\epsilon}]=\max_{s\geq 0}E[s\circ V_{\epsilon}],\quad \forall \epsilon>0.
			\end{align*}
			We can use the method described in \cite[Lemma 5.3]{MR4096725} to estimate the upper bound of $E[s_{V_{\epsilon}}^{\diamond}\circ V_{\epsilon}]$. 
			However, we can do this in a directer way. 
			According to the fact $f_u^{\prime}(s_u)=0$ and $s_u=\left(\frac{A[u]-\mu \eta_{q}B_q[u]}{\eta_{p}B_p[u]}\right)^{\frac{1}{p-1}}$, it follows that
			\begin{align*}
				f_u(s_u)=\frac{p-1}{p}s_u\left(A[u]-\frac{\mu}{q}B_q[u]\right)^{\frac{1}{p-1}}
				=\frac{p-1}{p}B_p[u]^{-\frac{1}{p-1}}\left(A[u]-\frac{\mu}{q}B_q[u]\right)^{\frac{p}{p-1}}.
			\end{align*}
			Notice that $A[V_{\epsilon}]=\frac{a^2}{\vert U_{\epsilon}\vert_2^2}A[U_{\epsilon}]$ and $B_s[V_{\epsilon}]=\frac{a^{2s}}{\vert U_{\epsilon}\vert_2^{2s}}B_s[U_{\epsilon}]$ for $s\in [2_{\alpha},2_{\alpha}^{*}]$. 
			As a consequence, we obtain that
			\begin{align*}
				E[s_{V_{\epsilon}}\circ V_{\epsilon}]
				=&\frac{\alpha+2}{2(N+\alpha)}B_p[V_{\epsilon}]^{-\frac{1}{p-1}}\left(A[V_{\epsilon}]-\frac{\mu}{q}B_q[V_{\epsilon}]\right)^{\frac{p}{p-1}}\\
				=&\frac{\alpha+2}{2(N+\alpha)}B_p[U_{\epsilon}]^{-\frac{1}{p-1}}\left(A[U_{\epsilon}]-\frac{\mu a^{2q-2}}{q}\vert U_{\epsilon} \vert_2^{2-2q} B_q[U_{\epsilon}]\right)^{\frac{p}{p-1}}\\
				\leq& 
				\frac{\alpha+2}{2(N+\alpha)}\mathcal{S}_H^{\frac{N+\alpha}{\alpha+2}}+\begin{cases}
					O(\epsilon^{\min\{\frac{N+\alpha}{2},\ N-2\}})-O(1),&\text{if }N\geq 5,\\
					O(\epsilon^{2})-O(\vert \log \epsilon\vert^{-\frac{\alpha+2}{N}}),&\text{if }N=4,\\
					O(\epsilon)-O(\epsilon^{\frac{\alpha+2}{N}}),&\text{if }N=3,
				\end{cases}\\
			\end{align*}
			where we have used Lemma \ref{lm:estimate_bubble} and Lemma \ref{lm:estimate_bubble_nonlocal}. 
			If $N\geq 4$ with $\alpha\in (0,N)$, it is easy to see that our lemma holds. 
			If $N=3$ with $\alpha\in (0,1)$, we have that $\frac{\alpha+2}{N}<1$ and hence our lemma holds.
			\item[(ii)]
			Notice that $\gamma(a, \mu)=\inf\limits_{u\in\mathcal{P}(a)}E[u]=\inf\limits_{u\in S(a)}\max\limits_{s>0} E[s\circ u]$. Indeed, $\max\limits_{s>0} E[s\circ u]=E[s_u \circ u]$ and $s_u\circ u\in \mathcal{P}(a)$, which implies that 
			\begin{align*}
				\inf\limits_{u\in S(a)}\max\limits_{s>0} E[s\circ u]\geq \inf\limits_{u\in\mathcal{P}(a)}E[u].
			\end{align*}
			Conversely, if $u\in\mathcal{P}(a)$, it holds that $s_u=1$ and hence
			\begin{align*}
				E[u]=\max\limits_{s>0} E[s\circ u]\geq \inf\limits_{u\in S(a)}\max\limits_{s>0} E[s\circ u].
			\end{align*}
			Now first consider $0<\mu_1<\mu_2<qa^{2-2q}\mathcal{C}_{H}(\alpha,q)^{-2q}$. 
			We shall write $s_{u,\mu}$ to show the dependence of $\mu$. 
			It follows that
			\begin{align*}
				\gamma_{\mu_2}(a)=&\inf\limits_{u\in S(a)}\max\limits_{s>0} E_{\mu_2}[s\circ u]=\inf\limits_{u\in S(a)}E_{\mu_2}[s_{u,\mu_2} \circ u]
				=\inf\limits_{u\in S(a)}\left(E_{\mu_1}[s_{u,\mu_2} \circ u]-\frac{\mu_2-\mu_1}{2q}s_{u,\mu_2}B_q[u]\right)\\
				\leq & \inf\limits_{u\in S(a)}E_{\mu_1}[s_{u,\mu_2} \circ u]-\frac{\mu_2-\mu_1}{2q}s_{u,\mu_2}\inf\limits_{u\in S(a)} B_q[u]
				< \inf\limits_{u\in S(a)}\max\limits_{s>0} E_{\mu_1}[s\circ u]=\gamma_{\mu_1}(a).
			\end{align*}
			Now we will consider $qa^{2-2q}\mathcal{C}_{H}(\alpha,q)^{-2q}\leq\mu_1<\mu_2$.
			We first claim that
			\begin{align*}
				\sup\limits_{u\in S(a)}\frac{A[u]}{B_q[u]}=+\infty.
			\end{align*}
			By Hardy-Littlewood-Sobolev inequality \eqref{ieq:HLS}, we get that 
			\begin{align*}
				B_q[u]\lesssim \|u\|_{\frac{2Nq}{N+\alpha}}^{2q}.
			\end{align*}
			Since $\frac{2Nq}{N+\alpha} \in (2,2^{*})$, there exists a $\theta \in (0,1)$ satisfying
			\begin{align*}
				\frac{\theta}{2}+\frac{1-\theta}{2^{*}}=\frac{N+\alpha}{2Nq}.
			\end{align*}
			According to the H\"{o}lder inequality, it follows that 
			\begin{align*}
				\| u \|_{\frac{2Nq}{N+\alpha}}\leq \|u\|_2^{\theta}\|u\|_{2^*}^{1-\theta}.
			\end{align*}
			Consequently, we have 
			\begin{align*}
				\frac{A[u]}{B_q[u]}\gtrsim \frac{A[u]}{\|u\|_2^{2q\theta}\|u\|_{2^*}^{2q(1-\theta)}}=\frac{\|\nabla u\|_2^2}{a^{2q\theta}\|u\|_{2^*}^{2}}.
			\end{align*}
			It is easy to show that 
			\begin{align*}
				\sup\limits_{u\in H^{1}(\mathbb{R}^N)\setminus\{0\}}\frac{\|\nabla u\|_2}{\|u\|_{2^*}}=+\infty.
			\end{align*}
			Now for $u\in H^{1}(\mathbb{R}^N)\setminus\{0\}$, we can define $v=a\|u\|_2^{-1}u$ and hence we have $\|v\|_2=a$ and 
			\begin{align*}
				\frac{\| \nabla v \|_2}{\| v \|_{2^{*}}}=\frac{\| \nabla u \|_2}{\| u \|_{2^{*}}},
			\end{align*}
			the claim is completed.
			
			Now we can choose $ u\in S(a)$ with $\frac{A[u]}{B_q[u]}>\mu \eta_{q}$, and put $s_u=\left(\frac{A[u]-\mu \eta_{q}B_q[u]}{\eta_{p}B_p[u]}\right)^{\frac{1}{p-1}}$, and notice that $f_u^{\prime}(s_u)=0$. 
			If $s\in [0,s_u),$ $f_u^{\prime}(s)>0$ and if $s\in (s_u,+\infty),$ $f_u^{\prime}(s)<0$. 
			Then $s_u$ is the unique maximum point for $f_u$ on $[0,+\infty)$ and $s_u\circ u\in\mathcal{P}^{-}(a)$.
			The remain part is similar to the case $0<\mu a^{2q-2}<q\mathcal{C}_{G}(q)^{-1}$.
			\item[(iii)]
			Let $\{w_n\}$ be a minizing sequence of the Gagliardo-Nirenberg inequality of Hartree type \eqref{ieq:GN_H}.
			Without loss of generality, we can assume that $\|w_n\|_2=a$, $B_q[w_n]=\mathcal{A}(N,\alpha)$ and $A[w_n]=\mathcal{C}_{G}^{-2q}a^{2(1-q)}+o_n(1)$.
			Then we obtain that
			\begin{align*}
				f_n^{\prime}(s):=f_{w_n}^{\prime}(s)=&(A[w_n]-\mu\eta_{q}B_q[w_n])s-\eta_{p}B_p[w_n]s^{p-1}\\
				=&(\mathcal{C}_{G}^{-2q}a^{2(1-q)}+o_n(1)-\mu\eta_{q})s-\eta_{p}B_p[w_n]s^{p-1}.
			\end{align*}
			There exists a unique $s_n:=s_{w_n}=\left(\frac{A[w_n]-\mu q^{-1}B_q[w_n]}{B_p[w_n]}\right)^{\frac{1}{p-1}}>0$ such that $s_n\circ w_n\in\mathcal{P}(a)$ for $0<\mu a^{2q-2}<q\mathcal{C}_{G}(q)^{-1}$.
			It follows that
			\begin{align*}
				B_p[w_n]\lesssim A[w_n]=\mathcal{C}_{G}^{-2q}a^{2(1-q)}+o_n(1).
			\end{align*}
			As a consequence, we get
			\begin{align*}
				E[w_n]
				=(\frac{1}{2}-\frac{1}{2p})s_n^pB_p[ w_n]
				=(\frac{1}{2}-\frac{1}{2p})\left(A[w_n]-\mu q^{-1}B_q[w_n]\right)^{\frac{p}{p-1}}B_p[w_n]^{\frac{1}{p-1}}=o_n(1)
			\end{align*}
			as $\mu a^{2q-2}\rightarrow q\mathcal{C}_{G}(q)^{-1}$.
			Thus we have $\gamma(a, \mu)\leq 0$ for $\mu a^{2q-2}= q\mathcal{C}_{G}(q)^{-1}$ via the monotone property of $\gamma(a, \mu)\leq 0$ for $\mu a^{2q-2}\geq q\mathcal{C}_{G}(q)^{-1}$.
			According to the fact $E[u]=(\frac{1}{2}-\frac{1}{2p})B_p[u]\geq 0$ for $u\in\mathcal{P}(a)$, it follows that $\gamma(a, \mu)= 0$ for $\mu a^{2q-2}\geq q\mathcal{C}_{G}(q)^{-1}$.
			The proof is completed.
		\end{enumerate}
	\end{proof}
	\begin{remark}
		The above lemma is inspired by \cite[Lemma 3.3]{MR4433054} with a slightly different proof.
		Indeed, a key step in \cite[Lemma 3.3]{MR4433054} is to show that, for $q=2^{\sharp}=2+\frac{4}{N}$, it holds that
		\begin{align*}
			\sup_{u\in S(a)}\frac{\|u\|_2^2}{\|u\|_q^q}=+\infty.
		\end{align*}
		Using our method, we can give a more or less easier proof: 
		\begin{align*}
			\sup_{u\in S(a)}\frac{\|u\|_2^2}{\|u\|_q^q}\geq \sup_{u\in S(a)}\frac{\|u\|_2^2}{\|u\|_2^{q\vartheta}\|u\|_{2^*}^{q(1-\vartheta)}},
		\end{align*}
		where $\frac{\vartheta}{2}+\frac{1-\vartheta}{2^{*}}=\frac{1}{q}$ and $q(1-\vartheta)=2$. 
		The remain part is similar to our case.
	\end{remark}
	Now we can prove Theorem \ref{thm:mass_critical} for $p=2_{\alpha}^*$.
	
	\begin{proof}[Proof of Theorem \ref{thm:mass_critical} for $p=2_{\alpha}^*$]
		\begin{enumerate}
			\item[(i)] For $0<\mu a^{2q-2}<q\mathcal{C}_{G}(q)^{-1}$, by Lemma \ref{lm:PS_existence_mass_critical}, there exists a bounded Palais-Smale sequence $\{u_n\}\subset\mathcal{P}(a)$ at level $\gamma(a, \mu)$.
			By Lemma \ref{lm:energy_mass_critical}, $\gamma(a, \mu)\in (0,\frac{\alpha+2}{2N+2\alpha}\mathcal{S}_{H}^{\frac{N+\alpha}{\alpha+2}})$, and hence one of the two alternatives in Lemma \ref{lm:PS_strong_convergence} holds.
			Let us assume that alternative $(i)$ holds; then, up to a subsequence, there exists a function $u_a\in H^1(\mathbb{R}^N)\setminus\{0\}$
			such that $u_n\rightharpoonup u_a$ weakly in $ H^1(\mathbb{R}^N)$ and 
			\begin{align*}
				E[u_a]\leq \gamma(a, \mu)-\frac{\alpha+2}{2N+2\alpha}\mathcal{S}_{H}^{\frac{N+\alpha}{\alpha+2}}<0.
			\end{align*}
			This leads to a contradiction since $P[u_a]=0$ and $E[u_a]=(\frac{1}{2}-\frac{1}{2p})B_p[u_a]> 0$.
			Consequently, the alternative $(ii)$ in Lemma \ref{lm:PS_strong_convergence} holds, namely $u_n\rightarrow u_a$ strongly in $H^1(\mathbb{R}^N)$, and $u_a$ is a real valued redial normalized solution to \eqref{eq:NLS_S} for some $\lambda_a<0$, with energy $\gamma(a, \mu)$.
			\item[(ii)] For $\mu a^{2q-2}\geq q\mathcal{C}_{G}(q)^{-1}$, we shall argue by contradiction.
			Suppose that $0=\gamma(a, \mu)$ is attained by some $u_a$, then by Lemma \ref{lm:energy_mass_critical}, we obtain that $B_p[u_a]=0$.
			It is impossible since $u_a\in S(a)$.
			Therefore, $\gamma(a, \mu)$ can not be attained and hence $\left. E\right|_{S(a)}$ has no ground states.
		\end{enumerate}
	\end{proof}
	\subsubsection{The compactness of Palais-Smale sequences under the HLS subcritical leading term}
	Similar to the proof of Lemma \ref{lm:energy_mass_critical}, we can prove the following lemma:
	\begin{lemma}\label{lm:energy_mass_critical_2}
		Let $N\geq 3$, $q=2_{\alpha}^{\sharp}<p<2_{\alpha}^{*}$ and $\mu,a>0$. 
		Then we have
		\begin{enumerate}
			\item[(i)] $\gamma(a, \mu)$ is strictly decreasing with respective to $\mu$ for $0<\mu a^{2q-2}<q\mathcal{C}_{G}(q)^{-1}$ and is nonincreasing with respective to $\mu$ for $\mu a^{2q-2}\geq q\mathcal{C}_{G}(q)^{-1}$;
			\item[(ii)] It holds that $\gamma(a, \mu)=0$ for $\mu a^{2q-2}\geq q\mathcal{C}_{G}(q)^{-1}$.
		\end{enumerate}  
	\end{lemma}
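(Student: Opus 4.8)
The plan is to transcribe, almost verbatim, the proof of parts (ii)--(iii) of Lemma~\ref{lm:energy_mass_critical}; the point is that in the HLS-subcritical regime $p<2_{\alpha}^{*}$ no concentration or bubble estimates are needed, so no restriction on $N$ or $\alpha$ arises. The only two ingredients are the mountain-pass description of $\gamma(a,\mu)$ and the Gagliardo--Nirenberg extremal of Hartree type, exactly as before. I would begin by recording the min--max identity $\gamma(a,\mu)=\inf_{u\in S(a)}\max_{s>0}E[s\circ u]$. Since $q=2_{\alpha}^{\sharp}$ forces $q\eta_{q}=1$, we have $f_{u}''(s)=-\eta_{p}(p\eta_{p}-1)s^{p\eta_{p}-2}B_{p}[u]\le 0$, hence for any $u\in S(a)$ with $A[u]-\mu\eta_{q}B_{q}[u]>0$ the map $f_{u}$ has a unique critical point $s_{u}=\big((A[u]-\mu\eta_{q}B_{q}[u])/(\eta_{p}B_{p}[u])\big)^{1/(p\eta_{p}-1)}$, which is its global maximum and satisfies $s_{u}\circ u\in\mathcal{P}^{-}(a)$, while every $u\in\mathcal{P}(a)$ has $s_{u}=1$; this yields the claimed identity as in Lemma~\ref{lm:energy_mass_critical}(ii).

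For part (i), given $0<\mu_{1}<\mu_{2}$ with $\mu_{2}a^{2q-2}<q\mathcal{C}_{G}(q)^{-1}$, I would write $E_{\mu_{2}}[s_{u,\mu_{2}}\circ u]=E_{\mu_{1}}[s_{u,\mu_{2}}\circ u]-\tfrac{\mu_{2}-\mu_{1}}{2q}s_{u,\mu_{2}}B_{q}[u]$ for each $u\in S(a)$ (using $q\eta_{q}=1$), take the infimum over $u\in S(a)$, and use $s_{u,\mu_{2}}B_{q}[u]>0$ to conclude $\gamma(a,\mu_{2})<\gamma(a,\mu_{1})$. In the range $\mu_{1}a^{2q-2}\ge q\mathcal{C}_{G}(q)^{-1}$ the same computation only gives the nonincreasing bound, but first one must know the admissible class is nonempty: I would prove $\sup_{u\in S(a)}A[u]/B_{q}[u]=+\infty$ by interpolating $\|u\|_{2Nq/(N+\alpha)}\le\|u\|_{2}^{\theta}\|u\|_{2^{*}}^{1-\theta}$ with $\theta/2+(1-\theta)/2^{*}=(N+\alpha)/(2Nq)$, using the HLS inequality \eqref{ieq:HLS}, together with $\sup_{u\ne 0}\|\nabla u\|_{2}/\|u\|_{2^{*}}=+\infty$; this produces $u\in S(a)$ with $A[u]>\mu_{2}\eta_{q}B_{q}[u]$, and the previous computation then gives $\gamma(a,\mu_{2})\le\gamma(a,\mu_{1})$.

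For part (ii), the lower bound $\gamma(a,\mu)\ge 0$ is immediate: for $u\in\mathcal{P}(a)$ one has $E[u]=E[u]-\tfrac12 P[u]=\tfrac{p\eta_{p}-1}{2p}B_{p}[u]\ge 0$ since $p>2_{\alpha}^{\sharp}$ gives $p\eta_{p}>1$. For the upper bound, let $W_{q}$ be an extremal of the Gagliardo--Nirenberg inequality of Hartree type \eqref{ieq:GN_H} and rescale it once into $S(a)$, obtaining a fixed $u_{a}$ with $B_{q}[u_{a}]=\mathcal{C}_{G}(q)A[u_{a}]a^{2q-2}$. Using the explicit value $f_{u}(s_{u})=\tfrac{p\eta_{p}-1}{p\eta_{p}}\eta_{p}^{-1/(p\eta_{p}-1)}\big(A[u]-\tfrac{\mu}{q}B_{q}[u]\big)^{p\eta_{p}/(p\eta_{p}-1)}B_{p}[u]^{-1/(p\eta_{p}-1)}$, we get for $\mu a^{2q-2}<q\mathcal{C}_{G}(q)^{-1}$ that $\gamma(a,\mu)\le E[s_{u_{a}}\circ u_{a}]=C\,A[u_{a}]^{p\eta_{p}/(p\eta_{p}-1)}B_{p}[u_{a}]^{-1/(p\eta_{p}-1)}\big(1-\tfrac{\mu}{q}\mathcal{C}_{G}(q)a^{2q-2}\big)^{p\eta_{p}/(p\eta_{p}-1)}\to 0$ as $\mu a^{2q-2}\uparrow q\mathcal{C}_{G}(q)^{-1}$. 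Since $\mu\mapsto\gamma(a,\mu)$ is nonincreasing by (i) and strictly positive below the threshold, this forces $\gamma(a,\mu)=0$ for every $\mu$ with $\mu a^{2q-2}\ge q\mathcal{C}_{G}(q)^{-1}$, and combined with $\gamma\ge 0$ completes the proof.

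The only genuinely delicate points are exactly those already dealt with in the critical case: the strict inequality in the monotonicity step (the interchange of the infimum with the nonnegative subtracted term) and the nonemptiness of $\mathcal{P}(a)$ at and above the threshold. Both are handled by the single claim $\sup_{u\in S(a)}A[u]/B_{q}[u]=+\infty$. Everything else is a routine adaptation of Lemma~\ref{lm:energy_mass_critical}(ii)--(iii) with the bubble functions $U_{\varepsilon}$ and Lemma~\ref{lm:estimate_bubble_nonlocal} deleted, which is why the lemma is stated as ``similar to Lemma~\ref{lm:energy_mass_critical}.''
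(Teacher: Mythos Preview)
Your proposal is correct and follows essentially the same approach the paper indicates: it reproduces the argument of Lemma~\ref{lm:energy_mass_critical}(ii)--(iii) with the bubble estimates removed, which is precisely what the paper means by ``similar to the proof of Lemma~\ref{lm:energy_mass_critical}.'' The one cosmetic difference is that you use a single Gagliardo--Nirenberg extremal $W_{q}$ (whose existence the paper asserts) rather than a minimizing sequence $\{w_{n}\}$; this is a harmless simplification and yields the same limit $\gamma(a,\mu)\to 0$ as $\mu a^{2q-2}\uparrow q\mathcal{C}_{G}(q)^{-1}$.
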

	
	The proof of $p\in (2_{\alpha}^{\sharp},2_{\alpha}^*)$ in Theorem \ref{thm:mass_critical} is similar to $p=2_{\alpha}^*$, and hence we omit the details.
	Then we shall use an approximation method to give a second proof of  Theorem \ref{thm:mass_critical} with $p\in (2_{\alpha}^{\sharp},2_{\alpha}^*)$.
	In the following part of this subsection, we shall write $E_p[u]:=E[u]$, $P_p[u]=P[u]$ and $\gamma_p(a)=\gamma(a)$, where $p$ is the exponent appeared in \eqref{eq:NLS_S}.

	We first prove some properties of the critical value $\gamma_p(a)$:
	\begin{lemma}
		Let $N\geq 3$, $q=2_{\alpha}^{\sharp}<p<2_{\alpha}^{*}$ and $a>0$.
		Suppose that $0<\mu a^{2q-2}<q\mathcal{C}_{G}(q)^{-1}$ holds.
		Then we have $\limsup\limits_{p\rightarrow 2_{\alpha}^{*}} \gamma_{p}(a)\leq\gamma_{2_{\alpha}^{*}}(a)$.
	\end{lemma}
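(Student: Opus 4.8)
The plan is to exploit the minimax characterisation $\gamma_p(a)=\inf_{u\in S(a)}\max_{s>0}E_p[s\circ u]$ together with a single, fixed test function that is near‑optimal for the critical problem, and then to pass to the limit $p\to 2_\alpha^*$ by means of an explicit formula for the fibre maximum. First I would record that, under the standing hypothesis $0<\mu a^{2q-2}<q\mathcal{C}_{G}(q)^{-1}$ and since $q\eta_q=1$, the quantity $c_1(u):=A[u]-\frac{\mu}{q}B_q[u]$ is strictly positive for every $u\in S(a)$: indeed \eqref{estimate:subcritical} gives $B_q[u]\le \mathcal{C}_{G}(q)a^{2q-2}A[u]$, so $\frac{\mu}{q}B_q[u]\le \frac{\mu a^{2q-2}}{q}\mathcal{C}_{G}(q)A[u]<A[u]$, using $A[u]>0$. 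Consequently, for any $p\in(2_\alpha^\sharp,2_\alpha^*]$ the fibre map $s\mapsto f_u(s)=2E_p[s\circ u]=s\,c_1(u)-\frac{1}{p}s^{p\eta_p}B_p[u]$ is strictly concave on $(0,\infty)$, increasing then decreasing, with unique critical point $s_u=\big(c_1(u)/(\eta_p B_p[u])\big)^{1/(p\eta_p-1)}$ that is its global maximum, so that
\begin{equation*}
	\max_{s>0}E_p[s\circ u]=\Phi(p,u):=\frac{p\eta_p-1}{2p\eta_p}\,\eta_p^{-\frac{1}{p\eta_p-1}}\,c_1(u)^{\frac{p\eta_p}{p\eta_p-1}}\,B_p[u]^{-\frac{1}{p\eta_p-1}}.
\end{equation*}
Since $s_u\circ u\in\mathcal{P}^-(a)=\mathcal{P}(a)$, the argument in the proof of Lemma \ref{lm:energy_mass_critical}(ii) applies verbatim for every $p\in(2_\alpha^\sharp,2_\alpha^*]$ and yields $\gamma_p(a)=\inf_{u\in\mathcal{P}(a)}E_p[u]=\inf_{u\in S(a)}\Phi(p,u)$.

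Next, I would fix $\varepsilon>0$ and choose $u_0\in S(a)$ with $\Phi(2_\alpha^*,u_0)=\max_{s>0}E_{2_\alpha^*}[s\circ u_0]<\gamma_{2_\alpha^*}(a)+\varepsilon$; note that $c_1(u_0)>0$ and $B_{2_\alpha^*}[u_0]>0$ are independent of $p$. Then $\gamma_p(a)\le\Phi(p,u_0)$ for all $p\in(2_\alpha^\sharp,2_\alpha^*)$, and it remains to let $p\to 2_\alpha^*$. One checks that $\eta_p=\frac{Np-N-\alpha}{2p}\to\eta_{2_\alpha^*}=1$ and $p\eta_p\to 2_\alpha^*$, so the prefactor of $\Phi(p,u_0)$ tends to $\frac{2_\alpha^*-1}{2\cdot 2_\alpha^*}=\frac{\alpha+2}{2(N+\alpha)}$, while $B_p[u_0]\to B_{2_\alpha^*}[u_0]$ by dominated convergence: for $p$ near $2_\alpha^*$ one has $\big||u_0|^p-|u_0|^{2_\alpha^*}\big|^{\frac{2N}{N+\alpha}}\le C\big(|u_0|^2+|u_0|^{2^*}\big)\in L^1(\mathbb{R}^N)$ with pointwise limit $0$, hence $|u_0|^p\to|u_0|^{2_\alpha^*}$ in $L^{\frac{2N}{N+\alpha}}(\mathbb{R}^N)$ and the nonlocal term converges by the Hardy--Littlewood--Sobolev inequality \eqref{ieq:HLS}. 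Therefore $\Phi(p,u_0)\to\Phi(2_\alpha^*,u_0)$, which gives $\limsup_{p\to 2_\alpha^*}\gamma_p(a)\le\Phi(2_\alpha^*,u_0)<\gamma_{2_\alpha^*}(a)+\varepsilon$, and letting $\varepsilon\to 0$ completes the proof.

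The main point requiring care is the uniform positivity of $c_1(u)$ over $S(a)$: this is precisely where the hypothesis $\mu a^{2q-2}<q\mathcal{C}_{G}(q)^{-1}$ enters, and without it the fibre $f_u$ need no longer have a finite positive maximum, so the explicit formula for $\Phi$ would break down. The remaining ingredients — continuity of $p\mapsto\eta_p$ and of $p\mapsto B_p[u_0]$ near $p=2_\alpha^*$, and the elementary optimisation of $f_u$ — are routine, and in particular no concentration–compactness analysis is needed here, since $u_0$ is a fixed function.
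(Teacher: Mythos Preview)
Your proof is correct and follows essentially the same approach as the paper's: fix a near-optimal test function $u_0\in S(a)$ for the critical problem, use the minimax identity $\gamma_p(a)=\inf_{u\in S(a)}\max_{s>0}E_p[s\circ u]$, and pass to the limit in the fibre maximum via dominated convergence for $B_p[u_0]$. The only difference is cosmetic: you exploit the closed-form expression $\Phi(p,u)$ for the fibre maximum and let $p\to 2_\alpha^*$ directly in that formula, whereas the paper shows $E_{p_n}[s\circ u]\to E_{2_\alpha^*}[s\circ u]$ uniformly on $[0,s_0]$ and tracks the maximizers $s_n\to 1$; both routes rest on the same ingredients and the same use of the hypothesis $\mu a^{2q-2}<q\,\mathcal{C}_G(q)^{-1}$ to guarantee $c_1(u)>0$.
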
 
	\begin{proof}
		Recall the definition of $\gamma_{2_{\alpha}^{*}}(a):=\inf\limits_{u\in\mathcal{P}_{2_{\alpha}^{*}}(a)} E_{2_{\alpha}^{*}}[u]$, and hence for any $\epsilon >0$, there exists a $u\in \mathcal{P}_{2_{\alpha}^{*}}(a)$ such that $E_{2_{\alpha}^{*}}[u]<\gamma_{2_{\alpha}^{*}}(a)+\epsilon$.
		Notice that 
		\begin{align*}
			E_{2_{\alpha}^{*}}[s\circ u]=\frac{s}{2}A[u]-\frac{\mu s}{2q}B_q[u]-\frac{s^{2_{\alpha}^{*}}}{22_{\alpha}^{*}}B_{2_{\alpha}^{*}}[u]\rightarrow -\infty\quad\text{as}\quad s\rightarrow+\infty,
		\end{align*}
		which implies that there exists $s_0>0$ satisfying $E_{2_{\alpha}^*}[s_0\circ u]\leq -2$.
		By the Hardy-Littlewood-Sobolev inequality \eqref{ieq:HLS} and the H\"{o}lder inequality, we obtain that, for some $\theta \in (0,1)$,
		\begin{align*}
			B_p[u] \lesssim \| u\|_{\frac{2pN}{N+\alpha}}^{2p}\leq \|u\|_2^{2p\theta}\|u\|_{2^*}^{2p(1-\theta)}\leq \|u\|_{H^{1}(\mathbb{R}^N)}^{2p}.
		\end{align*}
		On the other hand, by the Young inequality, we have
		\begin{align*}
			\vert u \vert^p \lesssim \vert u \vert^q + \vert u \vert^{2_{\alpha}^{*}}.
		\end{align*}
		Choosing a sequence $\{p_n\} \subset  (2_{\alpha}^{\sharp},2_{\alpha}^{*})$ with $\lim\limits_{n\rightarrow +\infty} p_n=2_{\alpha}^{*}$, we have
		\begin{align*}
			\vert u \vert^{p_n}\rightarrow \vert u \vert^{2_{\alpha}^{*}}\quad \text{a.e. in }\mathbb{R}^N \quad \text{and}\quad \vert u \vert^{p_n} \lesssim \vert u \vert^q+ \vert u \vert^{2_{\alpha}^{*}}  \in L^{1}(\mathbb{R}^N).
		\end{align*}
		Consequently, using the Lebesgue dominated convergence theorem, we obtain that
		\begin{align*}
			B_{p_n}[u]\rightarrow B_{2_{\alpha}^*}[u]\quad \text{as}\quad n\rightarrow +\infty.
		\end{align*}
		Now we can find that
		\begin{align*}
			\vert E_{p_n}[s\circ u]-E_{2_{\alpha}^{*}}[s\circ u] \vert=&\left\vert \frac{1}{p_n} s^{p_n\eta_{p_n}}B_{p_n}[u]-\frac{1}{2_{\alpha}^*}s^{2_{\alpha}^*}B_{2_{\alpha}^{*}}[u]\right \vert\\
			\leq & \left\vert \frac{1}{p_n} s^{p_n\eta_{p_n}}B_{p_n}[u]-\frac{1}{2_{\alpha}^*}s^{2_{\alpha}^*}B_{p_n}[u]\right \vert+\left\vert \frac{1}{2_{\alpha}^*}s^{2_{\alpha}^*}B_{p_n}[u]-\frac{1}{2_{\alpha}^*}s^{2_{\alpha}^*}B_{2_{\alpha}^{*}}[u]\right \vert\\
			\rightarrow & 0,\quad \text{as}\quad n\rightarrow +\infty.
		\end{align*}
		Without loss of generality, we can assume that for $s\in [0, s_0]$, $E_{p_n}[s\circ u]\leq -1$.
		Therefore, there exists a unique maximum point $s_n:=s_{p_n}^{\diamond}\in (0,s_0)$ of $E_{p_n}[s\circ u]$ such that $P_{p_n}[s_n\circ u]=0$.
		Choosing a any subsequence $\{s_n\}$ with $s_n\rightarrow s_{2_{\alpha}^*}$, a similar argument shows that $0=P_{p_n}[s_n\circ u]\rightarrow P_{2_{\alpha}^{*}}[s_{2_{\alpha}^*}\circ u]$, which implies that $P_{2_{\alpha}^{*}}[s_{2_{\alpha}^*}\circ u]=0$.
		The uniqueness gives that $s_{2_{\alpha}^*}=1$ since $P_{2_{\alpha}^{*}}[u]=0$.
		Therefore, we have
		\begin{align*}
			\gamma_{p_n}(a)\leq E_{p_n}[s_n\circ u]\rightarrow  E_{2_{\alpha^{*}}}[u]<\gamma_{2_{\alpha}^{*}}(a)+\epsilon,
		\end{align*}
		which implies that $\limsup\limits_{p\rightarrow 2_{\alpha}^{*}} \gamma_{p}(a)\leq\gamma_{2_{\alpha}^{*}}(a)$.
	\end{proof}
	\begin{lemma}
		Let $N\geq 3$, $q=2_{\alpha}^{\sharp}<p<2_{\alpha}^{*}$ and $a>0$.
		Suppose that $0<\mu a^{2q-2}<q\mathcal{C}_{G}(q)^{-1}$ holds.
		Then we have $\liminf\limits_{p\rightarrow 2_{\alpha}^{*}} \gamma_{p}(a)>0$.
	\end{lemma}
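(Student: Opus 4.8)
The plan is to obtain a lower bound for $E_p$ on $\mathcal{P}_p(a)$ that is \emph{uniform} for $p$ in a left neighbourhood of $2_\alpha^*$, and then let $p\uparrow 2_\alpha^*$. The starting point is that, since $q=2_\alpha^\sharp$ forces $q\eta_q=1$, the constraint $P_p[u]=0$ for $u\in\mathcal{P}_p(a)$ reads $A[u]=\eta_pB_p[u]+\tfrac{\mu}{q}B_q[u]$; inserting this into $E_p[u]=\tfrac12A[u]-\tfrac{1}{2p}B_p[u]-\tfrac{\mu}{2q}B_q[u]$ makes the $B_q$-terms cancel and leaves
\begin{align*}
	E_p[u]=\frac{p\eta_p-1}{2p}\,B_p[u],\qquad u\in\mathcal{P}_p(a).
\end{align*}
Since $p\eta_p>1$ for $p>2_\alpha^\sharp$, it thus suffices to bound $B_p[u]$ away from $0$ uniformly in $p$ near $2_\alpha^*$ and in $u\in\mathcal{P}_p(a)$.

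The next step is a Sobolev-type control of $B_p$ whose constant does not degenerate as $p\uparrow2_\alpha^*$. Applying the HLS inequality \eqref{ieq:HLS} with $r=s=\tfrac{2N}{N+\alpha}$ (so that the constant $\mathcal{C}_H(N,\alpha)$ is \emph{independent} of $p$), then the H\"older interpolation $\|u\|_{2Np/(N+\alpha)}\le\|u\|_2^{1-\eta_p}\|u\|_{2^*}^{\eta_p}$ and the Sobolev inequality $\|u\|_{2^*}\le\mathcal{S}^{-1/2}\|\nabla u\|_2$, one gets for every $u\in S(a)$ and every $p\in(2_\alpha^\sharp,2_\alpha^*]$
\begin{align*}
	B_p[u]\le \mathcal{C}_H(N,\alpha)\,\mathcal{S}^{-p\eta_p}\,a^{2p(1-\eta_p)}\,A[u]^{p\eta_p}=:K_p\,a^{2p(1-\eta_p)}\,A[u]^{p\eta_p},
\end{align*}
and $p\mapsto K_p$ is continuous, hence bounded by some $K^\ast=K^\ast(N,\alpha)$ on $[p_0,2_\alpha^*]$ for a suitable $p_0<2_\alpha^*$.

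With these two facts the argument closes quickly. Set $c_0:=1-\tfrac{\mu}{q}\mathcal{C}_G(q)a^{2q-2}$, which is positive because \eqref{assumptions:basic} reduces (via $q\eta_q=1$) exactly to $\mu a^{2q-2}<q\mathcal{C}_G(q)^{-1}$. For $u\in\mathcal{P}_p(a)$, the Gagliardo--Nirenberg inequality \eqref{ieq:GN_H} gives $B_q[u]\le\mathcal{C}_G(q)a^{2q-2}A[u]$, so $\eta_pB_p[u]=A[u]-\tfrac{\mu}{q}B_q[u]\ge c_0A[u]$; combined with the estimate above this yields $c_0A[u]\le K^\ast a^{2p(1-\eta_p)}A[u]^{p\eta_p}$, hence $A[u]\ge\big(c_0/(K^\ast a^{2p(1-\eta_p)})\big)^{1/(p\eta_p-1)}$, and also $B_p[u]\ge c_0A[u]$ since $\eta_p\le1$. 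Therefore, for $p\in[p_0,2_\alpha^*)$,
\begin{align*}
	\gamma_p(a)=\inf_{\mathcal{P}_p(a)}E_p\ \ge\ \frac{p\eta_p-1}{2p}\,c_0\Big(\frac{c_0}{K^\ast a^{2p(1-\eta_p)}}\Big)^{\frac{1}{p\eta_p-1}}.
\end{align*}
As $p\to2_\alpha^*$ one has $\eta_p\to1$, $p(1-\eta_p)\to0$, $p\eta_p-1\to\tfrac{\alpha+2}{N-2}$ and $\tfrac{p\eta_p-1}{2p}\to\tfrac{\alpha+2}{2(N+\alpha)}$, so the right-hand side tends to $\tfrac{(\alpha+2)c_0}{2(N+\alpha)}\,(c_0/K^\ast)^{\frac{N-2}{\alpha+2}}>0$, which proves $\liminf_{p\to2_\alpha^*}\gamma_p(a)>0$.

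The only delicate point is the \emph{uniformity} in $p$ in the second step: one must ensure the Sobolev-interpolation bound for $B_p$ survives the passage $p\uparrow2_\alpha^*$ with a constant that stays finite. This is exactly where the choice $r=s=\tfrac{2N}{N+\alpha}$ in the HLS inequality matters, since then $\mathcal{C}_H(N,\alpha)$ carries no $p$-dependence, and $K_p=\mathcal{C}_H(N,\alpha)\mathcal{S}^{-p\eta_p}$ extends continuously to $p=2_\alpha^*$ (where it equals $\mathcal{C}_H(N,\alpha)\mathcal{S}^{-2_\alpha^*}$). Everything else is elementary algebra built on $q\eta_q=1$ and $p\eta_p>1$; in particular no monotonicity of $a\mapsto\gamma_p(a)$ is needed.
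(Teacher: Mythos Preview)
Your argument is correct and in fact tidier than the paper's. The paper argues by contradiction: assuming $\liminf_{p\to 2_\alpha^*}\gamma_p(a)\le 0$, it picks a sequence $p_n\to 2_\alpha^*$, invokes the \emph{existence of ground states} $u_n\in\mathcal{P}_{p_n}(a)$ at level $\gamma_{p_n}(a)$, and from the identity $E_{p_n}[u_n]=\big(\tfrac12-\tfrac{1}{2p_n\eta_{p_n}}\big)(1-\tfrac{\mu}{q}\mathcal{C}_G(q)a^{2q-2})A[u_n]+o_n(1)$ deduces $A[u_n]\to 0$, contradicting the lower bound on $A$ for elements of $\mathcal{P}^-(a)$ (Lemma~\ref{lm:A_bounded_below_mass_critical}). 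You instead give a direct, quantitative lower bound for $\gamma_p(a)$ valid for \emph{all} $u\in\mathcal{P}_p(a)$, with no appeal to the existence of minimizers and no contradiction argument.

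The substantive difference is how you control $B_p[u]$. The paper's Lemma~\ref{lm:A_bounded_below_mass_critical} uses the Gagliardo--Nirenberg constant $\mathcal{C}_G(p)$, whose behaviour as $p\uparrow 2_\alpha^*$ is not discussed; the paper's contradiction step implicitly needs a $p$-uniform lower bound on $A$, which is not made explicit there. You sidestep this by bounding $B_p$ via HLS (with the $p$-independent constant $\mathcal{C}_H(N,\alpha)$) plus H\"older interpolation and Sobolev, so the resulting constant $K_p$ is visibly continuous up to $p=2_\alpha^*$. This makes the uniformity transparent. One cosmetic point: since $B_p[u]=\int(I_\alpha*|u|^p)|u|^p$ carries the factor $\mathcal{A}(N,\alpha)$ from the Riesz potential, your $K_p$ should read $\mathcal{A}(N,\alpha)\mathcal{C}_H(N,\alpha)\mathcal{S}^{-p\eta_p}$ rather than $\mathcal{C}_H(N,\alpha)\mathcal{S}^{-p\eta_p}$; this does not affect the argument.
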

	\begin{proof}
		We shall argue by contradiction and suppose that $\liminf\limits_{p\rightarrow 2_{\alpha}^{*}} \gamma_{p}(a)\leq0$.
		Let $\{p_n\}\subset (2_{\alpha}^{\sharp},2_{\alpha}^*)$ be a sequence with $\lim\limits_{n\rightarrow+\infty}p_n=2_{\alpha}^{*}$ and $\lim\limits_{n\rightarrow+\infty}\gamma_{p_n}(a)=\liminf\limits_{p\rightarrow 2_{\alpha}^{*}} \gamma_{p}(a).$
		By the existence of ground states, there exists $u_n:=u_{p_n}\in \mathcal{P}_{p_n}(a)$ such that $\gamma_{n}(a):=\gamma_{p_n}(a)=E_{p_n}[u_n]$.
		As a consequence, via $P_{p_n}[u_n]=0$, we obtain that
		\begin{align*}
			o_n(1)=E_{p_n}[u_n]=&
			\left(\frac{1}{2}-\frac{1}{2p_n\eta_{p_n}}\right)A[u_n]-\left(\frac{\mu}{2q}-\frac{\mu\eta_{q}}{2p_n\eta_{p_n}}\right)B_q[u_n]\\
			\geq & \left\{\left(\frac{1}{2}-\frac{1}{2p_n\eta_{p_n}}\right)-\left(\frac{\mu}{2q}-\frac{\mu\eta_{q}}{2p_n\eta_{p_n}}\right)\mathcal{C}_{G}(q)a^{2q-2}\right\}A[u_n],
		\end{align*}
		which implies that $A[u_n]\rightarrow 0$ as $n\rightarrow +\infty$.
		Form Lemma \ref{lm:A_bounded_below_mass_critical}, we get a contradiction.
	\end{proof}
	Now we can give a new proof of Theorem \ref{thm:mass_critical} with $p=2_{\alpha}^*$.
	\begin{proof}[The second proof of Theorem \ref{thm:mass_critical}(i) with $p=2_{\alpha}^*$ ]
		Let $\{p_n\}\subset (2_{\alpha}^{\sharp},2_{\alpha}^*)$ be a sequence with $\lim\limits_{n\rightarrow+\infty}p_n=2_{\alpha}^{*}$ and $\lim\limits_{n\rightarrow+\infty}\gamma_{p_n}(a)=\limsup\limits_{p\rightarrow 2_{\alpha}^{*}} \gamma_{p}(a).$
		By the existence of ground states, there exists a $u_n:=u_{p_n}\in \mathcal{P}_{p_n}(a)$ such that $\gamma_{n}(a):=\gamma_{p_n}(a)=E_{p_n}[u_n]$.
		first, using $P_{p_n}[u_n]=0$ and $0<\mu a^{2q-2}<q\mathcal{C}_{G}(q)^{-1}$, we obtain that 
		\begin{align*}
			\gamma_{2_{\alpha}^*}(a)+1\geq E_{p_n}[u_n]=&\left(\frac{1}{2}-\frac{1}{2p_n\eta_{p_n}}\right)A[u_n]-\left(\frac{\mu}{2q}-\frac{\mu\eta_{q}}{2p_n\eta_{p_n}}\right)B_q[u_n]\\
			\geq & \left\{\left(\frac{1}{2}-\frac{1}{2p_n\eta_{p_n}}\right)-\left(\frac{\mu}{2q}-\frac{\mu\eta_{q}}{2p_n\eta_{p_n}}\right)\mathcal{C}_{G}(q)a^{2q-2}\right\}A[u_n],
		\end{align*}
		which implies that $\{u_n\}$ is bounded in $H^1(\mathbb{R}^N)$.
		Up to a subsequence, we have $u_n \rightharpoonup u_a$ weakly in $H^1(\mathbb{R}^N)$, $u_n\rightarrow u_a$ in $L^t(\mathbb{R}^N)$ for $t\in (2,2^*)$ and $u_n\rightarrow u_a$ a.e. in $\mathbb{R}^N$.
		By Lemma \ref{lm:Langrange_multiplier}, there exists $\lambda_n<0$ such that $u_n$ satisfies \eqref{eq:NLS_S} and hence we have
		\begin{align*}
			-\lambda_n a^2=-(\eta_{p_n}-1)B_{p_n}[u_n]-\mu(\eta_{q}-1) B_q[u_n]\lesssim \max\{1,\|u_n\|_{H^{1}(\mathbb{R^N})}^{22_{\alpha}^{*}}\},
		\end{align*}
		that is, $\{\lambda_n\}$ is bounded.
		Besides, it holds that 
		\begin{align*}
			\lambda_n a^2=&\frac{1-\eta_{p_n}}{\eta_{p_n}}A[u_n]+\frac{\eta_{q}-\eta_{p_n}}{\eta_{p_n}}\mu B_q[u_n]\leq \frac{1-\eta_{p_n}+(\eta_{q}-\eta_{p_n})\mu \mathcal{C}_{G}(q)a^{2q-2}}{\eta_{p_n}}A[u_n]\lesssim -C.
		\end{align*}
		So there exists $\lambda_a< 0$ such that, up to a subsequence, $\lambda_n\rightarrow \lambda_a$ as $n\rightarrow +\infty$.
		Choosing a test function $\phi \in \mathcal{C}_c^{\infty}(\mathbb{R}^N)$ and by Young inequality and Lemma \ref{lm:decay}, we obtain that
		\begin{align*}
			\left\vert\vert u_n\vert^{p_n-2}u_n\phi\right\vert\lesssim\vert u_n\vert^{2-1}\vert\phi\vert+\vert u_n\vert^{2^{*}-1}\vert\phi\vert
			\lesssim \vert x\vert^{\frac{2-N}{2}}\vert\phi\vert+\vert x\vert^{\frac{2-N}{2}(2^{*}-1)}\vert\phi\vert.
		\end{align*}
		Similarly, we have
		\begin{align*}
			\vert u_n\vert^{p_n}\lesssim\vert u_n\vert^{2}+\vert u_n\vert^{2^{*}}
			\lesssim \vert x\vert^{\frac{-N}{2}}+\vert x\vert^{\frac{-2^*N}{2}}.
		\end{align*}
		Thus, using the Lebesgue dominated convergence theorem, it holds that 
		\begin{align*}
			0=\langle \dif \mathcal{I}_{\lambda_n,\mu}[u_n], \phi\rangle \rightarrow \langle \dif \mathcal{I}_{\lambda_a,\mu}[u_a], \phi\rangle,\quad \text{as}\quad n\rightarrow +\infty.
		\end{align*}
		Consequently, $(u_a,\lambda_a)$ is a solution of \eqref{eq:NLS_S} and $P_{2_{\alpha}^*}[u_a]=0$.
		Our next goal is to prove that $u_a$ is nonvanishing.
		If not, $u_a=0$, we immediately get $B_q[u_n]=o_n(1)$. 
		Therefore, $P[u_n]=0$, \eqref{ieq:best_constant} and the Young inequality imply that
		\begin{align*}
			A[u_n]=\eta_{p_n}B_{p_n}[u_n]+o_n(1)\leq \eta_{p_n}\left(\frac{p_n-q}{2_{\alpha}^*-q}\right)^2B_{p}[u_n]+o_n(1)\leq \eta_{p_n}\left(\frac{p_n-q}{2_{\alpha}^*-q}\right)^2\left(\frac{A[u_n]}{\mathcal{S}_H}\right)^{2_{\alpha}^*}+o_n(1),
		\end{align*}
		which implies that either $A[u_n]=o_n(1)$ or 
		$	\limsup\limits_{n\rightarrow +\infty} A[u_n] \geq \mathcal{S}_H^{\frac{N+\alpha}{\alpha+2}}$.
		The first case is impossible since it disobeys Lemma \ref{lm:A_bounded_below_mass_critical}.
		As a consequence, it holds that 
		\begin{align*}
			\gamma_{2_{\alpha}^*}(a)\geq \limsup_{n\rightarrow+\infty} \gamma_n(a)=&\limsup_{n\rightarrow+\infty}\left\{\left(\frac{1}{2}-\frac{1}{2p_n\eta_{p_n}}\right)A[u_n]-\left(\frac{\mu}{2q}-\frac{\mu\eta_{q}}{2p_n\eta_{p_n}}\right)B_q[u_n]\right\}\\
			=&\limsup_{n\rightarrow+\infty}\left(\frac{1}{2}-\frac{1}{2p_n\eta_{p_n}}\right)A[u_n]\geq \frac{2+\alpha}{2N+2\alpha}\mathcal{S}_H^{\frac{N+\alpha}{\alpha+2}},
		\end{align*}
		which is impossible since Lemma \ref{lm:energy_mass_critical} holds.
		Now we have $u_a \ne 0$ and $\|u_a\|_2\leq a$.
		If $\|u_a\|_2=c<a$, we can define $\bar{u}_a:=\theta^Bu_a(\theta^A x)$.
		Then we have
		\begin{align*}
			\|\bar{u}_a \|_2^2=\theta^{2B-AN}\|u\|_2^2,\quad A[\bar{u}_a]=\theta^{2A+2B-AN}A[u_a]\quad B_s[\bar{u}_a]=\theta^{(2B-AN)s+2s\eta_s}B_s[u_a].
		\end{align*}
		Let $\|\bar{u}_a \|_2=a$ and $B_p[\bar{u}_a]=B_p[u_{a}]$, and hence 
		\begin{align*}
			\theta =\left(\frac{c}{a}\right)^{\frac{1}{p}}<1,\quad A[\bar{u}_a]=\left(\frac{c}{a}\right)^{-\frac{2(p-1)}{p}}A[u_a]\leq A[u_a].
		\end{align*}
		There exists $1\geq\bar{s}>0$ such that $P[\bar{s}\circ \bar{u}_a]=0$.
		Therefore, it holds that
		\begin{align*}
			\gamma_{p}(a)\leq& E[\bar{s}\circ \bar{u}_a]=\frac{q\eta_q-1}{2q\eta_q}\bar{s}A[ \bar{u}_a]+\frac{p-q\eta_q}{2pq}\bar{s}^{p}B_p[u]\leq \frac{q\eta_q-1}{2q\eta_q}A[ \bar{u}_a]+\frac{p-q\eta_q}{2pq}B_p[u]\\
			\leq& \liminf_{n\rightarrow +\infty}\left\{\frac{q\eta_q-1}{2q\eta_q}A[ \bar{u}_a]+\frac{p_n-q\eta_q}{2p_nq}B_{p_n}[u]\right\} \liminf_{n\rightarrow +\infty}\gamma_{n}(a)\leq \gamma_{p}(a),
		\end{align*}
		which implies that $E[\bar{s}\circ \bar{u}_a]=\gamma_{p}(a)$. 
		The remain part is clear.
	\end{proof}
	\subsection{$L^2$-supercritical perturbation}
	In this subsection, we shall always assume that $2_{\alpha}^{\sharp}<q<p= 2_{\alpha}^{\sharp}$ and $\mu,a>0$. 
	Indeed, the case of $2_{\alpha}^{\sharp}<q<p< 2_{\alpha}^{*}$ and $\mu,a>0$ is contained in \cite[Theorem 1.1]{MR3390522}.
	The main proof in this subsection is similar to the $L^2$-critical focusing perturbation.
	So we shall give some lemmas without details and focus on the differences of proof.
	\subsubsection{The existence of bounded Palais-Smale sequences}
	It is clear that
	$\mathcal{P}(a)=\mathcal{P}^{-}(a)\cup\mathcal{P}^{0}(a)$.
	If $\mathcal{P}^{0}(a)\ne \emptyset,$ then we assume that $0\neq u\in\mathcal{P}^{0}(a)$, which means that
	\begin{align*}
		A[u]=\mu \eta_{q} B_q[u]+B_p[u],\quad (p-1)B_p[u]=-\mu\eta_{q}(q\eta_{q}-1)B_q[u].
	\end{align*}
	Then we have $u \equiv 0$, which is impossible.
	Indeed, we have proved the following lemma:
	\begin{lemma}\label{lm:P_manifold_mass_supercritical}
		 Let $N\geq 3$, $\alpha \in (0,N)$, $2_{\alpha}^{\sharp}<q<p=2_{\alpha}^{*}$ and $\mu,a>0$.
		 Then we have $\mathcal{P}(a)=\mathcal{P}^{-}(a)$.
	\end{lemma}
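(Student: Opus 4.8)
The plan is to exploit the explicit formula \eqref{eq:scaling_function_2nd_derivative} for $f_u''$ together with an elementary sign analysis. Since by construction $\mathcal{P}(a)=\mathcal{P}^{+}(a)\cup\mathcal{P}^{0}(a)\cup\mathcal{P}^{-}(a)$ is a disjoint union, it suffices to show that $\mathcal{P}^{+}(a)=\mathcal{P}^{0}(a)=\emptyset$ under the standing hypotheses $p=2_{\alpha}^{*}$, $q\in(2_{\alpha}^{\sharp},2_{\alpha}^{*})$ and $\mu,a>0$. The whole argument mirrors the one already given for the $L^2$-critical perturbation (Lemma \ref{lm:P_manifold_mass_critical}), the only new feature being that now \emph{both} terms in $f_u''$ carry the same sign.

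First I would record the relevant sign facts. For $p=2_{\alpha}^{*}$ one has $\eta_{p}=1$, hence $p\eta_{p}=p=2_{\alpha}^{*}>1$ and $p\eta_{p}-1=p-1>0$. For $q>2_{\alpha}^{\sharp}$, the chain $0<2r\eta_{r}=Nr-N-\alpha$ recorded in Section 2 (whose ``$>2$'' alternative holds precisely when $r\in(2_{\alpha}^{\sharp},2_{\alpha}^{*}]$) gives $q\eta_{q}>1$, so $1-q\eta_{q}<0$. Moreover, for any $u\in S(a)$ (in particular $u\not\equiv 0$) the Hardy--Littlewood--Sobolev inequality \eqref{ieq:HLS} yields $B_{p}[u]>0$ and $B_{q}[u]>0$, and $B_{p}[u]=0$ forces $u\equiv 0$.

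With these in hand, substituting into \eqref{eq:scaling_function_2nd_derivative} gives, for every $u\in S(a)$ and every $s>0$,
\[
f_u''(s)=-\eta_{p}(p\eta_{p}-1)s^{p\eta_{p}-2}B_p[u]+\mu\eta_{q}(1-q\eta_{q})s^{q\eta_{q}-2}B_q[u]<0,
\]
since the first summand is strictly negative (a product of positive factors preceded by a minus sign) while the second is $\le 0$ because $1-q\eta_{q}<0$, $\mu>0$ and $B_q[u]\ge 0$. In particular $f_u''(1)<0$ for every $u\in S(a)$, so no element of $S(a)$ can satisfy $f_u''(1)>0$ or $f_u''(1)=0$; thus $\mathcal{P}^{+}(a)=\mathcal{P}^{0}(a)=\emptyset$ and the disjoint decomposition collapses to $\mathcal{P}(a)=\mathcal{P}^{-}(a)$. (Equivalently, and as indicated in the text preceding the statement, if $u\in\mathcal{P}^{0}(a)$ then $f_u''(1)=0$ reads $(p-1)B_p[u]=-\mu\eta_{q}(q\eta_{q}-1)B_q[u]$ with the left side $\ge 0$ and the right side $\le 0$, forcing $B_p[u]=0$, i.e. $u\equiv 0$, contradicting $\|u\|_2=a>0$; and $\mathcal{P}^{+}(a)=\emptyset$ follows from the same sign computation.)

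There is essentially no obstacle here: the content is a one-line sign comparison. The only points needing the (routine) justifications already available in the preliminaries are the strict positivity $B_{p}[u],B_{q}[u]>0$ for $u\neq 0$, the implication $B_{p}[u]=0\Rightarrow u\equiv 0$, and the elementary equivalence $q\eta_{q}>1\iff q>2_{\alpha}^{\sharp}$.
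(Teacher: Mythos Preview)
Your proof is correct and is essentially the paper's argument written out in full: the paper observes (just before the lemma) that the sign of $f_u''$ forces $\mathcal{P}(a)=\mathcal{P}^{-}(a)\cup\mathcal{P}^{0}(a)$ and then rules out $\mathcal{P}^{0}(a)$ by the same identity $(p-1)B_p[u]=-\mu\eta_{q}(q\eta_{q}-1)B_q[u]$ you derive. The only cosmetic difference is that you handle $\mathcal{P}^{+}(a)$ and $\mathcal{P}^{0}(a)$ in one stroke via $f_u''(1)<0$, whereas the paper states the first inclusion as ``clear'' and then treats $\mathcal{P}^{0}(a)$ separately.
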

	Notice that $f_u^{\prime}(s_u)=0$ for some $s_u>0$. 
	If $s\in [0,s_u),$ $f_u^{\prime}(s)>0$ and if $s\in (s_u,+\infty),$ $f_u^{\prime}(s)<0$. Then $s_u$ is the unique maximum point for $f_u$ on $[0,+\infty)$ and $s_u\circ u\in\mathcal{P}^{-}(a)$. Following the argument in \cite[Lemma 5.3]{SOAVE20206941}, we can prove that the map $u\in S(a) \rightarrow s_u\in\mathbb{R}$ is of class $\mathscr{C}^1$.
	
	\begin{lemma}
		Let $N\geq 3$, $\alpha \in (0,N)$, $2_{\alpha}^{\sharp}<q<p=2_{\alpha}^{*}$ and $\mu,a>0$. Then there exists $\delta>0$ such that $A[u]\geq \delta$ for all $u\in \mathcal{P}^{-}(a)$.
	\end{lemma}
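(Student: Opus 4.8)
The plan is to combine the Pohozaev-type constraint $P[u]=0$, which every $u\in\mathcal{P}^-(a)$ satisfies (recall $\mathcal{P}^-(a)=\mathcal{P}(a)$ by Lemma~\ref{lm:P_manifold_mass_supercritical}), with the two sharp inequalities of Section~2, exploiting that both nonlinear terms are strictly super-homogeneous in $A[u]$.

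First I would note that any $u\in\mathcal{P}^-(a)$ is nonzero, hence $A[u]>0$, and that $p=2_\alpha^*$ forces $\eta_p=1$, so $P[u]=0$ becomes
\begin{equation*}
    A[u]=B_p[u]+\mu\eta_q B_q[u].
\end{equation*}
Since $B_p[u],B_q[u]\ge 0$, applying the HLS critical estimate~\eqref{estimate:critical} to $B_p[u]$, the Gagliardo--Nirenberg estimate~\eqref{estimate:subcritical} to $B_q[u]$ (legitimate because $q\in(2_\alpha,2_\alpha^*)$), and $\|u\|_2=a$, one obtains
\begin{equation*}
    A[u]\le \mathcal{S}_H^{-p}A[u]^{p}+\mu\eta_q\,\mathcal{C}_G(q)\,a^{2q(1-\eta_q)}A[u]^{q\eta_q}.
\end{equation*}

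The remaining point is elementary. From $q>2_\alpha^\sharp=\tfrac{N+\alpha+2}{N}$ one gets $2q\eta_q=Nq-N-\alpha>2$, i.e.\ $q\eta_q>1$, and of course $p\eta_p=p>1$; thus both exponents on the right exceed $1$. Dividing the last display by $A[u]>0$ yields
\begin{equation*}
    1\le \mathcal{S}_H^{-p}A[u]^{p-1}+\mu\eta_q\,\mathcal{C}_G(q)\,a^{2q(1-\eta_q)}A[u]^{q\eta_q-1},
\end{equation*}
and the right-hand side tends to $0$ as $A[u]\to 0^+$; hence there is $\delta=\delta(N,\alpha,p,q,\mu,a)>0$ such that this inequality fails whenever $A[u]<\delta$, forcing $A[u]\ge\delta$ for all $u\in\mathcal{P}^-(a)$. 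I do not anticipate any real obstacle: the bound is a soft consequence of the sharp embedding inequalities together with the super-criticality $q\eta_q>1$, and the only care needed is to record $\eta_p=1$ for $p=2_\alpha^*$ and the positivity of the nonlocal terms.
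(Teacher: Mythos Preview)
Your proof is correct and follows essentially the same route as the paper: one uses $P[u]=0$ (with $\eta_p=1$ since $p=2_\alpha^*$) to write $A[u]=B_p[u]+\mu\eta_q B_q[u]$, bounds the right-hand side via \eqref{estimate:critical} and \eqref{estimate:subcritical}, and then exploits $p>1$ and $q\eta_q>1$ to force $A[u]$ away from zero. Your presentation is in fact slightly more explicit in justifying $q\eta_q>1$ and in carrying out the limiting argument after dividing by $A[u]$.
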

	\begin{proof}
		If $u\in \mathcal{P}^{-}(a)$, we have  
		\begin{align*}
			A[u]=&\mu\eta_{q} B_q[u]+B_p[u]
			\leq  \mu \eta_{q} \mathcal{C}_{G}(q)^{2q}a^{2q-2q\eta_{q}}A[u]^{q\eta_{q}}+\mathcal{A}(N,\alpha)S_{H}^{-p}A[u]^p,
		\end{align*}
		which implies that there exists $\delta>0$ such that $A[u]\geq \delta$ for all $u\in \mathcal{P}^{-}(a)$.
	\end{proof}
	Similar to Lemma \ref{lm:inf_Pohozaev_pm}, we can prove that 
	\begin{lemma}
		Let $N\geq 3$, $\alpha \in (0,N)$, $2_{\alpha}^{\sharp}<q<p=2_{\alpha}^{*}$ and $\mu,a>0$.
		 Then we have
		$$\gamma(a)=\inf\limits_{\mathcal{P}(a)} E[u]=\inf\limits_{\mathcal{P}^{-}(a)} E[u]=\inf\limits_{\mathcal{P}_r^{-}(a)} E[u]>0.$$
	\end{lemma}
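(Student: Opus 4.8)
The plan is to deduce the three equalities in turn and then the strict positivity, each step resting on facts already established in the excerpt. The first equality $\gamma(a)=\inf_{\mathcal{P}^{-}(a)}E$ is immediate: Lemma~\ref{lm:P_manifold_mass_supercritical} gives $\mathcal{P}(a)=\mathcal{P}^{-}(a)$, hence $\inf_{\mathcal{P}(a)}E=\inf_{\mathcal{P}^{-}(a)}E$. To prepare the remaining steps I would first record a minimax reformulation of $\gamma(a)$: as observed just before the statement, for every $u\in S(a)$ the scaling function $f_u$ of \eqref{eq:scaling_function} has a unique critical point $s_u>0$, which is its global maximum on $[0,+\infty)$, with $s_u\circ u\in\mathcal{P}^{-}(a)$, and moreover $s_u=1$ when $u\in\mathcal{P}^{-}(a)$. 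Consequently
\[
\gamma(a)=\inf_{u\in\mathcal{P}^{-}(a)}E[u]=\inf_{u\in S(a)}\max_{s>0}E[s\circ u],
\qquad
\inf_{\mathcal{P}^{-}_r(a)}E=\inf_{u\in S_r(a)}\max_{s>0}E[s\circ u].
\]

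For the radial reduction, the inclusion $\mathcal{P}^{-}_r(a)\subset\mathcal{P}^{-}(a)$ yields $\inf_{\mathcal{P}^{-}_r(a)}E\ge\inf_{\mathcal{P}^{-}(a)}E$ trivially, so I would only need the reverse inequality. Given $u\in S(a)$, let $v\in S_r(a)$ be the Schwarz rearrangement of $|u|$. By the P\'olya--Szeg\H{o} inequality $A[v]\le A[u]$, and by Riesz's rearrangement inequality — applicable since $|x-y|^{-(N-\alpha)}$ is strictly decreasing in $|x-y|$ — one has $B_s[v]\ge B_s[u]$ for all $s\in(2_{\alpha},2_{\alpha}^{*}]$; these are precisely the inequalities invoked in the proof of Lemma~\ref{lm:inf_Pohozaev_pm}. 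Since $\mu>0$, it follows that $E[s\circ v]\le E[s\circ u]$ for every $s\ge 0$, hence $\max_{s>0}E[s\circ v]\le\max_{s>0}E[s\circ u]$; taking the infimum over $u\in S(a)$ and using the minimax identities above gives $\inf_{\mathcal{P}^{-}_r(a)}E\le\inf_{\mathcal{P}^{-}(a)}E$, so the three quantities coincide. The same computation also shows, exactly as in Lemma~\ref{lm:inf_Pohozaev_pm}, that any minimizer of $\inf_{\mathcal{P}^{-}(a)}E$ can be taken Schwarz symmetric.

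For the strict positivity, let $u\in\mathcal{P}(a)=\mathcal{P}^{-}(a)$. Since $p=2_{\alpha}^{*}$ forces $\eta_{p}=1$, the identity $P[u]=0$ reads $A[u]=B_p[u]+\mu\eta_{q}B_q[u]$, and subtracting $\tfrac12 P[u]$ from $E[u]$ gives
\[
E[u]=E[u]-\tfrac12 P[u]=\frac{p-1}{2p}B_p[u]+\frac{\mu(q\eta_{q}-1)}{2q}B_q[u].
\]
As $q>2_{\alpha}^{\sharp}$ forces $q\eta_{q}>1$ and $\mu>0$, both terms on the right are nonnegative, so $\gamma(a)\ge0$. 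To upgrade this to $\gamma(a)>0$ I would use the lower bound $A[u]\ge\delta>0$ on $\mathcal{P}^{-}(a)$ from the preceding lemma: from $B_p[u]+\mu\eta_{q}B_q[u]=A[u]\ge\delta$, at least one of $B_p[u]\ge\delta/2$ or $\mu\eta_{q}B_q[u]\ge\delta/2$ must hold, and in either case the displayed identity bounds $E[u]$ below by a positive constant independent of $u$; hence $\gamma(a)>0$.

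I do not anticipate a substantive obstacle here: every ingredient — the manifold classification $\mathcal{P}(a)=\mathcal{P}^{-}(a)$, the uniqueness and maximality of $s_u$, the rearrangement inequalities for $A$ and $B_s$, and the bound $A[u]\ge\delta$ — is already available. The only point demanding a little care is the sequencing: one must first pass to the minimax form $\gamma(a)=\inf_{S(a)}\max_{s>0}E[s\circ\cdot]$, so that the pointwise comparison $E[s\circ v]\le E[s\circ u]$ can be applied scaling-by-scaling, exactly as in the argument of Lemma~\ref{lm:inf_Pohozaev_pm}.
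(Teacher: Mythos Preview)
Your proposal is correct and follows essentially the same approach as the paper: the paper proves this lemma by direct reference to Lemma~\ref{lm:inf_Pohozaev_pm} (the Schwarz-rearrangement argument for the radial equality), combined with $\mathcal{P}(a)=\mathcal{P}^{-}(a)$ from Lemma~\ref{lm:P_manifold_mass_supercritical}; the identity $E[u]-\tfrac12 P[u]=\frac{p-1}{2p}B_p[u]+\frac{\mu(q\eta_q-1)}{2q}B_q[u]\ge 0$ is the one already recorded in Lemma~\ref{lm:coercive}(iv), and your use of the bound $A[u]\ge\delta$ to upgrade nonnegativity to strict positivity is exactly what the paper intends.
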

	Again, we can define $I: S(a)\rightarrow \mathbb{R}$ by $ I[u]=E[S_u\circ u]$ and hence we have $\langle \dif I[u],\psi\rangle=\langle \dif E[S_u\circ u], S_u\circ \psi\rangle$ for any $u\in S(a)$ and $\psi\in T_uS(a)$. Using the method given in Lemma \ref{lm:PS_sequence_existence_mass_subcritical}, we have
	\begin{lemma}\label{lm:PS_existence_mass_supercritical}
		Let $N\geq 3$, $\alpha \in (0,N)$, $2_{\alpha}^{\sharp}<q<p=2_{\alpha}^{*}$ and $\mu,a>0$.
		Then there exists a Palais-Smale sequence $\{u_n\}\subset \mathcal{P}^{-}(a)$ for $E$ restricted to $S_r(a)$ at level $\gamma(a)$.
	\end{lemma}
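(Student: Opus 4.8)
The plan is to repeat the Ghoussoub minimax scheme already used for Lemma~\ref{lm:PS_sequence_existence_mass_subcritical}, which here is simpler because Lemma~\ref{lm:P_manifold_mass_supercritical} gives $\mathcal{P}(a)=\mathcal{P}^-(a)$ and the preceding two lemmas give $\gamma(a)>0$ and a uniform lower bound $A[u]\ge\delta$ on $\mathcal{P}^-(a)$. First I would let $\mathcal{F}$ be the family of singletons in $S_r(a)$, a homotopy-stable family of compact sets with empty boundary in the sense of \cite[Definition 3.1]{1993Duality}, and set $e(a):=\inf_{D\in\mathcal{F}}\max_{u\in D}I[u]$ with $I[u]=E[s_u\circ u]$, where $s_u$ is the unique maximizer of $f_u$ exhibited just before the statement. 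Using the Schwarz rearrangement argument behind Lemma~\ref{lm:inf_Pohozaev_pm} (rearrangement lowers $A$, raises $B_q$ and $B_p$, hence lowers $f_u'$ and $E[s\circ u]$ for every $s$) one identifies $e(a)=\gamma(a)=\inf_{\mathcal{P}_r^-(a)}E$.

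Next I would choose $\{D_n\}\subset\mathcal{F}$ with $\max_{D_n}I<e(a)+1/n$, deform it via the homotopy $\eta(t,u)=(1-t+ts_u)\circ u$ into $G_n:=\{s_u\circ u:u\in D_n\}\subset\mathcal{P}^-(a)$ (using uniqueness of $s_u$ and $s_u\circ u\in\mathcal{P}^-(a)$), and apply \cite[Theorem 3.2]{1993Duality} to obtain a Palais--Smale sequence $\{v_n\}$ for $E|_{S_r(a)}$ at level $e(a)$ with $\operatorname{dist}(v_n,G_n)\to0$. If $\{v_n\}\subset\mathcal{P}^-(a)$ we are done; otherwise put $u_n:=s_{v_n}\circ v_n\in\mathcal{P}^-(a)$ and show it is again a Palais--Smale sequence at level $e(a)$ in $S_r(a)$.

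The one point needing care — the \emph{main obstacle} — is the two-sided bound $C^{-1}<s_{v_n}^2=A[u_n]/A[v_n]<C$. For $A[u_n]$: since $\{u_n\}\subset\mathcal{P}(a)$ with $E[u_n]\to e(a)$ bounded, Lemma~\ref{lm:coercive}(iv) (valid precisely for $p=2_\alpha^*$, $q\in(2_\alpha^\sharp,2_\alpha^*)$, $\mu>0$) forces $\{A[u_n]\}$ bounded above, and the preceding lemma gives $A[u_n]\ge\delta>0$. For $A[v_n]$: the same coercivity makes $G_n$ uniformly bounded in $H^1$, whence $\{A[v_n]\}$ is bounded above since $\operatorname{dist}(v_n,G_n)\to0$; and compactness of each $G_n$ yields $w_n\in G_n$ with $\|v_n-w_n\|\to0$, so $A[v_n]\ge\delta/2$ for large $n$ because $A[w_n]\ge\delta$. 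With $\{s_{v_n}\}$ trapped between positive constants, the relation $\langle\dif I[v_n],\psi\rangle=\langle\dif E[s_{v_n}\circ v_n],s_{v_n}\circ\psi\rangle$ recalled just above, together with the change of variables $\psi\mapsto s_{v_n}^{-1}\circ\psi$ on $T_uS(a)$, gives $\|\dif E|_{S(a)}[u_n]\|_{H^{-1}}\lesssim\|\dif I[v_n]\|_{H^{-1}}\to0$, so $\{u_n\}\subset\mathcal{P}^-(a)$ is the required sequence. Note that no estimate of the level $\gamma(a)$ is needed at this stage; it enters only later, when recovering compactness of $\{u_n\}$.
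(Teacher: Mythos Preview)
Your proposal is correct and follows precisely the route the paper intends: it says only ``Using the method given in Lemma~\ref{lm:PS_sequence_existence_mass_subcritical}'' and states the result, and you have faithfully reproduced that method, correctly invoking Lemma~\ref{lm:coercive}(iv) for the coercivity and the uniform lower bound $A[u]\ge\delta$ on $\mathcal{P}^-(a)$ to control $s_{v_n}$ from both sides. One small slip: the sequence $\{v_n\}$ produced by \cite[Theorem~3.2]{1993Duality} is a Palais--Smale sequence for $I|_{S_r(a)}$, not for $E|_{S_r(a)}$ (the min-max is taken over $I$); you clearly know this, since your final estimate uses $\|\dif I[v_n]\|_{H^{-1}}\to 0$, so just correct the earlier sentence accordingly.
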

	\subsubsection{The compactness of Palais-Smale sequences under the HLS critical leading term}
	Now we shall investigate the properties of $\gamma(a)$. 
	We shall use suitable scaling and cut-off techniques to get an estimate about the energy. 
	\begin{lemma}\label{lm:energy_mass_supercritical}
		Let $N\geq 4$ or $N=3$ with $0<\alpha<\min\{q-1,3\}$. Set $p=2_{\alpha}^{*}$, $q\in (2_{\alpha}^{\sharp}, 2_{\alpha}^*)$ and $\mu,a >0$, then $0<\gamma(a)<\frac{\alpha+2}{2N+2\alpha}S^{\frac{N+\alpha}{\alpha+2}}_{H}$.  
	\end{lemma}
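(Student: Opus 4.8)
The plan is to establish the two inequalities separately; the lower bound $\gamma(a)>0$ is soft, while the strict upper bound is the real content and is obtained by testing the variational level with a rescaled truncated Aubin--Talenti bubble. For the lower bound, recall that $\mathcal{P}(a)=\mathcal{P}^-(a)$ (Lemma~\ref{lm:P_manifold_mass_supercritical}) and $\eta_p=1$, so every $u\in\mathcal{P}(a)$ satisfies $A[u]=B_p[u]+\mu\eta_qB_q[u]$ and $E[u]=E[u]-\tfrac12P[u]=\tfrac{p-1}{2p}B_p[u]+\mu\tfrac{q\eta_q-1}{2q}B_q[u]\geq 0$. Combining this with the uniform bound $A[u]\geq\delta>0$ proved just above forces $\max\{B_p[u],\mu\eta_qB_q[u]\}\geq\delta/2$, hence $E[u]$ is bounded below by a positive constant and $\gamma(a)>0$.

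For the strict upper bound I would take the truncated family $U_\epsilon=\xi u_\epsilon$ from Lemma~\ref{lm:estimate_bubble} and test with $V_\epsilon:=aU_\epsilon/\|U_\epsilon\|_2\in S_r(a)$; since $\max_{s>0}E[s\circ V_\epsilon]=E[s_{V_\epsilon}\circ V_\epsilon]$ with $s_{V_\epsilon}\circ V_\epsilon\in\mathcal{P}^-(a)$, this gives $\gamma(a)\leq\tfrac12\max_{s>0}f_{V_\epsilon}(s)$. Writing $G(s):=sA[V_\epsilon]-\tfrac1ps^pB_p[V_\epsilon]$, with maximizer $s^*:=(A[V_\epsilon]/B_p[V_\epsilon])^{1/(p-1)}$, and substituting $s=s^*\sigma$, one obtains
\[
f_{V_\epsilon}(s^*\sigma)=s^*A[V_\epsilon]\Bigl(\sigma-\tfrac{\sigma^p}{p}-D_\epsilon\sigma^{q\eta_q}\Bigr),\qquad
D_\epsilon:=\frac{\mu}{q}\,\frac{(s^*)^{q\eta_q-1}B_q[V_\epsilon]}{A[V_\epsilon]}>0 .
\]
Because $1<q\eta_q<p$, an elementary analysis of $h_D(\sigma):=\sigma-\sigma^p/p-D\sigma^{q\eta_q}$ (unperturbed maximizer $\sigma=1$, implicit function theorem for $D$ small, monotonicity of $D\mapsto\max h_D$ and the limit $\max h_D\to 0$ as $D\to\infty$) gives a constant $c_1=c_1(p,q\eta_q)>0$ with $\max_{\sigma>0}h_D(\sigma)\leq\tfrac{p-1}{p}-c_1\min\{D,1\}$ for all $D>0$. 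Since $s^*A[V_\epsilon]=\tfrac{p}{p-1}G(s^*)$ and $s^*A[V_\epsilon]D_\epsilon=\tfrac{\mu}{q}(s^*)^{q\eta_q}B_q[V_\epsilon]$, I arrive at
\[
2\gamma(a)\leq G(s^*)-c_1\min\Bigl\{\tfrac{\mu}{q}(s^*)^{q\eta_q}B_q[V_\epsilon],\ s^*A[V_\epsilon]\Bigr\}.
\]

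It then remains to expand both pieces as $\epsilon\to0^+$. The rescaling factors cancel, so $G(s^*)=\tfrac{p-1}{p}\bigl(A[U_\epsilon]^p/B_p[U_\epsilon]\bigr)^{1/(p-1)}$, and Lemma~\ref{lm:estimate_bubble}, Lemma~\ref{lm:estimate_bubble_nonlocal}(i) together with $\mathcal{S}_H=\mathcal{S}/[\mathcal{A}(N,\alpha)\mathcal{C}_H(N,\alpha)]^{\frac{N-2}{N+\alpha}}$ give $G(s^*)\leq\tfrac{\alpha+2}{N+\alpha}\mathcal{S}_H^{\frac{N+\alpha}{\alpha+2}}+O\bigl(\epsilon^{\min\{N-2,(N+\alpha)/2\}}\bigr)$. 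For the nonlocal gain, $s^*=\kappa_\epsilon\|U_\epsilon\|_2^2$ with $\kappa_\epsilon$ bounded away from $0$ and $\infty$, so $(s^*)^{q\eta_q}B_q[V_\epsilon]$ is comparable to $\|U_\epsilon\|_2^{-2q(1-\eta_q)}B_q[U_\epsilon]$; using $B_q[U_\epsilon]\gtrsim\epsilon^{2q(1-\eta_q)}$ (Lemma~\ref{lm:estimate_bubble_nonlocal}(ii)) and the $\|U_\epsilon\|_2^2$--asymptotics of Lemma~\ref{lm:estimate_bubble}(ii) in the case $q=2$, together with $s^*A[V_\epsilon]=\tfrac{p}{p-1}G(s^*)$ being bounded away from $0$, the quantity $\min\{\cdots\}$ is bounded below by a positive constant when $N\geq5$, by $c\,|\log\epsilon|^{-q(1-\eta_q)}$ when $N=4$, and by $c\,\epsilon^{q(1-\eta_q)}$ when $N=3$. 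Comparing growth orders closes the argument: for $N\geq5$ the negative term is $-O(1)$ and swallows the truncation error; for $N=4$ one has $O(\epsilon^{2})=o(|\log\epsilon|^{-q(1-\eta_q)})$; and for $N=3$ the truncation error is $O(\epsilon)$ while $q(1-\eta_q)=\tfrac{3+\alpha-q}{2}$, so the requirement $\tfrac{3+\alpha-q}{2}<1$, i.e. $\alpha<q-1$ (together with $\alpha<3$), makes the gain dominate; hence for $\epsilon$ small $2\gamma(a)<\tfrac{\alpha+2}{N+\alpha}\mathcal{S}_H^{\frac{N+\alpha}{\alpha+2}}$. The main obstacle is precisely this bookkeeping of competing $\epsilon$-powers, and the dimension $N=3$ with $\alpha\in[\min\{q-1,3\},3)$ is exactly where the nonlocal gain can no longer beat the truncation error, which is the reason for the hypothesis.
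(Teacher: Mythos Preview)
Your proof is correct and follows the same overall strategy as the paper: test the level $\gamma(a)$ with the truncated Aubin--Talenti bubble projected onto $S(a)$, then compare the negative $B_q$-contribution against the $O(\epsilon^{\min\{N-2,(N+\alpha)/2\}})$ truncation errors, arriving at exactly the same $\epsilon$-powers and the same dimensional restriction for $N=3$. The implementation differs in two minor respects. First, you normalize by the scalar multiple $V_\epsilon = aU_\epsilon/\|U_\epsilon\|_2$, whereas the paper uses the dilation $V_\epsilon(x) = t^{(N-2)/2}U_\epsilon(tx)$ with $t=a^{-1}\|U_\epsilon\|_2$, which keeps $A$ and $B_p$ invariant; since the fiber maximum only depends on the ratio $A[U_\epsilon]^p/B_p[U_\epsilon]$ and on the combination $\|U_\epsilon\|_2^{-2q(1-\eta_q)}B_q[U_\epsilon]$, both choices produce the same key quantities (as your ``the rescaling factors cancel'' observation shows). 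Second, you handle the perturbed maximum via the substitution $s=s^*\sigma$ and the quantitative inequality $\max_{\sigma>0} h_D(\sigma)\leq \tfrac{p-1}{p}-c_1\min\{D,1\}$, while the paper simply argues that the maximizer $s_{V_\epsilon}$ stays bounded away from $0$ and $\infty$ (otherwise $\gamma(a)\leq 0$) and concludes by direct comparison. Your route is slightly more explicit and gives a cleaner quantitative remainder; the paper's is shorter. Both close the estimate for $N\geq4$ and for $N=3$ precisely when $q(1-\eta_q)=\tfrac{3+\alpha-q}{2}<1$, i.e.\ $\alpha<q-1$.
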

	\begin{proof}
		As before, let $\xi\in\mathscr{C}_{c}^{\infty}$ be a radial cut-off function with $\xi\equiv 1$ in $B_1$,   $\operatorname{supp}(\xi)\subset B_2$ and $0\leq \xi(x)\leq 1,$ $\forall x\in\mathbb{R}^N$.
		
		Define $V_{\epsilon}(x)=(a^{-1}\| U_{\epsilon}\|_2)^{\frac{N-2}{2}}U_{\epsilon}(a^{-1}\| U_{\epsilon}\|_2x)$. Then we have
		\begin{align*}
			\|V_{\epsilon}\|_2=a,\quad A[V_{\epsilon}]=A[U_{\epsilon}],\quad B_s[V_{\epsilon}]=(a^{-1}\| U_{\epsilon}\|_2)^{2s\eta_s-2s}B_s[U_{\epsilon}].
		\end{align*}
		Therefore, we obtain that 
		\begin{align*}
			\gamma(a)=\inf\limits_{\mathcal{P}^{-}(a)}E\leq E[s_{V_{\epsilon}}^{\diamond}\circ V_{\epsilon}]=\sup_{s\geq 0}E[s\circ V_{\epsilon}],\quad \forall \epsilon>0.
		\end{align*}
		Notice that $B_p[U_{\epsilon}]=B_p[V_{\epsilon}]$. As a consequence, by Lemma \ref{lm:estimate_bubble} and Lemma \ref{lm:estimate_bubble_nonlocal}, we have
		\begin{align*}
			E[s\circ V_{\epsilon}]
			=&\frac{1}{2}sA[V_{\epsilon}]-\frac{\mu}{2q}s^{q\eta_{q}}B_q[V_{\epsilon}]-\frac{1}{2p}s^pB_p[V_{\epsilon}]\\
			=&\frac{1}{2}sA[U_{\epsilon}]-\frac{1}{2p}s^pB_p[U_{\epsilon}]-\frac{\mu}{2q}s^{q\eta_{q}}(a^{-1}\| U_{\epsilon}\|_2)^{2s\eta_s-2s}B_q[U_{\epsilon}]\\
			\leq&\frac{1}{2}s\left(\mathcal{S}^{\frac{N}{2}}+O(\epsilon^{N-2})\right)-\frac{1}{2p}s^{p}\left(\mathcal{A}(N,\alpha)^{\frac{N}{2}}C(N,\alpha)^{\frac{N}{2}}\mathcal{S}_{H}^{\frac{N+\alpha}{2}}-O(\epsilon^{\frac{N+\alpha}{2}})\right)\\
			&-\frac{\mu}{2q}s^{q\eta_{q}}a^{2s-2s\eta_s}\times \begin{cases}
				O(1),&\text{if }N\geq 5,\\
				O(\vert\log\epsilon\vert^{2q(\eta_{q}-1)}),&\text{if }N=4,\\
				O(\epsilon^{q(1-\eta_{q})}),&\text{if }N=3.
			\end{cases}
		\end{align*}
		We claim that $s_{V_{\epsilon}}^{\diamond}$ is bounded independent of $\epsilon$.
		If not, $s_{V_{\epsilon}}^{\diamond}\rightarrow 0$ or $s_{V_{\epsilon}}^{\diamond}\rightarrow 0$ as $\epsilon \rightarrow 0^{+}$, then we find $\gamma(a)\leq 0,$ which is impossible.
		Consequently, we obtain that
		\begin{align*}
			\sup_{s\geq 0}E[s\circ V_{\epsilon}]<&\sup_{s\geq 0}\left(\frac{1}{2}s\mathcal{S}^{\frac{N}{2}}-\frac{1}{2p}s^p\mathcal{A}(N,\alpha)^{\frac{N}{2}}C(N,\alpha)^{\frac{N}{2}}\mathcal{S}_{H}^{\frac{N+\alpha}{2}}\right)
			=\frac{\alpha+2}{2N+2\alpha}\mathcal{S}^{\frac{N+\alpha}{\alpha+2}}_{H}.
		\end{align*}
	\end{proof}
	Now we can prove Theorem \ref{thm:mass_supercritical} similar to Theorem \ref{thm:mass_subcritical} and we omit the proof.
	\section{$L^2$-supercritical leading term with defocusing perturbation}
	\subsection{HLS subcritical leading term}
	In this subsection, we shall always assume that $p \in (2_{\alpha}^{\sharp},2_{\alpha}^{*})$, $q\in (2_{\alpha}, 2_{\alpha}^{\sharp}]$ and $\mu\leq0$.
	Under these assumptions, it is easy to see that $f^{\prime\prime}_u(s)\leq 0$ and hence $\mathcal{P}(a)=\mathcal{P}^{0}(a)\cup \mathcal{P}^{-}(a)$. 
	If $\mathcal{P}^{0}(a)\ne \emptyset$, we suppose that there exists a nontrivial $u \in \mathcal{P}(a)$ such that $f^{\prime\prime}_u(1)= 0$, that is,
	\begin{align*}
		0\leq\eta_{p}(p\eta_{p}-1)B_p[u]=\mu B_q(1-q\eta_{q})B_q[u]\leq 0,
	\end{align*}
	which implies $u \equiv 0$, a contradiction. Hence we prove
	\begin{lemma}
		Let $N\geq 3$, $\alpha\in (0,N)$, $p \in (2_{\alpha}^{\sharp},2_{\alpha}^{*})$, $q\in (2_{\alpha}, 2_{\alpha}^{\sharp}]$ and $\mu\leq0$. 
		Then we have $\mathcal{P}(a)= \mathcal{P}^{-}(a)$.
	\end{lemma}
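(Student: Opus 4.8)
The plan is to reduce the whole statement to a single sign computation for $f_u''$. Concretely, I will show that under the stated hypotheses one has $f_u''(s)<0$ for every $s>0$ and every $u\in H^1(\mathbb{R}^N)\setminus\{0\}$; in particular $f_u''(1)<0$ for all $u\in S(a)$, so neither $f_u''(1)=0$ nor $f_u''(1)>0$ can occur on $S(a)$. This immediately gives $\mathcal{P}^{0}(a)=\emptyset$ and $\mathcal{P}^{+}(a)=\emptyset$, and since $\mathcal{P}(a)$ is the disjoint union $\mathcal{P}^{+}(a)\cup\mathcal{P}^{0}(a)\cup\mathcal{P}^{-}(a)$, we conclude $\mathcal{P}(a)=\mathcal{P}^{-}(a)$.

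For the sign computation I would start from \eqref{eq:scaling_function_2nd_derivative},
\[
f_u''(s)=-\eta_{p}(p\eta_{p}-1)s^{p\eta_{p}-2}B_p[u]+\mu\eta_{q}(1-q\eta_{q})s^{q\eta_{q}-2}B_q[u].
\]
Since $I_{\alpha}$ is a strictly positive kernel, $B_p[u]>0$ and $B_q[u]>0$ whenever $u\not\equiv0$. The exponent bookkeeping is read off from the inequality $0<2r\eta_{r}=Nr-N-\alpha$ recorded in the introduction: $p\in(2_{\alpha}^{\sharp},2_{\alpha}^{*})$ forces $p\eta_{p}>1$, so $\eta_{p}(p\eta_{p}-1)>0$ and the first summand is strictly negative; $q\in(2_{\alpha},2_{\alpha}^{\sharp}]$ forces $0<q\eta_{q}\le1$, hence $1-q\eta_{q}\ge0$, and combined with $\mu\le0$, $\eta_{q}>0$ this makes the second summand $\le0$. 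Adding the two gives $f_u''(s)<0$ for all $s>0$, which is exactly what is needed.

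I do not anticipate a genuine obstacle here: the argument is the concavity observation already invoked implicitly for the $\mathcal{P}^{0}(a)=\emptyset$ statements elsewhere in the paper, and the only points deserving a word of care are (a) the strict positivity of $B_p[u]$ for nontrivial $u$, which is what prevents the first term from vanishing, and (b) the behaviour at the endpoint $q=2_{\alpha}^{\sharp}$, where $1-q\eta_{q}=0$ and the second term simply drops out — still leaving $f_u''(s)<0$. It is also worth recording, for use later in this subsection, that the same computation shows each nontrivial $u\in S(a)$ makes $f_u$ strictly concave with $f_u'(0^+)=A[u]>0$ and $f_u(s)\to-\infty$ as $s\to+\infty$, so that there is a unique $s_u>0$ with $f_u'(s_u)=0$ and $s_u\circ u\in\mathcal{P}^{-}(a)$, and the fibering map has a clean mountain-pass geometry.
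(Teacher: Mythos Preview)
Your proof is correct and is essentially the same as the paper's: both rest on the sign computation for $f_u''$ from \eqref{eq:scaling_function_2nd_derivative}, observing that the $B_p$-term is strictly negative and the $B_q$-term is nonpositive under the hypotheses. The paper phrases it as $f_u''(s)\le 0$ followed by a squeeze argument to rule out $\mathcal{P}^0(a)$, while you go directly to the strict inequality $f_u''(s)<0$; the content is identical. (One small aside: your remark that $f_u'(0^+)=A[u]$ is only literally correct when $\mu=0$ or $q=2_\alpha^\sharp$; for $\mu<0$ and $q\eta_q<1$ one has $f_u'(0^+)=+\infty$, which of course still gives the positivity you need.)
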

	Now we find that there exists a unique local maximum point $s_u\in (0,+\infty)$ such  that $f^{\prime}_u(s_u)=0$.
	Moreover, $f_u^{\prime}(s)>0$ if $s\in (0,s_u^{\diamond})$ and $f_u^{\prime}(s)<0$ if $s\in (s_u,+\infty)$.
	It is not hard to prove that the map $u \in S(a)\mapsto s_u\in \mathbb{R}$ is of class $\mathscr{C}^1$.
	\begin{lemma}\label{lm:A_bounded_defousing}
		Let $N\geq 3$, $\alpha\in (0,N)$, $p \in (2_{\alpha}^{\sharp},2_{\alpha}^{*})$, $q\in (2_{\alpha}, 2_{\alpha}^{\sharp}]$, $a>0$ and $\mu\leq0$. 
		Then we have
		\begin{enumerate}
			\item[(i)] $E[u]>0$ for all $u\in \mathcal{P}(a)$;
			\item[(ii)] there exists $\delta>0$ such that $A[u]\geq \delta$ for all $u \in \mathcal{P}(a)$.
		\end{enumerate}
	\end{lemma}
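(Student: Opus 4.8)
The plan is to lean entirely on the Pohozaev constraint $P[u]=0$ satisfied by every $u\in\mathcal P(a)$, on the identity $\mathcal P(a)=\mathcal P^{-}(a)$ just established, and on the elementary facts that $A[u]>0$ and $B_s[u]=\int_{\mathbb R^N}(I_\alpha*|u|^s)|u|^s\dif x>0$ for every $u\in H^1(\mathbb R^N)\setminus\{0\}$ and every $s\in(2_\alpha,2_\alpha^*]$ (the latter because the Riesz potential of a nonnegative nontrivial function is positive). Here the constraint reads $A[u]=\eta_{p}B_p[u]+\mu\eta_{q}B_q[u]$.

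For (i), I would eliminate the kinetic term by subtracting a multiple of the constraint: a direct computation gives
\[
E[u]=E[u]-\tfrac12P[u]=\frac12\Big(\eta_{p}-\frac1p\Big)B_p[u]+\frac{\mu}{2}\Big(\eta_{q}-\frac1q\Big)B_q[u].
\]
Now $\eta_{p}-\tfrac1p=\tfrac{Np-N-\alpha-2}{2p}>0$ because $p>2_{\alpha}^{\sharp}$, whereas $\eta_{q}-\tfrac1q=\tfrac{Nq-N-\alpha-2}{2q}\le 0$ because $q\le 2_{\alpha}^{\sharp}$; combined with $\mu\le0$ and $B_q[u]\ge0$, the second summand is nonnegative, while the first is strictly positive since $B_p[u]>0$. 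Hence $E[u]>0$.

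For (ii), I would start from $A[u]=\eta_{p}B_p[u]+\mu\eta_{q}B_q[u]\le\eta_{p}B_p[u]$ (again using $\mu\le0$, $B_q[u]\ge0$) and apply the Gagliardo--Nirenberg inequality of Hartree type \eqref{estimate:subcritical}, which is available precisely because $p\in(2_{\alpha}^{\sharp},2_{\alpha}^{*})$ is HLS subcritical: this yields $A[u]\le\eta_{p}\mathcal C_{G}(p)a^{2p(1-\eta_{p})}A[u]^{p\eta_{p}}$. Since $p>2_{\alpha}^{\sharp}$ forces $p\eta_{p}>1$ and $A[u]>0$, dividing by $A[u]$ gives $A[u]^{p\eta_{p}-1}\ge\big(\eta_{p}\mathcal C_{G}(p)a^{2p(1-\eta_{p})}\big)^{-1}$, i.e. $A[u]\ge\big(\eta_{p}\mathcal C_{G}(p)a^{2p(1-\eta_{p})}\big)^{-1/(p\eta_{p}-1)}=:\delta>0$, which is the assertion.

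There is no genuine obstacle; the argument is short and self-contained. The only points requiring care are the sign bookkeeping for $\eta_{p}-\tfrac1p$ and $\eta_{q}-\tfrac1q$ (this is exactly where the hypotheses $p>2_{\alpha}^{\sharp}$ and $q\le2_{\alpha}^{\sharp}$ are used), the sign of $\mu$, and recording that $A[u]>0$ and $B_s[u]>0$ for $u\neq0$, so that every division is legitimate and the final constant $\delta$ is strictly positive.
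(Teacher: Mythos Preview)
Your proof is correct and follows essentially the same approach as the paper: for (i) you both subtract $\tfrac12 P[u]$ to rewrite $E[u]=\tfrac{p\eta_p-1}{2p}B_p[u]+\tfrac{\mu(q\eta_q-1)}{2q}B_q[u]$ and read off the sign, and for (ii) you both drop the nonpositive term $\mu\eta_q B_q[u]$ from the constraint and apply the Hartree-type Gagliardo--Nirenberg inequality \eqref{ieq:GN_H} to obtain the identical explicit lower bound $\delta=\bigl(\eta_{p}\mathcal C_{G}(p)a^{2p(1-\eta_{p})}\bigr)^{-1/(p\eta_{p}-1)}$. Your justification that $B_p[u]>0$ for $u\ne0$ is slightly cleaner than the paper's phrasing, but there is no substantive difference.
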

	\begin{proof}
		\begin{enumerate}
			\item[(i)] If $u \in \mathcal{P}(a)$, we have 
			\begin{align*}
				E[u]=\frac{1}{2}A[u]-\frac{1}{2p}B_p[u]-\frac{\mu}{2q}B_q[u]=\frac{p\eta_{p}-1}{2p}B_p[u]+\frac{q\eta_{q}-1}{2q}\mu B_q[u]\geq 0,
			\end{align*}
			and $E[u]=0$ if and only if $A[u]=0$, which contradicts with $u\in S(a)$.
			\item[(ii)] It follows from $P[u]=0$ that 
			\begin{align*}
				A[u]=\eta_{p}B_p[u]+\mu \eta_{q}B_q[u]\leq \eta_{p}B_p[u]\leq \eta_{p} \mathcal{C}_{G}(p) a^{2p(1-\eta_{p})}A[u]^{p\eta_{p}},
			\end{align*}
			which implies that 
			\begin{align*}
				A[u]\geq \left(\eta_{p} \mathcal{C}_{G}(p) a^{2p(1-\eta_{p})}\right)^{-\frac{1}{p\eta_{p}-1}}.
			\end{align*}
		\end{enumerate}
	\end{proof}
	Now we can prove the main theorem in this subsection:
	\begin{proof}[Proof of Theorem \ref{thm:defousing}]
		Similar to Lemma \ref{lm:PS_sequence_existence_mass_subcritical}, using Lemma \ref{lm:coercive} and Lemma \ref{lm:A_bounded_defousing}, we can obtain a bounded Palais-Smale sequence $\{u_n\} \subset \mathcal{P}(a)$.
		The remain part is to prove that, up to a subsequence, $u_n$ converges to a nontrivial limit $u_a$.
		The proof is similar to Lemma \ref{lm:nontirvial_weak_limit} and Lemma \ref{lm:convergence_sub}. 
		The only point we need to notice is that 
		\begin{align*}
			\lambda_a a^2=&A[u_a]-B_p[u_a]-\mu B_q[u_a]=\frac{\eta_{p}-1}{\eta_{p}}A[u_a]+\frac{\eta_{q}-\eta_{p}}{\eta_{p}}\mu B_q[u_a]\\
			\leq& \frac{\eta_{p}-1}{\eta_{p}}A[u_a]+\frac{\eta_{q}-\eta_{p}}{\eta_{p}}\mu \eta_{q} \mathcal{C}_{G}(q) a^{2q(1-\eta_{q})}A[u_a]^{q\eta_{q}}\\
			\leq &\frac{A[u_a]^{q\eta_{q}}}{\eta_{p}}\left(\frac{\eta_{p}-1}{\left(\eta_{p} \mathcal{C}_{G}(p) a^{2p(1-\eta_{p})}\right)^{\frac{1-q\eta_{q}}{p\eta_{p}-1}}}+(\eta_{q}-\eta_{p})\mu \eta_{q} \mathcal{C}_{G}(q) a^{2q(1-\eta_{q})}\right)<0,
		\end{align*}
		where we have used \eqref{assumptions:basic}  and Lemma \ref{lm:A_bounded_defousing}.
	\end{proof}
	\subsection{HLS critical leading term}
	In this subsection, we shall always assume that $p =2_{\alpha}^{*}$, $q\in [2, 2_{\alpha}^{*})$, $a>0$ and $\mu <0$.
	We first recall a Liouville-type theorem, see \cite[Theorem 2.1]{MR2846170}:
	\begin{theorem}\label{thm:L_type}
		Assume that $N \geq 3$ and the nonlinearity $f:(0, \infty) \mapsto(0, \infty)$ is continuous and satisfies
		$$
		\liminf _{s \rightarrow 0} s^{-\frac{N}{N-2}} f(s)>0 .
		$$
		Then the differential inequality $-\Delta u \geq f(u)$ has no positive solution in any exterior domain of $\mathbb{R}^N$.
	\end{theorem}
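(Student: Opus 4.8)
The plan is to argue by contradiction and reduce the question to the critical Serrin inequality $-\Delta u\ge c_0 u^{N/(N-2)}$, for which the exponent $\sigma:=\frac{N}{N-2}$ is exactly borderline; nonexistence then follows from a radial differential inequality that diverges logarithmically. Suppose $u>0$ solves $-\Delta u\ge f(u)$ in an exterior domain, which we may take to contain $\{|x|>R_0\}$. The hypothesis $\liminf_{s\to 0^+}s^{-N/(N-2)}f(s)>0$ furnishes $c_0,\delta>0$ with $f(s)\ge c_0 s^{\sigma}$ for $0<s\le\delta$, and $\sigma>1$ since $N\ge3$. Because $f>0$, the function $u$ is strictly superharmonic.

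The first structural step is to pass to spherical means $w(r):=|\partial B_r|^{-1}\int_{\partial B_r}u\,dS$. Averaging $-\Delta u\ge f(u)$ over $\partial B_r$, the angular part of the Laplacian integrates to zero, leaving $-(r^{N-1}w'(r))'\ge r^{N-1}\,\overline{f(u)}(r)\ge 0$, where $\overline{f(u)}(r)$ denotes the spherical mean of $f(u)$. Hence $G(r):=-r^{N-1}w'(r)$ is nondecreasing. Since $g(r)=r^{N-1}w'(r)$ is then nonincreasing, $w'$ changes sign at most once, so $w$ is eventually monotone, and integrating $w'(r)\le g(R_0)r^{1-N}$ (here $N\ge3$ makes $\int^\infty t^{1-N}\,dt$ converge) shows $w$ is bounded; thus $L:=\lim_{r\to\infty}w(r)$ exists in $[0,\infty)$. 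Moreover $w(r)-L=\int_r^\infty G(t)t^{1-N}\,dt\ge G(r)\,\frac{r^{2-N}}{N-2}$ once $G(r)\ge 0$, which is the lower bound driving the argument.

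The main mechanism occurs when $L=0$. Then $w(r)\le\delta$ for large $r$ and $w$ is decreasing, so $G>0$ (if instead $G\le0$ throughout, then $w$ is nondecreasing and $L\ge w(R_0)>0$, contradicting $L=0$). Using the Serrin bound together with Jensen's inequality for the convex map $s\mapsto s^{\sigma}$ gives $\overline{f(u)}(r)\ge c_0\,\overline{u^{\sigma}}(r)\ge c_0\,w(r)^{\sigma}$, whence $G'(r)\ge c_0 r^{N-1}w(r)^{\sigma}$. Feeding in the lower bound for $w$ yields
\[
G'(r)\ge c_1\, r^{\,N-1+\sigma(2-N)}\,G(r)^{\sigma}=c_1\,\frac{G(r)^{\sigma}}{r},
\]
because $\sigma(2-N)=-N$ precisely at $\sigma=\tfrac{N}{N-2}$, so the power of $r$ collapses to $-1$. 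Separating variables and integrating $G^{-\sigma}G'\ge c_1/r$ from $r_2$ to $r$ gives $\frac{1}{\sigma-1}\big(G(r_2)^{1-\sigma}-G(r)^{1-\sigma}\big)\ge c_1\log(r/r_2)$; the left-hand side is bounded above by $\frac{1}{\sigma-1}G(r_2)^{1-\sigma}<\infty$ since $\sigma>1$ and $G(r)^{1-\sigma}\ge0$, while the right-hand side tends to $+\infty$, a contradiction.

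The remaining case $L>0$, and the justification of the Jensen step, is where the real difficulty lies, since $f(s)\ge c_0 s^{\sigma}$ only holds for small $s$ whereas $u$ need not be globally small. When $L>0$ one integrates the inequality of the second step to obtain $\int_{R_0}^\infty r^{N-1}\overline{f(u)}(r)\,dr\le G(R_0)-\lim_{r\to\infty}G(r)<\infty$; I would then derive a contradiction by showing $\overline{f(u)}(r)\ge\epsilon_0>0$ for all large $r$, so that the integral diverges like $\int r^{N-1}\,dr$. The needed lower bound follows from a Chebyshev estimate: since the normalized spherical mean $w(r)$ stays bounded, on each large sphere $u$ lies in a fixed compact range $[L/4,D]$ on a definite fraction of the sphere, where $f$ is bounded below by $\min_{[L/4,D]}f>0$. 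Reconciling the pointwise Serrin bound with the spherical-average differential inequality—controlling the set $\{u>\delta\}$ via the super-mean-value property in the $L=0$ case and via uniform integrability of $u\in L^1(\partial B_r)$ in the $L>0$ case—is the technical heart of the proof. The conceptually clean point, which I would emphasize, is that the criticality $\sigma=N/(N-2)$ is exactly what converts the averaged inequality into the logarithmically divergent ODE $G'\ge c_1 G^{\sigma}/r$.
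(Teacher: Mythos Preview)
The paper does not prove this theorem at all; it is quoted from Armstrong--Sirakov and invoked as a black box in the proof of Theorem~\ref{thm:defousing}(ii). So there is no in-paper argument to compare your proposal against, and it has to stand on its own.

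Your spherical-averaging reduction captures the right heuristic and correctly isolates why $\sigma=\tfrac{N}{N-2}$ is critical: after averaging, the inequality collapses to $G'\ge c_1 G^{\sigma}/r$, whose integral diverges logarithmically. But the gap you yourself flag in the last paragraph is genuine and is not closed by what you wrote. To pass from $\overline{f(u)}(r)$ to $c_0\,\overline{u^{\sigma}}(r)$ you need $f(u)\ge c_0 u^{\sigma}$ \emph{pointwise} on $\partial B_r$, hence $u\le\delta$ pointwise there; the bound $w(r)\le\delta$ on the spherical mean does not deliver this, since a superharmonic function can be large on small subsets of each sphere. Your proposed repairs do not work as written: the super-mean-value property yields \emph{lower} bounds on $u$, not the upper bound you need, and in the $L>0$ case Chebyshev only controls the measure of $\{u>D\}$, while on that set $f(u)$ could be arbitrarily small since the hypothesis says nothing about $f$ at large arguments beyond positivity. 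Thus neither case produces the claimed uniform lower bound on $\overline{f(u)}(r)$. The standard proofs avoid this obstruction entirely: Armstrong--Sirakov argue via the maximum principle and comparison with explicit radial barriers, and the alternative nonlinear-capacity (rescaled test-function) method works with integral quantities throughout and never requires pointwise smallness of $u$.
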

	Then we can prove the main theorem in this subsection:
	\begin{proof}[Proof of Theorem \ref{thm:defousing}(ii)]
		Let $u \in H^1(\mathbb{R}^N)$ be a nontrivial positive solution to \eqref{eq:NLS_S}. 
		By Lemma \ref{lm:Langrange_multiplier}, we obtain that $\lambda>0$ and $P[u]=0$.
		To prove $(i)$, we get that 
		\begin{align*}
			E[u]\geq \frac{1}{2}A[u]-\frac{1}{2p}B_p[u]\geq \frac{1}{2}A[u]-\frac{1}{2p}\mathcal{S}_{H}^{-p}A[u]^{p}\geq \frac{\alpha+2}{2(N+\alpha)}\mathcal{S}^{\frac{N+\alpha}{\alpha+2}}_{H}.
		\end{align*}
		Now we consider $(ii)$. Note that for $q\in (2_{\alpha},2_{\alpha}^{*})$, via the weak Young inequality, we obtain that 
		\begin{align*}
			\left(\int_{\mathbb{R}^N}\left\vert I_{\alpha} * \vert u \vert^q \right\vert^{\frac{2N}{N-\alpha}}\right)^{\frac{1}{2}}\lesssim \| u \|_{\frac{2Nq}{N+\alpha}}^q,
		\end{align*}
		which implies that $\lim\limits_{x\rightarrow+\infty}(I_{\alpha} * \vert u \vert^q)(x)=0$ since $\frac{2Nq}{N+\alpha}\in (2,2^*)$.
		Besides, $u\in S(a)$ gives that $\lim\limits_{x\rightarrow +\infty}u(x)=0$.
		Consequently, for $q\geq 2$, there exists a number $R_0>0$ such that 
		\begin{align*}
			(I_{\alpha} * \vert u \vert^q)(x)\vert u(x)\vert^{q-2}\leq -\frac{\lambda}{2\mu}\quad \text{for}\quad\vert x \vert >R_0.
		\end{align*}
		Therefore, we obtain that for $q\geq 2$ and $\vert x \vert >R_0$, 
		\begin{align*}
			-\Delta u(x)=\lambda u(x)+ (I_{\alpha} * \vert u \vert^p)(x)\vert u(x) \vert^{p-2}u(x)+\mu (I_{\alpha} * \vert u \vert^q)(x)\vert u(x) \vert^{q-2}u(x)\geq \frac{\lambda}{2} u(x).
		\end{align*}
		Applying Theorem \ref{thm:L_type} with $f(s)=\frac{\lambda}{2}s$, we obtain a contradiction.
	\end{proof}
	\section{The asymptotic behavior of ground states as $\mu \rightarrow 0^{+}$}
	In this section, we shall prove Theorem \ref{thm:asymptotic_mu_0}.
	\begin{proof}[Proof of Theorem \ref{thm:asymptotic_mu_0}]
		\begin{enumerate}
			\item[(i)] If $q\in (2_{\alpha},2_{\alpha}^{\sharp})$, the ground state $u_{a,\mu}$ is described by Theorem \ref{thm:minizing}. 
			Note that $k_1\rightarrow 0$ as $\mu\rightarrow 0^+$, which gives that $A[u_{a,\mu}]\rightarrow 0$ as $\mu \rightarrow 0^+$.
			Following the proof in Lemma \ref{lm:coercive}, we find that 
			\begin{align*}
				0>\gamma(a,\mu)=E[u_{a,\mu}]\geq \frac{p-1}{2p}A[u_{a,\mu}]-\frac{\mu}{2q}(1-\frac{q\eta_q}{p})\mathcal{C}_{G}(q)a^{2q(1-\eta_{q})}A[u_{a,\mu}]^{q\eta_{q}}\rightarrow 0
			\end{align*}
			as $\mu \rightarrow 0^+$.
			\item[(ii)] Then we consider $q\in [2_{\alpha}^{\sharp},2_{\alpha}^{*})$. 
			For a fixed $a>0$, let $\bar{\mu}>0$ satisfy \eqref{assumptions:basic} for $q=2_{\alpha}^{\sharp}$ and $\bar{\mu}>0$ is arbitrary for $q\in (2_{\alpha}^{\sharp},2_{\alpha}^{*})$.
			We first claim that $\{u_{a,\mu}:\mu\in (0,\bar{\mu})\}$ is bounded in $H^1(\mathbb{R}^N)$.
			If $q=2_{\alpha}^{\sharp}$, by Lemma \ref{lm:energy_mass_critical} and $P[u_{a,\mu}]=0$, it follows that 
			\begin{align*}
				\gamma(a,0)\geq& \gamma(a,\mu)=E[u_{a,\mu}]=\frac{p-1}{2p}A[u_{a,\mu}]-\frac{p-q\eta_{q}}{2pq}\mu B_q[u_{a,\mu}]\\
				\geq& \frac{p-1}{2p}\left(1-\eta_q\mu \mathcal{C}_{G}(q)a^{2q(1-\eta_{q})}\right)A[u_{a,\mu}].
			\end{align*}
			Similarly, if $q=2_{\alpha}^{\sharp}$, by the Hardy-Littewood-Sobolev inequality \eqref{ieq:HLS}, it holds that 
			\begin{align*}
				\gamma(a,0)\geq& \gamma(a,\mu)=E[u_{a,\mu}]=\frac{p-1}{2p}B_p[u_{a,\mu}]+\frac{q\eta_{q}-1}{2q}\mu B_q[u_{a,\mu}],
			\end{align*}
			which implies that $\{B_p[u_{a,\mu}]\}$ and $\{\mu B_q[u_{a,\mu}]\}$ are bounded.
			Notice that $P[u_{a,\mu}]=0$, that is, $A[u_{a,\mu}]=B_p[u_{a,\mu}]+\mu B_q[u_{a,\mu}]$, which implies that $\{A[u_{a,\mu}]\}$ is bounded.
			The formula of Lagrange multiplies gives that
			\begin{align*}
				\lambda_{a,\mu}=-\mu(1-\eta_{q})a^{-2}B_q[u_{a,\mu}]\rightarrow 0\quad \text{as}\quad \mu\rightarrow 0^{+}.
			\end{align*}
			Up to a subsequence, we assume that $A[u_{a,\mu}]\rightarrow \ell\geq 0$ and $\lambda_{a,\mu}\rightarrow 0$ as $\mu \rightarrow 0^+$.
			Indeed, $\ell \ne 0$. 
			If not, $\ell =0$, we have that $B_p[u_{a,\mu}]=o_{\mu}(1)$ by \eqref{ieq:best_constant}.
			Consequently, we have $\gamma(a,\mu)=E[u_{a,\mu}]=o_{\mu}(1)$.
			However, the monotonicity implies that $\gamma(a,\mu)\geq \gamma(a,\bar{\mu})>0$ for $0<\mu<\bar{\mu}$, a contradiction.
			Again, using $P[u_{a,\mu}]=0$, we deduce that
			\begin{align*}
				B_p[u_{a,\mu}]=A[u_{a,\mu}]-\mu q\eta_{q}B_q[u_{a,\mu}]= l+o_{\mu}(1),
			\end{align*}
			and by \eqref{ieq:best_constant}, we obtain that 
			\begin{align*}
				l\leq \mathcal{S}_{H}^{-p}l^p
			\end{align*}
			On the other hand, we also have 
			\begin{align*}
				\frac{N+\alpha}{2(\alpha+2)}l+o_{\mu}(1)=E[u_{a,\mu}]=\frac{p-1}{2p}A[u_{a,\mu}]-\frac{p-q\eta_{q}}{2pq}\mu B_q[u_{a,\mu}]\leq \gamma(a,0)=\frac{N+\alpha}{2(\alpha+2)}\mathcal{S}^{\frac{N+\alpha}{\alpha+2}}_{H},
			\end{align*}
			which implies that $l=\mathcal{S}^{\frac{N+\alpha}{\alpha+2}}_{H}$, and the thesis follows.
		\end{enumerate}
	\end{proof}
	
	\textbf{Acknowledgment}
	This study was supported partially by the National Natural Science Foundation of China (Grant NO. 11871242, 12031015, 12271508). We would like to express our gratitude to Prof. Louis Jeanjean about some discussions related to the min-max description in Lemma 3.7.

\end{document}